\newcommand\reallywidetilde[1]{\ThisStyle{%
  \setbox0=\hbox{$\SavedStyle#1$}%
  \stackengine{-0.1\LMpt}{$\SavedStyle#1$}{%
    \stretchto{\scaleto{\SavedStyle\mkern.2mu\AC}{.5150\wd0}}{.6\ht0}%
  }{O}{c}{F}{T}{S}%
}}
\newcommand\reallywidetildelow[1]{\ThisStyle{%
  \setbox0=\hbox{$\SavedStyle#1$}%
  \stackengine{-0.3\LMpt}{$\SavedStyle#1$}{%
    \stretchto{\scaleto{\SavedStyle\mkern.2mu\AC}{.5150\wd0}}{.4\ht0}%
  }{O}{c}{F}{T}{S}%
}}
\newcommand{\Dee}{\mbox{\usefont{T2A}{\rmdefault}{m}{n}\CYRD}}
\let\geq\geqslant
\let\leq\leqslant
\newcommand{\sm}{\smallsetminus}
\newcommand{\setsep}{:}
\newcommand{\smid}{\mkern1mu|\mkern1mu}
\pgfplotsset{compat=1.15}
\definecolor{g1}{rgb}{0.4,0.4,0.4}
\definecolor{g2}{rgb}{0.55,0.55,0.55}
\definecolor{g3}{rgb}{0.7,0.7,0.7}
\def\vdotsB{\smash{\vdots}\smash[b]{\strut}}
\def\circB{\smash{\raisebox{-1pt}{\scriptsize$\circ$}}}
\newcommand{\Xupcirc}{\overset{\circB}{\Xup}}
\DeclareMathOperator{\gr}{gr}
\DeclareMathOperator{\Stab}{Stab}
\DeclareMathOperator{\res}{res}
\DeclareMathOperator{\Spec}{Spec}
\DeclareMathOperator{\Gal}{Gal}
\DeclareMathOperator{\adj}{adj}
\DeclareMathOperator{\Bup}{B}
\DeclareMathOperator{\St}{St}
\DeclareMathOperator{\STup}{ST}
\DeclareMathOperator{\Tor}{Tor}
\DeclareMathOperator{\Hom}{Hom}
\DeclareMathOperator{\Coker}{Coker}
\newcommand{\op}{\mathrm{op}}
\newcommand{\toprm}{\mathrm{top}}
\newcommand{\prim}{\textup{prim}}
\newcommand{\StH}{\St^2}
\newcommand{\StL}{\St^\infty}
\newcommand{\StB}{\St^B}
\DeclareMathOperator{\Lup}{L}
\DeclareMathOperator{\Sup}{S}
\DeclareMathOperator{\Mup}{M}
\DeclareMathOperator{\Xup}{X}
\newcommand{\Qc}{\mathcal{Q}}
\newcommand{\Cc}{\mathcal{C}}
\newcommand{\Pc}{\mathcal{P}}
\newcommand{\Sc}{\mathcal{S}}
\newcommand{\Tc}{\mathcal{T}}
\newcommand{\Ic}{\mathcal{I}}
\newcommand{\Lc}{\mathcal{L}}
\newcommand{\Hc}{\mathcal{H}}
\newcommand{\Dc}{\mathcal{D}}
\newcommand{\Fc}{\mathcal{F}}
\newcommand{\Gc}{\mathcal{G}}
\newcommand{\CupL}{\operatorname{C}^\Lc}
\DeclareMathOperator{\Sh}{Sh}
\DeclareMathOperator{\I}{I}
\DeclareMathOperator{\Li}{Li}
\DeclareMathOperator{\Cor}{Cor}
\DeclareMathOperator{\Log}{Log}
\DeclareMathOperator{\Id}{Id}
\DeclareMathOperator{\GL}{GL}
\DeclareMathOperator{\Ker}{Ker}
\DeclareMathOperator{\Ind}{Ind}
\DeclareMathOperator{\Sp}{Sp}
\newcommand{\dd}{\mathrm{d}}
\newcommand{\Tup}{\mathrm{T}}
\newcommand{\Pup}{\mathrm{P}}
\newcommand{\Lie}{\mathrm{Lie}}
\newcommand{\M}{\mathcal{M}}
\newcommand{\Q}{\mathbb{Q}}
\newcommand{\Z}{\mathbb{Z}}
\newcommand{\R}{\mathbb{R}}
\newcommand{\N}{\mathbb{N}}
\newcommand{\PP}{\mathbb{P}}
\newcommand{\LL}{\mathbb{L}}
\newcommand{\Sbb}{\mathbb{S}}
\newcommand{\TT}{\mathbb{T}}
\newcommand{\Cbb}{\mathbb{C}}
\newcommand{\Sfr}{\mathfrak{S}}
\newcommand{\VB}{\textup{VB}}
\newcommand{\Mod}[1]{\: \mathrm{mod} \: #1}
\newcommand{\fup}{\mathrm{f}}
\DeclareMathOperator{\Nm}{Nm}
\DeclareMathOperator{\sgn}{sgn}
\newcommand{\lra}{\longrightarrow}
\newcommand{\Kbar}{\,\overline{\!K}}
\DeclareMathOperator{\Kmap}{K}
\newcommand{\llp}{\mathopen{(\mkern-4mu(}}
\newcommand{\rrp}{\mathclose{)\mkern-4mu)}}
\theoremstyle{plain}
\newtheorem{theorem}{Theorem}
\theoremstyle{definition}
\newtheorem{remark}[theorem]{Remark}
\newtheorem{definition}[theorem]{Definition}
\theoremstyle{plain}
\newtheorem{proposition}[theorem]{Proposition}
\newtheorem{conjecture}[theorem]{Conjecture}
\newtheorem{corollary}[theorem]{Corollary}
\newtheorem{example}[theorem]{Example}
\newtheorem{lemma}[theorem]{Lemma}
\setlist[enumerate,1]{label = {\rm(\roman*)}, leftmargin=*}
\renewcommand{\sectionautorefname}{\S{}\@gobble}
\def\equationautorefname~#1\null{(#1)\null}
\newcommand{\indsp}{\mkern0.5mu}
\newcommand{\llangle}{\langle\mkern-4mu\langle}
\newcommand{\rrangle}{\rangle\mkern-4mu\rangle}
\title[Multiple polylogarithms and the Steinberg module]{Multiple polylogarithms and the Steinberg module}
\subjclass[2020]{11G55, 19D45}
\author[Charlton]{Steven Charlton}
\address{Department of Mathematics and Computer Science, Division of Mathematics, University of Cologne,
Weyertal 86-90, 50931 Cologne, Germany}
\email{steven.charlton@uni-koeln.de}
\author[Radchenko]{Danylo Radchenko}
\address{Laboratoire Paul Painlev\'e, Universit\'e de Lille, F-59655 Villeneuve d'Ascq,	France}
\email{danradchenko@gmail.com}
\author[Rudenko]{Daniil Rudenko}
\address{Department of Mathematics, University of Chicago, 5801 S Ellis Ave, 60637 Chicago, IL, USA}
\email{rudenkodaniil@gmail.com}
\date{18 February 2026}
\begin{document}

\begin{abstract}
We establish a connection between multiple polylogarithms on a torus and 
the Steinberg module of $\Q$, and show that multiple polylogarithms of depth $d$ and weight $n$ can be expressed via a single function $\Li_{n-d+1,1,\dots,1}(x_1,x_2,\dots,x_d)$. Using this connection, we give a simple proof of the Bykovski\u{\i} theorem, explain the duality between multiple polylogarithms and iterated integrals, and provide a polylogarithmic interpretation of the conjectures of Rognes and Church--Farb--Putman. 
\end{abstract}

\maketitle

{
	\setcounter{tocdepth}{1}
	\tableofcontents
	\setcounter{tocdepth}{2}
}

\section{Introduction}\label{SectionIntroduction}
\subsection{Multiple polylogarithms and the Steinberg module}\label{SectionMPandSteinbergModule}
Multiple polylogarithms are multivalued complex analytic functions depending on positive integer parameters $n_1,\dots,n_d\in \N$. In the polydisc $|x_1|,|x_2|,\dots, |x_d| <1$ multiple polylogarithms are defined by the power series 
\begin{equation}\label{FormulaPolylogarithm}
\Li_{n_1,n_2,\dots, n_d}(x_1,x_2,\dots,x_d)=\sum_{0<m_1<m_2<\dots<m_d}\frac{x_1^{m_1} x_2^{m_2}\cdots x_d^{m_d}}{m_1^{n_1}m_2^{n_2}\cdots m_d^{n_d}}.
\end{equation}
The number $n=n_1+\dots+n_d$ is called the weight of the multiple polylogarithm, and the number $d$ is called its depth. In the form of the power series~\eqref{FormulaPolylogarithm}, multiple polylogarithms were introduced by Goncharov \cite{GoncharovMSRI,Gon98} in order to study \emph{multiple Dirichlet $L$-values}, which are given by $\Q$-linear combinations of multiple polylogarithms at roots of unity.

One of the most interesting characteristics of polylogarithms concerns depth reductions: a phenomenon whereby certain sums of multiple polylogarithms can be expressed through functions of lower depth. This phenomenon plays a key role in Goncharov's approach to Zagier's conjecture, see Dupont's \cite{Cle21} for an overview.  In \cite[Theorem 1.1]{Rud20} it is shown that any multiple polylogarithm of weight $n \geq  2$ can be expressed as a linear
combination of multiple polylogarithms of depth at most $\lfloor n/2 \rfloor$ and products of polylogarithms of lower weight. However there are still exponentially many different functions of weight $n$ and depth $\lfloor n/2\rfloor$. In~\cite[Conjecture~1]{GCRR22} it was conjectured that all polylogarithms of weight $n$ and depth $d$ are expressible essentially in terms of a single function $\Li_{n-d+1,1,\dots,1}$ of depth $d$. Our main result is the following theorem, proving this conjecture.

\begin{theorem}\label{TheoremMain}
A function $\Li_{n_1,n_2,\dots, n_d}(x_1,x_2,\dots,x_d)$ of weight $n$ and depth $d$ can be expressed as a polynomial with rational coefficients in multiple polylogarithms $\Li_{m-k+1, 1,\dots,1}$ of depth $k\leq d$ and weight $m\leq n$ whose  arguments are Laurent monomials in $\sqrt[N]{x_1}, \dots, \sqrt[N]{x_d}$ for sufficiently large $N$.
\end{theorem}
Here is an example of this type of identity in weight four and depth two: for $|x_1|,|x_2| < 1$, we have
   \begin{align*}
      \Li_{2,2}(x_1,x_2)= {} & 
       {-}4 \Li_{3,1}\Bigl({-}\frac{\sqrt{x_1}}{\sqrt{x_2}},x_2\Bigr)-4 \Li_{3,1}\Bigl(\frac{\sqrt{x_1}}{\sqrt{x_2}},x_2\Bigr)
      +4 \Li_{3,1}\Bigl({-}\frac{\sqrt{x_2}}{\sqrt{x_1}},x_1\Bigr)+4 \Li_{3,1}\Bigl(\frac{\sqrt{x_2}}{\sqrt{x_1}},x_1\Bigr)
      \\
      &
     {} + \Li_{3,1}(x_1,x_2)-\Li_{3,1}(x_2,x_1)
       -\Li_{3,1}\Bigl(\frac{x_2}{x_1},x_1\Bigr)-\frac{1}{2} \Li_4(x_1 x_2)+\Li_1(x_1) \Li_3(x_2).
   \end{align*}
Note that because the right hand side is Galois invariant, it does not depend on the choice of branches for the square roots $\sqrt{x_1}, \sqrt{x_2}$.

The word ``function'' in the statement of the theorem requires a comment. The multivaluedness of multiple polylogarithms makes identities hard to work with. There are several standard ways to circumvent this issue; we discuss them in \S \ref{SecPolylogsAsFunctionsAndMotives}. In the paper, we chose to work with so-called {\it formal Hopf algebra of multiple polylogarithms} $\mathcal{H}^\textup{f}$ introduced in \cite{CMRR24}. In \S\ref{SectionLin11} we give a precise statement of Theorem \ref{TheoremMain} in this setting. From polylogarithmic identities in $\mathcal{H}^\textup{f}$ one can obtain numerically verifiable identities using single-valued versions of the multiple polylogarithms \cite{CDG21}.

Theorem \ref{TheoremMain} is a consequence of a more general structural result, which describes a certain space of multiple polylogarithms on a torus via the Steinberg module. We will work with an algebraic torus  $\Tup_d=\bigl(\overline{\Q}^{\times}\bigr)^d$ over the field $\overline{\Q}$. Though our results would hold for an algebraic torus over $\mathbb{C}$, we prefer to work over $\overline{\Q}$ to be able to deduce consequences for motivic multiple polylogarithms, see the discussion in \S \ref{SecPolylogsAsFunctionsAndMotives}.

We will use the following notation related to algebraic tori \cite{Spr1998}. The ring of regular function on the torus is
\[
\overline{\Q}[\Tup_d]\cong\overline{\Q}[x_1^{\vphantom{-1}},x_1^{-1},\dots,x_d^{\vphantom{-1}},x_d^{-1}].
\]
A homomorphism of algebraic groups $\chi\colon\Tup_d\to\Tup_1$ is called a (rational) character of $\Tup_d$. We denote by $\Xup(\Tup_d)$ the character lattice of $\Tup_d$, i.e., the set of all rational characters of $\Tup_d$; it has a natural structure of an abelian group, isomorphic to $\Z^d$. The elements of $X(\Tup_d)$ are precisely the monomials $x_1^{a_1}\cdots x_d^{a_d}$ for some exponent vector $(a_1,\dots,a_d)\in\Z^d$. A matrix $A\in\Mup_d(\Z)$ with $\det(A)\ne0$ defines an isogeny $p_A\colon\Tup_d\to\Tup_d$ given by
    \[p_A(x_1,\dots,x_d)=(x_1^{a_{11}}x_2^{a_{21}}\cdots x_{d}^{a_{d1}},\dots,x_1^{a_{1d}}x_2^{a_{2d}}\cdots x_{d}^{a_{dd}}).\]
The degree of the isogeny $p_A$ is equal to $|\det(A)|$, so, in particular, for $A\in\GL_d(\Z)$ the morphism $p_A$ is an automorphism of $\Tup_d$.

 Next we give an informal description of a  space $\LL_{n}(\Tup_d)$ 
 whose elements are certain linear combinations of polylogarithms whose arguments are Laurent monomials in $\sqrt[N]{x_i}$. In this description, we will work with polylogarithms (\ref{FormulaPolylogarithm}) as if they were single-valued functions on a torus $\Tup_d$. The detailed definition of $\LL_{n}(\Tup_d)$ is spelled out in \S\ref{SectionModuleLn}.

Consider a $\Q$-algebra $A$ of functions on $\Tup_d$ generated by pushforwards of polylogarithms (\ref{FormulaPolylogarithm}) along isogenies $p\colon \Tup_d\lra \Tup_d.$ Elements 
\[\log(-x_i)=\Li_1(x_i^{-1})-\Li_1(x_i)
\]
lie in this algebra and generate an ideal $I$ in $A$. The space $\LL_{n}(\Tup_d)$  is a $\Q$-subspace of $A/I$ spanned by the following  pushforwards of polylogarithms:
\begin{equation} \label{FormulaPolylogarithmsRoots}
N^{n-d-1}\sum_{\substack{y_1^{N}=x_1 \\ \hspace{\widthof{$y_1^{N}$}} \vdotsB \hspace{\widthof{$x_d$}} \\ y_d^{N}=x_d}}
   \Li_{n_1,\dots,n_d}\left(\prod_{i=1}^d y_i^{a_{i1}},\dots,\prod_{i=1}^d y_i^{a_{id}} \right), \quad   (a_{ij})\in \Mup_{d}(\Z), \ N=\lvert\det(a_{ij})\rvert>0.
\end{equation}
Next, let $\gr_d^{\Dc}\LL_{n}(\Tup_d)$ be the quotient of $\LL_{n}(\Tup_d)$ by a similarly defined subspace generated by polylogarithms of depth lower than $d.$  In \S\ref{SectionModuleLn} and Definition \ref{DefLnDepth} we will see that $\gr_d^{\Dc}\LL_{n}(\Tup_d)$ is a $\GL_d(\Q)$-module generated by functions $\Li_{n_1,\dots, n_d}(x_1,\dots,x_d)$, $n_1+\dots+n_d=n$.

We now recall the notion of the Steinberg module $\St(V)$ of a $d$-dimensional vector space $V$ over a field $F$.  Let $\Tc_{V}$ be the Tits building of a vector space $V,$ i.e., the geometric realization of the poset of nonzero proper subspaces of $V$ ordered by inclusion. The Solomon--Tits theorem~\cite{Sol69} states that $\Tc_{V}$ is homotopy equivalent to a bouquet of $(d-2)$-spheres. The top reduced homology $\widetilde{H}_{d-2}(\Tc_V,\Q)$ is called the Steinberg module $\St(V)$ and is generated by elements $[v_1,\dots,v_d]$ for $v_i\in V$. The elements $[v_1,\dots,v_d]$ are skew-symmetric,  vanish for linearly dependent $v_1,\dots,v_d$, and satisfy the relation
\[
\sum_{i=0}^{d} (-1)^i[v_0,\dots,\widehat{v_i},\dots, v_{d}]=0
\] 
for any vectors $v_0,\dots,v_{d}\in V.$ We will sometimes use notation $\St_d(F)$ for $\St(F^d)$. The Steinberg module $\St(V)$ is equipped with a natural action of $\GL(V)$. Denote by $\Sbb^nV$ the $n$-th symmetric power of~$V$, which is also a $\GL(V)$-module. 

Our main result is the following theorem, which implies Theorem \ref{TheoremMain} by an elementary ``cut and paste'' argument. 

\begin{theorem}\label{TheoremMain2} Consider a torus $\Tup_d$ and put $V=\Xup(\Tup_d)\otimes\Q.$ Let $e_i\coloneqq x_i$, $i=1,\dots,d$, be the basis of the character lattice $\Xup(\Tup_d)$. For $n\geq 0$ there exists a unique isomorphism of $\GL(V)$-modules 
\begin{equation}
 \label{maptheoremmain2}
\STup\colon \gr_d^{\Dc}\LL_{n}(\Tup_d) \stackrel{\sim}{\lra} \St(V)\otimes\St(V)\otimes\Sbb^{n-d}V
\end{equation}
sending 
$\Li_{n_1,n_2,\dots, n_d}(x_1,x_2,\dots,x_d)$ to 
\[
 [e_d, e_d+e_{d-1},\dots, e_d+\dots+e_1] \otimes [e_d,e_{d-1},\dots,e_1] \otimes \prod_{i=1}^d \frac{e_i^{n_i-1}}{(n_i-1)!}.
\]
\end{theorem}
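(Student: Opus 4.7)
I would define $\STup$ on explicit generators of $\gr_d^{\Dc}\LL_n(\Tup_d)$, verify it respects the defining relations, and then prove bijectivity. First, I would show that $\gr_d^{\Dc}\LL_n(\Tup_d)$ is $\Q$-spanned by symbols $\Li_{n_1,\dots,n_d}[v_1,\dots,v_d]$ with $v_1,\dots,v_d\in V$ linearly independent and $\sum n_i=n$: the distribution relations in \eqref{FormulaPolylogarithmsRoots} (with the $N^{n-d-1}$ normalization) allow arbitrary $v_i\in V$ as arguments rather than just characters of $\Tup_d$, and the depth-reduction principle of \cite{Rud20} shows that linearly dependent tuples give polylogs of lower depth, hence vanish in $\gr_d^{\Dc}$. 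I would then set
\begin{align*}
\STup\bigl(\Li_{n_1,\dots,n_d}[v_1,\dots,v_d]\bigr) \coloneqq {} & [v_d,\, v_d+v_{d-1},\, \dots,\, v_d+\dots+v_1] \\
& {}\otimes [v_d,\, v_{d-1},\, \dots,\, v_1] \otimes \prod_{i=1}^d \frac{v_i^{n_i-1}}{(n_i-1)!},
\end{align*}
extended $\Q$-linearly. Uniqueness then follows from $\GL(V)$-equivariance, since $\Li_{n_1,\dots,n_d}(x_1,\dots,x_d)$ generates the source as a $\GL(V)$-module, so its specified image determines $\STup$ everywhere.

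Next I would verify $\STup$ vanishes on the defining relations of the source. The pushforward relations \eqref{FormulaPolylogarithmsRoots} translate to multilinearity on the target, because each Steinberg bracket is multilinear in the $v_i$ while the symmetric power contributes the remaining linear factors, and the normalization $N^{n-d-1}$ is engineered to match the total multilinear degree. Vanishing modulo $\log(x_i)$ is automatic since the target has no pure-log component; vanishing on linearly dependent tuples is built into the Steinberg module. The nontrivial check is that every functional equation among polylogs holding modulo lower depth and logs (shuffle, stuffle, inversion, and path reversal) maps into the Solomon-Tits kernel in one of the Steinberg factors. Here I would reduce to a universal family of relations using the motivic coproduct of \cite{GoncharovMSRI,Gon98}, then match each universal relation to a Solomon-Tits instance.

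For bijectivity, surjectivity is a representation-theoretic consequence: $\St(V)$ is generated as a $\GL(V)$-module by a single apartment symbol and $\Sbb^{n-d}V$ by a single monomial, so the $\GL(V)$-orbit of the image of $\Li_{n_1,\dots,n_d}(x_1,\dots,x_d)$ spans the target. For injectivity I would construct an explicit inverse: given a basis element $[u_\bullet]\otimes[w_\bullet]\otimes\mu$ of the target, produce a linear combination of polylogs whose $\STup$-image is that element, using the interplay between the power-series and iterated-integral representations of $\Li$ to realize each Steinberg factor independently. The main obstacle I expect is the relation check: showing that every nontrivial depth-$d$ functional equation of polylogs corresponds exactly to the Solomon-Tits relation in one of the two Steinberg factors, and that no additional relations need to be imposed. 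The two Steinberg factors play asymmetric but dual roles, since the first tracks the additive partial sums $m_i=k_1+\dots+k_i$ coming from the series expansion \eqref{FormulaPolylogarithm} while the second reflects the multiplicative structure coming from the iterated-integral representation, so the technical crux is understanding how these two structures interact modulo lower depth.
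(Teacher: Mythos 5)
Your proposal has two significant gaps, and the route through both diverges essentially from the paper's.

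\textbf{Well-definedness.} Your plan is to define $\STup$ on explicit generators and then ``verify $\STup$ vanishes on the defining relations'' — shuffle, stuffle, inversion, path reversal. But $\gr_d^{\Dc}\LL_n(\Tup_d)$ is defined as a subquotient of the Hopf algebra $\overline{\Hc}(\Tup_d)$, \emph{not} by an explicit presentation. There is no a priori guarantee that the list of relations you enumerate exhausts all relations holding modulo lower depth and modulo logs; in fact, establishing that the relations are precisely those encoded by $\St(V)\otimes\St(V)\otimes\Sbb^{n-d}V$ is essentially the content of Theorem~\ref{TheoremMain2}. Your argument is therefore circular at this step. The paper avoids the issue entirely by constructing $\STup$ as the composition $s^{-1}\circ\alpha^{-1}\circ\overline{\Delta}^{[d-1]}$, the iterated truncated coproduct followed by an identification of its image inside $\Bup_d\St(V)\otimes\Sbb^\bullet V$; that construction is automatically well-defined because $\overline{\Delta}^{[d-1]}$ is an honest Hopf-algebra operation (Proposition~\ref{PropositionTruncatedSymbolMap}). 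Also, the surjectivity claim ``$\St(V)$ and $\Sbb^{n-d}V$ are each cyclic, hence so is the tensor product'' does not follow: cyclic $\GL(V)$-modules do not have cyclic tensor products in general (already $\Q^2\otimes\Q^2$ fails for $\GL_2$). What one actually needs is that the $\GL(V)$-orbit of one Steinberg polylogarithm $\Lup[e_1,\dots,e_d]$ spans $\St^\Hc(V)$, which is Proposition~\ref{TheoremGenericPairsGenerate} and rests on Theorem~\ref{thm:Stn_li_basis}.

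\textbf{Injectivity.} The plan to ``construct an explicit inverse'' by producing a preimage for each basis element of the target only establishes surjectivity; it does not show $\Ker(\STup)=0$ unless you can also check $\Psi\circ\STup=\Id$ on all of $\gr_d^\Dc\LL_n(\Tup_d)$, which again requires the presentation you don't have. The paper's injectivity proof (\S\S\ref{SectionProofoftheoremMain}--\ref{SectionLemmaMain}) is explicitly flagged as the technical heart of the work: it runs a double induction on $(d,n)$, uses the exactness of $\Omega^\bullet\LL(\Tup_d)$ via the Koszulity of $\St$, applies the Dynkin operator (Lemma~\ref{LemmaProductDecomposition}), invokes the structure result on primitives $P(\overline{\Hc}(\Tup_d))\cong\gr_1^\Dc\overline{\Hc}(\Tup_d)$ (Proposition~\ref{PropositionWeakDepthConjecture}), and — most delicately — proves Lemma~\ref{LemmaMain}, that $\overline{\Delta}^{[2]}(a)=0$ forces $a\in\Dc_2\LL_n(\Tup_d)$, using a double-complex argument in the Lie-coalgebra setting. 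Your gesture toward ``the interplay between the power-series and iterated-integral representations'' does not touch any of this; there is no injectivity argument in your proposal.
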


The appearance of the tensor square of the Steinberg module is explained by the Koszul duality between $\St(V)$ and $\St(V)\otimes \St(V)$ discovered by Miller--Nagpal--Patzt \cite{MNP18} and Miller--Patzt--Wilson \cite{MPW23}.
The surjectivity of the map $\STup$ follows from an elementary yet intriguing statement about $\St(V)\otimes\St(V)$, which is true for an arbitrary field, see Theorem~\ref{thm:Stn_li_basis}.  As a consequence, we obtain an isomorphism
\[
\bigl( \St_d(F)\otimes \St_d(F)\bigr)_{\GL_d(F)}\cong \Q \quad \text{for} \: d\geq 1.
\]
 We learned that this result was obtained prior to our work, see Galatius--Kupers--Randal-Williams \cite[Theorem C]{GKRW20}. The proof of the injectivity of the map $\STup$ is the technical heart of the paper and occupies \S\S \ref{SectionProofoftheoremMain}-\ref{SectionLemmaMain}. 

Here is a conjectural description of the inverse map $\STup^{-1}$. 
A \emph{lattice Aomoto polylogarithm} is a multivalued analytic function
\[
\Li_{n_1,\dots,n_d}(v_1,\dots,v_d;w_1,\dots,w_d;x) 
= \frac{\sgn(\det(v_1,\dots,v_d))}{\det(w_1,\dots,w_d)} \!\!
\sum_{\nu\in C\cap\Z^d} \frac{x^{\nu}} {\prod_{j=1}^{d} \bigl(w^{j}(\nu)\bigr)^{n_j}},
\] 
where $x=(x_1,\dots,x_d)\in \Tup_d$, $x^\nu=\prod_{j=1}^d x_j^{\nu_j}$, $C=\R_{>0}v_1+\dots+\R_{>0}v_d$, and $w^1,\dots,w^d$ is the basis of linear functionals dual to $w_1,\dots,w_d$. Clearly, for $w_i=e_i$ and $v_i=e_i+e_{i+1}+\dots+e_d$ we recover the multiple polylogarithm (\ref{FormulaPolylogarithm}).  This function is a toric analogue of the usual Aomoto polylogarithm (\cite{Aom82,BGSV90}).  We expect that the function $\Li_{n_1,\dots,n_d}(v_1,\dots,v_d;w_1,\dots,w_d;x)$ can be expressed as a $\Q$-linear combination of multiple polylogarithms (\ref{FormulaPolylogarithmsRoots}).  Such a presentation via multiple polylogarithms should allow one to construct a lift of the lattice Aomoto polylogarithm to $\LL_{n}(\Tup_d)$.  We conjecture that its projection to $\gr^{\Dc}_d\LL_{n}(\Tup_d)$  coincides with 
\[
\STup^{-1}\bigg([v_1,\dots,v_d]\otimes[w_1,\dots,w_d]\otimes \frac{w_1^{n_1-1}}{(n_1-1)!}
\cdots \frac{w_d^{n_d-1}}{(n_d-1)!}\bigg) .
\]
In \S\ref{sec:polylogdistributions} we will give evidence for this by constructing an analogue of the map $\STup^{-1}$ valued in a certain space of distribution on the real torus $\R^d/\Z^d$.

One of the most intriguing properties of polylogarithms is a certain self-duality studied by Goncharov,  which underlies his notion of a dihedral Lie coalgebra, see \cite[\S 3]{Gon98}. For instance, multiple polylogarithms satisfy two families of relations: shuffle relations and quasi-shuffle relations. The quasi-shuffle relations easily follow from the power series presentation of polylogarithms, while the shuffle relations are evident from the presentation of multiple polylogarithms as iterated integrals. The appearance of the tensor square of the Steinberg module in Theorem~\ref{TheoremMain2} explains this duality: in \S\S\ref{sec:steinbergLiI}--\ref{SectionDoubleShuffle} we define a natural involution $D$ on $\St(V)\otimes \St(V)$ which sends multiple polylogarithms to iterated integrals and  exchanges shuffle and quasi-shuffle relations, see Remark \ref{RemarkDualityGeneratingFunctions}.

In \S\S\ref{SteinbergAndMilnor}--\ref{InjectivitySteinbergMilnor}, we prove that the Steinberg module $\St_d(\Q)$ is isomorphic to a certain subquotient of the Milnor $K$-group of the function field of the torus $\Tup_d$. Under the differential symbol, elements of this $K$-group are mapped to linear combinations of wedge products of differential forms
\[
\dd\log(1-\zeta \! \sqrt[N]{x_1}^{\,a_1}\!\cdots \! \sqrt[N]{x_d}^{\,a_d}).
\]
Multiple polylogarithms (\ref{FormulaPolylogarithmsRoots}) can be expressed as iterated integrals of such forms, which gives another perspective on Theorem \ref{TheoremMain2}. The connection between the Steinberg module and Milnor $K$-theory can be used to transfer ideas between these fields. As an example of this viewpoint, we give a short proof of the theorems of Ash--Rudolph \cite[Theorem 4.1]{AR79}  and Bykovski\u{\i} \cite{Byk} in \S \ref{SectionARByk}. This proof is almost a word-by-word translation of the classical result of Milnor \cite[Theorem 2.1]{Mil69}. We expect that the results of Br\"uck--Miller--Patzt--Sroka--Wilson \cite{BMPSW24} can be derived from the results of \cite{Rud21} in a similar way.

\subsection{The construction of the truncated  symbol map \texorpdfstring{$\STup$}{ST}}\label{IntroductionMultiplePolylogarithms}

Our next goal is to explain the construction of the truncated symbol map \eqref{maptheoremmain2}. We start by recalling some key facts about polylogarithms in the context of the Goncharov Program (see Dupont \cite{Cle21} for a detailed overview). 

For a field $F$ there exists a graded connected commutative Hopf algebra $\Hc^{\fup}(F)$ of formal multiple polylogarithms \cite{CMRR24}, see \S \ref{SectionMultiplePolylogs}.  As a $\Q$-vector space, $\Hc^{\fup}_n(F)$ is generated by multiple polylogarithms $\Li^{\fup}_{n_1,n_2,\dots, n_d}(x_1,x_2,\dots,x_d)$ for $x_1,\dots,x_d\in F$, and $n=n_1+\dots+n_d$ (see~\cite[Corollary 30]{CMRR24}). The Hopf algebra $\Hc^{\fup}(F)$ is expected to be isomorphic to the conjectural Hopf algebra $\Hc^{\M}(F)$ of mixed Tate motives over $F$. In \S\ref{SecPolylogsAsFunctionsAndMotives} we discuss the connection between formal polylogarithms and polylogarithms viewed as multivalued functions. In weight one,  $\Hc^{\fup}_1(F)$ is isomorphic to $\smash[b]{F^{\times}_{\Q}}$ via the map sending an element $a\in F^{\times}_{\Q}$ to $\log^{\fup}(a)=-\Li^{\fup}_1(1-a)$.

The coproduct $\Delta \Li^{\fup}_{n_1,n_2,\dots, n_d}(x_1,x_2,\dots,x_d)$ is given by an explicit yet intricate combinatorial formula, which comes from analyzing mixed Hodge structures associated to  multiple polylogarithms \cite[\S 6]{Gon01}. Here is an example of a computation of a (reduced) coproduct:
\begin{equation}\label{FormulaCoproductLi21}
\begin{split}
\Delta'(\Li^{\fup}_{2,1}(x_1,x_2)) = {}
 &\Li^{\fup}_{1,1}(x_1,x_2)\otimes \log^{\fup}(x_1)
+\Li^{\fup}_{2}(x_1x_2)\otimes \Li^{\fup}_{1}(x_2)+\Li^{\fup}_{1}(x_2)\otimes \Li^{\fup}_2(x_1)\\
& {} +\Li^{\fup}_{1}(x_1x_2)\otimes \Big(\Li^{\fup}_{2}(x_1^{-1})-\Li^{\fup}_{2}(x_2)+\Li_1^{\fup}(x_2)\log^{\fup}(x_1x_2)\Big).
\end{split}
\end{equation}

As an algebra, $\Hc^{\fup}(F)$ is filtered by depth: we denote by  $\Dc_k\Hc^{\fup}(F)$ the subspace of $\Hc^{\fup}(F)$ spanned by polylogarithms of depth at most $k$. Crucially, the computation in \eqref{FormulaCoproductLi21} shows that the depth filtration does {\bf not} make  $\Hc^{\fup}(F)$  a filtered coalgebra. There is a way to fix it: consider the quotient  of $\Hc^{\fup}(F)$ by the ideal generated by $\Hc_1^{\fup}(F)\cong F^{\times}_{\Q}$, then the depth filtration makes this quotient a filtered coalgebra. In \cite[Conjecture 7.6]{Gon01} Goncharov conjectured 
that the Hopf algebra $\Hc^{\M}(F)\,/\,(\Hc_1^{\M}(F)\Hc^{\M}(F))$ is cofree as a conilpotent coalgebra and the depth filtration on $\Hc^{\M}(F)\,/\,(\Hc_1^{\M}(F)\Hc^{\M}(F))$ coincides with the coradical filtration; the same result is also conjectured to hold for $\Hc^{\fup}(F)$, see \cite[Conjecture 44]{CMRR24}.

The motivic Hopf algebra is constructed in the case of a number field $F$, see Levine \cite{Lev93} and Deligne--Goncharov \cite{DG05}.  In a series of papers on higher cyclotomy \cite{GoncharovMSRI, Gon01B, Gon19B}, Goncharov observed that the sub-Hopf algebra of $\Hc^{\M}(\overline{\Q})$ generated by motivic multiple polylogarithms at roots of unity
\begin{equation} \label{FormulaPolylogarithmsAtRootsofunity}
\Li_{n_1,n_2,\dots, n_d}^{\M}(\zeta_1,\zeta_2,\dots,\zeta_d),\quad \zeta_i\in \mu_{\infty}
\end{equation}
is filtered as a coalgebra. For example, since $\log^{\M}(\zeta)=0$ in $\Hc^{\M}_1(\overline{\Q})$ for any root of unity $\zeta$, the element 
\begin{equation}\label{FormulaCoproductLi21roots of unity}
\Delta' (\Li_{2,1}^{\M}(\zeta_1,\zeta_2)) = \Li_{2}^{\M}(\zeta_1\zeta_2)\otimes \Li_{1}^{\M}(\zeta_2)+\Li_{1}^{\M}(\zeta_2)\otimes \Li_2^{\M}(\zeta_1)
+\Li_{1}^{\M}(\zeta_1\zeta_2)\otimes \Big(\Li_{2}^{\M}(\zeta_1^{-1})-\Li_{2}^{\M}(\zeta_2)\Big)
\end{equation}
lies in $\Dc_1\Hc^{\M}(\overline{\Q})\otimes \Dc_1\Hc^{\M}(\overline{\Q})$ by (\ref{FormulaCoproductLi21}).

 In \S \ref{SectionHopfAlgebraPolylogarithmsOnTorus} we define the Hopf algebra $\Hc(\Tup_d)$ of polylogarithms on a torus $\Tup_d$, whose elements are products of polylogarithms of the form
\[
\Li_{n_1,\dots,n_k}^{\fup}\biggl(\zeta_1\prod_{i=1}^d (\!\sqrt[N]{x_i}\,)^{a_{i1}},\dots,\zeta_k\prod_{i=1}^d (\!\sqrt[N]{x_i}\,)^{a_{ik}}\biggr) ,
\]
where $\zeta_i$ are $N$-th roots of unity and the exponent vectors $(a_{1j},\dots,a_{d\indsp  j})$ are linearly independent. Let $\overline{\Hc}(\Tup_d)$ be the quotient of \( \Hc(\Tup_d) \) by the ideal generated by elements  $\log^{\fup}(x_1),\dots,\log^{\fup}(x_d)$; we denote the coproduct on \( \overline{\Hc}(\Tup_d) \) by $\overline{\Delta}$ and call it the \emph{truncated coproduct}. A computation similar to  Goncharov's for (\ref{FormulaPolylogarithmsAtRootsofunity}) shows that $\overline{\Hc}(\Tup_d)$ is filtered by depth as a Hopf algebra, see \S \ref{SectionCoproductOfPolylogs}. 
 
The next key idea is to look at the iterated truncated coproduct 
\[
\overline{\Delta}^{[n]}\colon \overline{\Hc}(\Tup_d) \lra \left(\overline{\Hc}(\Tup_d)\right)^{\otimes (n+1)} ,
\]
defined as the composition 
$\overline{\Delta}^{[n]}=(\overline{\Delta}'\otimes\overbrace{\Id\otimes \dots \otimes\Id}^{\smash{n-1}})\circ \dots \circ ( \overline{\Delta}'\otimes\Id ) \circ \overline{\Delta}'.$
Since $\overline{\Hc}(\Tup_d)$ is filtered as a coalgebra, the iterated truncated coproduct induces a map
\begin{equation}\label{FormulaIteratedTruncatedCoproduct}
\overline{\Delta}^{[d-1]}\colon \gr^{\Dc}_d\overline{\Hc}(\Tup_d) \lra \left(\gr^{\Dc}_1\overline{\Hc}(\Tup_d)\right)^{\otimes d}.
\end{equation}
Elements of $\left(\gr^{\Dc}_1\overline{\Hc}(\Tup_d)\right)^{\otimes d}$ are $\Q$-linear combinations of terms
\begin{equation}\label{FormulaTensorProductOfClassicaslPolylogs}
\Li^{\fup}_{n_1}\biggl(\zeta_1\prod_{i=1}^d (\!\sqrt[N]{x_i}\,)^{a_{i1}}\biggr) \otimes \dots\otimes \Li^{\fup}_{n_d}\biggl(\zeta_d\prod_{i=1}^d (\!\sqrt[N]{x_i}\,)^{a_{id}}\biggr).
\end{equation}
In \S\ref{SectionCoproductOfPolylogs}, we prove that  for every such term appearing in the image of $\overline{\Delta}^{[d-1]}$, the vectors $(a_{11},\dots,a_{d\indsp  1}),\dots,(a_{1d},\dots,a_{dd})$ are in general position. 

For $V$ a $d$-dimensional $F$-vector space, let $\Bup_d\St(V)$ be the free $\Q$-vector space on decompositions of $V$ into a direct sum of lines ${L_1\oplus \dots\oplus L_d}$; the notation comes from the connection with the bar complex discussed below. If vectors $v_1,\dots,v_d$ are in general position, we denote the basis element of $\Bup_d\St(V)$ corresponding to the decomposition $V=\langle v_1\rangle\oplus \dots\oplus \langle v_d\rangle$ by $[v_1|\cdots|v_d]$ or by $[L_1|\cdots|L_d]$, where $L_i=\langle v_i\rangle$. If the lines~$L_i$ are not in general position, we set $[L_1|\cdots|L_d]=0$.

For $V=\Q^d$ (cf. Theorem \ref{TheoremMain2}), consider the map 
\begin{equation*}
\STup^d\colon \left(\gr^{\Dc}_1\overline{\Hc}(\Tup_d)\right)^{\otimes d}
\lra \Bup_d\St(V)\otimes \Sbb^{\bullet}V
\end{equation*}
sending (\ref{FormulaTensorProductOfClassicaslPolylogs}) to
\[
\lvert\det(v_1,\dots,v_d)\rvert \cdot [v_1|\cdots|v_d]\otimes \frac{v_1^{n_1-1}}{(n_1-1)!}
\cdots \frac{v_d^{n_d-1}}{(n_d-1)!}
\]
where $v_i=\frac{1}{N}(a_{i1},\dots,a_{id})$, $N=\lvert\det(v_1,\dots,v_d)\rvert$. The composition $\STup\coloneqq\STup^d\circ \overline{\Delta}^{[d-1]}$ is called the \emph{truncated symbol map}
\begin{equation}\label{FormulaTruncatedSymbol}
\STup\colon \gr^{\Dc}_d\overline{\Hc}(\Tup_d) \lra \Bup_d\St(V)\otimes \Sbb^{\bullet}V.
\end{equation}
For instance, \eqref{FormulaCoproductLi21} implies that
\begin{equation*}
\begin{split}
\STup(\Li^{\fup}_{2,1}(x_1,x_2)) 
={}&
[e_1{+}e_2 \smid e_2]\otimes (e_1{+}e_2)+[e_2 \smid e_1]\otimes e_1 + [e_1{+}e_2 \smid {-}e_1]\otimes ({-}e_1) -[e_1{+}e_2 \smid e_2]\otimes e_2\\
={}&([e_1{+}e_2 \smid e_2]+[e_2 \smid e_1]-[e_1{+}e_2 \smid {-}e_1])\otimes e_1.
\end{split}
\end{equation*}
(Since $\log^{\fup}(x_i)$, $i=1,\dots,d$, vanish in $\overline{\Hc}(\Tup_d)$, the terms $\Li^{\fup}_{1,1}(x_1,x_2)\otimes \log^{\fup}(x_1)$ and $\Li^{\fup}_{1}(x_1x_2)\otimes \Li_1^{\fup}(x_2)\log^{\fup}(x_1x_2)$ drop out from~\eqref{FormulaCoproductLi21}, and the map $\STup$ is well-defined.)
 Our next goal is to show that the image of the truncated symbol map lies in a subspace of $\Bup_d\St(V)\otimes \Sbb^{\bullet}V$ which can be identified with $\St(V)\otimes \St(V)\otimes \Sbb^{\bullet}V$. For this, we review the Koszul duality between $\St(V)$ and $\St(V)\otimes \St(V)$ discovered by Miller--Nagpal--Patzt \cite{MNP18} and Miller--Patzt--Wilson \cite{MPW23}. 

A $\VB$-module is a functor $\Fc$ from the category whose objects are finite-dimensional vector spaces over a given field $F$ and whose morphisms are linear isomorphisms to the category of $\Q$-vector spaces. The functor $\St$ sending a finite-dimensional vector space $V$ to $\St(V)$ is a $\VB$-module. For $\VB$-modules $\Fc$ and $\Gc$ we define their Day convolution by the formula
\[
(\Fc\otimes_{\VB} \Gc)(V)=\bigoplus_{V=V_1\oplus V_2} \Fc(V_1)\otimes \Gc(V_2).
\]
Here the sum is taken over all decompositions $V=V_1\oplus V_2$ of $V$ into a direct sum of its subspaces $V_1$ and $V_2$.
For a decomposition $V=V_1\oplus V_2$ we have a product map $\St(V_1)\otimes \St(V_2)\lra  \St(V)$
sending $[v_1,\dots,v_{d_1}]\otimes[v_{d_1+1},\dots,v_{d_1+d_2}]$ to $[v_1,\dots,v_{d_1+d_2}].$ These maps can be combined into a morphism ${\St} \otimes_{\VB} {\St}\lra \St$ which makes $\St$ a monoid object in the category of $\VB$-modules. 

Consider the (normalized) bar complex $\Bup_\bullet\St$ where
\[
\Bup_m\St(V)=\bigoplus_{\substack{V=V_1\oplus\dots\oplus V_m \\ V_i\neq 0}}\St(V_1)\otimes \dots \otimes \St(V_m)
\]
and denote by $\partial$ the usual bar differential.
The key result of Miller--Nagpal--Patzt \cite{MNP18} and Miller--Patzt--Wilson \cite{MPW23} is that
\begin{equation}\label{FormulaKoszulity}
H_i(\Bup_\bullet\St(V))=\begin{cases}
0 & \text{if } i\neq d,\\
\St(V)\otimes\St(V)& \text{if } i=d,
\end{cases}
\end{equation}
which can be interpreted as the Koszulity property of the Steinberg module viewed as a monoid object in the category of $\VB$-modules. We give a direct proof of (\ref{FormulaKoszulity}) in \S\ref{SectionKoszulitySteinberg}. The $\VB$-module $\StH$ sending $V$ to $\St(V)\otimes\St(V)$ is referred to as the Koszul dual of the Steinberg module. It has the structure of a commutative Hopf algebra in the $\VB$-sense, which we study in \S \ref{SectionProductCoproductInSteinberg} answering \cite[Question 1.6]{MPW23}. 

 The image of (\ref{FormulaTruncatedSymbol}) lies in the kernel of the map $\partial\otimes \Id$; we have ${(\partial_d\otimes \Id)\circ\STup=0}$, see \S\ref{SectionTruncatedSymbolMap}.
 By (\ref{FormulaKoszulity}), the image of $\STup$ lies in $\St(V)\otimes \St(V)\otimes \Sbb(V)$. So, we obtain a map 
\begin{equation} \label{FormulaIntroductionTruncatedSymbolMap}
\STup\colon  \gr^{\Dc}_d\overline{\Hc}(\Tup_d) \lra \St(V)\otimes \St(V)\otimes \Sbb^{\bullet}V,
\end{equation}
which we also call the truncated symbol map. A direct computation shows that 
\[
\STup(\Li_{n_1,\dots, n_d}^{\fup}(x_1,\dots,x_d))= [e_d, e_d+e_{d-1},\dots, e_d+\dots+e_1] \otimes [e_d,e_{d-1},\dots,e_1] \otimes \prod_{i=1}^d \frac{e_i^{n_i-1}}{(n_i-1)!};
\]
see Proposition~\ref{PropositionTruncatedSymbolMap}.

For $v_1,\dots,v_d\in V$ in general position, consider an element
\[
\Lup[v_1,\dots,v_d]=[v_d, v_d+v_{d-1},\dots, v_d+\dots+v_1] \otimes [v_d,v_{d-1},\dots,v_1] \in \St(V)\otimes \St(V),
\]
which we call \emph{the Steinberg polylogarithm.} In \S \ref{SectionCoxeterPairs} we prove that  Steinberg polylogarithms generate ${\St(V)\otimes \St(V)}$, which implies that the truncated symbol map is surjective. The map (\ref{FormulaIntroductionTruncatedSymbolMap}) is not injective, but becomes an isomorphism when restricted to the subspace $\gr^{\Dc}_d\LL_{n}(\Tup_d)$ of $\gr^{\Dc}_d\overline{\Hc}(\Tup_d)$ discussed in \S \ref{SectionMPandSteinbergModule}. The injectivity of the resulting map is the most technically challenging result of the paper and is proven in
\S\S \ref{SectionProofoftheoremMain}--\ref{SectionLemmaMain}.

\subsection{The conjectures of Rognes and Church--Farb--Putman}
The $\GL_d(\Q)$-module $\LL_{n}(\Tup_d)$ can be viewed as a $\GL_d(\Z)$-module. Denote by $\LL_{n}^{\Z}(\Tup_d)$ its $\GL_d(\Z)$-submodule generated by the functions $\Li_{n_1,\dots,n_k}(x_1,\dots,x_k)$ for $k\leq d$ and let $\gr_d^{\Dc}\LL_{n}^{\Z}(\Tup_d)$ be its quotient by a similarly defined subspace generated by polylogarithms of depth lower
than $d$.   As a $\Q$-vector space,  $\gr_d^{\Dc}\LL_{d}^{\Z}(\Tup_d)$ is spanned by elements 
\begin{equation}\label{FormulaPolylogsOverZ}
\Li_{1,\dots,1}\biggl(\prod_{i=1}^d x_i^{a_{i1}},\dots, \prod_{i=1}^d x_i^{a_{id}}\biggr)
\end{equation}
for $(a_{ij})\in \GL_{d}(\Z)$.

The truncated symbol of elements (\ref{FormulaPolylogsOverZ}) satisfies extra integrality condition: it lies in the subspace of 
$\Bup_d\St(\Q^d)$ spanned by elements $[v_1|\cdots|v_d]$ for $v_i \in \Z^d$ with $\det(v_1,\dots,v_d)=\pm 1$. We conjecture that this condition is also sufficient. To formulate this conjecture more precisely, consider the bar complex  $\Bup_\bullet^{\Z}\St(\Z^d)$  of the Steinberg module viewed as $\GL_d(\Z)$-module. Explicitly, we have 
\[
\Bup_m^{\Z}\St(\Z^d)=\bigoplus_{\substack{\Z^d=L_1\oplus\dots\oplus L_m \\ L_i\neq 0}}\St(L_1)\otimes \dots \otimes \St(L_m).
\]

\begin{conjecture}\label{ConjectureSteinbergZ} The truncated symbol map  $\STup$ induces an isomorphism 
\[
\STup\colon \gr_d^{\Dc}\LL_{d}^{\Z}(\Tup_d) \lra \Ker\left(\Bup^{\Z}_d\St(\Z^d)\stackrel{\partial_d}{\lra}  \Bup_{d-1}^{\Z}\St(\Z^d)\right).
\] 
\end{conjecture}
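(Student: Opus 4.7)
The plan is to treat injectivity and surjectivity separately; the former reduces to the rational statement of Theorem~\ref{TheoremMain2}, while the latter is the substantive content of the conjecture.

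\emph{Injectivity.} Consider the commutative square whose vertical maps $\iota_{\LL}\colon \gr_d^{\Dc}\LL_d^{\Z}(\Tup_d) \to \gr_d^{\Dc}\LL_d(\Tup_d)$ and $\iota_{\Bup}\colon \Ker(\partial_d^{\Z}) \hookrightarrow \St(\Q^d) \otimes \St(\Q^d)$ are the natural forgetful maps, and whose horizontal arrows are the integral and rational truncated symbols. The map $\iota_{\Bup}$ is injective because distinct saturated direct-sum decompositions of $\Z^d$ induce distinct rational decompositions, and the bottom symbol is an isomorphism by Theorem~\ref{TheoremMain2}. Hence injectivity of the top arrow reduces to injectivity of $\iota_{\LL}$, i.e.\ to the statement that if a $\Q$-combination of standard integer-argument polylogarithms vanishes modulo depth $<d$ in $\LL_d(\Tup_d)$, then it already vanishes modulo depth $<d$ inside $\LL_d^{\Z}(\Tup_d)$. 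We would establish this by induction on depth, using Theorem~\ref{TheoremMain2} at each lower depth to produce the required integral witnesses.

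\emph{Surjectivity.} The strategy is to exhibit integral polylogarithmic preimages for a generating set of $\Ker(\partial_d^{\Z})$. By Theorem~\ref{thm:Stn_li_basis}, the rational kernel is spanned by Steinberg polylogarithms $\Lup[v_1,\ldots,v_d]$. For the integral setting, the natural generating set consists of \emph{unimodular} Steinberg polylogarithms, i.e.\ those indexed by tuples $v_i \in \Z^d$ with $\det(v_1,\ldots,v_d) = \pm 1$. Such generators are easy to hit by integer-argument polylogs: since $(e_1,\ldots,e_d)$ yields $\Lup[e_1,\ldots,e_d]$ as the leading term of $\STup(\Li_{1,\ldots,1}(x_1,\ldots,x_d))$, $\GL_d(\Z)$-equivariance sends this to $\Lup[v_1,\ldots,v_d]$ for an arbitrary unimodular tuple. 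Surjectivity would then follow from the integral generation statement: that unimodular Steinberg polylogarithms span $\Ker(\partial_d^{\Z})$.

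\emph{Main obstacle.} The chief difficulty is precisely this integral generation statement. Over $\Q$, it is proven in \S\ref{SectionCoxeterPairs} using Coxeter-pair arguments that freely scale vectors; over $\Z$ we lose this flexibility, and the $\GL_d(\Z)$-orbit structure of unimodular tuples is vastly richer than the $\GL_d(\Q)$-orbit structure. This is exactly the link with the Church-Farb-Putman and Rognes conjectures on the high-dimensional cohomology of $\GL_d(\Z)$, and a complete proof is likely to require (or would imply) substantial progress on those. A plausible route is to develop an integral Koszul duality for the Steinberg module of $\Z^d$ extending \cite{MNP18,MPW23}, perhaps using Voronoi-type cellular decompositions of the symmetric space for $\GL_d(\R)$ to control $\Ker(\partial_d^{\Z})$. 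The low-rank cases $d = 2, 3$ should be amenable to direct verification and would provide useful sanity checks, and might also suggest the general form of the presentation needed for $\St(\Z^d)\otimes\St(\Z^d)$.
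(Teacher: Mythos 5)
The statement you are asked to prove is stated as a \emph{conjecture} in the paper; the authors offer no proof, and in fact explicitly discuss it as an open problem tied to the Rognes Connectivity Conjecture over $\Z$ and, through the work of Miller--Nagpal--Patzt and Church--Farb--Putman, to conjectural vanishing of high-dimensional rational cohomology of $\GL_d(\Z)$. Your proposal correctly identifies the surjectivity direction (the integral generation statement for unimodular Steinberg polylogarithms) as the substantive obstacle and correctly places it in the orbit of these conjectures, which matches the paper's own framing; so on the surjectivity side your diagnosis is sound and you are right to stop short of claiming a proof.

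However, you present the injectivity direction as essentially dispatched by reduction to Theorem~\ref{TheoremMain2}, and this understates a genuine gap. Your reduction is logically correct as far as it goes: $\iota_{\Bup}$ is indeed injective (a saturated direct-sum decomposition of $\Z^d$ determines the corresponding rational decomposition, and $\St(L)\to\St(L_{\Q})$ is an isomorphism for rank-one $L$), and the rational symbol on the bottom row is an isomorphism. What remains is precisely the injectivity of $\iota_{\LL}$, i.e.\ the equality
\[
\Dc_{d-1}\LL_d^{\Z}(\Tup_d) \;=\; \LL_d^{\Z}(\Tup_d)\cap \Dc_{d-1}\LL_d(\Tup_d),
\]
and this is not an afterthought. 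Theorem~\ref{TheoremMain2} produces a witness in $\Dc_{d-1}\LL_d(\Tup_d)$, i.e.\ a $\Q$-linear combination of $\GL_d(\Q)$-translates of lower-depth polylogarithms; there is no mechanism in the rational theorem that forces such a witness to be replaceable by one built only from $\GL_d(\Z)$-translates. Your sentence ``we would establish this by induction on depth, using Theorem~\ref{TheoremMain2} at each lower depth to produce the required integral witnesses'' elides the whole difficulty: Theorem~\ref{TheoremMain2} cannot produce integral witnesses, only rational ones. In effect both injectivity and surjectivity of the integral map encode nontrivial arithmetic integrality statements, and neither follows from the rational Theorem~\ref{TheoremMain2} alone. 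Your honest assessment of the surjectivity side should have been mirrored on the injectivity side.
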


Conjecture  \ref{ConjectureSteinbergZ} is closely related to the Rognes Connectivity Conjecture. 
In \cite{Rog92}, Rognes introduced the so-called common basis complex $\textup{CB}_d(R)$ for a principal ideal domain $R$. This is a simplicial complex whose vertices are the proper nonzero summands of $R^d$ and ${W_0,\dots,W_p}$ forms a simplex if there is a
basis for $R^n$ such that each $W_i$ is the span of a subset of that basis. Rognes conjectured that if $R$ is a Euclidean ring or a local ring, the reduced homology of  $\textup{CB}_d(R)$ is concentrated in degree $2d-3$. Rognes named the conjectural single
non-vanishing reduced homology group \emph{the stable Steinberg module} $\StL_d(R)$.

For fields, the Rognes Connectivity Conjecture was proven in \cite[Theorem 1.2]{MPW23}; it is not hard to show that in this case $\StL_d(F)$ is isomorphic to the quotient of the module $\St_d(F)\otimes \St_d(F)$ by products in the $\VB$-sense. For $R=\Z$, Rognes conjecture is open. Assuming this case is true, Conjecture~\ref{ConjectureSteinbergZ} would imply that the rationalization of the stable Steinberg module is isomorphic to the quotient of $\gr_d^{\Dc}\LL_{d}^{\Z}(\Tup_d)$ by products. In particular, since 
$\gr_d^{\Dc}\LL_{d}^{\Z}(\Tup_d)$ is generated as a $\GL_d(\Z)$-module  by one element $\Li_{1,\dots,1}(x_1,\dots,x_d)$, we would have that $\StL_d(\Z)\otimes_{\Z} \Q$ is a cyclic $\GL_d(\Z)$-module.
Then by considering any nontrivial shuffle relation (see Proposition~\ref{Proposition_double_shuffle}) and taking coinvariants we would get that
\begin{equation}\label{FormulaCoinvariantsStZ}
 \bigl(\StL_d(\Z)\otimes_{\Z} \Q \bigr)_{\GL_d(\Z)}=0\quad  \text{for}\quad d\geq 2.
\end{equation}
The results of Miller--Nagpal--Patzt \cite{MNP18} mean that the Rognes Connectivity Conjecture for $R=\Z$ together with \eqref{FormulaCoinvariantsStZ} would imply the conjecture of Church--Farb--Putman \cite[Conjecture 2]{CFP14}.

\subsection{Polylogarithms as functions and as motives}\label{SecPolylogsAsFunctionsAndMotives}
There are several common perspectives on polylogarithms. The purpose of this section is to discuss some of them, providing intuitions, examples, and a guide to the literature. We discuss the following four perspectives:
\begin{enumerate}
\item Complex-analytic: polylogarithms are multivalued functions of complex arguments. 
\item Hodge-theoretic: polylogarithms are certain framed mixed Hodge-Tate structures of geometric origin. 
\item Formal:  polylogarithms are elements of a Hopf algebra 
$\mathcal{H}^\fup(F)$ introduced in \cite{CMRR24}.
\item Motivic: polylogarithms are certain framed mixed Tate motives. 
\end{enumerate}
Later in the paper we will take the formal perspective on polylogarithms; we provide the necessary technical details in \S \ref{SectionMultiplePolylogs}.

We defined multiple polylogarithms as power series \eqref{FormulaPolylogarithm} converging in a polydisc. These functions also admit an integral presentation. To state it, we recall Chen's definition of an iterated integral \cite{Che77}; we restrict here to the case of $1$-forms. For a smooth complex manifold $M$ consider a collection of $1$-forms $\omega_1,\ldots,\omega_m$ on $M$ and a piecewise smooth path $\gamma\colon [0,1]\lra M.$ Denote by $\gamma^*\omega_i=f_i(t)\dd  t$
the pull-back of the form $\omega_i$ to the segment $[0,1].$ The iterated integral of $\omega_1,\ldots,\omega_n$ along $\gamma$ is defined by 
\[
\int_\gamma\omega_1\circ\dots\circ\omega_n\\
=\int_{0\leq t_1\leq t_2\leq \dots\leq t_n\leq 1}f_1(t_1)\dd t_1 \wedge \dots \wedge f_n(t_n) \dd t_n.
\]

By term-wise integration of the geometric series (see \cite[Theorem 2.1]{Gon01}), one can show that multiple polylogarithms can be presented as iterated integrals:
\begin{equation} \label{FormulaPolylogarithmsIteratedIntegrals}
\begin{split}
	&\Li_{n_1,n_2,\dots, n_k}(a_1,a_2,\dots,a_k)\\
	={}&(-1)^k \int_0^1 \underbrace{\frac{dt}{t-(a_1\dots a_k)^{-1}} \circ \frac{\dd t}{t}\circ\dots \circ\frac{\dd t}{t}}_{n_1}\circ \dots \circ \underbrace{\frac{\dd t}{t-a_k^{-1}} \circ \frac{\dd t}{t}\circ\dots \circ\frac{\dd t}{t}}_{n_k}.
\end{split}
\end{equation}
Formula (\ref{FormulaPolylogarithmsIteratedIntegrals}) extends  the definition of multiple polylogarithms outside of the polydisc by analytic continuation. The resulting function is multivalued, and its monodromy has been analyzed \cite[\S 5]{Woj97}. The key observation is that the monodromy can always be expressed in terms of multiple polulogarithms of lower weight multiplied by a certain power of $2 \pi i$. 

The rich structure of the monodromy of multiple polylogarithms is greatly clarified by the connection with mixed Hodge--Tate structure. We will explain this connection for the example of the dilogarithm, and give references for the general case. A mixed Hodge structure is a triple, consisting of a finite-dimensional $\Q$-vector space $H$, an increasing \emph{weight filtration} $W_\bullet$ on $H$ and a decreasing \emph{Hodge filtration} $F^\bullet$ on the complexification $H_\mathbb{C}$. These filtrations should satisfy the following compatibility properties: $W_{2n}=W_{2n+1}$ and 
\begin{equation}\label{eqnMHSdecomposition}
W_{2n}H_{\mathbb{C}}=W_{2n-2}H_{\mathbb{C}} \oplus \left ( W_{2n}H_{\mathbb{C}} \cap F^nH_{\mathbb{C}} \right)
\end{equation}
for all  $n\in \Z$. Mixed Hodge--Tate structures form a tensor category; the simple objects are \emph{pure Tate structures} $\Q(-n)$, for which $H$ is one-dimensional and the filtrations are characterized by the following properties: 
\[
W_{2n}H=H,\quad W_{2n-1}H=0,\quad F^n H_{\mathbb{C}} =H_{\mathbb{C}},\quad F^{n-1}H_{\mathbb{C}}=0.
\]

We give a construction of the mixed Hodge-Tate structure associated to the dilogarithm. For a point $a\in \mathbb{C}\sm \{0,1\}$ choose  branches of the functions $\Li_2(a), \Li_1(a)$ and $\log(a)$. Consider the matrix 
\[
L(a)
=
\begin{pmatrix}
1 & 0 & 0\\[6pt]
-\Li_1 (a) & 2\pi i & 0\\[8pt]
-\Li_2(a) & (2\pi i)\,\log a & (2\pi i)^2
\end{pmatrix}
\]
and denote its columns by $c_1(a), c_2(a), c_3(a)$. Let $H(a)$ be the $\Q$-vector space spanned by the columns of the matrix.  We define the weight filtration as follows (we specify only the terms $W_n$ for which the corresponding associated graded pieces are nontrivial):
\[
W_0=\langle c_3 (a) \rangle,\quad  W_2=\langle c_2(a), c_3 (a) \rangle,\quad W_4=\langle c_1(a),c_2(a), c_3 (a) \rangle .
\]
Next, we define the Hodge filtration $F^k$ to be the subspace of $H_{\mathbb{C}}(a)$ consisting of vectors for which the last $k$ coordinates vanishing. For example, $F^2 H_{\mathbb{C}}(a)$ is spanned by the vector 
\[
(2\pi i)^2 c_1(a)+ (2\pi i)\Li_1(a) c_2(a) + (\Li_2(a)-\log(a)\Li_1(a))c_3(a). 
\]
The condition \eqref{eqnMHSdecomposition} can be easily checked, so we indeed obtain a mixed Hodge-Tate structure $H$. The key observation is that the vector space $H(a)$, and both filtrations, are independent of the choice of the branches of the functions $\Li_2(a), \Li_1(a)$ and $\log(a)$.

The fact that the category of mixed Hodge-Tate structures is Tannakian allows to view $H(a)$ as a representation of a certain pro-algebraic group. The entries of the matrix can be upgraded to elements of a certain Hopf algebra of ``matrix coefficients'', called the Hopf algebra of framed mixed Hodge--Tate structures (see \cite{BMS87} and \cite{BGSV90}). The element corresponding to the  entry $\Li_2(a)$ is called Hodge dilogarithm $\Li^{\mathcal{H}}_2(a)$ (we omitted the minus sign). One should think of the Hodge dilogarithm as the ``dilogarithm modulo multiples of $(2\pi i)$''.

In a similar way, one can construct mixed Hodge--Tate structures corresponding to multiple polylogarithms and define their Hodge versions. Here are three key observations which make Hodge versions of multiple polylogarithms useful:
\begin{enumerate}
\item These elements do not depend on the choice of branches. Also, many functional equations for multiple polylogarithms look cleaner. For instance, Euler's reflection formula
\[
\Li_2(a)+\Li_2(1-a)=\frac{\pi^2}{2}-\log(a)\log(1-a) \text{ for } a\in (0,1)
\]
becomes
\[
\Li_2^{\mathcal{H}}(a)+\Li_2^{\mathcal{H}}(1-a)=-\log^{\mathcal{H}}(a)\log^{\mathcal{H}}(1-a) \text{ for } a\in \mathbb{C}\sm \{0,1\}.
\]
\item There exists a \emph{real period map} $p\colon \mathcal{H}\to \R$ which associates to multiple polylogarithms their single-valued versions. For instance, the real period of Hodge dilogarithm is given by \emph{Bloch-Wigner dilogarithm} 
\[
p(\Li_2^{\mathcal{H}}(a))=\Im(\Li_2(a))+\arg(1-a)\log|a|.
\]
Remarkably, the Bloch--Wigner dilogarithm is a single-valued continuous function on $\mathbb{C}\cup \{\infty\}$. The  real period map allows one to associate identities between continuous functions to any identities which hold in $\mathcal{H}$. 
\item The coproduct of Hodge multiple polylogarithms was computed by Goncharov and is given by an explicit formula \cite[\S6]{Gon01}. For example, we have
\[
\Delta \Li_2^{\mathcal{H}}(a)= \Li_2^{\mathcal{H}}(a)\otimes 1+  \Li_1^{\mathcal{H}}(a) \otimes \log^{\mathcal{H}}(a)+ 1\otimes  \Li_2^{\mathcal{H}}(a).
\]
The coproduct is not visible on the level of multivalued functions, but is the central tool for studying multiple polylogarithms. 
\end{enumerate}

Finally, we discuss the motivic perspective on polylogarithms. Beilinson and Deligne \cite{BD94} conjectured that for every field $F$ there exists an abelian category of mixed Tate motives. Assuming this category exists, it would be
equivalent to the category of graded comodules over a certain graded connected commutative Hopf algebra $\mathcal{H}^{\M}(F)$, which is called the Hopf algebra of framed mixed Tate motives.  Then, for any  $a_1,\dots,a_k\in F$ and $n_1,\dots,n_k\in \mathbb{N}$, $n=n_1+\dots+n_k$ there would exist elements 
\[
\Li_{n_1,n_2,\dots, n_k}^{\M}(a_1,a_2,\dots,a_k)\in \mathcal{H}_n^{\M}(F)
\]
called \emph{motivic multiple polylogarithms}. Goncharov~\cite[Conjectures 1.9 and 7.4]{Gon01} conjectured that motivic multiple polylogarithms span $\mathcal{H}_n^{\M}(F)$ as a $\Q$-vector space and gave a candidate description of all relations in weight three. For $n\geq 1$, denote by $H^{i}(\mathcal{H}^{\M}(F),\Q)_n$  the $i$-th cohomology group of the $n$-th graded component of the cobar complex of the Hopf algebra $\mathcal{H}^{\M}(F)$, given by
\[
0 \lra \mathcal{H}^{\M}_{n}(F)\lra \bigoplus_{\substack{n_1+n_2=n \\ n_1,n_2\geq 1}}\mathcal{H}^{\M}_{n_1}(F)\otimes \mathcal{H}^{\M}_{n_2}(F) \lra \cdots 
\,, \]
with \( \mathcal{H}^{\M}_n(F) \) placed in cohomological degree 1. The properties of mixed Tate motives would imply that
\begin{equation}\label{FormulaPolylogsKtheory}
H^{i}(\mathcal{H}^{\M}(F),\Q)_n \cong \gr_{\gamma}^{n} K_{2n-i}(F)_{\Q}\,,
\end{equation}
where $\gamma$ denotes the $\gamma$-filtration on the algebraic $K$-groups  and for any abelian group $A$, we denote by $A_\Q$ its rationalization $\Q\otimes_{\Z}A$. 
The existence of the category of mixed Tate motives is known only for some classes of fields, but in particular it is known for number fields \cite{Lev93}, \cite{DG05}. For an embedding of $F$ in $\mathbb{C}$ there is a realization morphism from $\mathcal{H}_n^{\M}(F)$ to the Hopf algebra of framed mixed Hodge--Tate structures. 

Since the motivic perspective on polylogarithms is only conjectural, we cannot use this approach. Instead, we work with the Hopf algebra of formal multiple polylogarithms $\mathcal{H}^\fup(F)$ which was introduced in \cite{CMRR24}. It is expected to be isomorphic to $\mathcal{H}^\mathcal{M}(F)$ when the latter exists. For number fields, there exist a {\it motivic realization}: a homomorphism 
\[
\mathcal{H}^\fup(F)\lra \mathcal{H}^\mathcal{M}(F).
\]
Also, for any embedding $F\hookrightarrow \mathbb{C}$ there exists a {\it Hodge realization}: a Hopf algebra morphism from $\mathcal{H}^\fup(F)$ to the Hopf algebra of framed mixed Hodge--Tate structures.  Note: in the remainder of the paper the superscript ${}^\fup$ will be ommited for notational simplicity, including in \S\ref{SectionMultiplePolylogs} where formal multiple polylogarithms are defined.

\subsection{Acknowledgements}

We would like to thank Benson Farb, Herbert Gangl, Alexander Goncharov, Alexander Kupers, and Ismael Sierra for many helpful discussions and suggestions. We are particularly grateful to Jeremy Miller and Peter Patzt for explaining the proof of Lemma \ref{LemmaFormulaForS}. We would also like to thank the anonymous referee for numerous comments and suggestions that helped to improve the clarity and exposition.

The second author acknowledges funding by the European Union (ERC, FourIntExP, 101078782). The third author is supported in part by the NSF grant DMS-2502729.

\section{Generalities on Hopf algebras and \texorpdfstring{$\VB$}{VB}-modules}
In this section, we review some of the background material used in the rest of the paper. In \S\S \ref{SectionRecapOfHopfAlgebras}-\ref{SectionRecapOfLieCoalgebras} we review commutative Hopf algebras and Lie coalgebras, see, for instance, \cite{MM65,Mic80, Mon93, Car07,LV12}. In \S \ref{SectionMultiplePolylogs} we discuss the Hopf algebra of multiple polylogarithms, following \cite{CMRR24}. Finally, in \S\S \ref{SectionVBmodules}-\ref{SectionVBmodulesKoszulDuality} we discuss $\VB$-modules and Koszul duality in the $\VB$-setting, following \cite[\S 4.1]{MPW23}. 

\subsection{Recap of Hopf algebras}\label{SectionRecapOfHopfAlgebras}
Let $H=\bigoplus_{n\geq 0} H_{n}$ be a graded Hopf algebra over $\Q$ with product~$m$, coproduct $\Delta$, and antipode $S$. We assume that $H$ is connected ($H_0=\Q$) and commutative. Denote by $H_{+}$ the augmentation ideal $\bigoplus_{n\geq 1} H_{n}$.

The reduced coproduct $\Delta'\colon H_{+}\lra H_{+}\otimes H_{+}$ is defined as $\Delta'=\Delta-\Id\otimes 1- 1\otimes \Id.$  The kernel of the reduced coproduct on $H_+$ is denoted $P(H)$ and is called the space of primitive elements in $H$. For $n\geq 0$ we define an iterated coproduct $\Delta^{[n]}\colon H_{+}\lra H_{+}^{\otimes (n+1)}$ as the composition
\[
\Delta^{[n]}=(\Delta'\otimes{}\underbrace{\Id\otimes \dots \otimes\Id}_{n-1})\circ \dots \circ(\Delta'\otimes\Id\otimes\Id )\circ ( \Delta'\otimes\Id ) \circ \Delta'.
\]
We have  $\Delta^{[0]}=\Id,$ $\Delta^{[1]}=\Delta'.$ 

An important example of a graded Hopf algebra is the shuffle algebra $\Sh(V)$ for which $\Sh_n(V)=V^{\otimes n}$ with shuffle product and deconcatenation coproduct. The map 
\[
\Sc\colon H\lra \Sh(H_1)
\]
sending $a\in H_n$ to $\Delta^{[n-1]}(a)\in \Sh_n(H_1)\cong H_1^{\otimes n}$ is a morphism of Hopf algebras, see \cite[Lemma 2.4]{Gon13}; we call it  \emph{the symbol}.

The coradical filtartion $N_kH_+$ on $H_+$ is defined inductively, see \cite[\S 5.2]{Mon93}. We have $N_0H_+=0$ and 
\[
N_k H_{+}=\{a\in H_+ \setsep \Delta'(a)\in (N_{k-1}H_+)  \otimes H_{+}\}
\]
for $k\geq 1$. It is known that $N_i(H_+)= \Ker(\Delta^{[i]})$, so, in particular, $N_1H_{+}=P(H)$. 
We define the coradical filtration on $H$ as a pullback of  the coradical filtration on $H_+$ with respect to the natural projection $H\twoheadrightarrow H_+$. Notice that $N_0H=H_0=\Q$.

The coradical filtration is compatible with the graded Hopf algebra structure. In particular, 
we have $N_iN_j\subseteq N_{i+j}$, $S(N_i)\subseteq N_i$ and $\Delta'(N_k)\subseteq \sum_{k=i+j} N_i\otimes N_j.$ Also, an element $a$ lies in $N_k$ if and only if all its graded components lie in $N_k$.

We will need the following lemma.

\begin{lemma} \label{LemmaHopfAlgebras1} We have
\[
(N_k H_{+}) \cap (H_{+})^2=\sum_{i+j=k} (N_{i} H_{+})\cdot(N_j H_{+}).
\]
\end{lemma}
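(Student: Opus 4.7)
The inclusion $\supseteq$ is immediate from the fact, recorded in the excerpt, that the coradical filtration is multiplicative, $N_iH_+\cdot N_jH_+\subseteq N_{i+j}H_+$; any such product also lies in $(H_+)^2$, so the right-hand side is contained in the left.

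For the reverse inclusion $\subseteq$, I would argue by induction on $k$. The base case $k=1$ is the assertion that every primitive element that is decomposable vanishes, $P(H)\cap(H_+)^2=0$. In the setting of the paper---connected graded commutative Hopf algebras over $\Q$---this is Leray's theorem: such an $H$ is isomorphic as a graded algebra to a polynomial algebra whose generators represent a chosen basis of the indecomposables $Q(H)=H_+/(H_+)^2$, and consequently the composition $P(H)\hookrightarrow H_+\twoheadrightarrow Q(H)$ is injective.

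For the inductive step, I would use two standard ingredients: (i) the coradical filtration is a coalgebra filtration, so $\Delta(N_kH)\subseteq\sum_{i+j=k}N_iH\otimes N_jH$, and in particular $\Delta'(a)\in\sum_{i+j=k,\,i,j\ge 1}N_iH_+\otimes N_jH_+$ whenever $a\in N_kH_+$; and (ii) the reduced-coproduct-of-product identity, valid in any commutative bialgebra,
\[
\Delta'(bc)=b\otimes c+c\otimes b+\Delta(b)\Delta'(c)+\Delta'(b)\Delta(c)-\Delta'(b)\Delta'(c).
\]
Writing $a=\sum_\ell b_\ell c_\ell\in(N_kH_+)\cap(H_+)^2$ with homogeneous $b_\ell,c_\ell\in H_+$ of minimal coradical degrees $i_\ell,j_\ell$, one compares the expression for $\Delta'(a)$ from (ii) with the filtration constraint from (i): any ``excess'' term $b_\ell\otimes c_\ell+c_\ell\otimes b_\ell$ with $i_\ell+j_\ell>k$ must be cancelled by contributions from the mixed pieces $\Delta(b)\Delta'(c)+\Delta'(b)\Delta(c)-\Delta'(b)\Delta'(c)$. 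Extracting this cancellation allows one to replace $a$, modulo $\sum_{i+j=k}N_iH_+\cdot N_jH_+$, by an element still in $(H_+)^2$ but of strictly lower coradical degree; the induction hypothesis then closes the argument.

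The main obstacle is the bookkeeping in this inductive step: the mixed terms redistribute coradical mass between the two tensor factors in a way that must be tracked carefully. A more conceptual alternative, avoiding this combinatorics, is to dualize: pass to the graded Hopf algebra dual $H^*$, which is connected graded cocommutative; by Milnor--Moore in characteristic zero, $H^*\cong U(\mathfrak{g})$ for a graded Lie algebra $\mathfrak{g}$, and the dual statement about the augmentation ideal filtration on $U(\mathfrak{g})$ follows from the PBW theorem. Either route shows that the lemma holds precisely because we are working over $\Q$ with a commutative Hopf algebra.
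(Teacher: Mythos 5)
Your $\supseteq$ direction and the base case $k=1$ (via injectivity of $P(H)\hookrightarrow Q(H)$, which is the paper's Lemma~\ref{LemmaMilnorMoore}) are correct. The problem is that neither of your two routes for $\subseteq$ is actually carried through, and each has a specific obstacle you do not resolve.

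In the inductive route, the step where you ``extract this cancellation'' and replace $a$, modulo $\sum_{i+j=k}N_iH_+\cdot N_jH_+$, by an element of strictly smaller coradical degree is precisely the content of the lemma, and you leave it as a gesture. Concretely: after writing $a=\sum_\ell b_\ell c_\ell$ and expanding $\Delta'(a)$, the mixed terms $\Delta(b)\Delta'(c)+\Delta'(b)\Delta(c)-\Delta'(b)\Delta'(c)$ do not separate cleanly into ``contributions one can subtract off''; they intertwine the factors $b_\ell$ and $c_\ell$ across different $\ell$, and the decomposition $a=\sum_\ell b_\ell c_\ell$ is far from unique. You acknowledge this bookkeeping as the main obstacle, and that obstacle is not overcome. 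In the dualization route, passing to the graded dual $H^*$ and invoking Milnor--Moore requires the graded pieces $H_n$ to be finite-dimensional, which the paper does not assume and which fails in the intended applications (already $\Hc_1(F)\cong F^\times_\Q$ is infinite-dimensional). Even granting local finiteness, the translation between the coradical filtration on $H$ and the augmentation-ideal filtration on $U(\mathfrak g)$, and between $(H_+)^2$ and its dual description, is nontrivial and left implicit.

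The paper avoids all of this with a short direct argument using the Dynkin operator $\Dee = m\circ(S\otimes Y)\circ\Delta$, where $Y$ is the grading operator; the key input is $\Ker(\Dee)=(H_+)^2$. For $a\in(N_kH_+)\cap(H_+)^2$ homogeneous of degree $n\ge 1$, one has $0=\Dee(a)=na+m\circ(S\otimes Y)\circ\Delta'(a)$, and since the coradical filtration is compatible with $m$, $\Delta'$ and $S$, the second summand already lies in $\sum_{i+j=k}(N_iH_+)\cdot(N_jH_+)$; dividing by $n$ (here using $\Q$-coefficients) finishes. This is genuinely different from either of your proposed routes and bypasses both the combinatorial bookkeeping and any finiteness hypothesis. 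If you want to salvage your sketch, the honest thing to do is to prove the inductive reduction step as a standalone claim, but the Dynkin-operator argument is the cleaner path.
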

\begin{proof}

Consider an element $a$ in  $\sum_{i+j=k} (N_{i} H_{+})\cdot(N_j H_{+})$. This element lies in $N_k H_{+}$ because the coradical filtration is compatible with the algebra structure on $H$. Since $N_0 H_{+}=0$, the element $a$  also lies in $(H_{+})^2$. 

Next, consider an element $a$ in $(N_k H_{+}) \cap (H_{+})^2$. We may assume that $a$ is homogeneous of degree $n\geq 1$. Let $Y\colon H\lra H$ be the grading operator sending $a\in H_n$ to $na$. Consider the \emph{Dynkin operator} $\Dee \colon  H\lra  H$ given by the formula $\Dee=m\circ (S\otimes Y) \circ \Delta$. We have $\Ker(\Dee)= (H_{+})^2$, see  \cite[\S 2.2]{CDG21}. Since $a$  lies in  $(H_{+})^2$, we have
\[
0=\Dee(a)=m\circ (S\otimes Y) \circ\Delta(a)=na+m\circ (S\otimes Y) \circ\Delta'(a).
\]
Since the coradical filtration is compatible with the coalgebra structure, we have 
\[
m\circ (S\otimes Y) \circ\Delta'(a)\in  \sum_{i+j=k} (N_{i} H_{+})\cdot(N_j H_{+}).
\]
and so $a$ lies in $\sum_{i+j=k} (N_{i} H_{+})\cdot(N_j H_{+})$.
\end{proof}

We define the \emph{cobar complex} of $H$
\[
    0\lra \Omega^1H \stackrel{\dd}{\lra} \Omega^2H\stackrel{\dd}{\lra} \Omega^3H\stackrel{\dd}{\lra} \cdots
\]
where $\Omega^m H=\bigl(H_+\bigr)^{\otimes m}$ and the differential $\dd\colon \Omega^mH\lra \Omega^{m+1}H$ given by the formula
\[
\dd=\sum_{i=1}^{m}(-1)^{i-1} \underbrace{\Id\otimes \dots \otimes \Id}_{i-1} {} \otimes \Delta' \otimes {} \underbrace{\Id\otimes \dots \otimes \Id}_{m-i}.
\]

\subsection{Recap of Lie coalgebras}\label{SectionRecapOfLieCoalgebras}
Recall the definition of a Lie coalgebra. For a vector space $V$, consider maps $\tau \colon V \otimes V \lra V \otimes V$ sending $a\otimes b$ to $b\otimes a$ and $\eta \colon  V \otimes V \otimes V\lra V \otimes V\otimes V$ sending $a\otimes b \otimes c$ to $b\otimes c \otimes a$. A~Lie coalgebra is a vector space $\Lc$ together with a map $\delta \colon\Lc\lra \Lc\otimes \Lc$ such that $\tau \circ \delta = -\delta$ and  $(1+\eta +\eta^2)\circ (1\otimes \delta)\circ\delta=0$ (coJacobi identity). We denote the induced map $\Lc\lra \Lambda^2\Lc$ by $\delta$ as well. The cobracket can be extended to a graded derivation of the exterior algebra  $\Lambda^\bullet \Lc$. The corresponding complex is called \emph{Chevalley-Eilenberg complex} of $\Lc$:
\[
0\lra \Lc \stackrel{\delta}{\lra} \Lambda^2 \Lc \stackrel{\delta}{\lra} \Lambda^3 \Lc \stackrel{\delta}{\lra} \cdots .
\]
 The cohomology groups $H^i(\Lc,\Q)$ of the Lie coalgebra $\Lc$  coincide with the cohomology of its Chevalley-Eilenberg complex.  In particular,  $H^1(\Lc,\Q)=\Ker(\delta)$. 

Let $H$ be a graded connected commutative Hopf algebra. The vector space $H_+/(H_+)^2\cong {H_+\otimes_{H}} \Q$ is called the space of \emph{indecomposable elements} of $H$ and is denoted $Q(H)$. The space  $Q(H)$ carries a Lie coalgebra structure with the cobracket $\delta$ obtained from the coproduct $\Delta$ by anti-symmetrization. The following lemma is well known, see \cite{MM65}.

\begin{lemma}\label{LemmaMilnorMoore}
The natural map $P(H)\lra Q(H)$ is injective and its image equals to $\Ker(\delta)$.
\end{lemma}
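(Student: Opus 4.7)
The plan is to prove the two assertions separately.

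Injectivity of $P(H) \to Q(H)$ is immediate from Lemma~\ref{LemmaHopfAlgebras1}: specializing to $k = 1$ gives
\[
P(H) \cap (H_+)^2 = N_1 H_+ \cap (H_+)^2 = \sum_{i+j=1}(N_i H_+)(N_j H_+) = 0,
\]
since $N_0 H_+ = 0$. The inclusion of the image inside $\Ker\delta$ is also immediate: by construction, $\delta$ is induced from $(1-\tau)\Delta' \colon H_+ \to H_+\otimes H_+$ followed by the projection $\pi\otimes\pi$ to $Q(H)\otimes Q(H)$, and any primitive element is annihilated by $\Delta'$.

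For the reverse inclusion $\Ker\delta \subseteq$ image of $P(H)$, the crucial input is commutativity of $H$. Expanding $\Delta(bc) = \Delta(b)\Delta(c)$ using commutativity of $H\otimes H$ yields, for all $b, c \in H_+$,
\[
\Delta'(bc) \equiv b\otimes c + c\otimes b \pmod{I}, \qquad I := (H_+)^2\otimes H_+ + H_+\otimes (H_+)^2.
\]
Hence $(\pi\otimes\pi) \Delta'$ restricted to $(H_+)^2$ surjects onto the subspace of symmetric tensors in $Q(H)\otimes Q(H)$. Given a homogeneous $a \in H_+$ representing a class in $\Ker\delta$, the element $(\pi\otimes\pi)\Delta'(a) \in Q(H)\otimes Q(H)$ is symmetric by hypothesis, and hence matches $(\pi\otimes\pi)\Delta'(q)$ for some $q \in (H_+)^2$. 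Replacing $a$ by $a - q$ preserves its class in $Q(H)$ while pushing $\Delta'(a)$ into $I$.

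The main obstacle is to continue this reduction until $\Delta'$ vanishes altogether. After the first step one has $\Delta'(a - q) \in I$, and one needs to construct a further correction in $(H_+)^{\geq 3}$ matching this residue exactly. I would formalise this through an induction on weight combined with an auxiliary decreasing filtration of $H_+\otimes H_+$ by decomposability depth, namely $F_k := \sum_{i+j\geq k+2,\,i,j\geq 1}(H_+)^i\otimes (H_+)^j$, which is eventually zero in any fixed weight. At each level one analyses $\Delta'$ modulo the next step of the filtration, and uses coassociativity of $\Delta$ together with commutativity and the inductive hypothesis on the strictly smaller weight factors appearing in the residue to produce the required higher-decomposability correction. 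Verifying that the needed product exists at each level of the filtration — so that the descent terminates at a genuine primitive — is the technical heart of the argument.
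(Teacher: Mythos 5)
The paper does not prove this lemma; it cites \cite{MM65} (Milnor--Moore) as a well-known result, so there is no in-paper argument to compare to. Judged on its own, your proposal correctly and cleanly dispatches the two easy halves: injectivity of $P(H)\to Q(H)$ does indeed fall out of Lemma~\ref{LemmaHopfAlgebras1} at $k=1$, and the inclusion of the image of $P(H)$ into $\Ker(\delta)$ is trivial. The first correction step for the reverse inclusion is also correct: the computation $\Delta'(bc)\equiv b\otimes c + c\otimes b \pmod I$ and the observation that $(\pi\otimes\pi)\Delta'$ maps $(H_+)^2$ onto the symmetric tensors in $Q(H)\otimes Q(H)$ are sound, and over $\Q$ one can indeed choose $q\in(H_+)^2$ homogeneous of the same weight as $a$ with $(\pi\otimes\pi)\Delta'(a-q)=0$.

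The problem is that the hard direction is then abandoned exactly at the point where the real work begins, and you say so yourself: ``Verifying that the needed product exists at each level of the filtration --- so that the descent terminates at a genuine primitive --- is the technical heart of the argument.'' That sentence is a correct self-assessment, but it means the proof has not been given. Concretely, after replacing $a$ by $a'$ with $\Delta'(a')\in F_1$, you must show that the class of $\Delta'(a')$ in $F_1/F_2$ lies in the image of $\Delta'$ restricted to $(H_+)^{3}$ (whose leading term in $F_1/F_2$ is the degree-3 coproduct $bcd\mapsto b\otimes cd + c\otimes bd + d\otimes bc + bc\otimes d + bd\otimes c + cd\otimes b$), and then iterate; the obstruction to matching is not obviously zero, and showing it vanishes requires exploiting coassociativity of $\Delta'$ (and ultimately the commutativity/char-$0$ structure, e.g.\ via Leray's theorem $H\cong\Sbb(W)$ or the Eulerian idempotents, as in Milnor--Moore's own proof). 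Without carrying out this integrability check at every filtration level, the descent is not known to converge to a primitive, and the lemma is not proved. Since this is a standard classical fact, the paper's route of citing \cite{MM65} is the pragmatic one; your sketch is headed in a workable direction but is incomplete precisely where it needs to be complete.
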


Lie coalgebra $Q(\Sh(V))$
is called a cofree conilpotent Lie coalgebra on $V$ 
and is denoted $\Lie^c(V)$. Recall that the coLie cooperad $Lie^c$ is the graded dual of the Lie operad $Lie$. For $n\geq 0$ we have an isomorphism 
\[
\Lie^c_n(V)\cong Lie^c_{n} \otimes_{\Sfr_{n}} V^{\otimes n}.
\]

The structure maps 
$
\delta^{[n]}\colon \Lc \lra \Lie^c_{n+1}(\Lc)$ are called \emph{iterated cobrackets}. The element $\delta^{[n]}(a)\in \Lie^c_{n+1}(\Lc)$ is the projection of an element
\[
 (\underbrace{1\otimes \dots \otimes 1}_{n-1} {} \otimes \delta)\circ\dots\circ(1\otimes \delta)\circ\delta(a)\in \Lc^{\otimes n+1}
\]
to the  Lie coalgebra of indecomposable elements of the shuffle algebra.

Lie coalgebra $\Lc$ carries a  \emph{coradical filtration} defined by the formula $N_i\Lc=\Ker(\delta^{[i]})$ for $i\geq 0$. By definition, $N_0\Lc=0$ and $N_1\Lc=\Ker(\delta)$. We will need the following lemma.
\begin{lemma} We have the following:
\[
N_2 \Lc=\{a\in \Lc \setsep \delta^{[2]}(a)=0\}=\{a\in \Lc \setsep \delta(a)\in\Lambda^2\Ker(\delta) \}.
\]
\end{lemma}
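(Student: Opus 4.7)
The plan is to establish the two equalities in turn. The first equality, $N_2\Lc = \{a\in\Lc : \delta^{[2]}(a)=0\}$, is immediate from the definition $N_i\Lc=\Ker(\delta^{[i]})$ given just above.

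For the implication $\delta(a)\in\Lambda^2\Ker(\delta) \Rightarrow \delta^{[2]}(a)=0$, I will use a direct computation: writing $\delta(a)=\sum_i u_i\wedge v_i$ with $\delta(u_i)=\delta(v_i)=0$, we have $(1\otimes\delta)\delta(a)=\sum_i\bigl(u_i\otimes\delta(v_i)-v_i\otimes\delta(u_i)\bigr)=0$ already in $\Lc^{\otimes 3}$, so certainly $\delta^{[2]}(a)=0$ after projecting to $\Lie^c_3(\Lc)$.

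For the converse, I first reduce the conclusion $\delta(a)\in\Lambda^2\Ker(\delta)$ to the stronger statement that $w:=(1\otimes\delta)\delta(a)$ vanishes in $\Lc^{\otimes 3}$ itself. This reduction uses a Pfaffian-type normal form: writing $\delta(a)=\sum_{i=1}^r e_{2i-1}\wedge e_{2i}$ with $e_1,\dots,e_{2r}\in\Lc$ linearly independent, the vanishing of $w=\sum_i\bigl(e_{2i-1}\otimes\delta(e_{2i})-e_{2i}\otimes\delta(e_{2i-1})\bigr)$ forces $\delta(e_k)=0$ for all $k$, giving $\delta(a)\in\Lambda^2\Ker(\delta)$. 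The task therefore reduces to upgrading the given hypothesis $\delta^{[2]}(a)=0$, which only says that $w$ lies in the shuffle subspace of $\Lc^{\otimes 3}$, to the honest equality $w=0$.

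The main step, and the chief obstacle, is the following claim: the shuffle subspace of $\Lc^{\otimes 3}$ has trivial intersection with the subspace of elements that are antisymmetric in positions $2,3$ and satisfy $(1+\eta+\eta^2)u=0$. Our $w$ automatically lies in this latter subspace (antisymmetry by the antisymmetry of $\delta$, coJacobi by the Lie-coalgebra axiom for $\Lc$), so the claim forces $w=0$. To verify the claim, I plan to decompose $\Lc^{\otimes 3}$ into its $\Sfr_3$-isotypic components: the antisymmetry and coJacobi constraints annihilate the symmetric-cube and exterior-cube components entirely, and within the isotypic component for the $2$-dimensional standard representation of $\Sfr_3$ they cut out a subspace of the same dimension as $\Lie^c_3(\Lc)$ which projects isomorphically to $\Lie^c_3(\Lc)=\Lc^{\otimes 3}/(\text{shuffle subspace})$. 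Equivalently, one can exhibit a Dynkin-type idempotent on $\Lc^{\otimes 3}$ whose image is this complementary copy of $\Lie^c_3(\Lc)$ and which annihilates every shuffle; either formulation completes the proof.
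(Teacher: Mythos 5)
Your proposal is correct and follows essentially the same route as the paper: the converse direction hinges on the same fact that the image of $(1\otimes\delta)\circ\delta$ lies in $W=\Ker(1+\eta+\eta^2)\cap\Ker(1\otimes 1\otimes 1+1\otimes\tau)$, and that $W$ maps isomorphically onto $\Lie^c_3(\Lc)$, so $\delta^{[2]}(a)=0$ already forces $(1\otimes\delta)\delta(a)=0$. The paper simply asserts that isomorphism (which your $\Sfr_3$-isotypic decomposition would verify) and then finishes via $(\Lc\otimes\Ker\delta)\cap(\Ker\delta\otimes\Lc)\cap\Lambda^2\Lc=\Lambda^2\Ker\delta$ rather than your Pfaffian normal form, a purely cosmetic difference.
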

\begin{proof} If $\delta(a)$ lies in $\Lambda^2\Ker(\delta)\subseteq \Lc\otimes \Lc$ (where we embed $x\wedge y$ as $x\otimes y - y\otimes x$), then  $(1\otimes \delta)\circ \delta(a)$ vanishes in $\Lc^{\otimes 3}$, and so $\delta^{[2]}(a)=0$. 

Next, assume that $\delta^{[2]}(a)=0$. By the definition of a Lie coalgebra, the image of the map $\delta^{[2]}=(1\otimes \delta)\circ \delta$ lies in the subspace 
\[
W=\Ker(1+\eta+\eta^2) \cap \Ker(1\otimes 1\otimes 1+1\otimes \tau)\subseteq \Lc^{\otimes 3}.
\]
The composition 
\[
W \hookrightarrow \Lc^{\otimes 3}  \twoheadrightarrow \Lie_3(\Lc)
\]
is an isomorphism. So, the vanishing of $\delta^{[2]}(a)$ in $\Lie_3(\Lc)$ implies the vanishing of  $(1\otimes \delta)\delta (a)$ in $\Lc^{\otimes 3}$. It follows that $\delta(a)\in \Lc\otimes \Ker(\delta)$. Since $\tau(\delta(a))=-\delta(a)$, we also have $\delta(a)\in \Ker(\delta)\otimes \Lc$. Since 
\[
\bigl( \Lc\otimes \Ker(\delta)\bigr) \cap \bigl(\Ker(\delta) \otimes \Lc\bigr)\cap \Lambda^2\Lc=\Lambda^2 \Ker(\delta),
\]
we see that  $\delta(a)$ lies in $\Lambda^2 \Ker(\delta)$.
\end{proof}

\subsection{\VB-modules} 
\label{SectionVBmodules}
\begin{definition} Let $F$ be a field. Let $\VB_F$ be the groupoid whose objects are finite-dimensional vector spaces over $F$ and morphisms are isomorphisms. A $\VB_F$-module is a covariant functor from $\VB_F$ to $\Q$-vector spaces.   
\end{definition}

For simplicity we will omit $F$ from the notation, and simply write $\VB$-modules.  $\VB$-modules form an abelian category. For a $\VB$-module $M$ we put $M_d=M(F^d);$ notice that $M_d$ is a $\GL_d(F)$-module.  The symmetric monoidal structure $\otimes$ on the category $\VB$ 
induces a symmetric monoidal structure on the category of $\VB$-modules known as \emph{Day convolution} given by the formula
\begin{equation}\label{FormulaVBproduct}
(M\otimes_{\VB} N)(V)=\bigoplus_{V_1\oplus V_2=V} M(V_1)\otimes N(V_2). 
\end{equation}
Here the sum is taken over all subspaces $V_1,V_2\subseteq V$ with internal direct sum $V_1\oplus V_2=V$. Equivalently,
\[
(M\otimes_{\VB} N)_d=\bigoplus_{d=d_1+d_2}\Ind_{\GL_{d_1}\times \GL_{d_2}}^{\GL_{d}}(M_{d_1}\otimes N_{d_2}).
\]
We call  this symmetric monoidal structure the $\VB$-tensor product.  The symmetric group $\Sfr_m$ acts on $\VB$-tensor powers $M^{\otimes m}$, so we  define symmetric and wedge powers:
\begin{align*}
&\Sbb^m M (V) = M^{\otimes m}(V)\otimes_{\Sfr_m} \textup{triv}_m,\\ 
&\Lambda^m M (V) = M^{\otimes m}(V)\otimes_{\Sfr_m} \textup{sgn}_m,
\end{align*}
where  $\textup{triv}_m$ is the trivial representation of $\Sfr_m$ and $\textup{sgn}_m$ is the alternating representation of $\Sfr_m.$

\begin{example} The unit object in the category of $\VB$-modules is the functor
\[
\Q_{F}(V)=
\begin{cases}
\Q & \text{if $V=0$},\\
0 & \text{if $V\neq 0$}.
\end{cases}
\]
\end{example}

An augmented $\VB$-monoid is a monoid object $A$ in the category of $\VB$-modules together with a surjective map of monoids $A\lra\Q_{F}$ whose kernel $A_+$ is concentrated in positive degrees; we call this map \emph{augmentation}. If $M$ is a right $A$-module and $N$ is a left $A$-module, we define
$M\otimes_A N$ as the coequalizer of two natural maps $M\otimes A \otimes N \lra M\otimes N.$ Let $\Tor_\bullet^A(M,N)$ be the associated derived functor. It can be computed using two-sided bar construction: $\Tor_\bullet^A(M,N)$ is the homology of the complex
\[
\Bup_\bullet(M,A,N)= M\otimes_{A} A_{+}^{\otimes m}\otimes_{A}N.
\]
We will be interested only in the special case $M=N=\Q_{F}$ and denote $\Bup_m(\Q_{F},A,\Q_{F})$ simply by  $\Bup_m (A).$  In this case, we have 
\[
\Bup_m (A)(V)=A_+^{\otimes m}(V)=\bigoplus_{V=V_1\oplus\dots\oplus V_m}A(V_1)\otimes\dots\otimes A(V_m),
\]
where the sum goes over all decompositions of $V$ into a direct sum of its nonzero subspaces. The space $\Bup_m (A)$ is spanned by elements 
\[
[a_1 | \cdots |a_m], \: a_i \in A(V_i).
\]
The homological degree of the above element is defined to be $m$ and the internal degree is $\sum_{i=1}^m \dim(V_i)$. 

The differential is 
    \begin{equation} \label{BarDifferential}
    \partial_m[a_1|\cdots|a_m]=\sum_{j=1}^{m-1}(-1)^{j-1}[a_1|\cdots|a_{j-1}|\,a_{j}a_{j+1}\,|a_{j+2}|\cdots|a_m].
    \end{equation}
    
\begin{remark}\label{RemarkVBZ} In \S\ref{SectionSteinbergCorrelators},  we will need a related notion of a $\VB_{\Z}$-module. A $\VB_{\Z}$-module $M$ is a functor from a groupoid of free abelian groups of finite rank (lattices) to $\Q$-vector spaces. The Day convolution is defined similarly to (\ref{FormulaVBproduct}): 
\[
(M\otimes_{\VB_{\Z}} N)(L)=\bigoplus_{L_1\oplus L_2=L} M(L_1)\otimes N(L_2). 
\]
\end{remark}

\subsection{Koszul duality for \VB-modules}\label{SectionVBmodulesKoszulDuality}
\begin{definition} An augmented $\VB$-monoid $A$ is Koszul if $\Tor_i^{A}(\Q_{F},\Q_{F})$ is supported only in internal degree $i$ for each $i\geq 0.$ Equivalently, $A$ is Koszul if for each $V$ the homology of the complex $\Bup_\bullet A(V)$ is supported in homological degree $\dim(V).$
\end{definition}

Assume that an augmented $\VB$-monoid $A$ is Koszul. Its Koszul dual $\VB$-module $A^{\Hc}$ is defined as follows:
\[
A^{\Hc}(V)=\Tor_{\dim(V)}^A(\Q_{F},\Q_{F}).
\]
More explicitly, for a vector space $V$ of dimension $d$ we have the following description of $A^{\Hc}(V):$
\[
\Ker\Bigg(\bigoplus_{\substack{V=L_1\oplus \dots \oplus L_{d} \\  \dim(L_i)=1}} \!\!\! A(L_1)\otimes \dots \otimes A(L_d)\stackrel{\partial}{\lra}  \!\!\! \bigoplus_{\substack{V=L_1\oplus\dots\oplus W \oplus \dots \oplus L_{d-2} \\ \dim(W)=2, \dim(L_i)=1}} \!\!\! A(L_1)\otimes \dots \otimes A(W)\otimes \dots \otimes A(L_{d-2})\Bigg).
\]

Assume that the  $\VB$-monoid $A$ is graded-commutative in the sense that for $a_1\in A(V_1)$ and $a_2\in A(V_2)$ we have that $a_1 a_2 =(-1)^{\dim(V_1)\dim(V_2)} a_2 a_1.$ Then the $\VB$-module $A^{\Hc}$ is a commutative Hopf algebra in $\VB$-sense. The  product 
 \[
 A^{\Hc}(V_1)\otimes A^{\Hc}(V_2)\stackrel{m}{\lra}  A^{\Hc}(V_1\oplus V_2)
 \]
 is induced by the the shuffle product
 \begin{equation} \label{BarShuffleProduct}
[a_1|\cdots|a_{d_1}]\cdot [a_{d_1+1}|\cdots|a_{d_1+d_2}]=\sum_{ \sigma \in \Sigma_{d_1,d_2}}[a_{\sigma (1)}|\cdots|a_{\sigma (d_1+d_2)}]
 \end{equation}
 where  $\Sigma_{d_1,d_2}\subseteq \Sfr_{d_1+d_2}$ is the set of $(d_1,d_2)$-shuffles. The coproduct 
 \[
 A^{\Hc}(V)\stackrel{\Delta}{\lra} \bigoplus_{V=V_1\oplus V_2}  A^{\Hc}(V_1)\otimes A^{\Hc}(V_2)
 \]
 is induced by the deconcatenation coproduct 
 \begin{equation} \label{BarDeconcatenationCoproduct}
\Delta [a_1|\cdots|a_{d}]=\sum_{d=d_1+d_2}[a_1|\cdots|a_{d_1}]\otimes[a_{d_1+1}|\cdots|a_{d_1+d_2}].
 \end{equation}

Next, define the cobar complex for $A^{\Hc}:$
\[
\Omega^mA^{\Hc}=\bigl(A^{\Hc}_+\bigr)^{\otimes m}
\]
with the differential $\dd\colon \Omega^mA^{\Hc}\lra \Omega^{m+1}A^{\Hc}$ given by the formula
\[
\dd=\sum_{i=1}^{m}(-1)^{i-1} \underbrace{\Id\otimes \dots \otimes \Id}_{i-1} \otimes \Delta' \otimes \underbrace{\Id\otimes \dots \otimes \Id}_{m-i}.
\]

\begin{proposition}[Bar-Cobar duality] \label{PropositionBarCobar} Assume that $A$ is a graded-commutative Koszul $\VB$-monoid.  Then 
\[
H^i\bigl ( (\Omega^{\bullet}A^{\Hc})(V)\bigr)=
\begin{cases}
0 & \text{ \ for \ } i\neq \dim(V),\\
A(V) & \text{ \ for \ } i=\dim(V).
\end{cases}
\]
\end{proposition}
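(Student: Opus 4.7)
The proposition is the classical bar-cobar duality for Koszul algebras, adapted to the $\VB$-setting. My plan is to compare two convergent spectral sequences arising from a single bicomplex that computes both sides of the identity.

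Concretely, for $V$ of dimension $d$, I would form the bicomplex
\[
D^{p,q}(V) = \bigoplus_{\substack{V = V_1 \oplus \dots \oplus V_p \\ V_i \neq 0}} \ \bigoplus_{\substack{q_1+\dots+q_p=q \\ q_i \geq 1}} \Bup_{q_1}A(V_1) \otimes \dots \otimes \Bup_{q_p}A(V_p),
\]
whose total complex is the cobar complex of the DG $\VB$-coalgebra $\Bup_\bullet A$ with the deconcatenation coproduct \eqref{BarDeconcatenationCoproduct}. The horizontal differential applies deconcatenation to one factor, and the vertical differential is the bar differential \eqref{BarDifferential} within one factor. In weight $d$ the bicomplex is bounded, $1 \le p$ and $p \le q \le d$, so both spectral sequences converge.

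Taking vertical (bar) homology first, Koszulity forces $H_{q_i}(\Bup_\bullet A(V_i))$ to vanish unless $q_i = \dim V_i$, where it equals $A^\Hc(V_i)$. Hence the $E_1$-page is supported at $q = d$ and equals $\Omega^p A^\Hc(V)$; the induced horizontal differential matches the cobar differential of $A^\Hc$, since the coproduct on $A^\Hc$ is by definition induced by~\eqref{BarDeconcatenationCoproduct}. The spectral sequence collapses to $E_2^{p,d} = H^p(\Omega^\bullet A^\Hc(V))$.

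For the other spectral sequence (horizontal first), I would invoke the classical fact that the counit $\Omega\Bup A \to A$ of the bar-cobar adjunction is a quasi-isomorphism for any augmented associative algebra, adapting the standard simplicial contracting homotopy to the $\VB$-setting. This shows that the total cohomology equals $A(V)$ concentrated at the corner $(p,q) = (d,d)$, and comparing the two abutments yields the statement. The main obstacle is verifying that the contracting homotopy is compatible with Day convolution and the graded-commutativity of $A$; the boundedness of the bicomplex in each weight keeps everything finite, but the bookkeeping needs care. An alternative would be to lift the inclusion $A^\Hc \hookrightarrow \Bup_\bullet A$ (from Koszulity) to a DG coalgebra quasi-isomorphism, apply the cobar functor to get $\Omega A^\Hc \to \Omega \Bup_\bullet A$, show this is a quasi-isomorphism via a weight-filtered spectral sequence, then compose with the counit.
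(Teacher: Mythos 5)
Your proposal is correct and is precisely the standard bar-cobar argument that the paper is gesturing at when it cites Loday--Vallette \S3. Two small remarks. First, the piece $A(V)$ of the total cohomology sits at the corner $(p,q)=(1,1)$ of your bicomplex, not $(d,d)$; these are on the same anti-diagonal (constant $p-q$), which is why the conclusion is unaffected --- but it is the \emph{first} spectral sequence, not the second, that places the abutment at $(d,d)$, so the phrasing should be adjusted. Second, your worry about compatibility of the contracting homotopy with graded-commutativity of $A$ is unfounded: the quasi-isomorphism $\Omega\Bup A\to A$ is a statement about augmented associative monoids and uses nothing about commutativity (the graded-commutativity of $A$ enters only later, to make $A^{\Hc}$ a commutative Hopf $\VB$-algebra). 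The only genuinely $\VB$-specific checks are that Day convolution is a symmetric monoidal product and that the internal direct-sum decompositions behave well under it, both of which are already established in the paper. Your alternative route (lift $A^{\Hc}\hookrightarrow \Bup_\bullet A$ to a DG-coalgebra quasi-isomorphism, apply $\Omega$, compare with the counit) also works and is arguably cleaner: note that $A^{\Hc}\hookrightarrow \Bup_\bullet A$ is automatically a morphism of DG coalgebras, since deconcatenation commutes with the bar differential, so no lifting is required.
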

\begin{proof}
The proof is similar to the corresponding result in the non-$\VB$ setting, see \cite[\S 3]{LV12}.
\end{proof}

We define the $\VB$-Lie coalgebra of indecomposables  $A^{\Lc}:$
\[
A^{\Lc}(V)=\Coker\Bigg(\bigoplus_{\substack{V=V_1\oplus V_2\\ \dim(V_i)\geq 1}}  A^{\Hc}(V_1)\otimes A^{\Hc}(V_2)\stackrel{m}{\lra}  A^{\Hc}(V)\Bigg).
\]
We have the Chevalley-Eilenberg complex $\Lambda^\bullet A^{\Lc}$:
\[
0\lra A^{\Lc}\stackrel{\delta}{\lra} \Lambda^2 A^{\Lc} \stackrel{\delta}{\lra} \Lambda^3 A^{\Lc}\stackrel{\delta}{\lra} \cdots.
\]

\begin{proposition}[Bar-Cobar duality, Lie coalgebra version]\label{PropositionBarCobarCoLie} Assume that $A$ is a graded-commutative Koszul $\VB$-monoid.  Then 
\[
H^i\bigl ((\Lambda^\bullet A^{\Lc})(V)\bigr)=
\begin{cases}
0 & \text{ \ for \ } i\neq \dim(V),\\
A(V) & \text{ \ for \ } i=\dim(V).
\end{cases}
\]
\end{proposition}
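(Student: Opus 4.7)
The plan is to reduce to Proposition~\ref{PropositionBarCobar} by exhibiting a natural quasi-isomorphism between the Chevalley--Eilenberg complex $(\Lambda^\bullet A^\Lc,\delta)$ and the cobar complex $(\Omega^\bullet A^\Hc,d)$ via antisymmetrization. First I would construct comparison maps in both directions. The projection $A^\Hc_+ \twoheadrightarrow A^\Lc$ onto indecomposables induces, by applying it componentwise and then wedging, a chain map
\[
\pi_m\colon \Omega^m A^\Hc \lra \Lambda^m A^\Lc,\qquad
a_1\otimes\dots\otimes a_m \longmapsto \overline a_1 \wedge \dots \wedge \overline a_m,
\]
while antisymmetrized lifts define a chain map in the opposite direction:
\[
\iota_m\colon \Lambda^m A^\Lc \lra \Omega^m A^\Hc,\quad a_1\wedge\dots\wedge a_m \longmapsto \frac{1}{m!}\sum_{\sigma\in\Sfr_m}(-1)^\sigma\, \tilde a_{\sigma(1)}\otimes\dots\otimes \tilde a_{\sigma(m)},
\]
for any choice of lifts $\tilde a_i\in A^\Hc_+$. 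Well-definedness of $\iota$ modulo the choice of lifts, and the fact that both $\pi$ and $\iota$ commute with the differentials, rest entirely on the defining property of $\delta$ as the antisymmetrization of the reduced coproduct $\Delta'$ on $A^\Hc$ modulo decomposables. A direct count yields $\pi\circ\iota = \Id$.

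Consequently $\Lambda^\bullet A^\Lc$ is a retract of $\Omega^\bullet A^\Hc$, so it suffices to show that $\iota$ is a quasi-isomorphism, or equivalently that the complementary subcomplex $K^\bullet := \ker \pi$ is acyclic. Granting this, Proposition~\ref{PropositionBarCobar} immediately gives
\[
H^i(\Lambda^\bullet A^\Lc)(V) \cong H^i(\Omega^\bullet A^\Hc)(V) = \begin{cases} A(V) & i = \dim V,\\ 0 & i \neq \dim V.\end{cases}
\]
To identify $K^\bullet$ concretely, I would use that the shuffle product \eqref{BarShuffleProduct} makes $\Omega^\bullet A^\Hc$ a graded-commutative DGA. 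In characteristic zero, the Eulerian idempotent decomposition (Barr--Loday, cf.~\cite[\S4]{LV12}) splits each $\Omega^m A^\Hc$ as $\bigoplus_{k=1}^m e^{(k)}\Omega^m A^\Hc$, where $e^{(m)}\Omega^m A^\Hc = \iota(\Lambda^m A^\Lc)$ is the top-weight antisymmetric summand and the pieces with $k<m$ are spanned by shuffle products of antisymmetric tensors with strictly more than one factor. Thus $K^\bullet$ is precisely the subcomplex of shuffle-decomposable elements.

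The main obstacle is establishing acyclicity of $K^\bullet$. Here I would argue by induction on $\dim V$, exploiting the $\VB$-module decomposition $\Omega^m A^\Hc(V) = \bigoplus_{V=V_1\oplus\dots\oplus V_m} A^\Hc(V_1)\otimes\dots\otimes A^\Hc(V_m)$: a shuffle-decomposable element at weight $k<m$ factors through a decomposition $V=W_1\oplus\dots\oplus W_k$ into strictly smaller summands $W_j\subsetneq V$, and the inductive hypothesis together with Proposition~\ref{PropositionBarCobar} applied to each $W_j$ concentrates the cohomology of each tensor factor in top internal degree $\dim W_j$. A K\"unneth-type comparison then identifies the cohomology of the shuffle-decomposable complex with the image of the multiplication $A(W_1)\otimes\dots\otimes A(W_k)\to A(V)$ induced by the monoid structure on~$A$, which cancels precisely the decomposable contribution already present inside $H^{\dim V}(\Omega^\bullet A^\Hc)(V)=A(V)$. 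This gives the desired acyclicity of $K^\bullet$, establishes $\iota$ as a quasi-isomorphism, and completes the proof.
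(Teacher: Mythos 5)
The paper's own ``proof'' is simply the remark that the argument is the same as in the non-$\VB$ setting, so you are being asked to supply what that argument actually is. Your strategy -- compare the Chevalley--Eilenberg complex with the cobar complex via a splitting -- is the right idea to try, but the execution via Eulerian idempotents has a genuine gap, and I do not think it can be fixed as written.

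The central problem: the Eulerian idempotents $e^{(k)}\in\Q[\Sfr_m]$ commute with the \emph{bar} differential of a \emph{commutative} algebra (Barr, Gerstenhaber--Schack), and dually they commute with the \emph{cobar} differential of a \emph{cocommutative} coalgebra. Here you are applying them to $\Omega^\bullet A^\Hc$, the cobar complex of the Hopf algebra $A^\Hc$, whose relevant structure is the \emph{coproduct} -- and this coproduct is deconcatenation~\eqref{BarDeconcatenationCoproduct}, which is not cocommutative (it is the product, the shuffle~\eqref{BarShuffleProduct}, that is commutative). So the Eulerian decomposition of each $\Omega^m A^\Hc=(A^\Hc_+)^{\otimes m}$ is only a decomposition of graded vector spaces, not of complexes, and the claimed splitting $\Omega^\bullet=\iota(\Lambda^\bullet A^\Lc)\oplus K^\bullet$ as subcomplexes does not hold.

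Several subsidiary claims fail along with this. First, $\iota$ is not well-defined independently of the lifts: changing $\tilde a_i$ by an element of $(A^\Hc_+)^2$ changes the resulting tensor in $\Omega^m A^\Hc$, and $(A^\Hc_+)^2$ does not vanish there. Nor is $\iota$ a chain map even after fixing lifts, since $d(\iota(a_1\wedge\dots\wedge a_m))$ involves $\Delta'(\tilde a_i)$, which is not skew-symmetric modulo anything you control. Second, $\Ker(\pi)$ is \emph{not} the ideal of shuffle-decomposables: $\Ker(\pi_1)=(A^\Hc_+)^2$ already consists of elements that are not shuffles of anything in lower cobar degree, while shuffle products can survive under $\pi$ (it is an algebra map to $\Lambda^\bullet A^\Lc$ in an appropriate signed sense). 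Your K\"unneth-type acyclicity argument is therefore aimed at the wrong subcomplex, and even for the correct $\Ker(\pi)$ the sum over decompositions $V=V_1\oplus V_2$ is not direct, so the induction does not close.

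What \emph{does} hold is that $\pi\colon\Omega^\bullet A^\Hc\to\Lambda^\bullet A^\Lc$ is a surjective chain map (this is exactly the definition of the Chevalley--Eilenberg differential as the antisymmetrized reduced coproduct modulo decomposables), so there is a short exact sequence of complexes with $\Ker(\pi)$ in degree $\ge 1$, and the proposition is equivalent to $\pi$ being a quasi-isomorphism. Proving \emph{that}, however, is the actual content, and the standard route is a filtration argument: filter $\Omega^\bullet A^\Hc$ so that the associated graded is the cobar complex of the free graded-commutative conilpotent coalgebra on $A^\Lc$ (e.g.\ via the coradical filtration, the coalgebra analogue of the PBW filtration on $U(\mathfrak g)$), identify the $E_1$-page with $\Lambda^\bullet A^\Lc$, and check convergence. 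That is the ``similar to the non-$\VB$ setting'' proof the paper is gesturing at, and it transfers to the $\VB$ category because the $\VB$-tensor product~\eqref{FormulaVBproduct} is exact and symmetric monoidal. You should redo the proof along these lines rather than via Eulerian idempotents.
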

\begin{proof}
The proof is similar to the corresponding result in the non-$\VB$ setting.
\end{proof}

Observe that the functor  sending $V$ to $\Bup_{d}(A) (V)$ for $d=\dim(V)$ is a commutative cofree VB-Hopf algebra with shuffle product. The Koszulity of $A$ implies that there is an injective  morphism  
\[
A^{\Hc}(V) \lra \Bup_{d}(A) (V),
\]
compatible with $\VB$-monoid structures. After passing to indecomposables with respect to the shuffle product, we obtain an injective map
\begin{equation} \label{VBLiesymbolmap}
A^{\Lc}(V) \lra \Bup_d(A) (V)\otimes_{\Sfr_d}Lie^c_d.
\end{equation}

\begin{definition}\label{DefinitionTensorS} Let $A$ be a $\VB$-module. Define a new $\VB$-module $A\otimes \Sbb:$
\[
(A\otimes \Sbb)(V)=A(V)\otimes  \Sbb^{\bullet}_{\Q}(V),
\]
where $\Sbb^{\bullet}_{\Q}(V)=\Sbb^{\bullet}(V\otimes_{\Z}\Q)$ is the symmetric algebra of the $\Q$-vector space $V\otimes_{\Z} \Q$.
\end{definition}
It is easy to see that if $A$ is an augmented $\VB$-monoid, then $A\otimes {\Sbb}$ is also an augmented $\VB$-monoid with the product
\[
\bigl(A(V_1)\otimes \Sbb^{\bullet}_{\Q}(V_1)\bigr)\otimes \bigl(A(V_2)\otimes \Sbb^{\bullet}_{\Q}(V_2)\bigr) \to \bigl(A(V_1)\otimes A(V_2)\bigr)\otimes \bigl(\Sbb^{\bullet}_{\Q}(V_1)\otimes \Sbb^{\bullet}_{\Q}(V_2)\bigr) \to A(V_1\oplus V_2)\otimes \Sbb^{\bullet}_{\Q}(V_1\oplus V_2).
\]
Here the map $\Sbb^{\bullet}_{\Q}(V_1)\otimes \Sbb^{\bullet}_{\Q}(V_2)\to \Sbb^{\bullet}_{\Q}(V_1\oplus V_2)$ is obtained by an embedding of $\Sbb^{\bullet}_{\Q}(V_1)$ and $\Sbb^{\bullet}_{\Q}(V_2)$ into $\Sbb^{\bullet}_{\Q}(V_1\oplus V_2)$ followed by the multiplication in the symmetric algebra.
\begin{lemma}\label{LemmaTensorS} Assume that $A$ is a graded-commutative Koszul $\VB_{F}$-monoid. Then $A\otimes \Sbb$ is also Koszul and we have
$(A\otimes \Sbb)^{\Hc}=A^{\Hc}\otimes \Sbb$ and $(A\otimes \Sbb)^{\Lc}=A^{\Lc}\otimes \Sbb.$
\end{lemma}
\begin{proof}
Notice that for every decomposition $V=V_1\oplus\dots\oplus V_k$ we have a canonical isomorphism
\begin{equation}\label{EqDecomposition}
\Sbb^{\bullet}_{\Q}(V)\cong \Sbb^{\bullet}_{\Q}(V_1)\otimes\dots\otimes\Sbb^{\bullet}_{\Q}(V_k).
\end{equation}
From here the statement follows.
\end{proof}
We will only use this for $F=\Q$, in which case $\Sbb^{\bullet}_{\Q}(V)$ is just $\Sbb^{\bullet}(V)$, the symmetric algebra of $V$.

\section{The Steinberg module and its Koszul dual}

\subsection{Definition and basic properties of the Steinberg module}
\label{SectionSteinbergDefinition}

 Let $V$ be a vector space over a field $F$ of dimension $d$. The Tits building $\Tc_{V}$ of the vector space $V$ is a $(d-2)$-dimensional simplicial complex whose $p$-simplices are flags of proper nonzero subspaces of $V$ of length $p+1$. Solomon--Tits theorem \cite{Sol69} implies that the Tits building $\Tc_{V}$ is homotopy equivalent to a wedge of $(d-2)$-dimensional spheres. The Steinberg module is its top-dimensional reduced homology group:
\[
\St(V)=\widetilde{H}_{d-2}(\Tc_{V},\Q).
\]
By convention, the reduced homology of an empty set is the base field, so $\St(V)\cong \Q$ if $\dim(V)=1$, and we separately define $\St(V):= \Q$ for $V=0$. Notice that our definition is slightly different from the usual one, because we work with rational coefficients instead of integer coefficients. Below we review various well-known facts about the Steinberg module, which follow from the proof of the Solomon--Tits theorem from~\cite{Sol69}.

As a $\Q$-vector space, the Steinberg module $\St(V)$ is generated by \emph{apartments}, defined as follows. Observe that the space of chains $C_{d-2}(\Tc_{V})$ is a free $\Q$-vector space on the set of complete flags of nonzero proper subspaces in $V.$  Consider a basis $v_1,\dots,v_d$ of $V$. 
A chain 
    \begin{equation}\label{FormulaFromSteinbergToChains}
    [v_1,\dots,v_d]=\sum_{\sigma\in\Sfr_d}(-1)^{\sigma}\bigl(0 \subsetneq\langle v_{\sigma(1)}\rangle \subsetneq \langle v_{\sigma(1)},v_{\sigma(2)}\rangle\subsetneq \dots \subsetneq  \langle v_{\sigma(1)},v_{\sigma(2)},\dots,v_{\sigma(d-1)}\rangle\subsetneq V \bigr)
    \end{equation}
is closed and defines an element  $[v_1,\dots,v_d]\in \widetilde{H}_{d-2}(\Tc_V,\Q)$ which is called an apartment.  Apartments $[v_1,\dots,v_d]$ generate the Steinberg module $\St(V)$ and satisfy the following properties:
\begin{enumerate}
\item\label{strel1} $[v_{\sigma(1)},v_{\sigma(2)}, \dots,v_{\sigma(d)}]=(-1)^{\sigma}[v_1,v_2, \dots,v_d]$ for $\sigma \in \Sfr_d,$
\item \label{strel2} $[\lambda v_1,v_2, \dots,v_d]=[v_1,v_2, \dots,v_d]$  for $\lambda \in F^{\times},$
\item \label{strel3} $\sum_{i=0}^{d} (-1)^i[v_0,\dots,\widehat{v_i},\dots, v_{d}]=0$  for any vectors $v_0,\dots,v_{d}\in V.$
\end{enumerate}
Moreover, any linear relation between  apartments follows from relations \ref{strel1}--\ref{strel3}, see  \cite[\S 3]{LS76}. For linearly dependent vectors $v_1,\dots,v_d$ we put $[v_1,\dots,v_d]=0$; it is not hard to see that relations \ref{strel1}--\ref{strel3} hold even if we allow such ``degenerate'' apartments.

Note that by \ref{strel2} the element $[v_1,\dots,v_d]$ only depends on the images of $v_i$ in the projective space $\PP(V)$, so for $P_i=\langle v_i\rangle$ we will sometimes use the notation
\[[P_1,\dots,P_d] \coloneqq  [v_1,\dots,v_d]\]
and picture an apartment as a simplex with vertices $P_1,\dots,P_d\in \PP(V)$.

We will need the following lemma.

\begin{lemma} \label{LemmaSteinbergBasis}
Let $0=\Fc_0\subsetneq\Fc_1\subsetneq\Fc_2\subsetneq\dots\subsetneq \Fc_d=V$ be a complete flag in $V$. Then the set of apartments $[P_1,\dots,P_d]$ with $\langle P_1,\dots,P_i\rangle = \Fc_i$, $i=1,\dots,d$, is a basis of $\St(V)$ as a $\Q$-vector space. Moreover, for any $[w_1,\dots,w_d]\in\St(V)$ we have 
    \begin{equation} \label{EquationSteinbergFlag}
    [w_1,\dots,w_d] = \sum_{\tau\in\Sfr_d}(-1)^{\tau} 
    [\Fc_1 \cap \Gc_{d}^{\tau},\Fc_2\cap \Gc_{d-1}^{\tau},\dots,\Fc_{d}\cap \Gc_{1}^{\tau}],
    \end{equation}
where $\Gc_{j}^{\tau} = \langle w_{\tau(d-j+1)},\dots,w_{\tau(d)}\rangle$, and we set $[\Fc_1\cap \Gc_{d}^{\tau},\Fc_2\cap \Gc_{d-1}^{\tau},\dots,\Fc_{d}\cap \Gc_{1}^{\tau}]=0$ if the flags $\Fc_{\bullet}$ and $\Gc_{\bullet}^{\tau}$ are not in general position.
\end{lemma}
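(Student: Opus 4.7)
My plan is to prove the two halves of the lemma in order: first the explicit expansion formula (\ref{EquationSteinbergFlag}), from which the spanning half of the basis claim follows immediately, and then linear independence of the adapted apartments.

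For the formula, I would argue by induction on $d$. The base case $d=1$ is trivial, while $d=2$ follows in one step by applying the boundary relation \ref{strel3} to the triple $(u, w_1, w_2)$ with $u$ any nonzero vector in $\Fc_1$. For the inductive step, apply relation \ref{strel3} to the $(d+1)$-tuple $(u, w_1, \dots, w_d)$ to obtain
\[
[w_1, \dots, w_d] \;=\; \sum_{i=1}^{d} (-1)^{i+1} [u, w_1, \dots, \widehat{w_i}, \dots, w_d].
\]
Each right-hand term has first argument $u \in \Fc_1$. Passing to the quotient $V/\Fc_1$, which carries the induced complete flag $\Fc_\bullet/\Fc_1$, converts such an apartment to an apartment in $V/\Fc_1$; by induction this is a signed sum of apartments adapted to $\Fc_\bullet/\Fc_1$. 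Lifting each adapted apartment in $V/\Fc_1$ back to $V$ by choosing, for each $j$, the unique line in $\Fc_j$ that projects to the prescribed line modulo $\Fc_1$, and recombining the pair (dropped index $i$, permutation of $\{1, \dots, d\}\setminus\{i\}$) into a single permutation of $\{1, \dots, d\}$, produces exactly the right-hand side of (\ref{EquationSteinbergFlag}). Along the way I would verify that when $\Fc_\bullet$ and $\Gc_\bullet^\tau$ fail to be in general position, the corresponding intersection $\Fc_i \cap \Gc_{d-i+1}^\tau$ collapses, yielding a degenerate apartment that vanishes by convention; this matches the inductive output since the induction produces a zero lift precisely in these cases.

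For linear independence, I would invoke the Solomon-Tits retraction from \cite{Sol69}: given the chamber of $\Tc_V$ corresponding to $\Fc_\bullet$, there is a simplicial deformation retract of $\Tc_V$ onto a subcomplex whose top reduced homology is free with basis the adapted apartments. This is the same apparatus used to prove that $\Tc_V$ is a bouquet of $(d-2)$-spheres. As a backup, one can define explicit dual cochains by selecting, for each adapted apartment $A$, the specific maximal chain from the identity-permutation term of the expansion (\ref{FormulaFromSteinbergToChains}), and verifying that the resulting functional is well-defined modulo the relations \ref{strel1}--\ref{strel3} and distinguishes $A$ from other adapted apartments.

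The main obstacle is the sign bookkeeping and handling of lifts in the inductive step for the formula: matching the permutation indexing of the output of (\ref{EquationSteinbergFlag}) with the pair (dropped index, permutation of remainder) produced by the induction is delicate, and needs to be organized so that a single induction step covers both the non-degenerate and degenerate cases uniformly. Linear independence is relatively routine once the Solomon-Tits retract is set up, but requires care to ensure the retraction restricts properly to the subcomplex spanned by the adapted apartments.
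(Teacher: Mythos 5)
Your approach is essentially the paper's: induct on $d$ by applying relation \ref{strel3} to $(u, w_1, \dots, w_d)$ with $u$ spanning $\Fc_1$, expand each resulting term via the inductive hypothesis in a smaller-dimensional Steinberg module, and invoke the Solomon-Tits theorem for the basis assertion. The one concrete difference is where the induction lives: you pass to the quotient $V/\Fc_1$ with the flag $\Fc_\bullet/\Fc_1$ and then lift back, whereas the paper stays inside the hyperplane $W_i=\langle w_1,\dots,\widehat{w_i},\dots,w_d\rangle$ equipped with the intersection flag $\Fc_j\cap W_i$, $j\ge 2$. That choice is not merely cosmetic: it avoids the lift altogether, and your lifting rule as written -- ``the unique line in $\Fc_j$ that projects to the prescribed line modulo $\Fc_1$'' -- is not well-defined, since the preimage of a line of $\Fc_j/\Fc_1$ inside $\Fc_j$ is two-dimensional. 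What you actually want is the line $\Fc_j\cap W_i$, which is the unique lift lying \emph{in $W_i$}; uniqueness only appears once the complement $W_i$ is fixed, so the complement must be carried explicitly through the argument. Once this is made precise, your grouping of the expansion by the dropped index $i$ matches the paper's decomposition of $\Sfr_d$ into the cosets $\Sfr_{d,i}\cdot(1\;2\;\cdots\;i)^{-1}$, the degenerate terms are handled the same way (both sides vanish when $\Fc_1\subseteq W_i$), and the two proofs coincide.
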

\begin{proof}
The first part of the statement is well-known and follows from the proof of the Solomon--Tits theorem. It remains to prove (\ref{EquationSteinbergFlag}), which we will do by induction on $d$. For $d=1$ the claim is trivial. Next, let $P_i=\langle w_i\rangle$, then from~\ref{strel3} it follows that
    \[[w_1,\dots,w_d] = \sum_{i=1}^{d}(-1)^{i-1}[\Fc_1,P_1,\dots,\widehat{P_i},\dots,P_d].\]
Denote $W_i=\langle P_1,\dots,\widehat{P_i},\dots,P_d\rangle$, and let $\Sfr_{d,i}$ be the stabilizer of $i$ in $\Sfr_d$. If $\Fc_1\not\subseteq W_i$, then $\Fc_{j}\cap W_i$, $j\geq2$ forms a complete flag in $W_i$, so by inductive assumption in $\St(W_i)$ we compute
    \[[P_1,\dots,\widehat{P_{i}},\dots,P_d] = \sum_{\tau'\in\Sfr_{d,i}}(-1)^{\tau'}
    [\Fc_2 \cap \Gc_{d-1}^{\tau},\Fc_3\cap \Gc_{d-2}^{\tau},\dots,\Fc_{d}\cap \Gc_{1}^{\tau'}],\]
where $\tau=\tau'(1\;2\;\cdots\;i)^{-1}$, and $(1\;2\;\cdots\;i)$ denotes the cyclic permutation of $1,\dots,i$. This implies
    \[(-1)^{i-1}[\Fc_1,P_1,\dots,\widehat{P_{i}},\dots,P_d] = \sum_{\tau'\in\Sfr_{d,i}}(-1)^{\tau}
    [\Fc_1, \Fc_2 \cap \Gc_{d-1}^{\tau},\Fc_3\cap \Gc_{d-2}^{\tau},\dots,\Fc_{d}\cap \Gc_{1}^{\tau'}],\]
which is now true also if $\Fc_1\subseteq W_i$, since both sides vanish in this case.
Note that $\Sfr_d$ is the disjoint union of $\Sfr_{d,i}\cdot (1\;2\;\cdots\;i)^{-1}$, $i=1,\dots,d$, since $\Sfr_{d,i}\cdot (1\;2\;\cdots\;i)^{-1}$ is the set of all $\tau\in\Sfr_d$ with $\tau(1)=i$. Therefore, summing up the above identity for $i=1,\dots,d$ gives~\eqref{EquationSteinbergFlag}.
\end{proof}

For any decomposition $V=V_1\oplus V_2$ we have a map
\[
m\colon \St(V_1)\otimes \St(V_2)\lra \St(V_1\oplus V_2)
\]
sending $[v_1,\dots,v_{d_1}]\otimes[v_{d_1+1},\dots,v_{d_1+d_2}]$ to $[v_1,\dots,v_{d_1+d_2}]$; this map will play a key role in \S \ref{SectionKoszulitySteinberg}. We will also denote by $m$ the multiplication map from $\St(V_1)\otimes\dots\otimes\St(V_k)$ to $\St(V_1\oplus\dots\oplus V_k)$.

We will need the following well-known result.

\begin{proposition}\label{PropositionSteinbergSubspace} Consider a vector space $V$ and a subspace $W\subseteq V$. The map 
\begin{equation} \label{FormulaInPropositionSteibergFlags}
 \bigoplus_{V=W\oplus W'} \St(W)\otimes \St(W') \stackrel{\cong}{\lra} \St(V)
\end{equation}
sending $a_1\otimes a_2\in  \St(W)\otimes \St(W')$ to $m(a_1, a_2)\in\St(V)$ is an isomorphism of $\Stab(W)$-modules, where $\Stab(W)$ is the stabilizer of $W$ inside $\GL(V)$.
\end{proposition}
\begin{proof}
Assume that $\dim(W)=k$. Choose a complete flag $0=\Fc_0\subsetneq \Fc_{1}\subsetneq\dots\subsetneq\Fc_{d-1}\subsetneq \Fc_{d}=V$ 
with $\Fc_k=W$. By Lemma~\ref{LemmaSteinbergBasis}, apartments $[P_1,\dots,P_d]$ such that $\langle P_1,\dots,P_i\rangle=\Fc_i$ form a basis of $\St(V).$  The flag $\Fc$ induces complete flags on $W$ and on $W'$, so, by  Lemma~\ref{LemmaSteinbergBasis}, elements $[P_1,\dots,P_k]\otimes [P_{k+1},\dots,P_d]$ with  $\langle P_1,\dots,P_i\rangle=\Fc_i$ for $i\in \{1,\dots,d\}$ and $\langle P_{k+1},\dots,P_d\rangle=W'$ form a basis of $\St(W)\otimes \St(W')$. So, the map \eqref{FormulaInPropositionSteibergFlags} sends a basis of  $\bigoplus_{V=W\oplus W'} \St(W)\otimes \St(W')$ to a basis of $\St(V)$ and thus is an isomorphism.
\end{proof}
By induction, we obtain the following corollary (which is a special case of Reeder's lemma, see~\cite[Proposition 1.1]{Ree91}):

\begin{corollary} \label{CorollarySteinbergFlag} Consider a vector space $V$ and a partial flag $0=\Fc_{0}\subsetneq \Fc_{1}\subsetneq \dots \subsetneq \Fc_{k-1} \subsetneq \Fc_{k}=V$. Then the natural map
\[
\bigoplus_{\substack{V=V_1\oplus\dots\oplus V_k:\\V_1\oplus \dots\oplus V_i=\Fc_{i},\,i=1,\dots,k}} \!\!\!\! \St(V_1)\otimes \dots\otimes \St(V_k) \stackrel{\cong}{\lra} \St(V) 
\]
is an isomorphism of $\Stab(\Fc)$-modules, where $\Stab(\Fc)$ is the stabilizer of the partial flag $\Fc$ inside $\GL(V)$.
\end{corollary}

\subsection{Koszulity of the Steinberg module} \label{SectionKoszulitySteinberg}

The Steinberg module $V\mapsto \St(V)$ is a $\VB$-module and it has a natural augmentation ${\St\lra \Q_F}$ which is an isomorphism in degree~$0$ and the zero map otherwise. Moreover, it is a $\VB$-monoid with a graded-commutative multiplication
\[
m\colon \St(V_1)\otimes \St(V_2)\lra \St(V_1\oplus V_2)
\]
sending $[v_1,\dots,v_{d_1}]\otimes[v_{d_1+1},\dots,v_{d_1+d_2}]$ to $[v_1,\dots,v_{d_1+d_2}],$ which we discussed in \S \ref{SectionSteinbergDefinition}. We will sometimes denote the product $m([v_1,\dots,v_{d_1}], [v_{d_1+1},\dots,v_{d_1+d_2}])$ by $[v_1,\dots,v_{d_1}]\cdot [v_{d_1+1},\dots,v_{d_1+d_2}]$

\begin{theorem}[\cite{MNP18,MPW23}] \label{TheoremSteinbergKoszulity} The Steinberg module is Koszul as an augmented $\VB$-monoid. Moreover, for a vector space $V$ we have the following isomorphism of $\GL(V)$-modules:
\[
\St^{\Hc}(V)\cong \St(V)\otimes \St(V).
\]
\end{theorem}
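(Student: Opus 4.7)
The plan is to identify the bar complex $\Bup_\bullet\St(V)$ with a shift of the augmented chain complex of the Tits building $\Tc_V$ tensored with $\St(V)$, and then apply the Solomon-Tits theorem.

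First, given any decomposition $V = V_1\oplus\dots\oplus V_k$ with all $V_i \neq 0$, I associate the flag $\Fc_i = V_1\oplus\dots\oplus V_i$. Partitioning the summands of $\Bup_k\St(V)$ according to their associated flag and invoking Corollary \ref{CorollarySteinbergFlag} for each fixed flag, I obtain a $\GL(V)$-equivariant isomorphism
\[
\Bup_k\St(V) \;\cong\; \widetilde{C}_{k-2}(\Tc_V;\Q)\otimes \St(V),
\]
where I use the augmented chain convention: a flag with $k-1$ nontrivial intermediate subspaces corresponds to a $(k-2)$-simplex of $\Tc_V$, and $k=1$ corresponds to $\widetilde{C}_{-1}(\Tc_V)=\Q$.

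Next, I would verify that under this identification the bar differential corresponds to the simplicial boundary tensored with $\Id_{\St(V)}$. Merging the $j$-th and $(j+1)$-th summands of a decomposition deletes the intermediate subspace $\Fc_j$ from the associated flag, and the concatenation isomorphism of Corollary \ref{CorollarySteinbergFlag} shows that this merging preserves the underlying apartment $[v_1,\dots,v_d]\in\St(V)$. The sign $(-1)^{j-1}$ in \eqref{BarDifferential} matches the sign of the face map deleting the $j$-th vertex of a simplex, so the vector-space isomorphism above extends to an isomorphism of chain complexes.

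Finally, the Solomon-Tits theorem (as recalled in \S\ref{SectionSteinbergDefinition}) gives $\widetilde{H}_i(\Tc_V;\Q)=0$ for $i\ne d-2$ and $\widetilde{H}_{d-2}(\Tc_V;\Q)=\St(V)$ by definition. Combining these,
\[
H_k\bigl(\Bup_\bullet\St(V)\bigr) \;\cong\; \widetilde{H}_{k-2}(\Tc_V;\Q)\otimes \St(V),
\]
which vanishes for $k\neq d$ and equals $\St(V)\otimes\St(V)$ for $k=d$; this simultaneously yields Koszulity of $\St$ and the identification $\St^{\Hc}(V)\cong\St(V)\otimes\St(V)$ as $\GL(V)$-modules. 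The main obstacle is the bookkeeping in the middle step: one must carefully track signs and the flag-versus-decomposition identifications to confirm that the bar differential really is the simplicial boundary tensored with the identity. Once that is in place, everything else follows formally from Solomon-Tits.
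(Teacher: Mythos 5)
Your proposal is correct and follows essentially the same route as the paper: the paper also identifies $\Bup_\bullet\St(V)$ with the (augmented) chain complex $C_{\bullet-2}(\Tc_V)\otimes\St(V)$ via the map sending $a_1\otimes\dots\otimes a_k\in\St(V_1)\otimes\dots\otimes\St(V_k)$ to the flag $(0\subsetneq V_1\subsetneq\dots\subsetneq V)$ tensored with $m(a_1,\dots,a_k)$, verifies this is an isomorphism of complexes using Corollary~\ref{CorollarySteinbergFlag}, and then invokes Solomon--Tits. The only cosmetic difference is that you phrase the chain-group identification as a partition by associated flag rather than directly writing the map $\beta_k$; the bookkeeping step you flag as the ``main obstacle'' is exactly the paper's observation that~\eqref{BarDifferential} matches the simplicial boundary because $m$ is associative.
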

We will denote $\StH(V)=\St(V)\otimes \St(V)$ and $\StL(V)=\St^{\Lc}(V)$, the corresponding VB-Lie coalgebra of indecomposables. We give a short algebraic proof of Theorem~\ref{TheoremSteinbergKoszulity} that will also allow us to identify $\StH(V)$ as a subspace of $\Bup_d\St(V)$ and also to give explicit formulas for the product and coproduct in $\StH(V)$. 

\begin{proof}
The main idea is that the complex $\Bup_{\bullet}\St(V)$ is isomorphic to the complex $C_{\bullet-2}(\Tc_V)\otimes\St(V)$, where $C_{\bullet}(\Tc_V)$ is augmented with $C_{-1}(\Tc_V)\coloneqq \Q$. After that, Theorem \ref{TheoremSteinbergKoszulity} follows immediately from the Solomon--Tits theorem.

Consider a map $\beta_k\colon\Bup_k\St(V)\rightarrow C_{k-2}(\Tc_V)\otimes\St(V)$ sending $a_1\otimes \dots \otimes a_k\in \St(V_1)\otimes \dots \otimes \St(V_k)$ to 
\[(0\subsetneq V_1\subsetneq V_1\oplus V_2\subsetneq \dots \subsetneq V_1\oplus\dots\oplus V_{k}=V)\otimes m(a_1,\dots, a_k),
\]
for any decomposition $V=V_1\oplus \dots \oplus V_k$. By~\eqref{BarDifferential} the maps $\beta_k$ commute with the differentials, so it suffices to show that each $\beta_k$ is an isomorphism. Since $C_{k-2}(\Tc_V)$ is a free vector space on partial flags, it is enough to show that for any partial flag $0=\Fc_0\subsetneq \Fc_{1}\subsetneq\dots\subsetneq\Fc_{k-1}\subsetneq \Fc_{k}=V$, the multiplication $m$ induces an isomorphism
\[
\bigoplus_{\substack{V=V_1\oplus\dots\oplus V_k: \\V_1\oplus \dots\oplus V_i=\Fc_{i},\,i=1\dots,k}}\St(V_1)\otimes \dots\otimes \St(V_k) \stackrel{\cong}{\lra} \St(V).
\]
This follows from Corollary \ref{CorollarySteinbergFlag}. 
\end{proof}

Let $s\colon \St(V)\otimes \St(V) \to \StH(V) \subseteq \Bup_d\St(V)$ be the isomorphism constructed in the proof of Theorem~\ref{TheoremSteinbergKoszulity}. We have the following explicit formula for $s$. 

\begin{lemma} \label{LemmaFormulaForS} Let $v_1,\dots,v_d$ and $w_1,\dots,w_d$ be a pair of bases of $V.$ 
For any $\sigma,\tau\in \Sfr_d$ and $0\leq i,j\leq d$ define the subspaces
\[
\Fc^\sigma_i =\bigl\langle v_{\sigma(1)},  \dots, v_{\sigma(i)}\bigr\rangle , \quad \Gc^\tau_j =\bigl\langle 
w_{\tau(d-j+1)},  \dots, w_{\tau(d)}\bigr\rangle.
\]
Then we have
\begin{equation}\label{FormulaSteinbergSteinberg}
s([v_1,\dots,v_d]\otimes[w_1,\dots,w_d])=\sum_{\sigma,\tau \in \Sfr_d}(-1)^{\sigma}(-1)^{\tau}[\,\Fc_1^{\sigma}{\cap}\Gc_{d}^{\tau}\,|\,\Fc_2^{\sigma}{\cap}\Gc_{d-1}^{\tau}\,|\cdots|\,\Fc_d^{\sigma}{\cap} \Gc_{1}^{\tau}\,]\in\Bup_d\St(V),
\end{equation}
where we set $[\,\Fc_1^{\sigma}{\cap} \Gc_{d}^{\tau}\,|\,\Fc_2^{\sigma}{\cap}\Gc_{d-1}^{\tau}\,|\cdots|\,\Fc_d^{\sigma}{\cap} \Gc_{1}^{\tau}\,]=0$ if the flags $\Fc_\bullet^{\sigma}$ and $\Gc_\bullet^{\tau}$ are not in general position. 
\end{lemma}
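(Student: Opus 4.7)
The plan is to unravel the definition of $s$ from the proof of Theorem~\ref{TheoremSteinbergKoszulity}. There $s$ is realized as the inverse of the isomorphism $\beta_d$ restricted to top-dimensional cycles, so using the embedding $\St(V)\hookrightarrow C_{d-2}(\Tc_V)$ given by~\eqref{FormulaFromSteinbergToChains}, the map $s$ is the composition
\[
\St(V)\otimes\St(V)\hookrightarrow C_{d-2}(\Tc_V)\otimes\St(V)\xrightarrow{\;\beta_d^{-1}\;}\Bup_d\St(V).
\]
Applied to $[v_1,\dots,v_d]\otimes[w_1,\dots,w_d]$, this gives
\[
s\bigl([v_1,\dots,v_d]\otimes[w_1,\dots,w_d]\bigr)=\sum_{\sigma\in\Sfr_d}(-1)^{\sigma}\,\beta_d^{-1}\bigl(\Fc_\bullet^{\sigma}\otimes[w_1,\dots,w_d]\bigr),
\]
where $\Fc_\bullet^{\sigma}=(0\subsetneq\Fc_1^{\sigma}\subsetneq\dots\subsetneq\Fc_{d-1}^{\sigma}\subsetneq V)$ is the complete flag induced by the ordering $v_{\sigma(1)},\dots,v_{\sigma(d)}$.

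The next step is to compute $\beta_d^{-1}(\Fc_\bullet^{\sigma}\otimes[w_1,\dots,w_d])$. By the construction of $\beta_d$ together with Corollary~\ref{CorollarySteinbergFlag}, for any complete flag $\Fc_\bullet$ and any $b\in\St(V)$ the element $\beta_d^{-1}(\Fc_\bullet\otimes b)$ is the unique preimage of $b$ under the multiplication isomorphism
\[
\bigoplus_{\substack{V=V_1\oplus\dots\oplus V_d\\ V_1\oplus\dots\oplus V_i=\Fc_i}}\St(V_1)\otimes\dots\otimes\St(V_d)\xrightarrow{\;m\;}\St(V).
\]
Hence I must expand $[w_1,\dots,w_d]$ in the basis of $\St(V)$ adapted to $\Fc_\bullet^{\sigma}$, and this is exactly what formula~\eqref{EquationSteinbergFlag} of Lemma~\ref{LemmaSteinbergBasis} provides. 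When $\Fc_\bullet^{\sigma}$ and $\Gc_\bullet^{\tau}$ are in general position, the lines $L_i\coloneqq \Fc_i^{\sigma}\cap\Gc_{d-i+1}^{\tau}$ satisfy $L_1\oplus\dots\oplus L_i=\Fc_i^{\sigma}$ by dimension count, so the apartment $[\Fc_1^{\sigma}\cap\Gc_d^{\tau},\dots,\Fc_d^{\sigma}\cap\Gc_1^{\tau}]$ sits in a distinct direct summand of $\Bup_d\St(V)$ and its unique preimage under $m$ is the bar element $[\,\Fc_1^{\sigma}\cap\Gc_d^{\tau}\,|\cdots|\,\Fc_d^{\sigma}\cap\Gc_1^{\tau}\,]$. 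Substituting this into the previous display yields the formula~\eqref{FormulaSteinbergSteinberg}.

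The only thing to check carefully is the bookkeeping of signs and degenerations. The two signs combine as advertised: the sign $(-1)^{\sigma}$ comes from the expansion~\eqref{FormulaFromSteinbergToChains} of $[v_1,\dots,v_d]$ as a chain in the Tits building, while the sign $(-1)^{\tau}$ is the one appearing in~\eqref{EquationSteinbergFlag}. For the degenerate terms, both sides vanish by the same convention built into Lemma~\ref{LemmaSteinbergBasis} and into the statement of the present lemma, so no extra argument is needed. I do not anticipate any genuine obstacle: the content of the lemma is simply the combination of~\eqref{FormulaFromSteinbergToChains} with Lemma~\ref{LemmaSteinbergBasis} through the identification $\beta_d$.
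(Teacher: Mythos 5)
Your proof is correct and follows exactly the same approach as the paper's own (terse, one-sentence) proof, which says ``This follows by combining the definition of $\beta_d$ with equations~\eqref{FormulaFromSteinbergToChains} and~\eqref{EquationSteinbergFlag}.'' Your write-up fills in the details faithfully: expanding $[v_1,\dots,v_d]$ as a chain via~\eqref{FormulaFromSteinbergToChains}, applying $\beta_d^{-1}$ flag-by-flag, and then invoking~\eqref{EquationSteinbergFlag} to expand $[w_1,\dots,w_d]$ in the basis adapted to each flag $\Fc_\bullet^{\sigma}$, with the dimension-count observation $L_1\oplus\dots\oplus L_i=\Fc_i^{\sigma}$ pinning down the summand of $\Bup_d\St(V)$ and hence the bar element.
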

\begin{proof}
This follows by combining the definition of $\beta_d$ with equations~\eqref{FormulaFromSteinbergToChains} and~\eqref{EquationSteinbergFlag}.
\end{proof}

We will often denote by $[v_1,\dots,v_d]\otimes [w_1,\dots,w_d]$ both the element of $\St(V)\otimes \St(V)$ and its image under~$s$ in $\Bup_d\St(V)$.

\subsection{The product and coproduct in \texorpdfstring{$\StH$}{St\textasciicircum{}2}}
\label{SectionProductCoproductInSteinberg}

\begin{proposition} \label{LemmaFormulaProduct} Consider a decomposition $V=V_1\oplus V_2$. The product of elements $a_1\otimes b_1 \in \StH(V_1)$ and  $a_2\otimes b_2 \in \StH(V_2)$ is the element
 \[
 m(a_1\otimes b_1, a_2\otimes b_2)=m(a_1, a_2)\otimes m(b_1, b_2)\in \StH(V).
 \]
\end{proposition}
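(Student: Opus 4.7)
The plan is to expand both sides of the claimed identity using Lemma~\ref{LemmaFormulaForS} and match them term-by-term. Choose bases so that $a_1 = [v'_1,\ldots,v'_{d_1}]$, $b_1 = [w'_1,\ldots,w'_{d_1}]$ with $v'_i, w'_i \in V_1$, and similarly $a_2, b_2$ in terms of bases $v''_i, w''_i$ of $V_2$. Concatenating gives bases $v_\bullet = (v'_1,\ldots,v'_{d_1},v''_1,\ldots,v''_{d_2})$ and $w_\bullet$ of $V = V_1 \oplus V_2$ with $m(a_1,a_2) = [v_1,\ldots,v_d]$ and $m(b_1,b_2) = [w_1,\ldots,w_d]$. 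By the definition of the product on $\St^\Hc$ via~\eqref{BarShuffleProduct}, the claim reduces to the identity
\[
s\bigl(m(a_1,a_2) \otimes m(b_1,b_2)\bigr) = s(a_1 \otimes b_1) \cdot s(a_2 \otimes b_2)
\]
inside $\Bup_d\St(V)$, where the right-hand side is the bar-level shuffle product.

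The main combinatorial device is the decomposition of $\Sfr_d$ by \emph{shape}: each $\sigma \in \Sfr_d$ admits a unique factorization $\sigma = (\sigma_1 \times \sigma_2) \circ \psi_\omega^{-1}$, where $\omega = \sigma^{-1}(\{1,\ldots,d_1\})$ records the positions of $V_1$-vectors in the sequence $v_{\sigma(1)}, \ldots, v_{\sigma(d)}$, the permutations $\sigma_i \in \Sfr_{d_i}$ act on the two blocks separately, and $\psi_\omega \in \Sigma_{d_1,d_2}$ is the unique $(d_1,d_2)$-shuffle with $\psi_\omega(\{1,\ldots,d_1\}) = \omega$; moreover $(-1)^\sigma = (-1)^{\sigma_1+\sigma_2+\psi_\omega}$. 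Under this factorization the flag $\Fc_j^\sigma$ splits cleanly as $\Fc_{k_1(j)}^{\sigma_1}(V_1) \oplus \Fc_{k_2(j)}^{\sigma_2}(V_2)$ with $k_1(j) = |\omega \cap [1,j]|$ and $k_2(j) = j-k_1(j)$, and $\Gc_j^\tau$ decomposes analogously with counts $l_1(j), l_2(j)$ determined by the shape $\omega'$ of $\tau$.

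A dimension count---forcing $\dim(\Fc_j^\sigma \cap \Gc_{d-j}^\tau) = 0$ for every $j$, as required by general position of the flags in $V$---translates through the $V_1 \oplus V_2$ splitting into $k_1(j) + l_1(d-j) = d_1$, equivalently $|\omega \cap [1,j]| = |\omega' \cap [1,j]|$, for each $j$. Hence only pairs $(\sigma,\tau)$ with matching shape $\omega = \omega'$ contribute. Conversely, when $\omega = \omega'$ and the induced subflags in $V_1$ and $V_2$ are each in general position, the full flags in $V$ are in general position as well (because $\max(0,a)+\max(0,b)=\max(0,a+b)$ whenever $a,b$ have the same sign, which holds here since $k_i(j)+l_i(d-j)-d_i$ are both nonnegative for $i+j\geq d$ and both nonpositive otherwise). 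Moreover, the intersection $\Fc_j^\sigma \cap \Gc_{d-j+1}^\tau$ lies entirely in $V_1$ for $j \in \omega$, where it coincides with the $k_1(j)$-th bar entry of the $(\sigma_1,\tau_1)$-term of $s(a_1 \otimes b_1)$, and entirely in $V_2$ for $j \in \omega^c$, matching the corresponding entry of $s(a_2 \otimes b_2)$.

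Collecting everything and using $(-1)^{2\psi_\omega} = 1$, the expansion of the left-hand side reorganizes as
\[
\sum_\omega \sum_{\sigma_1,\tau_1,\sigma_2,\tau_2} (-1)^{\sigma_1+\tau_1+\sigma_2+\tau_2}\, \bigl[\psi_\omega\text{-shuffle of the } (\sigma_1,\tau_1) \text{ and } (\sigma_2,\tau_2) \text{ bar terms}\bigr],
\]
and under the bijection between shapes $\omega$ and $(d_1,d_2)$-shuffles $\psi_\omega$ this is exactly the shuffle product $s(a_1 \otimes b_1) \cdot s(a_2 \otimes b_2)$ prescribed by~\eqref{BarShuffleProduct}. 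The main obstacle is the shape-matching step: the general-position condition for flags built by interleaving $V_1$- and $V_2$-vectors must be unpacked via the splitting of each intersection into its $V_1$- and $V_2$-components, and it is this step that pins down the shuffle structure on the nose.
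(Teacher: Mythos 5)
Your proof is correct in substance and takes a genuinely different route from the paper's. The paper's argument is structural: it defines a shuffle map $\mu$ on the chain complexes of the Tits buildings, checks that $\mu$ descends to the Steinberg product $m$ after skew-symmetrization, and then transports the bar shuffle product across the chain-level isomorphism $\beta_\bullet\colon\Bup_\bullet\St(V)\cong C_{\bullet-2}(\Tc_V)\otimes\St(V)$ established in the proof of Theorem~\ref{TheoremSteinbergKoszulity}, under which it becomes $\mu\otimes m$; most of the combinatorics is absorbed into structures already in place. You instead expand both $s\bigl(m(a_1,a_2)\otimes m(b_1,b_2)\bigr)$ and $s(a_1\otimes b_1)\cdot s(a_2\otimes b_2)$ directly via Lemma~\ref{LemmaFormulaForS}, organize the sum over $(\sigma,\tau)$ by shapes $\omega,\omega'$, use a dimension count to kill every term with $\omega\ne\omega'$, and identify the surviving terms with the shuffle product~\eqref{BarShuffleProduct}. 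This is fully self-contained and makes the cancellation mechanism explicit, at the cost of more bookkeeping. The step you single out as the obstacle --- that general position of the interleaved flags in $V=V_1\oplus V_2$ both forces the shapes of $\sigma$ and $\tau$ to agree and reduces to general position within each $V_i$ --- is correct and is indeed where the content lies in either presentation.

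One small defect in the write-up. You describe $\psi_\omega$ as ``the unique $(d_1,d_2)$-shuffle with $\psi_\omega(\{1,\dots,d_1\})=\omega$,'' but with the paper's convention for $\Sigma_{d_1,d_2}$ (permutations $\rho$ with $\rho^{-1}(1)<\dots<\rho^{-1}(d_1)$ and $\rho^{-1}(d_1+1)<\dots<\rho^{-1}(d)$) the map $\rho\mapsto\rho(\{1,\dots,d_1\})$ is not injective on shuffles: already for $(d_1,d_2)=(2,1)$ the shuffles $(2\,3)$ and $(1\,3\,2)$ both send $\{1,2\}$ to $\{1,3\}$, while $\{2,3\}$ is never attained. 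What your factorization actually requires is that $\psi_\omega^{-1}$ be the unique shuffle $\rho\in\Sigma_{d_1,d_2}$ with $\rho^{-1}(\{1,\dots,d_1\})=\omega$; equivalently, $\psi_\omega$ should be the unique \emph{inverse} shuffle (a permutation $\psi$ with $\psi(1)<\dots<\psi(d_1)$ and $\psi(d_1+1)<\dots<\psi(d)$) mapping $\{1,\dots,d_1\}$ onto $\omega$. With that correction, $\sigma=(\sigma_1\times\sigma_2)\circ\psi_\omega^{-1}$ is the right-coset factorization of $\Sfr_d$ by $\Sfr_{d_1}\times\Sfr_{d_2}$, the identity $\omega=\sigma^{-1}(\{1,\dots,d_1\})=\psi_\omega(\{1,\dots,d_1\})$ holds, and the sign bookkeeping $(-1)^\sigma=(-1)^{\sigma_1}(-1)^{\sigma_2}(-1)^{\psi_\omega}$, the flag-splitting, the dimension count, and the final reindexing against~\eqref{BarShuffleProduct} all go through as you describe.
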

\begin{proof}
Assume that $\dim(V_1)=d_1$ and $\dim(V_2)=d_2$.  We have a map 
\[
\mu\colon C_{d_1-2}(V_1)\otimes C_{d_2-2}(V_2)\to C_{d_1+d_2-2}(V_1\oplus V_2)
\]
given by the formula
    \begin{align*} 
    (0\subsetneq\langle v_1\rangle\subsetneq\;&\langle v_1,v_2\rangle \subsetneq\dots\subsetneq \langle v_1,\dots,v_{d_1}\rangle)\otimes (0\subsetneq\langle v_{d_1+1}\rangle\subsetneq\langle v_{d_1+1},v_{d_1+2}\rangle\subsetneq\dots\subsetneq \langle v_{d_1+1},\dots,v_{d_1+d_2}\rangle)\\
    \mapsto&\sum_{\sigma\in\Sigma_{d_1,d_2}}(-1)^{\sigma}(
    0\subsetneq\langle v_{\sigma(1)}\rangle\subsetneq\langle v_{\sigma(1)},v_{\sigma(2)}\rangle\subsetneq\dots\subsetneq \langle v_{\sigma(1)},\dots,v_{\sigma(d_1+d_2)}\rangle),
    \end{align*}
where $\Sigma_{d_1,d_2}$ is the set of $(d_1,d_2)$-shuffles.
The definition of shuffle permutations implies that $\Sigma_{d_1,d_2}$ is the set of left coset representatives of the subgroup $\Sfr_{d_1}\times \Sfr_{d_2}\subseteq\Sfr_{d}$, where $\Sfr_{d_1}\times \Sfr_{d_2}$ is the set of permutations leaving the sets $\{1,\dots,d_1\}$ and $\{d_1+1,\dots,d\}$ invariant.
From this we see that after skew-symmetrizing
the above formula 
over all permutations in $\Sfr_{d_1}\times \Sfr_{d_2}$ and using~\eqref{FormulaFromSteinbergToChains} we obtain
    \[\mu([v_1,\dots,v_{d_1}]\otimes[v_{d_1+1},\dots,v_{d}]) = [v_1,\dots,v_{d}],\]
so on $\St(V)$ the map $\mu$ induces $m$. In the proof of Theorem \ref{TheoremSteinbergKoszulity} we constructed an isomorphism between  $\Bup_{d}\St(V)$ and  $C_{d-2}(\Tc_V)\otimes \St(V)$. Under this isomorphism, the shuffle product on $\Bup_{d}\St(V)$  corresponds to the map 
$\mu \otimes m$. Therefore, on $\St(V)\otimes\St(V)$ the multiplication in $\Bup_d\St(V)$ induces $m\otimes m$, proving the claim.
\end{proof}

The formula for the coproduct is more involved. We introduce the following notation: for a subset $I=\{i_1,\dots,i_k\}\subseteq \{1,\dots,d\}$ and $\{1,\dots,d\}\sm I = \{i_1',\dots,i_{d-k}'\}$, with $i_1<\dots<i_k$ and $i_1'<\dots<i_{d-k}'$ we denote
\[\sigma_I\coloneqq\left(\begin{array}{cccccc}
1 &\cdots & k & k+1&  \cdots & d \\
i_1 & \cdots & i_{k}& i_1' &  \cdots & i_{d-k}' \end{array}\right),\qquad
\tau_I\coloneqq\left(\begin{array}{cccccc}
1 &\cdots & d-k & d-k+1&  \cdots & d \\
i_1' & \cdots & i_{d-k}'& i_1 &  \cdots & i_{k}' \end{array}\right).
\]
Note that $(-1)^{\sigma_I}(-1)^{\tau_I}=(-1)^{|I|(d-|I|)}$.
\begin{proposition}\label{LemmaFormulaCoProduct} Let $A=[v_1,\dots,v_d]$, $B=[w_1,\dots,w_d]$ be a pair of apartments. For subsets $I=\{i_1,\dots,i_k\}$ and $J=\{j_1,\dots,j_{d-k}\}$ in $\{1,\dots,d\}$ we put $A_I=\langle v_{i_1},\dots,v_{i_k} \rangle$ and $B_J=\langle w_{j_1},\dots,w_{j_{d-k}} \rangle.$ The coproduct is given by the following formula:
\begin{equation} \label{FormulaCoproductLA} 
\begin{split}
&\Delta(A\otimes B)=\sum_{I,J}(-1)^{\sigma_I}(-1)^{\tau_J}(A,B)_{I,J} \otimes (A,B)^{I,J}\in \bigoplus_{I,J}\StH(A_I) \otimes \StH(B_J), \\
\end{split} 
\end{equation}
where the sum goes over all pairs of subsets $I$ and $J$ with $V=A_I\oplus B_J$. Here $\bar{I}\coloneqq \{1,\dots,d\}\sm I=\{i'_1,\dots,i_{d-k}'\},$ $\bar{J}\coloneqq \{1,\dots,d\}\sm J=\{j'_1,\dots, j_{k}'\},$ and
\begin{align*}
&(A,B)_{I,J}=[v_{i_1},\dots, v_{i_k}]\otimes [A_I\cap \langle w_{j_1'},B_J\rangle ,\dots, A_I\cap \langle w_{j_{k}'} , B_J\rangle ]\in \StH(A_I),\\
&(A,B)^{I,J}=[B_J\cap \langle v_{i_1'}, A_I  \rangle ,\dots, B_J\cap \langle v_{i_{d-k}'} , A_I\rangle ]\otimes [w_{j_1},\dots, w_{j_{d-k}}] \in \StH(B_J).
\end{align*}
\end{proposition}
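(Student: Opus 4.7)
The plan is to compute $\Delta$ directly from the embedding $s\colon \St^{\Hc}(V)\hookrightarrow\Bup_d\St(V)$ provided by Lemma~\ref{LemmaFormulaForS}, apply the deconcatenation coproduct~\eqref{BarDeconcatenationCoproduct}, and then re-assemble the result as a sum over admissible pairs $(I,J)$ by invoking the same lemma applied separately to $A_I$ and $B_J$. Writing $A=[v_1,\dots,v_d]$ and $B=[w_1,\dots,w_d]$, Lemma~\ref{LemmaFormulaForS} expresses $s(A\otimes B)$ as a signed sum over $\sigma,\tau\in\Sfr_d$ of brackets $[\Fc_1^{\sigma}\cap\Gc_d^{\tau}\,|\cdots|\,\Fc_d^{\sigma}\cap\Gc_1^{\tau}]$. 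Splitting each such bracket after position $k$ produces a term in $\Bup_k\St(\Fc_k^{\sigma})\otimes\Bup_{d-k}\St(\Gc_{d-k}^{\tau})$, and dimension counting forces the non-vanishing terms to satisfy $V=\Fc_k^{\sigma}\oplus\Gc_{d-k}^{\tau}$ (otherwise at least one entry in the bar product fails to be a line and dies in $\St$).

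Next, I would group the surviving terms by the data $I=\sigma(\{1,\dots,k\})$ and $J=\tau(\{k+1,\dots,d\})$; these uniquely determine $\Fc_k^{\sigma}=A_I$ and $\Gc_{d-k}^{\tau}=B_J$, and only pairs with $V=A_I\oplus B_J$ appear. For such a pair, $\sigma$ factors uniquely as $\sigma=\sigma_I\cdot\sigma_0$ with $\sigma_0$ in the parabolic stabilizer $\Sfr_{\{1,\dots,k\}}\times\Sfr_{\{k+1,\dots,d\}}$, and analogously $\tau=\tau_J\cdot\tau_0$, so the global sign splits as $(-1)^{\sigma_I}(-1)^{\tau_J}$ multiplied by internal signs that will be absorbed back into applications of Lemma~\ref{LemmaFormulaForS}.

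The key geometric input is that whenever $V=A_I\oplus B_J$, the projection $\pi_I\colon V\to A_I$ along $B_J$ satisfies
\[
A_I\cap\langle w_{j'},B_J\rangle=\langle\pi_I(w_{j'})\rangle \quad (j'\in\bar J),
\]
and symmetrically $B_J\cap\langle v_{i'},A_I\rangle=\langle\pi_J(v_{i'})\rangle$ for $i'\in\bar I$. Since $\Fc_i^{\sigma}\subseteq A_I$ for $i\leq k$, each first-half intersection $\Fc_i^{\sigma}\cap\Gc_{d-i+1}^{\tau}$ reduces to an intersection inside $A_I$ between the $\sigma'$-flag of $\{v_i\}_{i\in I}$ and the $\tau'$-flag of $\{\pi_I(w_{j'})\}_{j'\in\bar J}$. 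Summing the inner signed permutations and invoking Lemma~\ref{LemmaFormulaForS} in $A_I$ yields exactly $s\bigl((A,B)_{I,J}\bigr)$; an identical argument in $B_J$ gives $s\bigl((A,B)^{I,J}\bigr)$. Reassembling the pieces reproduces~\eqref{FormulaCoproductLA}.

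The main obstacle is the combinatorial bookkeeping of signs: one has to verify that the canonical coset representatives $\sigma_I,\tau_J$ contribute precisely $(-1)^{\sigma_I}(-1)^{\tau_J}$ once the residual sums over $\sigma_0,\tau_0$ have been fully absorbed into the two restricted applications of Lemma~\ref{LemmaFormulaForS}, and that pairs $(I,J)$ with $A_I+B_J\neq V$ genuinely contribute zero—both facts reducing to a careful but direct expansion using the vanishing convention on non-general-position apartments.
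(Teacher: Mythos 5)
Your proof is correct, and it takes a genuinely different route from the paper's. The paper argues structurally: it introduces a coproduct $\Delta_{\Tc}$ on $C_{\toprm}(\Tc_V)$ (flag decomposition) and a coproduct $\Delta_{\overline{\St}}$ on $\St(V)$, defines the re-pairing operator $\Psi$, and then verifies on basis elements that the deconcatenation coproduct on $\Bup_d\St(V)\cong C_{\toprm}(\Tc_V)\otimes\St(V)$ equals $\Psi\circ(\Delta_{\Tc}\otimes\Delta_{\overline{\St}})$. You instead work directly inside $\Bup_d\St(V)$: expand $s(A\otimes B)$ via Lemma~\ref{LemmaFormulaForS}, apply the deconcatenation at each $k$, and regroup by the data $I=\sigma(\{1,\dots,k\})$, $J=\tau(\{k+1,\dots,d\})$. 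Your approach buys explicitness — it produces the formula from a single closed sum — at the cost of the permutation factorization $\sigma=\sigma_I\sigma_0$, $\tau=\tau_J\tau_0$ and the corresponding sign bookkeeping that you flag at the end. That bookkeeping does close: since $\sigma_0=\sigma_1\times\sigma_2$ with $\sigma_1\in\Sfr_k$, $\sigma_2\in\Sfr_{d-k}$ (and similarly for $\tau_0$), the sign factors as $(-1)^{\sigma}(-1)^{\tau}=(-1)^{\sigma_I}(-1)^{\tau_J}\cdot(-1)^{\sigma_1}(-1)^{\tau_1}\cdot(-1)^{\sigma_2}(-1)^{\tau_2}$; the first piece of the split bracket depends only on $(\sigma_1,\tau_1)$, the second only on $(\sigma_2,\tau_2)$, and the two inner double sums reassemble — via Lemma~\ref{LemmaFormulaForS} applied to the bases $\{v_i\}_{i\in I}$, $\{\pi_I(w_{j'})\}_{j'\in\bar J}$ of $A_I$ and $\{\pi_J(v_{i'})\}_{i'\in\bar I}$, $\{w_j\}_{j\in J}$ of $B_J$ — to $s((A,B)_{I,J})$ and $s((A,B)^{I,J})$ respectively. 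The vanishing of terms with $A_I+B_J\ne V$ also holds for the reason you give: if $\Fc_k^{\sigma}\cap\Gc_{d-k}^{\tau}\ne 0$, then either some $L_i$ fails to be a line or $L_k=L_{k+1}$, so the term dies in $\Bup_d\St(V)$.
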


Before proving this let us reinterpret~\eqref{FormulaCoproductLA}. Let $C_{\toprm}(\Tc_V)$ denote $C_{\dim(V)-2}(\Tc_V)$, which we recall is the free vector space on complete flags in $V$ (for $V=0$ we set $C_{\toprm}(\Tc_V)=\Q$). Then there is a coproduct map $\Delta_{\Tc}\colon C_{\toprm}(\Tc_V)\to \bigoplus_{W\subseteq V} C_{\toprm}(\Tc_W)\otimes C_{\toprm}(\Tc_{V/W})$, defined on complete flags by
    \begin{equation} \label{FormulaFlagCoproduct}
    \Delta_{\Tc}(\Fc_0\subsetneq\dots\subsetneq \Fc_d)
    = \sum_{j=0}^{d}
    (\Fc_{0}\subsetneq\dots\subsetneq \Fc_j)\otimes
    (\Fc_{j}/\Fc_{j}\subsetneq\dots\subsetneq \Fc_{d}/\Fc_{j}).
    \end{equation}
By~\eqref{FormulaFromSteinbergToChains} this induces a well-defined coproduct $\Delta_{\St}\colon \St(V)\to \bigoplus_{W\subseteq V} \St(W)\otimes \St(V/W)$ given by
    \begin{equation}\label{FormulaStCoproduct}
    \Delta_{\St}([v_1,\dots,v_d]) = \sum_{I\subseteq \{1,\dots,d\}}(-1)^{\sigma_I}[v_{i_1},\dots,v_{i_k}]\otimes [v_{i_1'},\dots,v_{i_{d-k}'}],
    \end{equation}
Let~$\Delta_{\overline{\St}}$ be a map defined by the same formula as~\eqref{FormulaStCoproduct} but with sign $(-1)^{\tau_I}$ instead (the two maps differ by sign $(-1)^{|I|(d-|I|)}$).
Note that $[A_I\cap \langle w_{j_1'},B_J\rangle ,\dots, A_I\cap \langle w_{j_{k}'} , B_J\rangle]$ from the definition of $(A,B)_{I,J}$ is simply $\pi([w_{j_1'},\dots,w_{j_k'}])$, where $\pi\colon V/B_{J}\to A_{I}$ is a natural isomorphism, and similarly for $(A,B)^{I,J}$. Therefore, equation~\eqref{FormulaCoproductLA} can be rewritten as
    \[\Delta(A\otimes B) = \Psi(\Delta_{
    \vphantom{\overline{\St}}
    \St}(A)\otimes\Delta_{\overline{\St}}(B)),\]
where, for any VB-modules $F,G$ we define
    \[\Psi=\Psi_{F,G}\colon\bigoplus_{V_1,V_2\subseteq V}F(V_1)\otimes F(V/V_1)\otimes G(V_2)\otimes G(V/V_2) \lra \bigoplus_{V=V_1\oplus V_2}F(V_1)\otimes G(V_1)\otimes F(V_2)\otimes G(V_2)\]
by setting
    \[\Psi(A\otimes A'\otimes B\otimes B') = (A\otimes \pi_{V_1}(B'))\otimes (\pi_{V_2}(A')\otimes B)\qquad \mbox{if}\;\;V=V_1\oplus V_2\]
and $0$ otherwise, where $\pi_{V_1}\colon V/V_{2}\to V_1$ and $\pi_{V_2}\colon V/V_{1}\to V_2$ are natural isomorphisms (cf.~\cite{JS}, where a similar construction is used to define a braiding for external tensor products).

\begin{proof}[Proof of Proposition~\ref{LemmaFormulaCoProduct}]
By the above discussion, it is enough to show that the deconcatenation coproduct $\Delta$ on $\Bup_d\St(V)\cong C_{\toprm}(\Tc_V)\otimes \St(V)$ is equal to $\widetilde{\Delta}=\Psi\circ (\Delta_{\Tc}\otimes\Delta_{\overline{\St}})$. It is enough to show this on the basis elements $[v_1|\dots|v_d]$. Let $\Fc_i=\langle v_1,\dots,v_i\rangle$, then the coproduct $\Delta_{\Tc}(\Fc_{\bullet})$ is given by~\eqref{FormulaFlagCoproduct}, and $\Delta_{\overline{\St}}([v_1,\dots,v_d])$ is given by~\eqref{FormulaStCoproduct} with $\tau_{I}$ in place of $\sigma_{I}$. To apply $\Psi$ note that $V=\Fc_j\oplus \langle v_{i_1},\dots,v_{i_k}\rangle$ if and only if $I=\{j+1,\dots,d\}$, in which case $(-1)^{\tau_{I}}=1$. Therefore,
    \begin{multline*}
    \widetilde{\Delta}\big((\Fc_0\subsetneq\dots\subsetneq \Fc_d)\otimes [v_1,\dots,v_d]\big) = \smash[b]{\sum_{j=0}^{d}}
    \big((0\subsetneq\langle v_1\rangle\subsetneq\dots\subsetneq \langle v_1,\dots,v_j\rangle)\otimes [v_1,\dots,v_j]\big)\\
    \otimes \big((0\subsetneq \langle v_{j+1}\rangle\subsetneq\dots\subsetneq\langle v_{j+1},\dots,v_d\rangle)\otimes [v_{j+1},\dots,v_d]\big),
    \end{multline*}
which under isomorphism $\beta_d$ maps to the deconcatenation coproduct~\eqref{BarDeconcatenationCoproduct}.
\end{proof}

\subsection{Coxeter pairs}\label{SectionCoxeterPairs}
Let $V$ be a vector space over $F$ of dimension $d$ and denote by $\PP^{d-1}=\PP(V)$ the projectivization of $V$. Lines in $V$ are in bijection with points of $\PP(V)$; we refer to them as \emph{points}.  Planes in $V$ are in bijection with lines of $\PP(V)$; we refer to them as \emph{projective lines}.
Finally, hyperplanes in $V$ are in bijection with hyperplanes in $\PP(V)$; we refer to them as \emph{hyperplanes}. We say that vectors $v_1,\dots,v_m \in V$ are in general position if vectors $v_{i}, i\in I$ are linearly independent for every subset $I\subseteq\{1,\dots, m\}$ of size at most $d$.  We say that points $P_1,\dots,P_m$ are in general position  if the vectors in $V$ generating the corresponding lines are in general position . 

A \emph{projective simplex} is an ordered set of $d$ points $\Pc=(P_1,\dots,P_d)$ in $\PP(V).$ We say that $\Pc$ is non-degenerate if the points $P_1,\dots,P_d$ are in general position. Let $[\Pc]=[P_1,\dots,P_d]\in \St(V)$ be the corresponding apartment. A pair of simplexes $(\Pc, \Qc)$ defines an element $[\Pc]\otimes [\Qc]$ in $\StH(V)$.

\begin{definition}\label{DefinitionSpecialPairs} A pair of simplexes $(\Pc, \Qc)$ is called a \emph{Coxeter pair} if both simplexes $\Pc$ and $\Qc$ are non-degenerate, $P_1=Q_1$, and $Q_i$ lies on the projective line $\langle P_{i-1},P_i\rangle$ for $i=2,\dots,d$. We will also use the term  \emph{Coxeter pair} for the corresponding element $[\Pc]\otimes [\Qc]\in \StH(V)$. 
\end{definition}

The main result of this section is the following theorem.

\begin{theorem} \label{thm:Stn_li_basis}
	Let $V$ be a vector space over $F$ of dimension $d;$ let $H\subseteq V$ be a hyperplane. Then Coxeter pairs $[\Pc]\otimes [\Qc]$ with $P_1,\dots,P_{d-1}\subseteq H$ generate $\StH(V)$ as a $\Q$-vector space.
\end{theorem}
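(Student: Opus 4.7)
The proof will go by induction on $d=\dim V$.  The cases $d=1$ (where $\St^{\Hc}(V)\cong \Q$) and $d=2$ are immediate: for $d=2$, Lemma~\ref{LemmaSteinbergBasis} applied to the flag $0\subsetneq H\subsetneq V$ shows that $\St(V)$ has basis $\{[H,L]\}_L$ indexed by lines $L$ transverse to $H$, so $\St^{\Hc}(V)$ is spanned by tensor products $[H,L]\otimes [H,L']$, which are precisely the Coxeter pairs with $P_1=H$.

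For the inductive step with $d\geq 3$, I will apply Proposition~\ref{PropositionSteinbergSubspace} to the subspace $H\subset V$ to obtain the decomposition
\[
\St^{\Hc}(V) \;\cong\; \bigoplus_{L,L'}\St(H)\otimes \St(H),
\]
indexed by pairs of lines transverse to $H$ (using $\St(L)\cong \Q$).  Given a Coxeter pair $(\Pc,\Qc)$ with $P_1,\dots,P_{d-1}\in H$, an iterative use of the Coxeter conditions shows that $Q_1,\dots,Q_{d-1}\in H$ (since $\langle P_{i-1},P_i\rangle\subseteq H$ for $i\leq d-1$), while $Q_d\in \langle P_{d-1},P_d\rangle\setminus\langle P_{d-1}\rangle$ satisfies $Q_d\notin H$.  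Writing $L=\langle P_d\rangle$, $L'=\langle Q_d\rangle$ and $\Pc'=(P_1,\dots,P_{d-1})$, $\Qc'=(Q_1,\dots,Q_{d-1})$, Proposition~\ref{LemmaFormulaProduct} gives $[\Pc]\otimes [\Qc]=([\Pc']\otimes [\Qc'])\cdot ([P_d]\otimes [Q_d])$ in the $(L,L')$-summand, where $(\Pc',\Qc')$ is itself a Coxeter pair in $\St^{\Hc}(H)$.

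The goal then reduces to showing that each $(L,L')$-summand is spanned.  When $L=L'$ the Coxeter condition forces $Q_d=P_d$ up to scalar, and $(\Pc',\Qc')$ ranges freely over all Coxeter pairs in $\St^{\Hc}(H)$; the inductive hypothesis applied to $H$ with any hyperplane $H''\subset H$ supplies a spanning family, and letting $L$ vary covers all diagonal summands.  When $L\neq L'$ the $2$-plane $\langle L,L'\rangle$ meets $H$ in a single line, which must coincide with $\langle P_{d-1}\rangle$; to cover this summand one applies Proposition~\ref{PropositionSteinbergSubspace} a second time, now to $\St(H)$ with respect to a hyperplane $H''\subset H$ not containing $P_{d-1}$, thereby isolating $H$-apartments with last vertex $\langle P_{d-1}\rangle$, and invokes the inductive hypothesis on the resulting subspace of $\St^{\Hc}(H)$.

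The main obstacle I anticipate is in the $L\neq L'$ case: the inductive hypothesis a priori only constrains the first $d-2$ vertices of an $H$-Coxeter pair, whereas one needs to prescribe the last vertex $P_{d-1}$ to coincide with the specific line $\langle L,L'\rangle\cap H$ determined by $(L,L')$.  Controlling this will require carefully combining the inductive hypothesis with the product formula of Proposition~\ref{LemmaFormulaProduct} and the iterated flag decomposition of Proposition~\ref{PropositionSteinbergSubspace} (or equivalently Corollary~\ref{CorollarySteinbergFlag}) to extract precisely those $H$-Coxeter pairs whose last vertex matches the prescribed line.
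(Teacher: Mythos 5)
Your decomposition $\St^{\Hc}(V)\cong\bigoplus_{L,L'}\St(H)\otimes\St(H)$ via Proposition~\ref{PropositionSteinbergSubspace}, and the observations that the Coxeter conditions force $Q_1,\dots,Q_{d-1}\in H$ and identify $[\Pc]\otimes[\Qc]$ with the product $([\Pc']\otimes[\Qc'])\cdot([P_d]\otimes[Q_d])$, are all sound. But the gap you flag at the end is real and is not resolved by the sketch you give. For $L\neq L'$ the Coxeter condition pins $P_{d-1}'$ to the single line $R=\langle L,L'\rangle\cap H$, so you must show that Coxeter pairs of $H$ with the last vertex $P_{d-1}'$ fixed equal to $R$ still span $\St^{\Hc}(H)$. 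Your inductive hypothesis, as stated, cannot supply this: it produces Coxeter pairs of $H$ with $P_1',\dots,P_{d-2}'$ in a chosen hyperplane $H''\subset H$ but leaves $P_{d-1}'$ ranging over all lines outside $H''$. Applying Proposition~\ref{PropositionSteinbergSubspace} a second time merely redecomposes $\St(H)$; it does not ``isolate'' Coxeter pairs whose last vertex is prescribed, and the required spanning statement does not follow from it.

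The paper closes exactly this gap by strengthening the statement proved by induction: it shows that \emph{any} $[\Pc]\otimes[\Qc]$ (not necessarily a Coxeter pair) with $P_1,\dots,P_{d-1}\in H$ is a combination of Coxeter pairs $[\Pc']\otimes[\Qc']$ with $P_1',\dots,P_{d-1}',Q_1',\dots,Q_{d-1}'\in H$ \emph{and} $P_d'=P_d$. This extra ``fix the last vertex'' clause is precisely what your $L\neq L'$ summand requires. To propagate it through the induction, the paper sets $R=H\cap\langle Q_d,P_d\rangle$ and uses the cocycle relation on the $d+1$ points $P_1,\dots,P_{d-1},R,P_d$ (together with $[P_1,\dots,P_{d-1},R]=0$, since these all lie in $H$) to get
\[
[P_1,\dots,P_d]=\sum_{i=1}^{d-1}(-1)^{d-i-1}[P_1,\dots,\widehat{P_i},\dots,P_{d-1},R,P_d],
\]
which places $R$ in the $(d-1)$-th slot, and then invokes the strengthened hypothesis inside $H$ with $R$ as the distinguished final vertex. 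Without building an analogous strengthening into your inductive statement, the decomposition-based induction does not close.
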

\begin{proof}
Choose any complete flag $\Fc_{\bullet}$ with $\Fc_{d-1}=H$. By Lemma  \ref{LemmaSteinbergBasis}, apartments $[P_1,\dots,P_d]$ with $P_i\in\Fc_i$ form a basis $\St(V)$. In particular,  apartments $[P_1,\dots,P_d]$ with $P_1,\dots,P_{d-1}\subseteq H$ span $\St(V)$. So, the statement of the theorem follows from the following claim:
any element $[\Pc]\otimes [\Qc]$ with $P_1,\dots,P_{d-1}\subseteq H$ can be written as a linear combination of Coxeter pairs $[\Pc']\otimes [\Qc']$ satisfying $P_1',\dots,P_{d-1}',Q_1',\dots,Q_{d-1}'\subseteq H$ and $P_{d}'=P^{\vphantom{'}}_{d}$. 

We prove the claim by induction on $d$. For $d=0,1$ there is nothing to prove, and for $d=2$ the hyperplane $H$ is a point and the statement follows from the identity 
\[
[P_1,P_2]\otimes [Q_1,Q_2]=[P_1,P_2]\otimes [H,Q_2]-[P_1,P_2]\otimes [H,Q_1].
\] 
To prove the induction step, we first use Lemma \ref{LemmaSteinbergBasis} to reduce to the case when $Q_1,\dots,Q_{d-1}\subseteq H$. Let $R$ be the point of intersection of $H$ with $\langle Q_d, P_d\rangle$ (in case $Q_d=P_d$ take $R$ to be any point of $H$). Then the following identity holds in $\St(V)$:
	\[[P_1,\dots,P_d] = \sum_{i=1}^{d-1}(-1)^{d-i-1}
    [P_1,\dots,\widehat{P_{i}},\dots,P_{d-1},R,P_d].\]
By the induction hypothesis applied to the space $H$  and a hyperplane $H'\coloneqq \langle P_1,\dots,\widehat{P_i},\dots,P_{d-1}\rangle$, any element $[P_1,\dots,\widehat{P_i},\dots,P_{d-1},R]\otimes[Q_1,\dots,Q_{d-1}]$ can be written as a sum of Coxeter pairs $[P_1',\dots,P_{d-1}']\otimes [Q_1',\dots,Q_{d-1}']$ with $P_{d-1}'=R$. By the choice of $R$, the point $Q_d$ lies on the projective line $\langle P'_{d-1}, P_d^{\vphantom{'}}\rangle$, so $[P_1',\dots,P_{d-1}',P^{\vphantom{'}}_d]\otimes [Q_1',\dots,Q_{d-1}',Q^{\vphantom{'}}_d]$ is also a Coxeter pair. This proves the induction step.
\end{proof}

Note that lines in the dual space $V^\vee$ are in natural bijection with hyperplanes in $V$. Explicitly, for a nonzero functional $\varphi\in V^{\vee}$, a line $\langle \varphi \rangle$ corresponds to the hyperplane $\ker\varphi\subset V$. Thus, points of the projectivization $\PP(V^\vee)$ are in bijection with hyperplanes in $\PP(V)$. For any non-degenerate projective simplex $\Pc=(P_1,\dots,P_d)$ consider its dual 
$\Pc^{\vee}=(H_1,\dots,H_d)$
for $H_i=\langle P_1,\dots,\widehat{P_i},\dots,P_d \rangle\in \PP(V^{\vee}).$ This gives rise to a well-defined duality isomorphism
    \begin{equation}\label{FormulaDualityMapSt}
    D\colon \StH(V)\stackrel{\sim}{\lra} \StH\bigl(V^{\vee}\bigr)
    \end{equation}
defined by the formula  $D([\Pc]\otimes[\Qc])=[\Qc^{\vee}]\otimes [\Pc^{\vee}].$

\begin{lemma}
The dual of a Coxeter pair $(\Pc,\Qc)$ is again a Coxeter pair.
\end{lemma}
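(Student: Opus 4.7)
The plan is to identify the image $D([\Pc]\otimes[\Qc])=[\Qc^\vee]\otimes[\Pc^\vee]$ with a Coxeter pair in $\PP(V^\vee)$, where one must allow for an index reversal of both dual simplexes. Set $\tilde{\Pc}^\vee\coloneqq(H_d,\dots,H_1)$ and $\tilde{\Qc}^\vee\coloneqq(G_d,\dots,G_1)$. Since apartments are skew-symmetric in their arguments, the two signs $(-1)^{d(d-1)/2}$ contributed by reversing both simplexes cancel in the tensor product, so $[\tilde{\Qc}^\vee]\otimes[\tilde{\Pc}^\vee]=[\Qc^\vee]\otimes[\Pc^\vee]$. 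I therefore need only show that $(\tilde{\Qc}^\vee,\tilde{\Pc}^\vee)$ is a Coxeter pair in the sense of Definition~\ref{DefinitionSpecialPairs}.

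The key geometric input is the equality of filtrations $\langle Q_1,\dots,Q_i\rangle=\langle P_1,\dots,P_i\rangle$ in $V$ for every $i=1,\dots,d$, proved by induction: the base case $i=1$ is $Q_1=P_1$, and for the inductive step $Q_{i+1}\in\langle P_i,P_{i+1}\rangle\subseteq\langle P_1,\dots,P_{i+1}\rangle$ together with non-degeneracy of $\Qc$ forces equality in dimension $i+1$. Taking $i=d-1$ gives $G_d=\langle Q_1,\dots,Q_{d-1}\rangle=\langle P_1,\dots,P_{d-1}\rangle=H_d$, which is the equality of the first entries of $\tilde{\Qc}^\vee$ and $\tilde{\Pc}^\vee$.

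For the remaining Coxeter conditions, after substituting $j=d+1-i$ the task reduces to verifying $H_j\in\langle G_{j+1},G_j\rangle$ in $\PP(V^\vee)$ for $j=1,\dots,d-1$. Choose $f_j,g_k\in V^\vee$ with $\ker f_j=H_j$ and $\ker g_k=G_k$; the 2-dimensional subspace $\langle g_j,g_{j+1}\rangle\subseteq V^\vee$ is exactly the annihilator of $\langle Q_i:i\neq j,j+1\rangle$, so the claim becomes $f_j(Q_i)=0$ for every $i\neq j,j+1$. For $i=1$ one has $j\neq 1$, hence $Q_1=P_1\in H_j$; for $i\geq 2$, $Q_i\in\langle P_{i-1},P_i\rangle$ and both indices $i-1,i$ differ from $j$ (using $i\neq j,j+1$), so $\langle P_{i-1},P_i\rangle\subseteq H_j=\ker f_j$. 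Non-degeneracy of $\Pc^\vee$ and $\Qc^\vee$ is inherited from non-degeneracy of $\Pc$ and $\Qc$.

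The main obstacle is essentially conceptual: the Coxeter condition is sensitive to the ordering of the simplex, and the natural duality picks up an order-reversal. Geometrically, dualizing reverses the Coxeter chain — the pair $(P_{i-1},P_i)$ whose line contains $Q_i$ becomes the pair $(G_j,G_{j+1})$ whose span in $\PP(V^\vee)$ contains $H_j$. Once this reversal is absorbed into the indexing, the verification is a one-line incidence computation.
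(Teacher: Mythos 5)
Your proof is correct and follows essentially the same route as the paper's. The only substantive difference is presentational: you make explicit the order reversal and the cancellation of the two signs $(-1)^{d(d-1)/2}$ in $[\Qc^\vee]\otimes[\Pc^\vee]$, whereas the paper absorbs the reversal silently into the conditions it verifies, namely $q_d=p_d$ and $q_i\cap q_{i+1}\subseteq p_i$ for $1\le i\le d-1$ (which are precisely the Coxeter conditions for the \emph{reversed} pair $(\tilde{\Qc}^\vee,\tilde{\Pc}^\vee)$); the remaining incidence check $G_j\cap G_{j+1}\subseteq H_j$ is the same in both arguments.
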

\begin{proof}
Consider a Coxeter pair $(\Pc,\Qc).$ By definition, $P_1=Q_1$ and $Q_i\in \langle P_{i-1}, P_i \rangle$ for $2\leq i\leq d.$ Consider the faces $p_i=\langle P_1,\dots,\widehat{P}_i,\dots, P_d\rangle$ and  $q_i=\langle Q_1,\dots,\widehat{Q}_i,\dots, Q_d\rangle$ of $\Pc$ and $\Qc.$ To show that $(\Qc^{\vee},\Pc^{\vee})$ is a Coxeter pair we need to show that $q_d=p_d$ and $q_i\cap q_{i+1}\subseteq p_i$ for $1\leq i\leq d-1.$
To see the first statement, notice that since $P_1=Q_1$ and $Q_i\in \langle P_{i-1}, P_i \rangle$ for $2\leq i\leq d$ we have $\langle Q_1,\dots, Q_{d-1} \rangle\subseteq \langle P_1,\dots, P_{d-1} \rangle.$ The equality follows from the general position assumption in 
Definition~\ref{DefinitionSpecialPairs}. Next, ${q_i\cap q_{i+1}}=\langle Q_1,\dots, Q_{i-1},Q_{i+2},\dots, Q_d\rangle$
and for $j\in\{1,\dots,i-1,i+2,\dots,d\}$ we have  $Q_j\in \langle P_{j-1},P_j \rangle \subseteq p_i.$ This proves the statement.
\end{proof}

Dualizing the statement of Theorem~\ref{thm:Stn_li_basis} we then get the following.

\begin{corollary}
\label{thm:Stn_I_basis}
 Let $V$ be a vector space over $F$ of dimension $d;$ let $L\subseteq V$ be a line. Then Coxeter pairs $[\Pc]\otimes[\Qc]$ with $P_1=Q_1=L$ generate $\StH(V)$  as a $\Q$-vector space.
\end{corollary}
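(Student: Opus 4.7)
The plan is to apply the duality isomorphism $D \colon \St^{\Hc}(V) \stackrel{\sim}{\to} \St^{\Hc}(V^{\vee})$ from~\eqref{FormulaDualityMapSt}, together with the preceding lemma, to reduce the corollary to Theorem~\ref{thm:Stn_li_basis}. Let $H = L^{\perp} \subseteq V^{\vee}$ be the hyperplane annihilating $L$. Theorem~\ref{thm:Stn_li_basis} applied to $V^{\vee}$ with hyperplane $H$ shows that Coxeter pairs $[\Pc'] \otimes [\Qc']$ in $\St^{\Hc}(V^{\vee})$ with $P'_1, \dots, P'_{d-1} \in H$ generate $\St^{\Hc}(V^{\vee})$. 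For any such generator, non-degeneracy of the two simplexes forces the common ``opposite face'' $\langle P'_1, \dots, P'_{d-1}\rangle = \langle Q'_1, \dots, Q'_{d-1}\rangle$ to equal $H$ itself.

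Since $D$ is an isomorphism (indeed an involution, once we identify $V^{\vee\vee}$ with $V$), its inverse sends the above generating set to a generating set for $\St^{\Hc}(V)$, namely the elements $[(\Qc')^{\vee}] \otimes [(\Pc')^{\vee}]$. By the preceding lemma, each of these is again a Coxeter pair in $\St^{\Hc}(V)$, so the only remaining task is to identify their common apex.

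The key point is that duality exchanges the common apex and the common opposite face of a Coxeter pair. Concretely, under $V^{\vee\vee} = V$, the $d$-th vertex of $(\Pc')^{\vee}$ is the annihilator in $V$ of the hyperplane $\langle P'_1, \dots, P'_{d-1}\rangle = H$, which is precisely $L$; the same holds for $(\Qc')^{\vee}$. Thus our dual Coxeter pair has common $d$-th vertex $L$. Reversing the orderings of both $(\Pc')^{\vee}$ and $(\Qc')^{\vee}$ converts this into a standard Coxeter pair with common first vertex $L$; this reversal preserves the tensor of apartments, since each factor acquires a sign $(-1)^{d(d-1)/2}$ that cancels.

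The main subtlety is this bookkeeping of indices: in a Coxeter pair, both the shared apex and the shared opposite face are distinguished, and duality interchanges them, so the condition ``$P_1 = Q_1 \in L$'' on $V$ is the exact image, under $D^{-1}$, of the condition ``common opposite face equal to $L^{\perp}$'' on $V^\vee$ produced by Theorem~\ref{thm:Stn_li_basis}. Once this correspondence is sorted out, the corollary follows immediately.
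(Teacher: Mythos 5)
Your proof is correct and takes the same approach as the paper, whose entire proof is the single sentence ``Dualizing the statement of Theorem~\ref{thm:Stn_li_basis} we then get the following''; you have carefully spelled out the index-reversal and sign bookkeeping that this dualization entails.
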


\subsection{Generic Coxeter pairs, Steinberg polylogarithms and Steinberg iterated integrals}
\label{sec:steinbergLiI}
In this section, we define generic Coxeter pairs and discuss two ways to parametrize them. For $F=\Q$, these parametrizations are related to multiple polylogarithms and iterated integrals. 

\begin{definition}
A Coxeter pair $(\Pc,\Qc)$  is called \emph{generic} if $P_{i}\neq Q_i$ for $2\leq i\leq d.$
\end{definition}

Non-generic Coxeter pairs are decomposable in the $\VB$-algebra $\StH(V)$:

\begin{proposition} \label{PropositionNonGenericPairs}
Suppose that a Coxeter pair $(\Pc,\Qc)$ is \emph{not} generic. Then
\[
[\Pc]\otimes [\Qc]\in \sum_{\substack{V=V_1\oplus V_2\\ \dim(V_i)\geq1}} m(\StH(V_1)\otimes \StH(V_2)).
\]
Equivalently, the projection of $[\Pc]\otimes [\Qc]\in \StH(V)$ to $\StL(V)$ vanishes.
\end{proposition}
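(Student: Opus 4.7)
The plan is to locate an index $i$ where the Coxeter pair fails to be generic and use it to split $V$ into two nontrivial summands on which both simplices restrict nicely.

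Let $i \in \{2, \ldots, d\}$ be such that $P_i = Q_i$, which exists by assumption. Set
\[
W_1 = \langle P_1, \ldots, P_{i-1}\rangle, \qquad W_2 = \langle P_i, \ldots, P_d\rangle.
\]
Since $\mathcal{P}$ is non-degenerate we have $V = W_1 \oplus W_2$, and both summands are nonzero because $1 \le i-1$ and $i \le d$. I will check that all points of $\mathcal{P}$ and $\mathcal{Q}$ lie in $W_1 \cup W_2$. For $\mathcal{P}$ this is immediate. For $\mathcal{Q}$, we use the Coxeter relation $Q_j \in \langle P_{j-1}, P_j\rangle$ (with $Q_1 = P_1$): when $j \le i-1$ both $P_{j-1}, P_j$ belong to $W_1$, so $Q_j \in W_1$; when $j = i$ we use $Q_i = P_i \in W_2$; and when $j \ge i+1$ both $P_{j-1}, P_j$ belong to $W_2$, so $Q_j \in W_2$. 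Moreover, since $\mathcal{Q}$ is non-degenerate, $Q_1, \ldots, Q_{i-1}$ are linearly independent in $W_1$ and $Q_i, \ldots, Q_d$ are linearly independent in $W_2$, and by dimension count they are bases of the respective summands.

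Consequently, in $\St(V)$ we have the factorizations
\[
[\mathcal{P}] = m\bigl([P_1,\ldots,P_{i-1}], [P_i,\ldots,P_d]\bigr), \qquad [\mathcal{Q}] = m\bigl([Q_1,\ldots,Q_{i-1}], [Q_i,\ldots,Q_d]\bigr),
\]
with respect to the decomposition $V = W_1 \oplus W_2$. Applying Proposition \ref{LemmaFormulaProduct} (the product formula in $\St^{\mathcal{H}}$), we get
\[
[\mathcal{P}]\otimes[\mathcal{Q}] = m\Bigl(\,[P_1,\ldots,P_{i-1}]\otimes[Q_1,\ldots,Q_{i-1}],\; [P_i,\ldots,P_d]\otimes[Q_i,\ldots,Q_d]\,\Bigr),
\]
where $m$ now denotes the multiplication $\St^{\mathcal{H}}(W_1)\otimes \St^{\mathcal{H}}(W_2) \to \St^{\mathcal{H}}(V)$. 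Since $\dim W_1 = i-1$ and $\dim W_2 = d-i+1$ are both in $\{1,\ldots,d-1\}$, this expresses $[\mathcal{P}]\otimes[\mathcal{Q}]$ as an element of $\St_{i-1}^{\mathcal{H}}\cdot \St_{d-i+1}^{\mathcal{H}}$, yielding the desired inclusion.

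There is essentially no obstacle here; the only subtlety is verifying that every $Q_j$ lands in the correct summand, which amounts to the case analysis above using the Coxeter relation and the equality $P_i = Q_i$. The equivalence with vanishing in $\St^{\mathcal{L}}(V)$ is immediate from the definition of $\St^{\mathcal{L}}$ as the cokernel of multiplication from products of lower-dimensional factors.
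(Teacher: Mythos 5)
Your proof is correct and follows the same strategy as the paper's: locate the index $i$ with $P_i = Q_i$, split $V = W_1 \oplus W_2$ at that index, verify that both $\mathcal{P}$ and $\mathcal{Q}$ factor through the decomposition, and apply the product formula in $\St^{\mathcal{H}}$. The paper is more terse — it simply notes that the two halves $(\mathcal{P}',\mathcal{Q}')$ and $(\mathcal{P}'',\mathcal{Q}'')$ are again Coxeter pairs and multiplies them — whereas you spell out the case analysis showing each $Q_j$ lands in the right summand; this added detail is accurate and harmless.
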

\begin{proof}
If $(\Pc,\Qc)$ is not generic, then there exists $1\leq k<d$ such that $Q_{k+1}=P_{k+1}$. If we denote $\Pc'=(P_1,\dots,P_{k})$, $\Pc''=(P_{k+1},\dots,P_{d})$, $\Qc'=(Q_1,\dots,Q_{k})$, $\Qc''=(Q_{k+1},\dots,Q_{d})$, then $(\Pc',\Qc')$ and $(\Pc'',\Qc'')$ are also Coxeter pairs, and 
  \[
  [\Pc]\otimes[\Qc]
  =\Bigl([\Pc']\otimes[ \Qc']\Bigr)\cdot \Bigl([\Pc'']\otimes [\Qc'']\Bigr),
  \]
proving the claim.
\end{proof}

Our next goal is to parametrize generic Coxeter pairs. 

\begin{definition}
Let $v_1,\dots,v_d$ be a basis of a vector space $V$. The \emph{Steinberg polylogarithm} is an element of \( \StH(V) \) given by 
\begin{equation}
\Lup[v_1,\dots,v_d] \coloneqq 
[v_d, v_d+v_{d-1},\dots,v_{d}+\dots+v_1]\otimes [v_d,v_{d-1},\dots,v_1] .
\end{equation}
The \emph{Steinberg iterated integral} is an element of \( \StH(V) \) given by 
\begin{equation}
\I[v_1,\dots,v_d] \coloneqq (-1)^{d}[v_d, v_{d-1},\dots v_1]\otimes [v_d, v_{d-1}-v_d,\dots,v_1-v_2] .
\end{equation}
We write $\Lup^{\Lc}[v_1,\dots,v_d]$ and $\I^{\Lc}[v_1,\dots,v_d]$ for the projections of the above elements to $\StL(V)$. 
\end{definition}

The following lemma gives a parameterization of generic Coxeter pairs.

\begin{lemma} \label{LemmaCoxeterParameterization}
Let $(\Pc,\Qc)$ be a generic Coxeter pair. Then there exists a basis $v_1,\dots,v_d$ of $V$ such that $P_{i}=\langle v_1+\dots+v_i\rangle$ and $Q_{i}=\langle v_i\rangle$, for $1\leq i \leq d.$ In particular, 
\[[\Pc]\otimes [\Qc]=\Lup[v_d,v_{d-1},\dots,v_1].\]
\end{lemma}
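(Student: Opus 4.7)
The plan is to construct $v_1,\dots,v_d$ inductively, using the Coxeter conditions at each step to solve for $v_i$ inside the line spanned by $Q_i$. Start with any nonzero $v_1\in Q_1$; since $P_1=Q_1$ both required equalities hold trivially for $i=1$. Now suppose $v_1,\dots,v_{i-1}$ have been chosen with $\langle v_j\rangle=Q_j$ and $\langle v_1+\dots+v_j\rangle=P_j$ for all $j<i$. Set $w_{i-1}=v_1+\dots+v_{i-1}$, so that $\langle w_{i-1}\rangle=P_{i-1}$, and pick any $u_i$ with $\langle u_i\rangle=P_i$. The Coxeter condition $Q_i\subseteq\langle P_{i-1},P_i\rangle$ lets us write $Q_i=\langle\alpha w_{i-1}+\beta u_i\rangle$ for some scalars $\alpha,\beta$.

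The key point is that both $\alpha$ and $\beta$ are nonzero. If $\alpha=0$ then $Q_i=\langle u_i\rangle=P_i$, violating the genericity hypothesis $P_i\ne Q_i$. If $\beta=0$ then $Q_i=\langle w_{i-1}\rangle=P_{i-1}$; but $Q_j\in\langle P_{j-1},P_j\rangle\subseteq\langle P_1,\dots,P_{i-1}\rangle$ for every $j\le i-1$, and under the assumption $\beta=0$ we would additionally have $Q_i\in\langle P_1,\dots,P_{i-1}\rangle$, so $Q_1,\dots,Q_i$ would lie in an $(i-1)$-dimensional subspace, contradicting the non-degeneracy of $\Qc$. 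So $\alpha,\beta\ne 0$, and I can set
\[
v_i \;=\; -w_{i-1} - \tfrac{\beta}{\alpha}u_i.
\]
A direct check gives $v_i=-\tfrac{1}{\alpha}(\alpha w_{i-1}+\beta u_i)$, whence $\langle v_i\rangle=Q_i$, and $w_{i-1}+v_i=-\tfrac{\beta}{\alpha}u_i\in\langle u_i\rangle=P_i$, so $\langle v_1+\dots+v_i\rangle=P_i$. Since $w_{i-1}$ and $u_i$ are linearly independent (they span the distinct points $P_{i-1}$ and $P_i$) and $\beta/\alpha\ne 0$, the vector $v_i$ is nonzero.

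After completing the induction, the vectors $v_1,\dots,v_d$ span $V$ because $\langle v_j\rangle=Q_j$ and $Q_1,\dots,Q_d$ are in general position, so they form the desired basis. The only slightly subtle step is ruling out $\beta=0$, i.e.\ showing that $Q_i\ne P_{i-1}$ is automatic from non-degeneracy of $\Qc$ (not requiring genericity); everything else is a straightforward linear-algebra manipulation inside the $2$-plane $\langle P_{i-1},P_i\rangle$.
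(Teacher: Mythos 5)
Your proof is correct. Since the paper dismisses this lemma with ``an easy exercise in linear algebra'' and gives no argument, there is no paper proof to compare against; your inductive construction (solving for $v_i$ in the $2$-plane $\langle P_{i-1},P_i\rangle$ at each step) is a clean and complete way to carry out that exercise. The two case distinctions are handled correctly: genericity rules out $\alpha=0$ (so $Q_i\ne P_i$), and non-degeneracy of $\Qc$ rules out $\beta=0$ (so $Q_i\ne P_{i-1}$), since otherwise $Q_1,\dots,Q_i$ would all lie in the $(i-1)$-dimensional subspace $\langle P_1,\dots,P_{i-1}\rangle$. One small notational point: when you write ``$Q_j\in\langle P_{j-1},P_j\rangle$ for every $j\le i-1$,'' this reads awkwardly for $j=1$ (there is no $P_0$); it would be cleaner to note separately that $Q_1=P_1$ and invoke $Q_j\subseteq\langle P_{j-1},P_j\rangle$ only for $2\le j\le i-1$. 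Also worth observing, though not needed for the statement: your formula $v_i=-w_{i-1}-\tfrac{\beta}{\alpha}u_i$ is actually independent of the choice of $u_i$ (rescaling $u_i$ rescales $\beta/\alpha$ inversely), so the whole basis $v_1,\dots,v_d$ is determined by the generic Coxeter pair up to a single overall scalar.
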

\begin{proof}
This is an easy exercise in linear algebra.
\end{proof}

Directly by definition, we have
\begin{align}
\Lup[v_1,v_2,\dots,v_d]=&\,(-1)^d\I[v_1+\dots+v_d,v_2+\dots+v_{d},\dots, v_d], \label{FormulaLviaI}\\
\I[v_1,v_2,\dots,v_d]=&\,(-1)^d\Lup[v_1-v_2,v_2-v_3,\dots, v_{d-1}-v_d,v_d].
\label{FormulaIviaL} \end{align}

We will use the following presentation for symbols of Steinberg polylogarithms and Steinberg iterated integrals:

\begin{lemma} \label{LemmaLCoproductComponent} The following equalities holds in $\Bup_d\St(V)$:
\begin{align*}
s(\Lup[v_1,\dots,v_d])={}& s(\Lup[v_2,\dots, v_{d}])\otimes [v_1] \\[-0.5ex] &{} +\sum_{i=1}^{d-1} s(\Lup[v_1,\dots,v_{i-1},v_i+v_{i+1},v_{i+2},\dots, v_d])\otimes ([v_{i+1}] -[v_{i}]) ,\\[2ex]
s(\I[v_1,\dots,v_d])={}&-s(\I[v_1,\dots, v_{d-1}])\otimes [v_d]\\[-0.5ex]&{}+ \sum_{i=1}^{d-1} (s(\I[v_1,\dots,\widehat{v_{i+1}},\dots, v_d])-s(\I[v_1,\dots,\widehat{v_{i}},\dots, v_d]))\otimes [v_{i+1}-v_i].
\end{align*}
\end{lemma}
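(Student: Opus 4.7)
The strategy is to deduce both identities from the Hopf coalgebra structure of $\St^{\Hc}$. Since $s\colon\St^\Hc\to \Bup_\bullet\St$ identifies the shuffle product and deconcatenation coproduct on $\Bup_\bullet\St$ with the operations of Propositions~\ref{LemmaFormulaProduct} and~\ref{LemmaFormulaCoProduct}, an element of $\Bup_d\St(V)$ is uniquely determined by its $(d-1,1)$-deconcatenation into $\Bup_{d-1}\St\otimes_{\VB}\Bup_1\St$. Consequently, the first identity reduces to computing the $(d-1,1)$-component of $\Delta(\Lup[v_1,\dots,v_d])$ in $\St^{\Hc}\otimes\St^{\Hc}$ via Proposition~\ref{LemmaFormulaCoProduct}, with each ``$\otimes[v]$'' on the right interpreted as the element $[v]\otimes[v]\in\St^{\Hc}(\langle v\rangle)$ placed in the last slot.

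Write $\Lup[v_1,\dots,v_d]=A\otimes B$ with $A=[u_1,\dots,u_d]$, $B=[b_1,\dots,b_d]$, where $u_i=v_d+\dots+v_{d-i+1}$ and $b_j=v_{d-j+1}$, and parametrize the contributing $(d-1,1)$-pairs by $\bar I=\{k\}$, $J=\{l\}$. Using the telescoping relation $v_{d-l+1}=u_l-u_{l-1}$ inside $V/A_I$, the condition $V=A_I\oplus B_J$ singles out exactly the pairs $(k,l)\in\{(1,1)\}\cup\{(l,l),(l-1,l)\setsep 2\le l\le d\}$, each carrying sign $(-1)^{\sigma_I}(-1)^{\tau_J}=(-1)^{k+l}$. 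For every such pair $(A,B)^{I,J}$ collapses to $[b_l]\otimes[b_l]$. A direct computation of the intersections $A_I\cap\langle b_{j_s'},B_J\rangle$ then identifies $(A,B)_{I,J}$ as the Steinberg polylogarithm on $d-1$ arguments obtained from $(v_1,\dots,v_d)$ either by dropping $v_1$ (in the case $(k,l)=(d,d)$, giving $\Lup[v_2,\dots,v_d]$) or by merging a pair of consecutive entries $v_p,v_{p+1}$ (in every other case). Matching the merging index and the last factor $[b_l]$ with the summation index $i$ on the right-hand side gives a bijection under which the signs $(-1)^{k+l}$ and the claimed coefficients $\pm$ coincide, proving the first identity.

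The second identity follows from the first by substitution using~\eqref{FormulaIviaL}. Setting $x_i=v_i-v_{i+1}$ for $i<d$ and $x_d=v_d$, telescoping gives $\Lup[x_2,\dots,x_d]=(-1)^{d-1}\I[v_2,\dots,v_d]$, and for each $1\le i\le d-1$ one checks that $\Lup[x_1,\dots,x_i+x_{i+1},\dots,x_d]=(-1)^{d-1}\I[v_1,\dots,\widehat{v_{i+1}},\dots,v_d]$. Substituting into the $\Lup$-identity, regrouping by the line appearing in the last slot, and using scalar invariance $[v]=[-v]$ in $\St$ to identify $[x_i]=[v_i-v_{i+1}]$ with $[v_{i+1}-v_i]$ delivers the stated formula for $s(\I[v_1,\dots,v_d])$.

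The main technical hurdle lies in the second paragraph: the intersections $A_I\cap\langle b_{j_s'},B_J\rangle$ must be evaluated in three subcases depending on whether the index $s$ lies strictly to the left of, precisely at, or strictly to the right of the critical position $l$, and one must verify that the resulting strings of lines assemble in the correct order to form the first and second apartments of the intended Steinberg polylogarithm.
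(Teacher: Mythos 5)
Your proof is correct and takes essentially the same approach as the paper: compute the $(d-1,1)$-component of the coproduct via Proposition~\ref{LemmaFormulaCoProduct}, identify the contributing pairs $(I,J)$ and their signs, and read off the formula, with the second identity then following from the first by the substitution~\eqref{FormulaIviaL}. The only difference is cosmetic---your indexing $u_i=v_d+\dots+v_{d-i+1}$, $b_j=v_{d-j+1}$ is the reversal of the paper's $w_k=v_k+\dots+v_d$, and since both tensor factors of $\Lup$ are reversed the signs cancel, so the computations match term by term.
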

\begin{proof}
We prove the first statement; the second statement follows from the first and~\eqref{FormulaIviaL}. We introduce the notation $w_k=v_k+\dots+v_d$. To prove the identity, we compute $\Delta_{d-1,1}s(\Lup[v_1,\dots,v_d])$ using~\eqref{FormulaCoproductLA}. The relevant terms in the formula~\eqref{FormulaCoproductLA} correspond to decompositions of $V$ into a sum of two subspaces: $W_i=\langle w_1,\dots,\widehat{w_i},\dots, w_d \rangle$ and $\langle v_j\rangle$ for $1\leq i,j\leq d$. We have $V=W_i\oplus \langle v_j\rangle$ if and only if $j=i$ or $j=i-1$, and in these cases the corresponding term is
    \begin{align*}
    &(-1)^{i+j}s([w_1,\dots,\widehat{w_i},\dots,w_d]\otimes [v_1,\dots,v_{i-2},v_{i-1}+v_{i},v_{i+1},\dots,v_{d}])\otimes [v_j]
    \\
    = {} &(-1)^{i+j}s(\Lup[v_1,\dots,v_{i-2},v_{i-1}+v_{i},v_{i+1},\dots,v_{d}])\otimes [v_j],
    \end{align*}
if $(i,j)\ne (1,1)$ or
    \begin{align*}
    s([w_2,\dots,w_d]\otimes [v_2,\dots,v_{d}])\otimes [v_1] = s(\Lup[v_2,\dots,v_d]) \otimes [v_1],
    \end{align*}
if $i=j=1$, proving the claim.
\end{proof}

Lemma \ref{LemmaLCoproductComponent} together with the fact that $s(\I[v])=s(-[v] \otimes[v]) =-[v]$ and  $s(\Lup[v])=s([v] \otimes[v]) =[v]$ allows us to inductively compute $s(\Lup[v_1,\dots,v_d])$ and $s(\I[v_1,\dots,v_d]).$

\begin{example}
For $d=2$ we have
\begin{align*}
s(\Lup[v_1,v_2]) &= [v_2 \smid v_1]-[v_1{+}v_2 \smid v_1]+[v_1{+}v_2 \smid v_2],\\
s(\I[v_1,v_2]) &= [v_1 \smid v_2]-[v_1 \smid v_2{-}v_1]+[v_2 \smid v_2{-}v_1].
\end{align*}
Similarly, for $d=3$
\begin{align*}
s(\I[v_1,v_2,v_3]) = {} & 
 -s(\I[v_1,v_2])\otimes[v_3]
+\big(s(\I[v_1,v_3])
-s(\I[v_2,v_3])\big)\otimes[v_2{-}v_1] \\
& 
{} +\big(s(\I[v_1,v_2])-s(\I[v_1,v_3])\big)\otimes[v_3{-}v_2]\\[1ex]
= {} & - [v_1 \smid v_2 \smid v_3]+[v_1 \smid v_2{-}v_1 \smid v_3]-[v_2 \smid v_2{-}v_1 \smid v_3]
\\
 &+[v_1 \smid v_3 \smid v_2{-}v_1]-[v_1 \smid v_3{-}v_1 \smid v_2{-}v_1]+[v_3 \smid v_3{-}v_1 \smid v_2{-}v_1]\\
 &-[v_2 \smid v_3 \smid v_2{-}v_1]+[v_2 \smid v_3{-}v_2 \smid v_2{-}v_1]-[v_3 \smid v_3{-}v_2 \smid v_2{-}v_1]\\
 &+[v_1 \smid v_2 \smid v_3{-}v_2]-[v_1 \smid v_2{-}v_1 \smid v_3{-}v_2]+[v_2 \smid v_2{-}v_1 \smid v_3{-}v_2]\\
 &-[v_1 \smid v_3 \smid v_3{-}v_2]+[v_1 \smid v_3{-}v_1 \smid v_3{-}v_2]-[v_3 \smid v_3{-}v_1 \smid v_3{-}v_2].
\end{align*}
\end{example}

\begin{proposition} \label{PropositionDuality} Let  $v_1,\dots ,v_d$ be a basis of $V$ and $v^1,\dots,v^d$ be the corresponding dual basis of $V^{\vee}$. We have 
\begin{align}
D\bigl(\Lup[v_1,\dots,v_d]\bigr)  &=(-1)^d\I[v^d,\dots,v^1],\label{FormulaDualityL}\\ 
D\bigl (\I[v_1,\dots,v_d]\bigr) &=(-1)^d\Lup[v^d,\dots,v^1].\label{FormulaDualityI}
\end{align}
\end{proposition}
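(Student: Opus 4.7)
The plan is to prove both identities by direct computation, carefully reading off the dual simplex of each of the two apartments that make up a Steinberg polylogarithm and a Steinberg iterated integral, and then matching the result against the target expression using the skew-symmetry and $F^\times$-invariance of apartments in the Steinberg module.

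First I would prove the identity \eqref{FormulaDualityL}. By definition, $\Lup[v_1,\dots,v_d]=[\Pc]\otimes[\Qc]$ with $\Pc=(v_d,v_d+v_{d-1},\dots,v_d+\dots+v_1)$ and $\Qc=(v_d,v_{d-1},\dots,v_1)$. The simplex $\Qc$ is a permutation of the basis $v_1,\dots,v_d$, so its dual is immediate: $\Qc^\vee=(v^d,v^{d-1},\dots,v^1)$. For $\Pc^\vee$, the task is to identify the functional that vanishes on $\langle P_1,\dots,\widehat{P_i},\dots,P_d\rangle$. A one-line evaluation shows $v^k(P_j)=1$ iff $j\geq d-k+1$; this gives $(v^{d-i+1}-v^{d-i})(P_j)=-\delta_{i,j}$ (with the convention $v^0=0$), so $\Pc^\vee=(v^d-v^{d-1},v^{d-1}-v^{d-2},\dots,v^2-v^1,v^1)$.

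Setting $u_i\coloneqq v^{d-i+1}$, one obtains
\[
D(\Lup[v_1,\dots,v_d])=[u_1,\dots,u_d]\otimes[u_1-u_2,u_2-u_3,\dots,u_{d-1}-u_d,u_d].
\]
On the other side, unpacking the definition of $\I[u_1,\dots,u_d]=\I[v^d,\dots,v^1]$ and reversing both tensor factors (each reversal contributes the sign of the longest permutation, $(-1)^{d(d-1)/2}$) turns the expression into $(-1)^d$ times the same element, giving \eqref{FormulaDualityL}.

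For \eqref{FormulaDualityI} I would take the shortcut offered by \eqref{FormulaIviaL}: apply \eqref{FormulaDualityL} to the basis $v_1-v_2,v_2-v_3,\dots,v_{d-1}-v_d,v_d$ whose dual basis is $v^1, v^1+v^2,\dots,v^1+\dots+v^d$; substituting this into \eqref{FormulaLviaI} and simplifying yields the desired formula. Alternatively, one can repeat the computation of the dual simplex directly: for $\I[v_1,\dots,v_d]$, the first factor $[v_d,v_{d-1},\dots,v_1]$ dualizes to $(u_1,\dots,u_d)$, while the second factor, whose $i$-th vertex is $v_{d-i+1}-v_{d-i+2}$ (and $v_d$ for $i=1$), has dual vertex $u_i+u_{i+1}+\dots+u_d$ — this is an easy calculation using that the partial sums of consecutive differences telescope, so that the functional vanishing on all but the $i$-th vertex must be constant on $\{w_1,\dots,w_{i-1}\}$ and on $\{w_i,\dots,w_d\}$ with value zero on the first block. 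Matching with $\Lup[u_1,\dots,u_d]$ and applying the reversal identities finishes the proof.

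The only real obstacle is bookkeeping: tracking the signs from the two reversal permutations (which cancel to give a total factor $(-1)^{d(d-1)}=1$) and confirming that $D$ is well-defined on the classes of $\Lup$ and $\I$, i.e.\ that the answer is independent of any choice of representative of the projective points. Since $\Pc^\vee$ and $\Qc^\vee$ are computed entirely within $\PP(V^\vee)$ and the formula $D([\Pc]\otimes[\Qc])=[\Qc^\vee]\otimes[\Pc^\vee]$ from \eqref{FormulaDualityMapSt} is already known to be well-defined, this is automatic, and the whole argument reduces to the two simplex-dualization computations above.
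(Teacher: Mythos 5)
Your proof is correct and follows essentially the same route as the paper: compute the dual basis of $v_d, v_d+v_{d-1},\dots,v_d+\dots+v_1$ (getting $v^d-v^{d-1},\dots,v^2-v^1,v^1$), apply the formula $D([\Pc]\otimes[\Qc])=[\Qc^\vee]\otimes[\Pc^\vee]$, and compare with $\I[v^d,\dots,v^1]$ after reversing both tensor factors (so the two $(-1)^{d(d-1)/2}$ signs cancel). One small slip: $(v^{d-i+1}-v^{d-i})(P_j)$ evaluates to $+\delta_{ij}$, not $-\delta_{ij}$, but since you only need the resulting projective point the sign is immaterial and the stated $\Pc^\vee$ is right. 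For \eqref{FormulaDualityI} the paper merely asserts ``the proof is similar''; your reduction via \eqref{FormulaIviaL}/\eqref{FormulaLviaI} applied to the basis $v_1-v_2,\dots,v_{d-1}-v_d,v_d$ (whose dual basis is $v^1,v^1+v^2,\dots,v^1+\dots+v^d$) is a clean way to make that precise, and it is certainly within the spirit of the original argument.
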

\begin{proof}
We prove (\ref{FormulaDualityL}); the proof of (\ref{FormulaDualityI}) is similar. For this notice that the dual basis of the basis $v_d, v_d+v_{d-1},\dots,v_d+v_{d-1}+\dots+v_1$ is $v^d-v^{d-1}, v^{d-1}-v^{d-2},\dots,v^2-v^1,v^1.$ Thus 
    \begin{align*}
    D\bigl(\Lup[v_1,\dots,v_d]\bigr)&=D \Bigl( 
[v_d, v_d+v_{d-1},\dots,v_{d}+\dots+v_1]\otimes [v_d,v_{d-1},\dots,v_1]\Bigr)\\
&=[v^d,v^{d-1},\dots,v^1]\otimes [v^d-v^{d-1},v^{d-1}-v^{d-2},\dots,v^2-v^1,v^1]\\
    &=[v^1,v^2,\dots,v^d]\otimes[v^1,v^2-v^1,\dots,v^d-v^{d-1}]\\
    &=(-1)^d\I[v^d,\dots,v^1].\qedhere
    \end{align*}
\end{proof}

\subsection{Double Shuffle Relations and their corollaries} \label{SectionDoubleShuffle}
In this section, we prove a family of identities in $\StH(V)$ inspired by the Double Shuffle Relations for polylogarithms. As a consequence, we prove that generic Coxeter pairs span $\StH(V)$.

\begin{proposition}[Double Shuffle Relations] \label{Proposition_double_shuffle} Consider a decomposition $V=V_1\oplus V_2$. Let $v_1,\dots,v_{d_1}$ be a basis of $V_1$ and  $v_{d_1+1},\dots,v_{d_1+d_2}$ be a basis of $V_2.$ The following equalities hold in $\StH(V)$:
\begin{align}
\Lup[v_1,\dots,v_{d_1}]\Lup[v_{d_1+1},\dots,v_{d_1+d_2}]={}&\sum_{\sigma\in \Sigma_{d_1,d_2}}\Lup[v_{\sigma(1)},\dots,v_{\sigma(d)}],\label{eq:shuffle1}\\
\I[v_1,\dots,v_{d_1}]\I[v_{d_1+1},\dots,v_{d_1+d_2}]={}&\sum_{\sigma\in \Sigma_{d_1,d_2}}\I[v_{\sigma(1)},\dots,v_{\sigma(d)}],\label{eq:shuffle2}
\end{align}
where $\Sigma_{d_1,d_2}\subseteq \Sfr_{d}$ is the set of $(d_1,d_2)$-shuffles.
\end{proposition}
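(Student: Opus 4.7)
The plan is to establish \eqref{eq:shuffle1} directly, and then deduce \eqref{eq:shuffle2} by means of the duality involution $D$ of \eqref{FormulaDualityMapSt}. For the deduction, I would first verify that $D$ is an isomorphism of $\VB$-algebras. For a concatenated simplex $\Pc=\Pc_1\cdot \Pc_2$ in $V=V_1\oplus V_2$ with $\Pc_i$ a simplex in $V_i$, each of the first $d_1$ faces $\langle P_1,\dots,\widehat{P_i},\dots,P_d\rangle$ contains $V_2$ entirely and thus dualizes to a line in $V_1^\vee\subset V^\vee=V_1^\vee\oplus V_2^\vee$, so $(\Pc_1\cdot\Pc_2)^\vee=\Pc_1^\vee\cdot\Pc_2^\vee$. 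Together with Proposition \ref{LemmaFormulaProduct} this yields $D(xy)=D(x)D(y)$. Applying $D$ to \eqref{eq:shuffle1}, using $D(\Lup[v_1,\dots,v_d])=(-1)^d\I[v^d,\dots,v^1]$ from Proposition \ref{PropositionDuality}, and relabeling the reversed dual basis as an arbitrary basis (noting that the block-reversal permutation preserves $\Sigma_{d_1,d_2}$) then gives \eqref{eq:shuffle2}.

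For \eqref{eq:shuffle1}, I would work inside $\Bup_d\St(V)$ via the embedding $s$ of Theorem \ref{TheoremSteinbergKoszulity}, under which the $\VB$-algebra multiplication on $\St^\Hc$ corresponds to the shuffle product \eqref{BarShuffleProduct}. Proceeding by induction on $d=d_1+d_2$, the base case $d=2$ follows from the explicit formulas for $s(\Lup[v_1,v_2])$ and $s(\Lup[v_2,v_1])$ given right after Lemma \ref{LemmaLCoproductComponent}, whose sum equals $[v_1|v_2]+[v_2|v_1]=s(\Lup[v_1])\cdot s(\Lup[v_2])$. For the inductive step, I would apply the $(d{-}1,1)$-part $\Delta_{d-1,1}\colon \Bup_d\St\to\Bup_{d-1}\St\otimes \Bup_1\St$ of the deconcatenation coproduct, which is injective on $\Bup_d\St$ since it merely splits off the last letter. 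On the left-hand side, the Hopf compatibility of shuffle and deconcatenation rewrites $\Delta_{d-1,1}$ of the shuffle product as
\[
s(\Lup[v_1,\dots,v_{d_1}])\cdot \Delta_{d_2-1,1}(s(\Lup[v_{d_1+1},\dots,v_d])) + \Delta_{d_1-1,1}(s(\Lup[v_1,\dots,v_{d_1}]))\cdot s(\Lup[v_{d_1+1},\dots,v_d]),
\]
and the inductive hypothesis converts the inner shuffle products into sums over shuffles of shorter sequences. On the right-hand side, Lemma \ref{LemmaLCoproductComponent} expands $\Delta_{d-1,1}(s(\Lup[v_{\sigma(1)},\dots,v_{\sigma(d)}]))$ as an initial term with last letter $[v_{\sigma(1)}]$ plus merger terms of the form $s(\Lup[\dots,v_{\sigma(i)}+v_{\sigma(i+1)},\dots])\otimes([v_{\sigma(i+1)}]-[v_{\sigma(i)}])$.

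The main obstacle will be the combinatorial bookkeeping that matches these two expansions. One must organize the right-hand sum by the position of the last letter $v_{\sigma(d)}\in\{v_{d_1},v_d\}$ and show that the merger contributions where $\sigma(i)$ and $\sigma(i+1)$ lie in different blocks cancel in pairs under the antisymmetry $[v_{\sigma(i+1)}]-[v_{\sigma(i)}]$ when summed over adjacent shuffles, while the intra-block mergers regroup by block of origin and reassemble into the two Hopf-axiom summands on the left-hand side (via the inductive hypothesis applied to the smaller shuffles $\Sigma_{d_1-1,d_2}$ and $\Sigma_{d_1,d_2-1}$).
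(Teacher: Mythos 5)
Your proof is correct, but it takes a genuinely different route from the paper. The paper reduces \eqref{eq:shuffle1}, via Proposition~\ref{LemmaFormulaProduct}, to a pure identity in $\St(V)$ of the form
\[
[v_1,\dots,v_1{+}\dots{+}v_{d_1}]\,[v_{d_1+1},\dots,v_{d_1+1}{+}\dots{+}v_d] = \sum_{\sigma\in\Sigma_{d_1,d_2}}(-1)^{\sigma}[v_{\sigma(1)},\dots,v_{\sigma(1)}{+}\dots{+}v_{\sigma(d)}],
\]
proves it first for $F=\Q$ by interpreting apartments as indicator functions of polyhedral cones in $\Q^d$ (the left side is a cone $C$ that decomposes into $\bigcup_{\sigma}C_\sigma$ with disjoint interiors), and then transfers to an arbitrary field $F$ using Theorem~\ref{TheoremARByk} to build a map $\St_d(\Q)\to\St_d(F)$. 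Your approach instead works directly in $\Bup_d\St(V)$ over any field $F$: you exploit injectivity of the $(d{-}1,1)$-deconcatenation component, expand both sides via Lemma~\ref{LemmaLCoproductComponent} and Hopf-compatibility of shuffle with deconcatenation, and close the argument by pairing off cross-block merger terms (they cancel via the antisymmetry of $[v_{\sigma(i+1)}]-[v_{\sigma(i)}]$ under the adjacent transposition $\sigma\mapsto\sigma\circ(i,i{+}1)$, which preserves $\Sigma_{d_1,d_2}$) and regrouping intra-block mergers and initial terms using the inductive hypothesis. The payoff is that your proof is more elementary and field-agnostic from the start — no detour through $\Q$, cones, or Bykovski\u{\i} — at the cost of heavier combinatorial bookkeeping. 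One small slip: the initial terms of the Lemma~\ref{LemmaLCoproductComponent} expansion carry the factor $[v_{\sigma(1)}]$, so the organization is by the block membership of $\sigma(1)\in\{1,d_1{+}1\}$, not by $v_{\sigma(d)}\in\{v_{d_1},v_d\}$ as you wrote in passing; the subsequent description of the cancellations and regroupings is consistent with the correct reading. Your verification that $D$ is a $\VB$-algebra homomorphism (via $(\Pc_1\cdot\Pc_2)^\vee=\Pc_1^\vee\cdot\Pc_2^\vee$) is a useful supplement to the paper, which simply invokes Proposition~\ref{PropositionDuality}.
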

\begin{proof}
We will prove only the first identity, the second follows by duality, see Proposition~\ref{PropositionDuality}. Identity~\eqref{eq:shuffle1} is a consequence of the following identity in $\St(V_1\oplus V_2)$:
    \begin{equation*} 
        [v_{1},\dots,v_1+\dots+v_{d_1}]\,[v_{d_1+1},\dots,v_{d_1+1}+\dots+v_{d_1+d_2}]   =
    \sum_{\sigma\in \Sigma_{d_1,d_2}} \!\! (-1)^\sigma[v_{\sigma(1)},\dots,v_{\sigma(1)}+\dots+v_{\sigma(d)}]  .
    \end{equation*}
It suffices to check that this identity holds in $\St(V)$ in the case when $v_i=e_i$ form the standard basis.

First, we show that this identity holds in $\St_d(\Q)$. It can be seen in terms of cones (see~\S\ref{sec:steinbergcones}): $\sgn(\sigma)[e_{\sigma(1)},\dots,e_{\sigma(1)}+\dots+e_{\sigma(d_1+d_2)}]$ represents the cone $C_{\sigma}=\{t\in\Q^{d_1+d_2}\setsep t_{\sigma(1)}\geq t_{\sigma(2)}\geq\dots\geq t_{\sigma(d_1+d_2)}\geq0\}$ and the element on the left hand side represents the cone 
$C=\{t\in\Q^{d_1+d_2} \setsep t_{1}\geq t_{2}\geq\dots\geq t_{d_1}\geq0\,,\,t_{d_1+1}\geq t_{d_1+2}\geq\dots\geq t_{d_1+d_2}\geq0\}$.
The permutations in $\Sigma_{d_1,d_2}$ encode the orderings of $t_i$ extending $t_1\geq\dots\geq t_{d_1}$ and $t_{d_1+1}\geq\dots\geq t_{d_1+d_2}$, therefore $C=\bigcup_{\sigma\in\Sigma_{d_1,d_2}}C_{\sigma}.$ Since  $C_{\sigma}$ and $C_{\sigma'}$ have non-intersecting interiors for $\sigma\ne\sigma'$, this implies the sought after identity in $\St_d(\Q)$.

Next, Theorem \ref{TheoremARByk} implies that for every field $F$ there exists a map $\St_d(\Q)\lra \St_d(F)$ sending an integral apartment  $\St_d(\Q)$ to the corresponding element in $\St_d(V)$ using a natural map $\Z\lra F$. So, the identity above holds in $\St(V)\cong\St_d(F)$.
\end{proof}

\begin{corollary}[Dihedral relations] \label{CorollaryDihedral} Consider a basis $v_1,\dots,v_d$ of $V$ and put $v_{0}=-(v_1 + \cdots + v_d)
.$ The following equalities hold in $\StL(V)$:
\[\Lup^{\Lc}[v_1,\dots,v_d]=\Lup^{\Lc}[v_2,\dots,v_{d},v_{0}]=\Lup^{\Lc}[-v_1,\dots,-v_d]=(-1)^{d+1}\Lup^{\Lc}[v_d,\dots,v_1].
\]
We also have an identity $\I^{\Lc}[v_1,\dots,v_d]=(-1)^{d+1}\I^{\Lc}[v_d,\dots,v_1]$ and
\[
\I^{\Lc}[v_1-v_0,v_2-v_0,\dots,v_d-v_0]=\I^{\Lc}[v_2-v_1,v_3-v_1,\dots,v_0-v_1].
\]
\end{corollary}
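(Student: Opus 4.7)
\emph{Plan:} All five identities in Corollary~\ref{CorollaryDihedral} will be derived from Proposition~\ref{Proposition_double_shuffle} (Double Shuffle Relations), the apartment relations~\ref{strel1}--\ref{strel3}, and the duality $D$ of Proposition~\ref{PropositionDuality}. The central structural input is that in any connected commutative Hopf algebra, the antipode acts as $-\mathrm{id}$ on the space of indecomposables, combined with the fact that on a shuffle Hopf algebra the antipode of a length-$d$ word equals $(-1)^d$ times its reversal.

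The identity $\Lup^{\Lc}[v_1,\dots,v_d]=\Lup^{\Lc}[-v_1,\dots,-v_d]$ in fact holds already at the level of $\St^{\Hc}(V)$: negating every $v_i$ negates each entry of both apartments $[v_d,v_d+v_{d-1},\dots,v_d+\dots+v_1]$ and $[v_d,\dots,v_1]$ individually, and by relation~\ref{strel2} these apartments are left unchanged.

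For the reversal identities, the Double Shuffle Relations~\eqref{eq:shuffle1} and~\eqref{eq:shuffle2} say that the $\Lup$'s (respectively the $\I$'s) assemble into an algebra map from the free shuffle algebra $\Sh(W)$ on a formal space of letters $W$ into $\St^{\Hc}$. In the shuffle Hopf algebra, the element $w_1\cdots w_d+(-1)^d w_d\cdots w_1$ lies in the shuffle-product ideal (this is the shuffle-algebra counterpart of ``$S=-\mathrm{id}$ on indecomposables''). Transferring this via the algebra map yields
\[\Lup[v_1,\dots,v_d]+(-1)^d\Lup[v_d,\dots,v_1]\in(\St_+^{\Hc})^2 ,\]
so after projection to $\St^{\Lc}$ one gets the reversal identity for $\Lup^{\Lc}$, and analogously for $\I^{\Lc}$. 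The two reversal identities are also interchangeable under the duality formulas~\eqref{FormulaDualityL}, \eqref{FormulaDualityI}.

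The cyclic symmetry $\Lup^{\Lc}[v_1,\dots,v_d]=\Lup^{\Lc}[v_2,\dots,v_d,v_0]$ is the most delicate. Apply the apartment relation~\ref{strel3} to the $d+1$ vectors $v_0,v_1,\dots,v_d$ satisfying $v_0+v_1+\dots+v_d=0$ to obtain
\[ [v_1,v_2,\dots,v_d]=\sum_{i=1}^{d}(-1)^{i+1}[v_0,v_1,\dots,\widehat{v_i},\dots,v_d]\quad\text{in }\St(V). \]
Inserting this expansion into both factors in the definition of $\Lup[v_1,\dots,v_d]$ and expanding via the explicit product formula of Proposition~\ref{LemmaFormulaProduct}, the Double Shuffle Relations imply that the resulting intermediate combinations are decomposable in $\St^{\Hc}(V)$; after projection to $\St^{\Lc}(V)$ one arrives at the identity $\Lup^{\Lc}[v_1,\dots,v_d]=\Lup^{\Lc}[v_2,\dots,v_d,v_0]$. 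The cyclic identity~(e) for $\I^{\Lc}$ follows by applying the duality $D$ together with the conversion formulas~\eqref{FormulaLviaI}, \eqref{FormulaIviaL}.

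The main obstacle is the cyclic identity: reversal follows from shuffle-algebra generalities alone, but cyclic symmetry requires combining the shuffle structure with the finite-dimensionality constraint encoded in relation~\ref{strel3}. This is the Steinberg analog of Goncharov's dihedral symmetry for multiple polylogarithms, where classically one must combine shuffle and stuffle relations; in the present setting the apartment-boundary relation plays the role of the stuffle.
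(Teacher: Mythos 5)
The paper's own proof of Corollary~\ref{CorollaryDihedral} is a single citation to Goncharov (\cite[Theorem~2.7]{Gon01B}), so you are attempting something the paper does not spell out. That is a reasonable thing to try, and most of your plan is sound. The sign-invariance $\Lup^{\Lc}[-v_1,\dots,-v_d]=\Lup^{\Lc}[v_1,\dots,v_d]$ does indeed hold already in $\St^{\Hc}(V)$ via relation~\ref{strel2}, as you say. Your reversal argument is also essentially correct: the antipode on a graded connected commutative Hopf algebra satisfies $a+S(a)\in(H_+)^2$, the antipode of the shuffle Hopf algebra sends a word to $(-1)^{\text{length}}$ times its reversal, and the shuffle relations~\eqref{eq:shuffle1}, \eqref{eq:shuffle2} transfer this; the one point you should make explicit is that the expression $w_1\cdots w_d+(-1)^dw_d\cdots w_1\in(\Sh_+)^2$ can be written as a sum of shuffles $p_i\shuffle q_i$ with $p_i$, $q_i$ using \emph{disjoint} letter sets (this follows from the finer multigrading of $\Sh(W)$ by multidegree), since only for disjoint letter sets does the shuffle relation on $\Lup$'s or $\I$'s apply.

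The cyclic identity $\Lup^{\Lc}[v_1,\dots,v_d]=\Lup^{\Lc}[v_2,\dots,v_d,v_0]$, however, is where your argument breaks down, and it is the part that actually carries the content of the corollary. You propose to apply the apartment relation~\ref{strel3} to $[v_1,\dots,v_d]$ and ``insert this expansion into both factors in the definition of $\Lup[v_1,\dots,v_d]$.'' But the two factors of $\Lup[v_1,\dots,v_d]=[v_d,v_d+v_{d-1},\dots,v_d+\dots+v_1]\otimes[v_d,\dots,v_1]$ are not $[v_1,\dots,v_d]$; the first factor is an apartment on the partial sums $v_d+\dots+v_{i}$, and the apartment relation you wrote down does not touch it. Moreover, even after expanding one factor, the resulting terms of $\St(V)\otimes\St(V)$ are generic elements of $\St^{\Hc}(V)$ with no evident reason to be decomposable in the $\VB$-algebra sense; ``the Double Shuffle Relations imply the combinations are decomposable'' is asserted but never justified. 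Proposition~\ref{LemmaFormulaProduct} describes products $\St^{\Hc}(V_1)\otimes\St^{\Hc}(V_2)\to\St^{\Hc}(V_1\oplus V_2)$ and will not produce the identification you need. The actual mechanism that brings $v_0$ into play is not relation~\ref{strel3} but the \emph{interplay between the two shuffle families}: passing from $\Lup$ to $\I$ via \eqref{FormulaLviaI}/\eqref{FormulaIviaL} replaces $(v_1,\dots,v_d)$ by its partial sums, the leading one being $-v_0$; applying the $\I$-shuffle relation and translating back to $\Lup$'s (together with sign-invariance and reversal) then yields the cyclic shift. For instance at $d=2$, the $\I$-shuffle $\I[u_1]\cdot\I[u_2]$ with $u_1=v_1+v_2=-v_0$, $u_2=v_2$ gives $\Lup^{\Lc}[v_1,v_2]+\Lup^{\Lc}[-v_1,-v_0]=0$, and sign-invariance plus reversal turn this into $\Lup^{\Lc}[v_1,v_2]=\Lup^{\Lc}[v_2,v_0]$. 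A correct self-contained proof would need to carry out this kind of argument in general, rather than invoking~\ref{strel3}. Finally, a minor remark: the last identity $\I^{\Lc}[v_1-v_0,\dots,v_d-v_0]=\I^{\Lc}[v_2-v_1,\dots,v_0-v_1]$ follows from the $\Lup^{\Lc}$ cyclic identity by \eqref{FormulaIviaL} alone; the duality $D$ is not needed for this step.
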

\begin{proof} Dihedral relations follow from double shuffle relations, see \cite[Theorem 2.7]{Gon01B}. 
\end{proof}

\begin{corollary}
\label{CoxeterSpecialH} Let $H$ be a hyperplane in a $d$-dimensional vector space $V$. Then generic Coxeter pairs $[\Pc]\otimes [\Qc]$ with $Q_2,\dots,Q_d\in H$ generate $\StL(V)$ as a $\Q$-vector space.
\end{corollary}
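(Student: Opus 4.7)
The plan is to deduce the corollary from Theorem~\ref{thm:Stn_li_basis} together with the cyclic shift identity in Corollary~\ref{CorollaryDihedral}. Theorem~\ref{thm:Stn_li_basis} says that Coxeter pairs with $P_1,\dots,P_{d-1}\in H$ generate $\St^{\Hc}(V)$, and Proposition~\ref{PropositionNonGenericPairs} shows that non-generic Coxeter pairs are decomposable and hence vanish in $\St^{\Lc}(V)$. Consequently $\St^{\Lc}(V)$ is already generated by the classes of generic Coxeter pairs with $P_1,\dots,P_{d-1}\in H$, and the task reduces to rewriting each such class as a generic Coxeter pair satisfying $Q_2,\dots,Q_d\in H$.

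Express a generic Coxeter pair with $P_1,\dots,P_{d-1}\in H$ as a Steinberg polylogarithm $\Lup[v_1,\dots,v_d]$, so that $P_i=\langle v_1+\dots+v_i\rangle$ and $Q_i=\langle v_i\rangle$. Telescoping the conditions $P_i\in H$ yields $v_1,\dots,v_{d-1}\in H$; since $v_1,\dots,v_d$ must be a basis of $V$, this forces $v_d\notin H$. Iterating the shift $\Lup^{\Lc}[u_1,\dots,u_d]=\Lup^{\Lc}[u_2,\dots,u_d,u_0]$ from Corollary~\ref{CorollaryDihedral} exactly $d$ times then produces
\[
\Lup^{\Lc}[v_1,\dots,v_d]=\Lup^{\Lc}[v_0,v_1,\dots,v_{d-1}],\qquad v_0 \coloneqq -(v_1+\dots+v_d).
\]
The right-hand side is the Steinberg polylogarithm in the basis $w_1=v_0$, $w_i=v_{i-1}$ for $i\ge 2$, whose associated Coxeter pair $(\Pc',\Qc')$ satisfies $Q'_i=\langle v_{i-1}\rangle\in H$ for $i=2,\dots,d$, which is exactly the desired condition.

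The only remaining step, which is also the one place where any genuine checking is required, is to verify that this shifted pair is still generic, i.e.\ that $w_1+\dots+w_{i-1}$ is not a scalar multiple of $w_i$ for $i=2,\dots,d$. A direct calculation gives $w_1+\dots+w_{i-1}=-(v_{i-1}+v_i+\dots+v_d)$, so the claim becomes $v_i+\dots+v_d\notin\langle v_{i-1}\rangle$. Since $v_{i-1}\in H$ while $v_i+\dots+v_d=(v_i+\dots+v_{d-1})+v_d$ has a nonzero component modulo $H$ coming from $v_d\notin H$, the two vectors cannot be proportional. This completes the reduction and establishes that generic Coxeter pairs with $Q_2,\dots,Q_d\in H$ span $\St^{\Lc}(V)$.
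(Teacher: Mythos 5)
Your proof is correct and follows essentially the same route as the paper: reduce via Theorem~\ref{thm:Stn_li_basis} and Proposition~\ref{PropositionNonGenericPairs} to generic Coxeter pairs with $P_1,\dots,P_{d-1}\in H$, then apply the cyclic shift from Corollary~\ref{CorollaryDihedral} to produce a Coxeter pair with $Q_2,\dots,Q_d\in H$. The paper parametrizes the starting pair with vectors $v_i$ satisfying $h(v_i)=1$ and identifies the shifted element as $(-1)^d\I^{\Lc}[v_1,\dots,v_d]$, which by the definition of the Steinberg iterated integral $\I$ is already in the desired form, so no separate genericity check is needed; your $d$-fold shift yields the same element, and the explicit genericity verification you carry out substitutes for that identification. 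One notational caution: the parametrization you write, $P_i=\langle v_1+\cdots+v_i\rangle$ and $Q_i=\langle v_i\rangle$ for $\Lup[v_1,\dots,v_d]$, reverses the index order in the definition of $\Lup$ from \S\ref{sec:steinbergLiI} (which has $Q_i=\langle v_{d-i+1}\rangle$), so what you call $\Lup[v_1,\dots,v_d]$ is the paper's $\Lup[v_d,\dots,v_1]$. Since $\Lup^{\Lc}[v_d,\dots,v_1]=(-1)^{d+1}\Lup^{\Lc}[v_1,\dots,v_d]$, this only costs a harmless sign in $\St^{\Lc}(V)$ and the argument goes through, but you should align the indices with the paper's convention.
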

\begin{proof}
Let $h$ be a linear functional with $H=\Ker(h)$. Recall from Proposition~\ref{PropositionNonGenericPairs} that non-generic Coxeter pairs vanish in $\StL(V)$. Therefore, by Theorem~\ref{thm:Stn_li_basis}, the elements $\Lup^\Lc[-v_1,v_1-v_2,\dots,v_{d-1}-v_d]$ span $\StL(V)$ as $v_1,\dots,v_d$ run over all $d$-tuples of vectors satisfying $h(v_i)=1$, $i=1,\dots,d$. By Corollary~\ref{CorollaryDihedral}
    \[\Lup^\Lc[-v_1,v_1-v_2,\dots,v_{d-1}-v_d] = \Lup^\Lc[v_1-v_2,\dots,v_{d-1}-v_d,v_d]
     = (-1)^d\I^\Lc[v_1,\dots,v_d],\]
which proves the claim, since generic Coxeter pairs $[\Pc]\otimes [\Qc]$ with $Q_2,\dots,Q_d\in H$ are exactly the elements of the form $\I[v_1,\dots,v_d]$ with $h(v_i)=1$, $i=1,\dots,d$.
\end{proof}

\begin{remark} \label{RemarkDualityGeneratingFunctions}
Note that the shuffle relations for $\Lup$ and $\I$ are dual to each other. This duality extends to higher weight polylogarithms via generating functions as follows. Assume that $F=\Q$. We define the following elements in $\StH(V)\otimes \Sbb(V)[[t_1,\dots,t_d]]$:
    \begin{align*}
    \Lup[v_1,\dots,v_d|t_1,\dots,t_d] \coloneqq {}& 
    \sum_{n_1,\dots,n_d>0}\Lup[v_1,\dots,v_d]\otimes \prod_{j=1}^{d}\frac{v_j^{n_j-1}t_j^{n_j-1}}{(n_j-1)!}\\
    ={}&\Lup[v_1,\dots,v_d]\otimes e^{t_1v_1+\dots+t_dv_d},
    \end{align*}
and 
    \begin{align*}
    \I[v_1,\dots,v_d|t_1,\dots,t_d] \coloneqq {} & 
    \sum_{n_1,\dots,n_d>0}\I[v_1,\dots,v_d]\otimes \prod_{j=1}^{d}\frac{(v_j-v_{j+1})^{n_j-1}(t_1+\dots+t_j)^{n_j-1}}{(n_j-1)!}\\
    ={}&\I[v_1,\dots,v_d]\otimes e^{t_1(v_1-v_2)+(t_1+t_2)(v_2-v_3)+\dots+(t_1+\dots+t_{d})(v_{d})}\\
    ={}&\I[v_1,\dots,v_d]\otimes e^{t_1v_1+\dots+t_dv_d},
    \end{align*}
where we set $v_{d+1}=0$.
Then shuffle relations~\eqref{eq:shuffle1} and~\eqref{eq:shuffle2} clearly also hold for these generating functions, for example
    \[\Lup[v_1,v_2|t_1,t_2]\Lup[v_3|t_3] = \Lup[v_1,v_2,v_3|t_1,t_2,t_3]
                                          +\Lup[v_1,v_3,v_2|t_1,t_3,t_2]
                                          +\Lup[v_3,v_1,v_2|t_3,t_1,t_2].\]
The two generating functions are now related to each other by
    \[\I[v_1,\dots,v_d|t_1,\dots,t_d] = \Lup[v_1-v_2,\dots,v_{d-1}-v_d,v_d|t_1,t_1+t_2,\dots,t_1+\dots+t_d].\]
Using Theorem~\ref{TheoremMain2} one can use this to derive double shuffle relations for multiple polylogarithms, see~\cite[\S\S2.5--2.7]{Gon01}.
Compare this also to the generating functions considered by Goncharov in~\cite[\S4.1]{Gon01B}.
\end{remark}

As an application, we prove that  generic Coxeter pairs generate $\StH(V)$.

 \begin{proposition}\label{TheoremGenericPairsGenerate} Generic Coxeter pairs generate $\StH(V)$ as a $\Q$-vector space.
\end{proposition}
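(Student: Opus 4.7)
The plan is to argue by induction on $d=\dim V$, combining Corollary~\ref{CoxeterSpecialH} (which controls $\St^\Lc(V)$) with the shuffle relation \eqref{eq:shuffle1} (which handles the decomposable part of $\St^\Hc(V)$). The cases $d\le 1$ are trivial since $\St^\Hc(V)\cong \Q$ is spanned by the empty (or single-vertex) Coxeter pair.

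For the inductive step, let $S\subseteq \St^\Hc(V)$ denote the $\Q$-span of generic Coxeter pairs. By Corollary~\ref{CoxeterSpecialH}, generic Coxeter pairs already span the Lie coalgebra of indecomposables $\St^\Lc(V)$, so $S$ maps surjectively onto $\St^\Lc(V)$. Since
\[
\St^\Lc(V)=\St^\Hc(V)\Big/\sum_{k=1}^{d-1}\St^\Hc_k(V)\cdot \St^\Hc_{d-k}(V),
\]
this means $\St^\Hc(V)=S+\mathrm{Decomp}$, where $\mathrm{Decomp}\coloneqq \sum_{k=1}^{d-1}\St^\Hc_k(V)\cdot \St^\Hc_{d-k}(V)$. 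It therefore suffices to prove $\mathrm{Decomp}\subseteq S$.

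Fix a decomposition $V=V_1\oplus V_2$ with $0<\dim V_i<d$. By the inductive hypothesis applied to $V_1$ and $V_2$, each factor $\St^\Hc(V_i)$ is spanned by generic Coxeter pairs, which by the lemma preceding Definition~\ref{DefinitionSpecialPairs} are exactly the Steinberg polylogarithms $\Lup[v_1,\dots,v_{d_i}]$ associated to bases of $V_i$. Applying Proposition~\ref{LemmaFormulaProduct} and the shuffle relation \eqref{eq:shuffle1} gives
\[
\Lup[v_1,\dots,v_{d_1}]\cdot \Lup[v_{d_1+1},\dots,v_d]=\sum_{\sigma\in \Sigma_{d_1,d_2}}\Lup[v_{\sigma(1)},\dots,v_{\sigma(d)}].
\]
Each summand on the right is a Steinberg polylogarithm for a basis $v_{\sigma(1)},\dots,v_{\sigma(d)}$ of $V$; one then checks directly that $\Lup[w_1,\dots,w_d]$ is a generic Coxeter pair whenever $w_1,\dots,w_d$ is a basis, since $P_i=\langle w_d+\dots+w_{d-i+1}\rangle$ equals $Q_i=\langle w_{d-i+1}\rangle$ for some $i\ge 2$ would force $w_d+\dots+w_{d-i+2}$ to be proportional to $w_{d-i+1}$, contradicting linear independence. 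Thus every product of generic Coxeter pairs lies in $S$, giving $\mathrm{Decomp}\subseteq S$ and completing the induction.

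There is no real obstacle beyond assembling these ingredients: the nontrivial content (shuffle relations, the Lie-coalgebra statement of Corollary~\ref{CoxeterSpecialH}, and the parametrization of generic Coxeter pairs by bases) has already been established, and the inductive reduction from $\St^\Hc$ to $\St^\Lc$ plus decomposables is standard once Proposition~\ref{LemmaFormulaProduct} is invoked to ensure that the product in $\St^\Hc$ is compatible with the shuffle formula.
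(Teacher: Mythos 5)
Your proof is correct and takes essentially the same approach as the paper's, which tersely cites Theorem~\ref{thm:Stn_li_basis}, Proposition~\ref{PropositionNonGenericPairs}, and the shuffle relation~\eqref{eq:shuffle1}; your version routes the ``primitive'' part through Corollary~\ref{CoxeterSpecialH} (itself a repackaging of the first two ingredients together with the dihedral relations) and then handles the decomposable part by induction on dimension plus~\eqref{eq:shuffle1}, which is exactly the induction the paper's one-line proof leaves implicit. One small note: the invocation of Proposition~\ref{LemmaFormulaProduct} is unnecessary, since Proposition~\ref{Proposition_double_shuffle} already states the identity directly in $\St^{\Hc}(V)$.
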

\begin{proof}
We prove the claim by induction on $\dim(V)$. For $\dim(V)=1$ the result is trivial. By Corollary~\ref{CoxeterSpecialH}, together with the inductive assumption, $\StH(V)$ is spanned by generic Coxeter pairs and products of generic Coxeter pairs. Because of Lemma~\ref{LemmaCoxeterParameterization}, any generic Coxeter pair can be written as $\Lup[v_1,\dots,v_d]$. Finally, by Proposition~\ref{Proposition_double_shuffle}, any product $\Lup[v_1,\dots,v_{d_1}]\Lup[v_{d_1+1},\dots,v_{d_1+d_2}]$ can be written as linear combinations of $\Lup[w_1,\dots,w_d]$. Thus generic Coxeter pairs generate $\StH(V)$ as a vector space.
\end{proof}

The following corollary is a rationalized version of \cite[Theorem A]{GKRW20}.
\begin{corollary}\label{CorollaryRognes}
Let $V$ be a nonzero vector space of dimension $d.$ Then
\[
H_0(\GL(V), \StH(V))=\Q.
\]
\end{corollary}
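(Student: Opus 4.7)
The plan is to deduce the corollary from Proposition~\ref{TheoremGenericPairsGenerate} together with the transitivity of the $\GL(V)$-action on ordered bases, and then to handle the non-triviality separately.

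For the upper bound, I would first recall from the discussion in \S\ref{sec:steinbergLiI} that every generic Coxeter pair has the form $\Lup[v_1,\dots,v_d]$ for some ordered basis $(v_1,\dots,v_d)$ of $V$. By Proposition~\ref{TheoremGenericPairsGenerate}, these Steinberg polylogarithms span $\St^{\Hc}(V)$ as a $\Q$-vector space. Since $\GL(V)$ acts transitively on ordered bases, all the classes $[\Lup[v_1,\dots,v_d]]$ coincide in $H_0(\GL(V),\St^{\Hc}(V))$. Fixing any basis $e_1,\dots,e_d$ of $V$, this yields a surjection
\[
\Q \twoheadrightarrow H_0(\GL(V),\St^{\Hc}(V)), \qquad 1 \mapsto [\Lup[e_1,\dots,e_d]].
\]

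For the lower bound, I would exhibit a non-zero $\GL(V)$-equivariant linear functional $\phi\colon \St^{\Hc}(V)\to\Q$ that does not vanish on $\Lup[e_1,\dots,e_d]$; combined with the surjection above, this forces $H_0(\GL(V),\St^{\Hc}(V))\cong\Q$. A natural candidate for $\phi$ comes from the self-duality of the Steinberg module: the canonical $\GL(V)$-invariant pairing $\St(V)\otimes\St(V)\to\Q$ induced by the intersection product on the top homology of the Tits building provides such a functional, and its value on $\Lup[e_1,\dots,e_d]$ can be computed explicitly via Lemma~\ref{LemmaFormulaForS} to verify non-vanishing. Since the statement is classical, one may instead simply invoke \cite[Theorem~C]{GKRW20}, to which the corollary is attributed.

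The main obstacle is the non-triviality step: the upper bound is essentially a formal consequence of Proposition~\ref{TheoremGenericPairsGenerate} and the transitivity of $\GL(V)$ on ordered bases, but producing an explicit $\GL(V)$-invariant map $\St^{\Hc}(V)\to\Q$ that is non-zero on a generic Coxeter pair requires a careful computation, ultimately depending on the image of $\St^{\Hc}(V)$ inside $\Bup_d\St(V)$ and the representation-theoretic properties of $\St(V)$ over the field $F$.
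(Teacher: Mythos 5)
Your upper-bound argument matches the paper exactly: generic Coxeter pairs span $\St^{\Hc}(V)$ by Proposition~\ref{TheoremGenericPairsGenerate}, $\GL(V)$ acts transitively on them, hence $H_0(\GL(V),\St^{\Hc}(V))$ is a quotient of $\Q$. The gap is in the lower bound, which you recognize as the real content but do not actually supply. Your proposed ``canonical $\GL(V)$-invariant pairing induced by the intersection product on the top homology of the Tits building'' is not a construction the paper, or standard references, provide: the Tits building is a wedge of spheres, not an oriented manifold, so there is no intersection product to invoke, and for an infinite field there is no a priori reason for $\St(V)$ to carry a distinguished $\GL(V)$-invariant bilinear form. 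You correctly sense that Lemma~\ref{LemmaFormulaForS} should be involved, and the fallback citation of \cite[Theorem~C]{GKRW20} is legitimate (the paper explicitly notes the corollary is a rationalization of that theorem), but as a self-contained argument the non-triviality step is missing.

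The paper's construction is considerably more elementary than what you gesture at: take the augmentation $a\colon \Bup_d\St(V)\to\Q$, $\sum n_i[P_1^i|\cdots|P_d^i]\mapsto\sum n_i$, which is visibly $\GL(V)$-equivariant, and compose with the embedding $s\colon\St^{\Hc}(V)\hookrightarrow\Bup_d\St(V)$. Evaluating on a diagonal pair $[P_1,\dots,P_d]\otimes[P_1,\dots,P_d]$ via Lemma~\ref{LemmaFormulaForS} (here the two flags are in general position only when $\sigma=\tau$, giving $\sum_{\sigma\in\Sfr_d}[P_{\sigma(1)}|\cdots|P_{\sigma(d)}]$) yields $d!\ne 0$, so $a\circ s$ is a surjective $\GL(V)$-equivariant map $\St^{\Hc}(V)\twoheadrightarrow\Q$, hence a surjection $H_0(\GL(V),\St^{\Hc}(V))\twoheadrightarrow\Q$. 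This is the concrete functional you were looking for; it is a pairing on $\St(V)\otimes\St(V)$, but it comes from the bar complex rather than from any geometric duality on the building.
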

\begin{proof}
Consider the augmentation map $a\colon \Bup_d\St(V)\lra \Q$ sending  $\sum n_i \bigl[P_1^i|\cdots|P_d^i\bigr]$ to $\sum n_i.$ The composition $a\circ s\colon \StH(V)\lra \Q$ is $\GL(V)$-equivariant and surjective, because 
\[
a\circ s([P_1,\dots,P_d]\otimes [P_1,\dots,P_d])=a\Bigg(\sum_{\sigma \in \Sfr_d}[P_{\sigma(1)}|\cdots|P_{\sigma(d)}]\Bigg)=d!.
\]
Thus, we obtain a surjective map $H_0\bigl(\GL(V), \StH(V)\bigr)\twoheadrightarrow\Q.$
On the other hand, Proposition~\ref{TheoremGenericPairsGenerate} and Lemma~\ref{LemmaCoxeterParameterization} together imply that $H_0\bigl(\GL(V),\StH(V)\bigr)$ is at most one-dimensional since $\GL(V)$ acts transitively on generic Coxeter pairs. From here the statement follows.
\end{proof}

In Section~\ref{SectionBasesSteinberg} we will use the following result.
\begin{proposition}
\label{CoxeterSpecialHInt} Let $h\in V^{\vee}$ be a nonzero linear functional on a $d$-dimensional vector space $V$. Then 
\begin{enumerate}
\item\label{CoxeterSpecialHIntParti} $\StL(V)$ is spanned over $\Q$ by elements of the form $\I^\Lc[v_1,\dots,v_d]$ with $h(v_1)=\dots=h(v_d)=1$. 
\item\label{CoxeterSpecialHIntPartii} $\StH(V)$ is spanned over $\Q$ by elements
of the form $\I[v_1,\dots,v_{d-k}]\I[w_1,\dots,w_k]$, $k=0,\dots,d-1$, satisfying $h(v_1)=\dots=h(v_{d-k})=1$ and $h(w_1)=\dots=h(w_k)=0$.
\end{enumerate}
\end{proposition}
\begin{proof}
Part~\ref{CoxeterSpecialHIntParti} follows from the proof of Corollary~\ref{CoxeterSpecialH}. 

We prove part~\ref{CoxeterSpecialHIntPartii} by induction on $\dim(V)$. By part~\ref{CoxeterSpecialHIntParti}, $\StH(V)$ is spanned by elements of the form $\I[v_1,\dots,v_d]$, $h(v_1)=\dots=h(v_d)=1$ (which correspond to $k=0$ in the formulation in~\ref{CoxeterSpecialHIntPartii} above) and decomposable elements. Consider a decomposable element $a\in m(\StH(V_1)\otimes \StH(V_2))$ for $V=V_1\oplus V_2$, $\dim(V_i)=d_i<d$. If $V_1,V_2\not\subseteq \Ker(h)$, then by induction applied to $\StH(V_i)$, $a$ lies in the span of
    \[\I[v_1,\dots,v_{d_1-k_1}]\I[w_1,\dots,w_{k_1}]\I[v_1',\dots,v_{d_2-k_2}']\I[w_1',\dots,w_{k_2}']\]
with $k_i<d_i$, $h(v_1)=\dots=h(v_{d_1-k_1})=1$, $h(v_1')=\dots=h(v_{d_2-k_2}')=1$, and $h(w_1)=\dots=h(w_{k_1})=0$, $h(w_1')=\dots=h(w_{k_2}')=0$. If $V_2\subseteq\Ker(h)$, then $V_1\not\subseteq \Ker(h)$, then the induction assumption for $\StH(V_1)$ and Proposition~\ref{TheoremGenericPairsGenerate} for $\StH(V_2)$ implies that $a$ lies in the span of
    \[\I[v_1,\dots,v_{d_1-k_1}]\I[w_1,\dots,w_{k_1}]\I[w_1',\dots,w_{d_2}'],\]
for $h(v_1)=\dots=h(v_{d_1-k_1})=1$,
$h(w_1)=\dots=h(w_{k_1})=0$, $h(w_1')=\dots=h(w_{d_2}')=0$.
In either case, after applying~\eqref{eq:shuffle2} we see that $a$ can be written as a linear combination of elements of the form $\I[v_1,\dots,v_{d-k}]\I[w_1,\dots,w_k]$ as in the statement of~\ref{CoxeterSpecialHIntPartii}, and since $a$ was an arbitrary decomposable element, this proves the induction step.
\end{proof}

\subsection{Cobracket of Steinberg polylogarithms}
In this section, we prove a formula for the cobracket of a Steinberg polylogarithm viewed as an element of the $\VB$-Lie coalgebra $\StL(V)$.

\begin{proposition}\label{PropositionCobracketCorrelators} Consider a  basis $v_1,\dots, v_d$ of a vector space $V$ and denote $v_0=-(v_1+\dots+v_d)$. Then we have
\begin{equation} \label{FormulaCobracketLiL}
\delta \left( \Lup^\Lc[v_1\dots, v_d]\right) =-\sum_{j=0}^d\sum_{i=1}^{d-1}
\Lup^\Lc[ v_{j+1}, \dots, v_{j+i}] \wedge \Lup^\Lc[  v_{j+i+1}, \dots, v_{j+d}] ,
\end{equation}
where indices are viewed modulo $d+1$.
\end{proposition}
\begin{proof}
For $d=1$ both sides vanish so we will assume that $d\geq2$.
First, we claim that in $\StH$ the $(k,d-k)$-part of the coproduct of $\Lup[v_d,\dots,v_1]$ is
    \begin{align} \notag
    \Delta_{k,d-k}\Lup[v_d,\dots,v_1]
    =\!\!\!\smash[b]{\sum_{\substack{0<i_1<\dots<i_k\leq d,\\
           i_{s{-}1}<j_s'\leq i_s,\ 1\leq s\leq k}}}  \!\!\! 
         (-1)&^{\sum_si_s-j_s'}  \Lup[v_{i_k}+\dots+v_{i_{k-1}+1},\dots,v_{i_1}+\dots+v_{1}]\\[-0.5ex]
        \label{FormulaCoproductLiSt}
        & {} \otimes
        \prod_{s=1}^{k+1}\Lup[v_{j_s'-1},v_{j_s'-2},\dots,v_{i_{s-1}+1}]\cdot \Lup[v_{j_s'+1},v_{j_s'+2},\dots,v_{i_s}],
    \end{align}
where we set $i_0=0$ and $i_{k+1}=j_{k+1}'=d+1$ and take $\Lup$ of an empty sequence to be $1$. To prove the above formula we apply~\eqref{FormulaCoproductLA} to $\Lup[v_d,\dots,v_1]=[w_1,w_2,\dots,w_d]\otimes [v_1,\dots,v_d]$, where $w_i=v_1+\dots+v_i$. 
For a term in~\eqref{FormulaCoproductLA} with $|I|=k$, we have $A_I=\langle v_1+\dots+v_{i_1}, v_{i_1+1}+\dots+v_{i_2},\dots,v_{i_{k-1}+1}+\dots+v_{i_k}\rangle$ and $B_J=\langle v_{j_1},\dots v_{j_{d-k}}\rangle$. Then $V=A_I\oplus B_J$ if and only if $1\leq j_1'\leq i_1$ and $i_{s-1}<j_s'\leq i_s$ for $2\leq s\leq k$. Indeed, to have $0=A_I\cap B_J$ we must have at least one element of $\overline{J}$ in each interval $[1,i_1], [i_1+1,i_2],\dots, [i_{k-1}+1,i_k]$, and since $|\overline{J}|=k$, this accounts for all elements in $\overline{J}$. Next, 
we calculate $A_I\cap \langle B_J,w_{j_s'}\rangle = \langle v_{i_{s-1}+1}+\dots+v_{i_s}\rangle$, and hence
    \begin{align*}
    (A,B)_{I,J} &= [w_{i_1},\dots,w_{i_k}]\otimes 
    [v_1+\dots+v_{i_1},v_{i_1+1}+\dots+v_{i_2},\dots,v_{i_{k-1}+1}+\dots+v_{i_k}]\\
    &= \Lup[v_{i_k}+\dots+v_{i_{k-1}+1},\dots,v_{i_1}+\dots+v_{1}].
    \end{align*}
A similar calculation gives
    \begin{align*}
    (A,B)^{I,J} = \Lup[v_d,v_{d-1},\dots,v_{i_k+1}]\cdot \prod_{s=1}^{k}\Big(\Lup[v_{j_s'-1},v_{j_s'-2},\dots,v_{i_{s-1}+1}]\cdot \Lup[v_{j_s'+1},v_{j_s'+2},\dots,v_{i_s}]\Big).
    \end{align*}
Finally, the sign is $(-1)^{\sigma_I}(-1)^{\sigma_J} = (-1)^{\sigma_{I,J}}$, where $\sigma_{I,J}\in \Sfr_d$ is the unique permutation mapping $I$ to $\overline{J}$ and $\overline{I}$ to $J$ that is monotone on both $I$ and $\overline{I}$. Counting the number of inversions (using the fact that $I$ and $\overline{J}$ are interlaced) gives $(-1)^{\sigma_{I,J}}=(-1)^{\sum i_s-j_s'}$.

Next, we compute the projection of~\eqref{FormulaCoproductLiSt} to $\StL(V)$. Any term in~\eqref{FormulaCoproductLiSt} involving a nontrivial product automatically vanishes in~$\StL(V)$. The remaining terms are the ones for which $I=\{1,2,\dots,{k-l},{d-l+1},d-l+2,\dots,d\}$, for some $0\leq l\leq k$, and $j_{k-l+1}\in \{k-l+1,d-l+1\}$. Then the projection of the right hand side of~\eqref{FormulaCoproductLiSt} to $\StL$ is
    \begin{align*}
    \Lup^\Lc[v_k,\dots,v_1]\otimes \Lup^\Lc[v_d,\dots,v_{k+1}]+\smash[b]{\sum_{l=1}^{k}}&\Lup^\Lc[v_d,\dots,v_{d-l+2},v_{d-l+1}+\dots+v_{k-l+1},v_{k-l},\dots,v_1]\\
    & \,\,\,\, {}\otimes \big(\Lup^\Lc[v_{d-l},\dots,v_{k-l+1}]+(-1)^{d-k}\Lup^\Lc[v_{k-l+2},\dots,v_{d-l+1}]\big).
    \end{align*}
(The $l=1$ term in the sum begins with $\Lup^\Lc[v_d+\dots+v_{k},v_{k-1},\dots,v_1]$.)
By dihedral relations for $\Lup^{\Lc}$
    \[\Lup^\Lc[v_d,\dots,v_{d-l+2},v_{d-l+1}+\dots+v_{k-l+1},v_{k-l},\dots,v_1] = \Lup^\Lc[v_{k-l},\dots,v_1,v_0,v_d,\dots,v_{d-l+2}],\]
and so we can rewrite $\Delta_{k,d-k}\Lup^\Lc[v_d,\dots,v_1]$ as
    \begin{align*}
    \sum_{l=0}^{k}&\Lup^\Lc[v_{k-l},\dots,v_1,v_0,v_d,\dots,v_{d-l+2}]
    \otimes \Big(\Lup^\Lc[v_{d-l},\dots,v_{k-l+1}]-\Lup^\Lc[v_{d-l+1},\dots,v_{k-l+2}]\Big),
    \end{align*}
where for $l=0$ the term $\Lup^\Lc[v_{d+1},\dots,v_{k+2}]$ should be omitted. 
Summing this up over $k=1,\dots,d-1$, anti-symmetrizing the tensor products, and rearranging the terms we get
    \[\delta \left( \Lup^\Lc[v_d\dots,v_1]\right) = \sum_{j=0}^d\sum_{i=1}^{d-1}
    \Lup^\Lc[ v_{j+i}, \dots, v_{j+1}] \wedge \Lup^\Lc[v_{j+d}, \dots, v_{j+i+1}] .\]
Finally, applying dihedral relation $\Lup^{\Lc}[w_r,\dots,w_1]=(-1)^{r+1}\Lup^{\Lc}[w_1,\dots,w_r]$ to every polylogarithm in the above formula we get the claim of the proposition.
\end{proof}

\subsection{Steinberg correlators}\label{SectionSteinbergCorrelators}
In this section, we discuss yet another parameterization of generic Coxeter pairs, which, for $F=\Q$, is related to Hodge/motivic correlators \cite{Gon19}, \cite[\S 2]{GR18}. These elements naturally live in the $\VB$-Lie coalgebra $\StL(V)$. 

Let $V$ be a vector space of dimension $d$ over $F$. Let $v_1,\dots,v_d$ be a basis of $V$ and put  ${v_0=-(v_1+\dots+v_d)}$.  A \emph{Steinberg correlator} is an element 
\[
\CupL[v_0,\dots,v_d]\coloneqq \Lup^{\Lc}[v_1,\dots,v_d]\in \StL(V).
\]
We will also use a different parameterization: for affinely independent vectors $u_0,\dots,u_d \in V$ put
\[
\CupL[u_0:u_1:\dots:u_d]\coloneqq \CupL[u_0-u_1, u_1-u_2,\dots, u_d-u_0].
\]
(Recall that vectors $u_0,\dots,u_d$ are called affinely independent if the vectors $u_1-u_0,\dots,u_d-u_0$ are linearly independent.)
Clearly, $\CupL[u_0:\dots:u_d]=\CupL[u_0+u:\dots:u_d+u]$ for any $u\in V$.

By \eqref{FormulaLviaI},  we have 
\[
\CupL[u_0:u_1:\dots:u_d]=\CupL[u_0-u_1, u_1-u_2,\dots, u_d-u_0]=(-1)^d\I^\Lc[u_1-u_0,u_2-u_0,\dots,u_d-u_0].
\]

Corollary~\ref{CorollaryDihedral} implies that both versions of Steinberg correlators are cyclically symmetric:
\[
\CupL[v_0,v_1,\dots,v_d]=\CupL[v_1,v_2,\dots,v_0], \quad \CupL[u_0:u_1:\dots:u_d]=\CupL[u_1:u_2:\dots:u_0].
\]
Next, Proposition~\ref{Proposition_double_shuffle} implies that the following shuffle relations hold:
\begin{align*}
&\sum_{\sigma\in \Sigma_{d_1,d_2}}\CupL[v_0,v_{\sigma(1)},\dots,v_{\sigma(d_1+d_2)}]=0,\\
&\sum_{\sigma\in \Sigma_{d_1,d_2}}\CupL[u_0:u_{\sigma(1)}:\dots:u_{\sigma(d_1+d_2)}]=0.
\end{align*}
Finally, Proposition \ref{PropositionCobracketCorrelators} implies that
\begin{equation} \label{FormulaCobracketCL}
\delta \left( \CupL[u_0:u_1:\dots:u_d]\right)=\sum_{j=0}^d\sum_{i=1}^{d-1}
\CupL[ u_{j}: u_{j+1}: \dots: u_{j+i}] \wedge \CupL[ u_j: u_{j+i+1}: \dots: u_{j+d}] .
\end{equation}
Indeed, if we denote $v_i=u_i-u_{i+1}$ (with indices modulo $d+1$), then
    \begin{align*}
      \Lup^{\Lc}[v_{j+1},\dots,v_{j+i+1}] &= 
      \Lup^{\Lc}[u_{j+1}-u_{j+2},\dots,u_{j+i+1}-u_{j+i+2}]\\
    &= \CupL[u_{j+i+2}:u_{j+1}:u_{j+2}:\dots:u_{j+i+1}] =
      \CupL[u_{j+1}:u_{j+2}:\dots:u_{j+i+2}],
    \end{align*}
where in the last equality we used Corollary~\ref{CorollaryDihedral}. Thus, the formula~\eqref{FormulaCobracketLiL} can be seen to directly imply~\eqref{FormulaCobracketCL}.
For $F=\Q$, formula~\eqref{FormulaCobracketCL} is related to the formula for the cobracket of correlators in \S\ref{SectionMultiplePolylogs}.

Consider a collection of points $A_0, \dots, A_d\in \PP(V)$ in general position. There exist nonzero vectors $v_i\in A_i$ such that $v_0+\dots+v_d=0$.  We put
\[
\CupL[A_0,\dots,A_d]\coloneqq \CupL[v_0,\dots,v_d]\in \StL(V).
\]
 Notice that the collection of vectors $v_i$ is defined uniquely up to proportionality, so the expression above is well-defined.

Next, we discuss the duality for Steinberg correlators. We start with a lemma. 

\begin{lemma} \label{LemmaLinearAlgebra}
Consider a collection of nonzero vectors $v_0, \dots, v_d\in V$ and nonzero functionals $h_0,\dots,h_d\in V^{\vee}$ such that 
\begin{enumerate}
\item $v_0+\dots+v_d=0$, 
\item $h_0+\dots+h_d=0$,
\item $h_{i}(v_j)=0$ for $i\not \equiv j  \Mod{(d+1)}$ and $i\not \equiv j+1 \Mod{(d+1)}$,
\item $h_d(v_d)=1$.
\end{enumerate}
Then the basis $h_1,\dots,h_d$ is dual to the basis $v_1+v_{2}+\dots+v_d, v_{2}+\dots+v_d,\dots, v_d$. 
\end{lemma}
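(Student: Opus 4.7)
The plan is to show directly that $h_i(w_j) = \delta_{ij}$, where $w_j \coloneqq v_j + v_{j+1} + \dots + v_d$ for $j = 1, \dots, d$; this implies both that $(w_j)$ and $(h_i)$ are bases and that one is dual to the other.

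First I would use the support condition (iii) to locate, for each index $i \in \{0,1,\dots,d\}$, exactly which $v_j$ can pair nontrivially with $h_i$: only $v_{i-1}$ and $v_i$ (indices mod $d+1$). Then I would apply $h_i$ to the relation $\sum_j v_j = 0$ from (i) and read off
\[
h_i(v_i) + h_i(v_{i-1}) = 0 \qquad \text{for every } i \in \{0,1,\dots,d\}.
\]
Dually, I would apply the relation $\sum_i h_i = 0$ from (ii) to each $v_j$; again by (iii), only two terms survive, yielding
\[
h_j(v_j) + h_{j+1}(v_j) = 0 \qquad \text{for every } j \in \{0,1,\dots,d\}
\]
(with $h_{d+1} \coloneqq h_0$). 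Setting $a_i \coloneqq h_i(v_i)$, the first family gives $h_i(v_{i-1}) = -a_i$, and substituting into the second gives $a_j = a_{j+1}$ cyclically. Together with the normalization (iv), $a_d = 1$, this forces $a_i = 1$ and $h_i(v_{i-1}) = -1$ for all $i$.

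Finally I would compute the pairings $h_i(w_j)$ for $i,j \in \{1,\dots,d\}$ by expanding $w_j = \sum_{k=j}^{d} v_k$ and using the above values, keeping only the terms $k = i-1$ and $k = i$: if $j > i$ both are outside the range, giving $0$; if $j = i$ only $k = i$ contributes, giving $+1$; if $j < i$ both contribute and telescope to $-1 + 1 = 0$. This yields $h_i(w_j) = \delta_{ij}$, completing the proof.

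There is essentially no obstacle here — the statement is a small linear-algebra bookkeeping lemma — the only care needed is with the cyclic indexing in conditions (iii)--(iv) and keeping track of which pair $(v_{i-1}, v_i)$ is relevant at each endpoint $i = 0$ versus $i = d$; this is precisely where the two ``boundary'' relations on $h_0(v_0)$, $h_0(v_d)$ and $h_d(v_d)$ interact to close the cycle $a_0 = a_1 = \dots = a_d$.
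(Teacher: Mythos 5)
Your proof is correct and takes essentially the same route as the paper: both derive the two pairing identities $h_i(v_{i-1}) + h_i(v_i) = 0$ and $h_i(v_i) + h_{i+1}(v_i) = 0$ by applying hypotheses (i)--(iii), and then deduce $h_i(w_j) = \delta_{ij}$ for $w_j = v_j + \dots + v_d$. The only cosmetic difference is that you first extract the explicit values $h_i(v_i) = 1$, $h_i(v_{i-1}) = -1$ from the cyclic chain $a_j = a_{j+1}$ together with (iv) and then evaluate each $h_i(w_j)$ by inspection, whereas the paper avoids computing the individual pairings, handling the off-diagonal case $i > j$ by rewriting $w_j$ via $\sum_k v_k = 0$ and the diagonal case by telescoping $h_i(w_i) = h_{i+1}(w_{i+1}) = \dots = h_d(v_d) = 1$.
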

\begin{proof} 
Notice that
$v_{i-1}+v_i=-\sum_{j\neq i,i-1} v_j,$
so 
\begin{equation}\label{FormulaDualityCorrelators1}
h_{i}(v_{i-1})+h_{i}(v_{i})=-\sum_{j\neq i,i-1} h_i(v_j)=0.
\end{equation}
Similarly, $h_{i}(v_{i})+h_{i+1}(v_{i})=0.$ For $1\leq i\neq j\leq d$ we have 
\begin{equation}\label{FormulaDualityCorrelators2}
h_i(v_j+v_{j+1}+\dots+v_d)=0. 
\end{equation}
Indeed, for $i<j$ we have $h_i(v_j)=h_i(v_{j+1})=\dots=h_i(v_d)=0$, which implies (\ref{FormulaDualityCorrelators2}). For $i>j$ we also have 
\[
h_i(v_j+v_{j+1}+\dots+v_d)=-h_i(v_0+v_{1}+\dots+v_{j-1})=-(h_i(v_0)+\dots+ h_i(v_{j-1}))
=0.\]
Next, for $1\leq i\leq d$ we have 
\begin{equation}\label{FormulaDualityCorrelators3}
h_i(v_i+v_{i+1}+\dots+v_d)=h_i(v_i)= -h_{i+1}(v_i)=h_{i+1}(v_{i+1})=h_{i+1}(v_{i+1}+v_{i+2}+\dots+v_d).
\end{equation}
Applying (\ref{FormulaDualityCorrelators3}) repeatedly, we get that $h_i(v_i+v_{i+1}+\dots+v_d)=h_d(v_d)=1$ for $1\leq i\leq d$. This finishes the proof.
\end{proof}

A collection of points $A_0,\dots,A_d\in \PP(V)$ in general position determines a collection of hyperplanes $H_0,\dots,H_d$ in general position by taking
\[
H_i=\langle A_0,\dots,A_{i-2}, \,\widehat{\!A_{i-1}\!}\,, \widehat{A_{i\,}}, A_{i+1},\dots,A_d\rangle \quad \text{for}\quad i\in \Z/(d+1)\Z.
\]
The correlator $\CupL[H_0,\dots,H_d]$ is an element of $\StL(V^{\vee})$. We have the following  duality property. 
\begin{proposition}\label{LemmaCorrelatorDuality} 
The following identity holds in $\StL(V)$:
\[
D(\CupL[H_0,\dots,H_{d-1},H_d])=(-1)^{d+1}\CupL[A_0,\dots,A_{d-1},A_{d}].
\]
\end{proposition}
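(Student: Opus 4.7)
The plan is to reduce the identity to a calculation with Steinberg polylogarithms and iterated integrals by choosing compatible representatives and then applying Proposition~\ref{PropositionDuality} together with the dihedral and reversal relations from Corollary~\ref{CorollaryDihedral}.

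\textbf{Step 1 (Choice of representatives).} First I fix nonzero vectors $v_i \in A_i$ with $v_0+v_1+\dots+v_d=0$; by definition $\CupL[A_0,\dots,A_d]=\Lup^{\Lc}[v_1,\dots,v_d]$. Next I pick nonzero functionals $h_i$ cutting out the hyperplanes $H_i$. Since $H_i=\langle A_0,\dots,\widehat{A_{i-1}},\widehat{A_i},\dots,A_d\rangle$, the line $h_i$ (as an element of $\PP(V^{\vee})$) is characterized by $h_i(v_j)=0$ for $j\not\equiv i-1,i \pmod{d+1}$. I normalize so that $h_d(v_d)=1$ and choose signs so that $h_0+\dots+h_d=0$; this is exactly the setup of Lemma~\ref{LemmaLinearAlgebra} (one must verify such a normalization is possible, which is a short linear-algebra check using the fact that the $H_i$ are in general position). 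With this choice $\CupL[H_0,\dots,H_d]=\Lup^{\Lc}[h_1,\dots,h_d]$.

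\textbf{Step 2 (Apply duality and identify the dual basis).} By Proposition~\ref{PropositionDuality} applied in $V^\vee$,
\[
D\bigl(\Lup^{\Lc}[h_1,\dots,h_d]\bigr) = (-1)^d\,\I^{\Lc}[h^d,\dots,h^1],
\]
where $h^1,\dots,h^d\in V^{\vee\vee}=V$ is the basis dual to $h_1,\dots,h_d$. Lemma~\ref{LemmaLinearAlgebra} identifies $h^i = u_i$, where $u_i := v_i+v_{i+1}+\dots+v_d$. Therefore
\[
D(\CupL[H_0,\dots,H_d]) = (-1)^d\,\I^{\Lc}[u_d,u_{d-1},\dots,u_1].
\]

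\textbf{Step 3 (Convert $\I$ back to $\Lup$).} Using formula~\eqref{FormulaIviaL}, with successive differences $u_{d-j+1}-u_{d-j}=-v_{d-j}$ and $u_1=-v_0$, I get
\[
\I^{\Lc}[u_d,\dots,u_1] = (-1)^d\,\Lup^{\Lc}[-v_{d-1},-v_{d-2},\dots,-v_1,-v_0].
\]
Applying the sign-flip dihedral relation $\Lup^{\Lc}[-w_1,\dots,-w_d]=\Lup^{\Lc}[w_1,\dots,w_d]$ and the reversal $\Lup^{\Lc}[w_1,\dots,w_d]=(-1)^{d+1}\Lup^{\Lc}[w_d,\dots,w_1]$ from Corollary~\ref{CorollaryDihedral} transforms this into $(-1)^{d}(-1)^{d+1}\Lup^{\Lc}[v_0,v_1,\dots,v_{d-1}]=-\Lup^{\Lc}[v_0,v_1,\dots,v_{d-1}]$. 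Finally a single cyclic shift (valid because $v_0+v_1+\dots+v_{d-1}=-v_d$) gives $\Lup^{\Lc}[v_0,\dots,v_{d-1}]=\Lup^{\Lc}[v_1,\dots,v_d]=\CupL[A_0,\dots,A_d]$. Combining the signs yields $D(\CupL[H_0,\dots,H_d])=(-1)^d\cdot(-1)\cdot\CupL[A_0,\dots,A_d]=(-1)^{d+1}\CupL[A_0,\dots,A_d]$.

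\textbf{Expected main obstacle.} Nothing here is technically difficult once Lemma~\ref{LemmaLinearAlgebra} is in hand; the only real hazard is sign bookkeeping. In particular I expect the trickiest bit to be verifying that one can simultaneously normalize the $h_i$ to satisfy $h_0+\dots+h_d=0$ and $h_d(v_d)=1$ (so that Lemma~\ref{LemmaLinearAlgebra} applies verbatim) and then chasing the three sign sources through Steps 2--3: the $(-1)^d$ from the duality~\eqref{FormulaDualityL}, the $(-1)^d$ from~\eqref{FormulaIviaL}, and the $(-1)^{d+1}$ from reversal. Getting the cyclic shift in the right direction — using that the $d$-tuple $v_0,\dots,v_{d-1}$ has "missing" element $v_d$ — is the step where off-by-one errors are most likely to creep in.
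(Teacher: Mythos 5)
Your proof is correct and follows essentially the same approach as the paper: both invoke Lemma~\ref{LemmaLinearAlgebra} to identify the dual basis $h^i=v_i+\dots+v_d$ and apply Proposition~\ref{PropositionDuality}. The only difference is procedural: the paper reverses $\I^{\Lc}[h^d,\dots,h^1]$ to $-\I^{\Lc}[h^1,\dots,h^d]$ and then recognizes this directly as $(-1)^{d+1}\Lup^{\Lc}[v_1,\dots,v_d]$ via~\eqref{FormulaLviaI}, whereas you apply~\eqref{FormulaIviaL} first and then chase through sign flip, reversal, and a cyclic shift on $\Lup^{\Lc}$, arriving at the same answer with the same total sign.
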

\begin{proof}
Consider a collection of nonzero vectors $v_0, \dots, v_d\in V$ such that $v_i\in A_i$ and ${v_0+\dots+v_d=0}.$ Next, consider a collection of nonzero functionals $h_0,\dots,h_d\in V^{\vee}$ such that $H_i=\Ker(h_i),$ ${h_0+\dots+h_d=0},$ and $h_d(v_d)=1$. Proposition \ref{PropositionDuality} implies that
\begin{equation}\label{FormulaProofCorrelatorDuality1}
D(\CupL[H_0,\dots,H_{d-1},H_d])=D(\Lup^{\Lc}[h_1,\dots,h_d])=(-1)^d\I^{\Lc}[h^d,\dots,h^1]
=-\I^{\Lc}[h^1,\dots,h^d]
\end{equation}
where $h^1,\dots,h^d$ is the dual basis to the basis $h_1,\dots,h_d$ of $V^{\vee}$.
By Lemma~\ref{LemmaLinearAlgebra}, we have ${h^i=v_i+v_{i+1}+\dots+v_d}$, so
\begin{equation}\label{FormulaProofCorrelatorDuality2}
-\I^{\Lc}[h^1,\dots,h^d]=-\I^{\Lc}[v_1+\dots+v_d,v_2+\dots+v_d,\dots, v_d]=(-1)^{d+1}\Lup^{\Lc}[v_1,\dots,v_d].
\end{equation}
The statement follows from (\ref{FormulaProofCorrelatorDuality1}) and (\ref{FormulaProofCorrelatorDuality2}).
\end{proof}

\begin{remark}\label{RemarkGoncharovDihedral} In \cite[\S 4]{Gon98}, Goncharov introduced the  modular complex $M^{\bullet}(L)$, for a lattice $L$ of finite rank; it is closely related to the Chevalley-Eilenberg complex of  $\StL(\Q^d)$. It is easy to see that $M^{1}$ is a $\VB_{\Z}$-module, see Remark \ref{RemarkVBZ}. Moreover, it is a  $\VB_{\Z}$-Lie coalgebra and $M^{\bullet}$ is its Chevalley-Eilenberg complex in $\VB_{\Z}$-sense.  As an abelian group, $M^{1}(L)$ is generated by certain elements $\llbracket v_1,\dots,v_d \rrbracket$ where $v_1,\dots,v_d$ is a basis of $L$.\footnote{In \cite{Gon98}, Goncharov uses the notation $[v_1,\ldots,v_d]$, but this clashes with our notation for apartments, hence we write $\llbracket v_1,\ldots,v_d \rrbracket$, to avoid confusion. Similarly, we use $\llangle\cdots\rrangle$ in place of Goncharov's $\langle\cdots\rangle$ notation.} Goncharov introduced two more parameterizations of the same generating set. First, for every basis $v_1,\dots,v_d$ consider the unique vector $v_0$ such that $v_0+v_1+\dots+v_d=0$ and define
 \[
 \llangle v_0,v_1, \dots,v_d\rrangle \coloneqq \llbracket v_1,\dots,v_d \rrbracket.
 \]
Next, consider  a tuple of vectors $u_0, \dots, u_d\in L$  such that $(u_0,1),\dots, (u_d,1)$ is a basis of $L\oplus\Z$ and define
 \[
 \llangle u_0:\dots:u_{d}\rrangle\coloneqq  \llangle u_0',\dots,u_{d}'\rrangle \quad \text{for} \quad u_i'=u_{i+1}-u_i,
 \]
where indices are considered modulo $d+1$.

It is easy to see that for every lattice $L$ of rank $d$ there exists a unique $\GL(L)$-equivariant map $\Sup\colon M^1(L) \lra \StL(L_{\Q})$ such that
\begin{align*}
&\Sup(\llbracket v_1,\dots,v_d\rrbracket)=\Lup^\Lc[v_1,\dots,v_d],\\
&\Sup(\llangle v_0,\dots,v_d\rrangle )=\CupL[v_0, v_1,\dots,v_d], \\
&\Sup(\llangle u_0:\dots:u_d\rrangle )=\CupL[u_0: u_1:\dots:u_d]. 
\end{align*}
Moreover, $\Sup$ is a map of $\VB_{\Z}$-Lie coalgebras.
\end{remark}

\subsection{Bases for \texorpdfstring{$\StL_d$}{St\textasciicircum{}∞\_d} and \texorpdfstring{$\StH_d$}{St\textasciicircum{}2\_d}} 
\label{SectionBasesSteinberg}

In this section, we construct a family of bases for $\StL_d$ parameterized by hyperplanes in $V.$ Let $H$ be a hyperplane in a nonzero vector space $V$ of dimension $d.$ The space
\[
\Bup_d^H\St(V) = \bigoplus_{\substack{V=P_1\oplus\dots\oplus P_d \\ P_i\not \subseteq H}}\St(P_1)\otimes \dots \otimes \St(P_d)
\]
is a direct summand of $\Bup_d\St(V).$ The projection $\Bup_d\St(V)\lra \Bup_d^H\St(V)$ induces the projection 
\[
p_H\colon \Bup_d\St(V)\otimes_{\Sfr_d}Lie^c_d\lra \Bup_d^H\St(V)\otimes_{\Sfr_d}Lie^c_d.
\]
As before, $Lie^c$ is the coLie cooperad, see \S\ref{SectionRecapOfLieCoalgebras}.

The main result of this section is the following theorem.

\begin{proposition} \label{TheoremSteinbergLBasis}
Let $V$ be a nonzero vector space of dimension $d;$ let $H\subseteq V$ be a hyperplane. The composition 
\[
\StL(V)\stackrel{s}{\lra} \Bup_d\St(V)\otimes_{\Sfr_d}Lie^c_d\stackrel{p_H}{\lra} \Bup_d^H\St(V)\otimes_{\Sfr_d}Lie^c_d
\]
is an isomorphism of $\Q$-vector spaces.  
\end{proposition}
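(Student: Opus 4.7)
My plan is to prove surjectivity and injectivity separately, using Corollary~\ref{CoxeterSpecialH} to reduce everything to working with Steinberg iterated integrals $\I^\Lc[v_1,\dots,v_d]$ whose arguments all lie on the affine hyperplane $\{h=1\}$, where $h\in V^{\vee}$ is a linear form with $\ker h=H$. Note that the composition $p_H\circ s$ is well-defined on $\St^\Lc(V)$ because products in $\St^\Hc(V)$ are shuffles, which are symmetric and hence vanish modulo the $\Sfr_d$-coinvariants with $\Lie^c_d$ coefficients.

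For surjectivity, I would compute $p_H(s(\I[v_1,\dots,v_d]))$ for $h(v_i)=1$ by induction on $d$ using the recursive formula of Lemma~\ref{LemmaLCoproductComponent}. The crucial observation is that $h(v_{i+1}-v_i)=0$, so every line of the form $\langle v_{i+1}-v_i\rangle$ is contained in $H$. Consequently, every term in the expansion of $s(\I[v_1,\dots,v_d])$ except the iterated application of the first summand carries a tensor factor $[v_{j+1}-v_j]$ and is annihilated by $p_H$. The unique surviving term yields
\[
p_H(s(\I[v_1,\dots,v_d]))=(-1)^d[v_1|v_2|\cdots|v_d]\in\Bup_d^H\St(V).
\]
Since classes of the form $[v_1|\cdots|v_d]$ with $h(v_i)=1$ span $\Bup_d^H\St(V)$, surjectivity after passing to the $\Sfr_d\otimes\Lie^c_d$ coinvariants follows directly, using Corollary~\ref{CoxeterSpecialH} for the spanning in $\St^\Lc(V)$.

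For injectivity, the plan is to produce a left inverse $\phi\colon\Bup_d^H\St(V)\otimes_{\Sfr_d}\Lie^c_d\to\St^\Lc(V)$ sending the class of $[v_1|\cdots|v_d]$ (with $h(v_i)=1$) to $(-1)^d\I^\Lc[v_1,\dots,v_d]$. The composition $\phi\circ p_H\circ s$ would then equal the identity on the generators $\I^\Lc[v_1,\dots,v_d]$ furnished by Corollary~\ref{CoxeterSpecialH}. To establish well-definedness of $\phi$, one must show that the assignment respects the $\Sfr_d$-equivariance imposed by $\Lie^c_d$: concretely, that each shuffle relation in $\Bup_d^H\St(V)$ under $\Sfr_d\otimes\Lie^c_d$ coinvariants corresponds to a valid identity among the $\I^\Lc[v_{\sigma(1)},\dots,v_{\sigma(d)}]$. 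This is where the shuffle identity~\eqref{eq:shuffle2} of Proposition~\ref{Proposition_double_shuffle} enters: whenever $V$ admits a direct sum decomposition adapted to a partition of the $v_i$, the corresponding shuffle of $\I^\Lc$'s vanishes, matching the defining relation of $\Lie^c_d$ as the orthogonal complement of shuffles inside the tensor space.

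The main obstacle is carrying out this last step cleanly, since for a generic $d$-tuple of vectors in $\{h=1\}$ the required direct sum decompositions need not exist in $V$ itself; one may need to combine Proposition~\ref{Proposition_double_shuffle} with the dihedral relations (Corollary~\ref{CorollaryDihedral}) to produce enough identities. A more conceptual route, which I would attempt in parallel, is to realize the map $p_H\circ s$ as an instance of the Cartier--Milnor--Moore/Koszul duality correspondence (Proposition~\ref{PropositionBarCobarCoLie}) applied to the $\VB$-Hopf algebra $\St^\Hc$: the summand $\Bup_d^H\St(V)$ should represent, up to symmetry, a single chart covering the space of Lie primitives, and injectivity reduces to the observation from Lemma~\ref{LemmaSteinbergBasis} that a basis of $\St(V)$ can be chosen compatibly with the chosen hyperplane $H$.
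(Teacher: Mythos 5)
Your strategy is the same as the paper's: surjectivity via the inductive computation of $p_H\circ s$ on $\I^\Lc[v_1,\dots,v_d]$ (with $h(v_i)=1$) using Lemma~\ref{LemmaLCoproductComponent}, and injectivity by producing the left inverse $\phi=\I_H$ and showing $\phi$ factors through the $\Sfr_d\otimes\Lie^c_d$ quotient via Proposition~\ref{Proposition_double_shuffle}. That is exactly what the paper does, with the slight difference that the paper checks $p_H\circ s\circ\I_H=\Id$ rather than $\I_H\circ p_H\circ s=\Id$; the two are interchangeable once surjectivity/spanning is known. (A small correction: Corollary~\ref{CoxeterSpecialH} is what you need for \emph{injectivity}, not surjectivity; for surjectivity the direct computation of $p_H\circ s$ already produces every basis element of $\Bup_d^H\St(V)$.)

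The ``main obstacle'' you identify — that ``for a generic $d$-tuple of vectors in $\{h=1\}$ the required direct sum decompositions need not exist in $V$'' — is not a real obstacle. By the very definition of $\Bup_d^H\St(V)$ as a direct sum over decompositions $V=P_1\oplus\dots\oplus P_d$ with $P_i\not\subseteq H$, every generator $[P_1|\cdots|P_d]$ already comes with a decomposition of $V$ into independent lines, and hence $v_1,\dots,v_d$ is automatically a \emph{basis} of $V$. Therefore for every partition $\{1,\dots,d\}=S_1\sqcup S_2$ one has $V=\langle v_i:i\in S_1\rangle\oplus\langle v_j:j\in S_2\rangle$, and Proposition~\ref{Proposition_double_shuffle} applies directly. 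The shuffle relations that define $\Bup_d^H\St(V)\otimes_{\Sfr_d}\Lie^c_d$ as a quotient are precisely the Day-convolution products, which are indexed by such decompositions, so $\eqref{eq:shuffle2}$ (together with the vanishing of products in $\St^\Lc$) gives well-definedness of $\phi$ in one step; no appeal to dihedral relations is needed. Your parallel ``conceptual route'' via Koszul/bar-cobar duality is not carried far enough to stand on its own.
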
 

\begin{proof}
Assume that the hyperplane $H$ is defined by an equation $h(v)=0$ for $h\in V^{\vee}$. Consider a map $\I_H\colon \Bup_d^H\St(V)\lra \StL(V)$ defined by the formula
    \[\I_H([P_1|\cdots|P_d])=(-1)^d\I^\Lc[v_1,\dots,v_d],\]
where the vectors $v_i\in P_i$ are such that $h(v_i)=1$. Proposition~\ref{Proposition_double_shuffle} implies that this map vanishes on all nontrivial shuffles, and thus factors through the projection
    \[\Bup_d^H\St(V)\lra \Bup_d^H\St(V)\otimes_{\Sfr_d}Lie^c_d.\] 
Therefore it gives a map $\I_H\colon \Bup_d^H\St(V)\otimes_{\Sfr_d}Lie^c_d \lra \StL.$ 
Surjectivity of $\I_H$ follows from Proposition~\ref{CoxeterSpecialHInt}\ref{CoxeterSpecialHIntParti}.

Notice that $v_i-v_j\in H$, so Lemma \ref{LemmaLCoproductComponent} implies that $s(\I^{\Lc}[v_1,\dots,v_d])$ has only one term $\pm [P_1|\dots|P_d]$ with all $P_i$ not in $H$. Therefore, we have
\[
p_H(s(\I_H([P_1|\cdots|P_d])))=[P_1|\cdots|P_d],
\]
and thus $p_H\circ s \circ \I_H =\Id.$ It follows that $\I_H$ is injective, so an isomorphism. Furthermore, $p_H\circ s$ is its inverse.
\end{proof}

The above construction can be extended to give a family of bases for the $\VB$-Hopf algebra $\StH_d$. For a hyperplane~$H$ and $0\leq k<d$ we define the following subspaces of $\Bup_d\St(V)$:
\[
\Bup_d^{H,k}\St(V) = \!\!\! \bigoplus_{\substack{V=U\oplus W \\ W\subseteq H,\, \dim(W)=k}}\!\!\!\Bup_{d-k}^{H\cap U}\St(U) \otimes \StH(W).
\]
Note that $\Bup_d^{H,0}\St(V) = \Bup_d^{H}\St(V)$ and $\Bup_d^{H,k}\St(V)\subseteq\widetilde{\Bup}_d^{H,k}\St(V)$, where
\[
\widetilde{\Bup}_d^{H,k}\St(V) \coloneqq  \!\!\! \bigoplus_{\substack{V=U\oplus W \\ W\subseteq H,\, \dim(W)=k}} \!\!\! \Bup_{d-k}^{H\cap U}\St(U) \otimes \Bup_k\St(W).
\]
The space $\widetilde{\Bup}_d^{H,k}\St(V)$ has a canonical projection $r_{H,k}$ from $\Bup_d\St(V)$; denote $r_H=\bigoplus_{k}r_{H,k}$.

\begin{proposition} \label{TheoremSteinbergHBasis}
Let $V$ be a nonzero vector space of dimension $d$ and let $H\subseteq V$ be a hyperplane. Then the composition 
\[
\StH(V)\stackrel{s}{\lra} \Bup_d\St(V)\stackrel{r_{H}}{\lra} \bigoplus_{k=0}^{d-1}\Bup_d^{H,k}\St(V)
\]
is an isomorphism of $\Q$-vector spaces.  
\end{proposition}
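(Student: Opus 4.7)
The plan is to construct an explicit inverse to $p_H\circ s$ by mimicking the proof of Proposition~\ref{TheoremSteinbergLBasis}, lifted from $\St^\Lc$ to $\St^\Hc$. Fix a linear functional $h\in V^\vee$ with $H=\ker(h)$. For each direct sum decomposition $V=U\oplus W$ with $W\subseteq H$ and $\dim W=k$, define
\[
\alpha_{U,W}\colon \Bup_{d-k}^{H\cap U}\St(U)\otimes\St^\Hc(W)\lra\St^\Hc(V),\qquad
[P_1\smid\cdots\smid P_{d-k}]\otimes\eta\longmapsto (-1)^{d-k}\,\I[v_1,\ldots,v_{d-k}]\cdot\eta,
\]
where $v_i\in P_i$ is normalized by $h(v_i)=1$, the Steinberg iterated integral $\I[v_1,\ldots,v_{d-k}]\in\St^\Hc(U)$ lifts the Steinberg polylogarithm in $\St^\Lc(U)$, and the product is the $\VB$-Hopf algebra multiplication in $\St^\Hc$ associated to the decomposition $V=U\oplus W$ (with the convention that $\I$ of the empty sequence is the unit). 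Well-definedness with respect to a further decomposition $U=U'\oplus U''$ follows from the shuffle relations for Steinberg iterated integrals (Proposition~\ref{Proposition_double_shuffle}), which translate the bar-shuffle product on $\Bup_{d-k}^{H\cap U}\St(U)$ directly into the $\VB$-product in $\St^\Hc(U)$. Summing these produces a global candidate inverse
\[
\alpha\colon\bigoplus_{k=0}^{d-1}\Bup_d^{H,k}\St(V)\lra\St^\Hc(V).
\]

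To verify $(p_H\circ s)\circ\alpha=\mathrm{id}$, I apply Proposition~\ref{LemmaFormulaProduct} to get $s(\I[v_1,\ldots,v_{d-k}]\cdot\eta)=s(\I[v_1,\ldots,v_{d-k}])\shuffle s(\eta)$ inside $\Bup_d\St(V)$. By Lemma~\ref{LemmaLCoproductComponent} applied inductively, the element $s(\I[v_1,\ldots,v_{d-k}])\in\Bup_{d-k}\St(U)$ contains exactly one term all of whose slots avoid $H$, namely $(-1)^{d-k}[\langle v_1\rangle\smid\cdots\smid\langle v_{d-k}\rangle]$; every other term involves at least one slot of the form $\langle v_i-v_j\rangle$, which lies in $H\cap U\subseteq H$ because $h(v_i-v_j)=0$. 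Since every slot of $s(\eta)\in\Bup_k\St(W)$ lies in $W\subseteq H$, the only contribution to the component $p_{H,k}$ (the piece with exactly $k$ slots in $H$) of $s(\I)\shuffle s(\eta)$ comes from shuffling the distinguished all-outside-$H$ term of $s(\I)$ with $s(\eta)$. Tracking signs carefully, this contribution equals $[P_1\smid\cdots\smid P_{d-k}]\otimes s(\eta)=[P_1\smid\cdots\smid P_{d-k}]\otimes\eta$ in the $(U,W)$-summand. As a byproduct, the second factor of $p_H\circ s$ automatically lies in $\St^\Hc(W)\subseteq\Bup_k\St(W)$, so the image of $p_H\circ s$ is contained in $\bigoplus_k\Bup_d^{H,k}\St(V)$ rather than merely in the larger $\bigoplus_k\widetilde\Bup_d^{H,k}\St(V)$.

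Surjectivity of $p_H\circ s$ follows from surjectivity of $\alpha$, which is a consequence of Theorem~\ref{thm:Stn_li_basis}: $\St^\Hc(V)$ is spanned by Coxeter pairs with $P_1,\ldots,P_{d-1}\in H$, and each such pair, generic or not, can be written as a product $\I[v_1,\ldots,v_{d-k}]\cdot\eta$ with $\eta\in\St^\Hc(W)$ for some $W\subseteq H$, hence lies in the image of some $\alpha_{U,W}$.

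The main technical obstacle is the shuffle computation in the second paragraph: one must fix the precise definition of the projection $p_{H,k}$ (including the correct signs for reordering slots), verify that the shuffle interleavings combine to give exactly one copy of $[P_1\smid\cdots\smid P_{d-k}]\otimes\eta$ (not a multiple), and check that contributions from distinct $(U,W)$-summands do not interfere. Once this bookkeeping is carried out, the same computation simultaneously establishes injectivity (via the identity $(p_H\circ s)\circ\alpha=\mathrm{id}$), surjectivity, and the factorization of $p_H\circ s$ through $\bigoplus_k\Bup_d^{H,k}\St(V)$.
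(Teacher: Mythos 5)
Your construction of $\alpha$ is exactly the paper's $\varphi$, your key inductive input is the same (Lemma~\ref{LemmaLCoproductComponent}), and your computation of the ``diagonal'' piece $p_{H,k}\circ s\circ\alpha_{U,W}=\mathrm{id}$ is correct, including the sign normalization. The gap is in the final step: the identity $(p_H\circ s)\circ\alpha=\mathrm{id}$, which you invoke to conclude injectivity, is simply false. You verify that the only contribution to the component $p_{H,k}$ comes from the distinguished all-outside-$H$ term of $s(\I)$, which is right; but the other terms of $s(\I[v_1,\dots,v_{d-k}])$ --- those having at least one slot of the form $\langle v_i-v_j\rangle\subseteq H$ --- do \emph{not} disappear. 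When shuffled with $s(\eta)$, they produce nonzero contributions to $p_{H,j}$ for $j>k$. Concretely, for $d=2$, $k=0$, one has $s(\I[v_1,v_2])=[v_1|v_2]-[v_1|v_2{-}v_1]+[v_2|v_2{-}v_1]$, and since $v_2-v_1\in H$ the projection $p_{H,1}$ gives $-[v_1|v_2{-}v_1]+[v_2|v_2{-}v_1]\ne 0$; so $(p_H\circ s)\circ\alpha$ is not the identity even in the smallest nontrivial case.

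What is true --- and what the paper's proof actually records --- is that $p_H\circ s\circ\alpha$ carries $\Bup_d^{H,k}\St(V)$ into $\bigoplus_{j\ge k}\Bup_d^{H,j}\St(V)$, since every non-distinguished term of $s(\I)$ has $\ge 1$ slot in $H$, hence produces at least $k+1$ slots in $H$ after shuffling with $s(\eta)$. Together with your identification of the $j=k$ component as the identity, this shows $p_H\circ s\circ\alpha$ is block-triangular with identity on the diagonal with respect to the increasing filtration by $k$, hence invertible (a nontrivial inverse, not equal to $\alpha$ itself). That invertibility, rather than the false identity, is what gives injectivity of $\alpha$; combined with your (correct) surjectivity argument from Theorem~\ref{thm:Stn_li_basis}, one concludes that both $\alpha$ and $p_H\circ s$ are isomorphisms. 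So: right map, right diagonal computation, but you need to replace the claim ``$=\mathrm{id}$'' with a triangularity argument, otherwise the logic does not close.
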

\begin{proof}
First, we note the following chain of inclusions
    \[\StH(V)
    \subseteq\bigoplus_{\substack{U\oplus W=V\\ \dim(W)=k}} \StH(U)\otimes\StH(W)
    \subseteq\bigoplus_{\substack{U\oplus W=V\\ \dim(W)=k}} \Bup_{d-k}\St(U)\otimes\Bup_{k}\St(W)
    = \Bup_d\St(V)
    \]
that directly follows the definition of the differential $\partial_k$ from~\eqref{BarDifferential}. In particular, this implies that the projection from $\StH(V)$ to $\widetilde{\Bup}_d^{H,k}\St(V)$ indeed lands in $\Bup_d^{H,k}\St(V)$.

As in the proof of Proposition~\ref{TheoremSteinbergLBasis}, we assume that $H=\Ker(h)$ for a nonzero linear functional $h\in V^{\vee}$. Consider the mapping $\varphi\colon \bigoplus_{k=0}^{d-1}\Bup_d^{H,k}\St(V)\to \StH(V)$, defined on each direct summand $\Bup_{d-k}^{H\cap U}\St(U) \otimes \StH(W)$ by
    \[\varphi([P_1|\cdots|P_{d-k}]\otimes a) = (-1)^{d-k}\I[v_1,\dots,v_{d-k}]\cdot a,\]
where the vectors $v_i\in P_i$ are chosen to satisfy $h(v_i)=1$. A direct calculation using Lemma~\ref{LemmaLCoproductComponent} shows that the composition $r_{H,k}\circ s\circ \varphi$ restricts to identity on each $\Bup_d^{H,k}\St(V)$ and moreover, the image under $r_H\circ s\circ \varphi$ of $\Bup_d^{H,k}\St(V)$ is contained in $\bigoplus_{j\geq k}\Bup_d^{H,j}\St(V)$. This shows that $r_H\circ s\circ \varphi$ is an isomorphism, and so $\varphi$ is injective. 

To see that $\varphi$ is surjective, recall from Proposition~\hyperref[CoxeterSpecialHIntPartii]{\ref*{CoxeterSpecialHInt}\ref*{CoxeterSpecialHIntPartii}} that $\StH(V)$ is spanned by elements of the form $\I[v_1,\dots,v_{d-k}]\I[w_1,\dots,w_k]$ with $w_1,\dots,w_k\in H$ and $h(v_1)=\cdots=h(v_{d-k})=1$ for some linear functional $h$ satisfying $\Ker(h)=H$. Surjectivity then follows from the calculation 
    \[\varphi([P_1|\cdots|P_{d-k}]\otimes \I[w_1,\dots,w_k]) = (-1)^{d-k}\I[v_1,\dots,v_{d-k}]\I[w_1,\dots,w_k],\]
where $P_i=\langle v_i\rangle$, $i=1,\dots,d-k$.
\end{proof}

Iteratively applying the map $\varphi$ constructed in the proof of the above theorem, one gets a basis of $\StH(V)$ consisting of Coxeter pairs. Here is an example.
For $\dim(V)=2$ and a hyperplane $H\subseteq V$, choose a functional $h$ such that $H=\Ker(h)$ and a nonzero vector $w\in H$. 
Then the union of the sets
\begin{alignat*}{2}
& \bigl\{\I[v_1,v_2] &&\setsep v_1, v_2\ \text{is a basis of $V$},\, h(v_1)=h(v_2)=1\bigr\}
,
\\
 & \bigl\{\I[v]\cdot\I[w] &&\setsep v \in V,\,h(v)=1\bigr\}
\end{alignat*}
forms a basis for $\StH(V)$.

\section{The Steinberg module and Milnor \texorpdfstring{$K$}{K}-theory} \label{SectionSteinbergMilnor}

The goal of this section is to explain a connection between the Steinberg module $\St_d(\Q)$ and Milnor $K$-theory. The results of this section are not used later, yet we think that they clarify the origin of the connection between $\St_d(\Q)$ and polylogarithms on the torus.
We begin by explaining the main idea. The theorem of Ash--Rudolph \cite[Theorem 4.1]{AR79} states that $\St_d(\Q)$ is generated as a $\Q$-vector space by integral apartments: elements $[v_1,\dots,v_d]\in\St_d(\Q)$ such that $v_i\in \Z^d$ and $\det(v_1,\dots,v_d)=\pm1$. The theorem of Bykovski\u{\i} \cite{Byk}  states that all relations between integral apartments follow from the following three:
\begin{align}
    \tag*{{\bf (R1)}} \label{bykovskii:rel1x} [v_{\sigma(1)},v_{\sigma(2)},\dots,v_{\sigma(d)}]&=(-1)^{\sigma}[v_1,v_2,\dots,v_d] \text{ for }\sigma \in \Sfr_d, \\
    \tag*{{\bf (R2)}} \label{bykovskii:rel2x} [v_1,v_2,\dots,v_d]&=[-v_1,v_2,\dots,v_d], \\
    \tag*{{\bf (R3)}} \label{bykovskii:rel3x}
    [v_1,v_2,v_3,\dots,v_d]&=[v_1,v_1+v_2,v_3,\dots,v_d]+[v_1+v_2,v_2,v_3,\dots,v_d].
\end{align} 
Using the presentation above, it is possible to construct an injective map $\Kmap_d$ from the Steinberg module $\St_d(\Q)$ to a certain quotient of the Milnor $K$-group of the function field of a $d$-dimensional algebraic torus over $\overline{\Q}$. In \S \ref{SectionARByk}, we present a different construction of this map, which does not use the Bykovski\u{\i} presentation of $\St_d(\Q)$. Related constructions have appeared in the literature before, see \cite[\S 3]{SV24} and \cite[Theorem 1]{XU24}.

In \S \ref{SectionARByk}, we give a new proof of the theorems of Ash--Rudolph and Bykovski\u{\i}, which is inspired by the construction of the norm map on Milnor $K$-theory.

\subsection{The Steinberg module and Milnor \texorpdfstring{$K$}{K}-theory} \label{SteinbergAndMilnor}

The Milnor ring $K^M(F)$ of a field $F$ is a quotient of the tensor algebra on $F^{\times}$ by the homogeneous ideal generated by elements $a\otimes (1-a)$ for $a, 1-a \in F^{\times}$, see \cite{Mil69}. It follows that $K_0^M(F)\cong\Z$ and $K_1^M(F)\cong F^{\times}$. We denote the projection of $a_1\otimes \dots \otimes a_n\in (F^{\times})^{\otimes n}$ to $K_n^M(F)$ by $\{a_1,\dots,a_n\}$. The Milnor ring is graded-commutative \cite[Lemma 1.1]{Mil69}.

Next, we discuss residue  maps and norm maps on Milnor \texorpdfstring{$K$}{K}-groups. Let $\nu\colon F^{\times}\lra \Z$ be a discrete valuation with valuation ring $\mathcal{O}_\nu$, residue field $F_\nu$, and uniformizer $\pi$. For a unit $u\in \mathcal{O}_\nu^{\times}$ its residue class $\overline{u}$ lies in  $F_\nu^{\times}$. The residue map $\res_\nu\colon K_n^M(F)\lra K_{n-1}^M(F_\nu)$ is uniquely characterized by the properties:
\begin{align*}
\res_\nu(\{u_1, \dots, u_n\}) &=0 \,, \text{ and } \\
\res_\nu(\{\pi, u_2,\dots, u_{n}\}) &=\{\overline{u}_2,\dots, \overline{u}_n\}
\end{align*}
for any units $u_1, \dots, u_n \in \mathcal{O}_\nu^{\times}$. The residue map does not depend on the choice of the uniformizer.

For a finite field extension $L/F$ there exists a norm map $\Nm_{L/F}\colon K_n^M(L)\lra K_n^M(F)$, see  \cite{BT73, Kat79}. The construction is quite involved in general but simplifies when working with rationalized Milnor $K$-groups and Galois extensions. In this case, the norm map can be described as follows: for $a_1,\dots,a_n \in L^{\times}$, the element
\[
\sum_{\sigma \in \Gal(L/F)}\{\sigma(a_1),\dots,\sigma(a_n)\}
\]
lies in the image of the embedding $K_n^M(F)_{\Q} \hookrightarrow K_n^M(L)_{\Q}$; the corresponding element in  $K_n^M(F)_{\Q}$ equals to $\Nm_{L/F}(\{a_1,\dots,a_n\})$, see \cite{Sus79}. 

For an algebraic torus $\Tup$, we have an embedding $\Xup(\Tup)\hookrightarrow K_1^M(\overline{\Q}(\Tup))$ sending a character $\chi\colon \Tup\lra \Tup_1$ to the corresponding element $\chi \in \overline{\Q}(\Tup)^{\times}$ (here $\overline{\Q}(\Tup)$ is the field of rational functions on $\Tup$, equal to $\overline{\Q}(x_1,\dots,x_d)$). We have a map
\begin{equation} \label{FormulaMultiplicationMap}
K_{n-1}^M\bigl(\overline{\Q}(\Tup)\bigr) \otimes \Xup(\Tup)\lra K_{n-1}^M\bigl(\overline{\Q}(\Tup)\bigr) \otimes  K_1^M(\overline{\Q}(\Tup)) \lra K_n^M\bigl(\overline{\Q}(\Tup)\bigr),
\end{equation} 
given as the composition of the embedding discussed above and multiplication in the Milnor ring. We introduce the following notation for the rationalization of the cokernel of the map above:
\[
\Kbar_n(\Tup):= \Coker\left(K_{n-1}^M\bigl(\overline{\Q}(\Tup)\bigr) \otimes \Xup(\Tup) \lra K_n^M\bigl(\overline{\Q}(\Tup)\bigr) \right )\otimes_{\Z} \Q.
\]

The main result of this section is the following theorem. 
 
\begin{theorem} \label{TheoremSteinbergMilnor}
There exists a unique $\Q$-linear map
\[
\Kmap\colon \St_d(\Q)\lra  \Kbar_d(\Tup_d)
\]
such that for an integral apartment $[v_1,\dots,v_d]\in \St_d(\Q)$ we have
\begin{equation}\label{FormulaImageofIntegralAppartment}
\Kmap([v_1,\dots,v_d])=\{1-x^{v_1},\dots, 1-x^{v_d}\} \in \Kbar_d(\Tup_d).
\end{equation}
\end{theorem}
The uniqueness follows immediately from the theorem of Ash--Rudolph. We give two proofs of the existence statement. The first proof relies on the theorem of Bykovski\u{\i}. The second proof uses the properties of the norm map on $K$-groups.
\begin{proof}[First proof of Theorem \ref{TheoremSteinbergMilnor}]
The existence can be easily deduced from the theorem of Bykovski\u{\i}. Indeed, it is sufficient to show that the map $\Kmap$ annihilates relations \ref{bykovskii:rel1x}--\ref{bykovskii:rel3x}. 
 For \ref{bykovskii:rel1x}, we have 
\[
\Kmap([v_{\sigma(1)},\dots, v_{\sigma(d)}])=\{1-x^{v_{\sigma(1)}},\dots, 1-x^{v_{\sigma(d)}}\}=(-1)^{\sigma}\{1-x^{v_1},\dots, 1-x^{v_d}\}= \Kmap((-1)^{\sigma}[v_1,\dots,v_d]).
\]
For \ref{bykovskii:rel2x}, we have 
\begin{align*}
\Kmap([-v_1,v_2, \dots,v_d])&=\{1-x^{-v_1},1-x^{v_2},\dots, 1-x^{v_d}\}\\
&=\{-(1-x^{v_1}),1-x^{v_2},\dots, 1-x^{v_d}\}-\{x^{v_1},1-x^{v_2},\dots, 1-x^{v_d}\}\\
&=\Kmap([v_1,v_2, \dots,v_d])+\{-1,1-x^{v_2},\dots, 1-x^{v_d}\}-\{x^{v_1},1-x^{v_2},\dots, 1-x^{v_d}\}.
\end{align*}
The element $\{x^{v_1},1-x^{v_2},\dots, 1-x^{v_d}\}$ lies in the image of \eqref{FormulaMultiplicationMap} and thus vanishes in $\Kbar_d\bigl(\Tup_d\bigr).$ The element $\{-1,1-x^{v_2},\dots, 1-x^{v_d}\}$ is $2$-torsion, and thus vanishes since $\Kbar_d\bigl(\Tup_d\bigr)$ is a $\Q$-vector space.
So, $\Kmap$ annihilates \ref{bykovskii:rel2x}.

It remains to show that $\Kmap$  annihilates \ref{bykovskii:rel3x}. It is sufficient to show that the identity
\[
\{1-x^{v_1}, 1-x^{v_2}\}  -\{ 1-x^{v_1},1-x^{v_1+v_2}\} -\{1-x^{v_1+v_2},1-x^{v_2}\}=0
\]
holds in $\Kbar_2\bigl(\Tup_d)$.
First, observe that since $\{1-x^{v_1+v_2},1-x^{v_1+v_2}\}$ is torsion, we have
\[
\{1-x^{v_1}, 1-x^{v_2}\}  -\{ 1-x^{v_1},1-x^{v_1+v_2}\} -\{1-x^{v_1+v_2},1-x^{v_2}\}
=\Bigl\{\frac{1-x^{v_1}}{1-x^{v_1+v_2}},\frac{1-x^{v_2}}{1-x^{v_1+v_2}}\Bigr\}.
\]
Next, 
\[\Bigl\{\frac{1-x^{v_1}}{1-x^{v_1+v_2}},\frac{1-x^{v_2}}{1-x^{v_1+v_2}}\Bigr\}= \Bigl\{\frac{1-x^{v_1}}{1-x^{v_1+v_2}},\frac{(1-x^{v_2})x^{v_1}}{1-x^{v_1+v_2}}\Bigr\}-\Bigl\{\frac{1-x^{v_1}}{1-x^{v_1+v_2}},x^{v_1}\Bigr\}
\]
The element $\bigl\{\frac{1-x^{v_1}}{1-x^{v_1+v_2}},\frac{(1-x^{v_2})x^{v_1}}{1-x^{v_1+v_2}}\bigr\}$ vanishes since 
\[
\frac{1-x^{v_1}}{1-x^{v_1+v_2}}+\frac{(1-x^{v_2})x^{v_1}}{1-x^{v_1+v_2}}=1.
\]
The element $\bigl\{\frac{1-x^{v_1}}{1-x^{v_1+v_2}},x^{v_1}\bigr\}$ vanishes because it lies in the image of \eqref{FormulaMultiplicationMap}.   This finishes the proof of \ref{bykovskii:rel3x}, and the Theorem.
\end{proof}
\begin{proof}[Second proof of Theorem \ref{TheoremSteinbergMilnor}]
Lee--Szczarba~\cite{LS76} proved that the $\Q$-vector space $\St_d(\Q)$ is generated by (not necessarily integral) apartments $[v_1,\dots,v_d]$ for $v_1,\dots, v_d\in \Z^d$ in general position subject to the following relations:
\begin{enumerate}
\item \label{Steinberg1} $[v_{\sigma(1)},v_{\sigma(2)}, \dots,v_{\sigma(d)}]=(-1)^{\sigma}[v_1,v_2, \dots,v_d]$ for $\sigma \in \Sfr_d,$
\item \label{Steinberg2}  $[m v_1,v_2, \dots,v_d]=[v_1,v_2, \dots,v_d]$  for $m \in \Z\sm \{0\},$
\item \label{Steinberg3}  $\sum_{i=0}^{d}(-1)^i[v_0,\dots,\widehat{v_{i\,}},\dots, v_{d}]=0$  for any vectors $v_0,\dots,v_{d}\in \Z^d$ in general position.
\end{enumerate}
For an isogeny $p\colon \Tup'\lra  \Tup$ we have a norm map (since an isogeny defines a finite extension of function fields)
\[
\Nm_p\colon K_n^M\bigl(\overline{\Q}(\Tup')\bigr)\lra K_n^M\bigl(\overline{\Q}(\Tup)\bigr).
\]
It is easy to see that this map induces a map from $\Kbar_n\bigl(\Tup'\bigr)$ to $\Kbar_n\bigl(\Tup\bigr)$ which we also call the norm map and denote in the same way. For a matrix $A\in \Mup_{d}(\Z)$ with $\det(A)\neq0$ we define an isogeny $p_A \colon \Tup_d\lra \Tup_d$ by the formula
\[
p_A(x_1,\dots,x_d)=\left(\prod_{i=1}^d x_i^{a_{i1}},\dots,\prod_{i=1}^d x_i^{a_{id}} \right).
\] 
Let $\adj(A)$ be the adjugate matrix to $A$, defined by $\adj(A)=\det(A)A^{-1}$, and  $p_{\adj(A)}\colon \Tup_d \lra \Tup_d$ be the corresponding isogeny. Consider an apartment $[v_1,\dots,v_d]=[A e_1,\dots,Ae_d]$ for $A\in \Mup_d(\Z)$ with $\det(A)\neq 0$. We define
\[
\Kmap([v_1,\dots,v_d])\coloneqq \Nm_{p_{\adj(A)}}\{1-x_1,\dots,1-x_d\}\in \Kbar_d\bigl(\Tup_d\bigr).
\]
More explicitly, we have
\begin{equation} \label{FormulaL}
\Kmap([v_1,\dots,v_d])=
N^{-1}\sum_{\substack{y_1^{N}=x_1 \\ \hspace{\widthof{$y_1^{N}$}}\vdotsB\hspace{\widthof{$x_d$}} \\ y_d^{N}=x_d}}
\biggl\{1-\prod_{i=1}^d y_i^{(v_{1})_i}, \dots ,1-\prod_{i=1}^d y_i^{(v_d)_{i}}\biggr\},
\end{equation}
where $N=|\det(v_1,\dots,v_d)|.$

We claim that the map $\Kmap$ is well-defined and satisfies (\ref{FormulaImageofIntegralAppartment}). The latter follows immediately from (\ref{FormulaL}), since if $[v_1,\dots,v_d]$ is an integral apartment, we have $N=1$. So, it remains to prove that $\Kmap$ annihilates relations \ref{Steinberg1}, \ref{Steinberg2}, and \ref{Steinberg3}.  By the tower theorem for norms, for any matrices $A,B \in \Mup_d(\Z)$ with nonzero determinants, we have 
$\Nm_{\adj(B)}\,\Nm_{\adj(A)}=\Nm_{\adj(A)\adj(B)}=\Nm_{\adj(BA)}$, so
\begin{equation}\label{FormulaSTNorm}
\Kmap(B\cdot[v_1,\dots,v_d] )=\Nm_{p_{\adj(B)}}\bigl (\Kmap([v_1,\dots,v_d])\bigr).
\end{equation}
 Because of \eqref{FormulaSTNorm}, it is sufficient to prove relations \ref{Steinberg1}, \ref{Steinberg2} for $v_1=e_1,\dots,v_d=e_d$. After that, we prove \ref{Steinberg3}, reducing it to the case  $v_0=e_1+\dots+e_d$ using \eqref{FormulaSTNorm} and \ref{Steinberg2}.

To see that  $\Kmap$ annihilates \ref{Steinberg1}, notice that
\[
\Kmap([e_{\sigma(1)},\dots, e_{\sigma(d)}])=\{1-x_{\sigma(1)},\dots,1-x_{\sigma(d)}\}=(-1)^{\sigma}\{1-x_1,\dots,1-x_d\}=(-1)^{\sigma}\Kmap([e_{1},\dots, e_{d}]).
\]
Next, for $m>0$ we have
\begin{align*}
\Kmap([m e_{1}, e_2,\dots, e_{d}])&=\frac{1}{m}\sum_{\substack{y_1^{m}=x_1 \\ \hspace{\widthof{$y_1^{m}$}} \vdotsB \hspace{\widthof{$x_d$}} \\ y_d^{m}=x_d}}\{1-y_1^m,1-y_2,\dots,1-y_d\}\\
&=\frac{1}{m}\sum_{y_1^{m}=x_1}\{1-y_1^m,1-x_2,\dots,1-x_d\}\\
&=\{1-x_1,1-x_2,\dots,1-x_d\}.
\end{align*}
Since we work in the cokernel of the map (\ref{FormulaMultiplicationMap}), we have 
\[
\Kmap([-e_{1}, e_2,\dots, e_{d}])=\{1-x_1^{-1},1-x_2,\dots,1-x_d\}=\{1-x_1,1-x_2,\dots,1-x_d\}
\]
This proves  that  $\Kmap$ annihilates  \ref{Steinberg2}. To prove that  $\Kmap$ annihilates  \ref{Steinberg3}, notice that
\[
\sum_{i=0}^{d}(-1)^i\Kmap([e_0,\dots,\widehat{e_{i\,}},\dots, e_{d}])=
\Bigl\{\frac{1-x_1}{1-x_1\cdots x_d},\dots,\frac{1-x_d}{1-x_1\cdots x_d} \Bigr\}.
\]
In $\Kbar_d\bigl(\Tup_d\bigr)$, we have 
\[
\Bigl\{\frac{1-x_1}{1-x_1\dots x_d},\dots,\frac{1-x_d}{1-x_1\dots x_d} \Bigr\}=
\Bigl\{\frac{1-x_1}{1-x_1\cdots x_d},\frac{x_1(1-x_2)}{1-x_1\cdots x_d},\dots,\frac{x_1\cdots x_{d-1}(1-x_d)}{1-x_1\cdots x_d} \Bigr\}.
\] 
The latter element vanishes because $\{a_1,\dots,a_{n}\}=0$ for $a_1+\dots+a_n=1$, see \cite[Lemma 1.3]{Mil69}. This finishes the proof of the theorem.
\end{proof}

\subsection{Injectivity of the map \texorpdfstring{$\Kmap$}{K}} \label{InjectivitySteinbergMilnor}

Our next goal is to show that the map $\Kmap$ is injective. The proof is based on a Milnor $K$-theory version of Parshin's residue \cite{Par75,Par76}.

\begin{proposition} \label{Proposition Kbar injectivity}
For any $d\geq 0$ the map $\Kmap\colon \St_d(\Q)\lra  \Kbar_d\bigl(\Tup_d\bigr)$   is injective.
\end{proposition}
\begin{proof}
Recall that $\St_d(\Q)$ is defined as the reduced $(d-2)$-homology of the Tits building $\Tc_{V}$ for $V=\Q^d.$ The vector space of $(d-2)$-chains $C_{d-2}(\Tc_{V},\Q)$ is a $\Q$-vector space on the set of complete flags
\[
\Fc_{\bullet}=\Bigl(0=\Fc_0\subsetneq \Fc_1\subsetneq \Fc_2 \subsetneq \dots \subsetneq \Fc_{d-1}\subsetneq\Fc_d=V\Bigr).
\]
  The space $\Tc_{V}$ has dimension $d-2,$ so we have an embedding $i\colon \St_d(\Q)\hookrightarrow C_{d-2}(\Tc_{V})$.  Recall that by (\ref{FormulaFromSteinbergToChains}) we have
\[
i([v_1,v_2,\dots,v_d])=\sum_{\sigma \in \Sfr_d}(-1)^{\sigma}\Bigl (0\subsetneq \langle v_{\sigma(1)} \rangle \subsetneq \langle v_{\sigma(1)}, v_{\sigma(2)} \rangle \subsetneq \dots \subsetneq \langle v_{\sigma(1)}, v_{\sigma(2)},\dots, v_{\sigma(d-1)}\rangle \subsetneq  V \Bigr).
\]
To show that $\Kmap$ is injective it is sufficient to construct a map 
$j\colon \Kbar_d\bigl(\Tup_d\bigr)\lra C_{d-2}(\Tc_{V})$ such that $i=j\circ \Kmap.$ To construct this map, we need some preparation. 

For a subspace $W\subseteq V=\Xup(\Tup_d)_{\Q}$ we denote by $\Tup_W$ the torus corresponding to the sublattice $W\cap \Xup(\Tup_d)$ in $\Xup(\Tup_d)$. For $W_2\subseteq W_1 \subseteq V$ of dimensions $d_2<d_1\leq d$ such that with $d_2=d_1-1$ we have a map 
\[
\res_{W_1,W_2}\colon \Kbar_{d_1}\bigl(\Tup_{W_1}\bigr)\lra \Kbar_{d_2}\bigl(\Tup_{W_2}\bigr)
\] 
induced by the residue map for the discrete valuation associated to the codimension~$1$ subvariety $\Tup_{W_2}$ in $\Tup_{W_1}$ (more explicitly, $\Tup_{W_2}$ is the vanishing set of $\chi-1$ for some character on $\Tup_{W_1}$, and we choose the discrete valuation associated to the irreducible polynomial $\chi-1$).
For a  complete flag $\Fc_\bullet$ in $V$ denote the composition
\[
\Kbar_d\bigl(\Tup_d\bigr)=\Kbar_d\!\bigl(\Tup_{\Fc_d}\bigr)\xrightarrow{\res_{\Fc_d,\Fc_{d{\!-\!}1}}} \Kbar_{d{\!-\!}1}\!\bigl(\Tup_{\Fc_{d{\!-\!}1}}\bigr)\xrightarrow{\res_{\Fc_{d{\!-\!}1},\Fc_{d{\!-\!}2}}}\cdots \xrightarrow{\res_{\Fc_{1},\Fc_{0}}} \Kbar_0\!\bigl(\Tup_{\Fc_0}\bigr)\cong \Q
\]
by $\res_{\Fc}$.
Notice that for a nonzero function $f\in \overline{\Q}(\mathrm{T})$ there are only finitely many discrete valuations $\nu$ corresponding to subtori, for which $\nu(f)\neq 0$. It follows that for an element $a\in \Kbar_d\bigl(\Tup_d\bigr)$ the residue $\res_{\Fc}(a)$ is not equal to zero for only finitely many flags $\Fc$.
We define the map $j\colon \Kbar_d\bigl(\Tup_d\bigr)\lra C_{d-2}(\Tc_{V})$ by the formula
\[
j(a)=\sum_{\Fc}\res_{\Fc}(a) \Fc\in C_{d-2}(\Tc_V,\Q).
\]

To finish the proof, it is sufficient to check the formula $i=j\circ \Kmap$ on integral apartments. All our constructions are $\GL_d(\Z)$-equivariant, so it is sufficient to check the statement for the apartment $[e_1,\dots,e_d]$. This follows by induction from the fact that that the residue $\res_{V,W}[e_1,\dots,e_d]$
is nonzero only when $W=\langle e_1, \dots, \widehat{e_i}, \dots, e_d \rangle$ for some $i\in \{1,\dots,d\}$; for such W we have \[
\res_{V,W}\{1-x_1,\dots,1-x_d\}=(-1)^i\{1-x_1,\dots,\widehat{1-x_i},\dots, 1-x_d\}.
\]
It follows that $\res_\Fc(\{1-x_1,\dots,1-x_d\})$ is nonzero only for flags
\[
\Fc_{\sigma}=\Bigl (0\subsetneq \langle e_{\sigma(1)} \rangle \subsetneq \langle e_{\sigma(1)}, e_{\sigma(2)} \rangle \subsetneq \dots \subsetneq \langle e_{\sigma(1)}, e_{\sigma(2)},\dots, e_{\sigma(d-1)} \rangle \subsetneq V\Bigr)
\]
for some $\sigma\in \Sfr_d$ and that $\res_{\Fc_{\sigma}}(\{1-x_1,\dots,1-x_d\})=(-1)^{\sigma}.$
Thus
\[
j(\Kmap([e_1,\dots,e_d]))=\sum_{\sigma \in \Sfr_d}(-1)^{\sigma}\Bigl (0\subsetneq \langle e_{\sigma(1)} \rangle \subsetneq \langle e_{\sigma(1)}, e_{\sigma(2)} \rangle \subsetneq \dots \subsetneq \langle e_{\sigma(1)}, e_{\sigma(2)},\dots, e_{\sigma(d-1)} \rangle \subsetneq V\Bigr)
\]
and so $j(\Kmap([v_1,\dots,v_d]))=i([v_1,\dots,v_d])$ for any integral apartment $[v_1,\dots,v_d]\in \St_d(\Q)$. The statement follows.
\end{proof}

\begin{remark}
The map $j\colon \Kbar_d\bigl(\Tup_d\bigr)\lra C_{d-2}(\Tc_{V})$,
defined in the proof of Proposition \ref{Proposition Kbar injectivity}, is surjective. Since the image of $j$ is invariant under the group $\GL_d(\Z)$, it is sufficient to construct an element $a\in\Kbar_d\bigl(\Tup_d\bigr)$ such that there exists a unique flag $\Fc$ for which $\res_{\Fc}(a)\neq 0$. One can take 
\[
a=\{1-x_1,2-(x_1+x_2),\dots,d-(x_1+\dots+x_d)\}
\] 
as such an element.
\end{remark}

\subsection{A simple proof of Bykovski\u{\i} theorem}\label{SectionARByk}
In this section, we give a simple proof of the theorems of Ash--Rudolph and Bykovski\u{\i} which is inspired by the classical result of Milnor \cite[Theorem 2.1]{Mil69}. The proof is completely elementary and does not use the results of \S\ref{SteinbergAndMilnor} and \S\ref{InjectivitySteinbergMilnor}. We prove the result in the setup where $\St(V)$ is a $\Q$-vector space, but note that the proof can be trivially adapted to the case when $\St(V)$ is viewed as an abelian group.

\begin{theorem}[Bykovski\u{\i}, 2003, \cite{Byk}] \label{TheoremARByk} The Steinberg module $\St_d(\Q)$ is generated by integral apartments $[v_1,\dots,v_d]$. All relations between integral apartments arise from the following three:
\begin{align}
    \tag*{{\bf (R1)}} \label{bykovskii:rel1}[v_{\sigma(1)},v_{\sigma(2)},\dots,v_{\sigma(d)}]&=(-1)^{\sigma}[v_1,v_2,\dots,v_d] \text{ for }\sigma \in \Sfr_d, \\
    \tag*{{\bf (R2)}} \label{bykovskii:rel2} [v_1,v_2,\dots,v_d]&=[-v_1,v_2,\dots,v_d], \\
    \tag*{{\bf (R3)}} \label{bykovskii:rel3} 
    [v_1,v_2,v_3,\dots,v_d]&=[v_1,v_1+v_2,v_3,\dots,v_d]+[v_1+v_2,v_2,v_3,\dots,v_d].
\end{align}

\begin{proof}
\newcommand{\wrapper}[1]{(#1)}
\renewcommand{\widetilde}{\reallywidetildelow}
\renewcommand{\tilde}{\reallywidetilde}
Let $\Lambda$ be a lattice of rank $d$ and put $V=\Lambda_{\Q}$. Consider a group $\StB(\Lambda)$ generated by symbols $[v_1,\dots,v_d]$ for bases $v_1,\dots,v_d$ of $\Lambda$ subject to relations \ref{bykovskii:rel1}--\ref{bykovskii:rel3}. Since these relations hold in $\St(V)$, we have a well-defined map 
\[
\Bup \colon \StB(\Lambda)\lra \St(V).
\]
We need to show that $\Bup$ is an isomorphism. We argue by induction on $d;$ the base case $d=1$ is trivial. 

For a line $P$ in $V$ consider a \emph{residue map} 
\[
\partial_P\colon \St(V)\lra \St(V/P)
\]
sending $[v_1,\dots,v_d]$ to zero if none of the vectors $v_i$ lies in $P$ and to $(-1)^{i-1}[\overline{v}_1,\dots,\widehat{\overline{v}}_i,\dots,\overline{v}_d]$ if $v_i\in P.$ It is easy to see that this map is well-defined. The same rule also gives  residue map $\delta_P\colon \StB(\Lambda)\lra \StB(\Lambda/(\Lambda\cap P))$. Denote by ${\Bup}_P$  the natural map sending $\StB(\Lambda/(\Lambda\cap P))$ to $\St(V/P)$; we have ${\Bup}_P \circ \delta_P=\partial_P \circ \Bup$.

Choose an arbitrary surjective linear function $h\colon \Lambda\lra \Z$ and denote by $H\subseteq V$ the hyperplane spanned by the lattice $\Ker(h)\subset \Lambda.$ We have the following commutative diagram:
\[
\begin{tikzcd}[row sep=large]
\StB(\Lambda) \arrow{r}{\Bup} \arrow{d}{\oplus \delta_{P}} & \St(V) \arrow[d, "\oplus \partial_{P}"] \\
\displaystyle\bigoplus_{P\not\subseteq H}\StB(\Lambda/(\Lambda\cap P)) \arrow{r}{\oplus \Bup_P} & \displaystyle\bigoplus_{P\not\subseteq H}\St(V/P)
\end{tikzcd}
\]

The map $\oplus \partial_{P} \colon \St(V)\lra \bigoplus_{P\not\subseteq H}\St(V/P)$ is an isomorphism by Proposition \ref{PropositionSteinbergSubspace}.  By the induction hypothesis, the map $\oplus \Bup_P$ is also an isomorphism. It remains to show that the map 
\[
\oplus \delta_{P} \colon \StB(\Lambda)\lra \bigoplus_{P\not\subseteq H}\StB(\Lambda/(\Lambda\cap P))
\]
is an isomorphism.

Consider an increasing filtration  $\Ic_\bullet\StB(\Lambda)$ such that $\Ic_k\StB(\Lambda)$ is generated by symbols $[v_1,\dots,v_d]$ such that $|h(v_i)|\leq k$.
For every line $P$ choose a vector $v_P\in \Lambda$ generating the lattice $\Lambda\cap P$.  For a line $P$ with $|h(v_P)|\geq k$  the residue maps $\delta_P$  vanishes on $\Ic_{k-1}\StB(\Lambda),$ so for any $k\geq 1$ we have an induced map
\begin{equation}\label{FormulaResidueOnGr}
\oplus \delta_{P} \colon\gr^\Ic_k\StB(\Lambda) \lra \!\!\! \bigoplus_{|h(v_P)|=k} \!\!\! \StB(\Lambda/(\Lambda\cap P)).
\end{equation}
Since $\Ic_0\StB(\Lambda)=0$, it is sufficient to show that (\ref{FormulaResidueOnGr}) is an isomorphism for any $k\geq 1$.

For a  line $P$ with $|h(v_P)|=k$ consider a \emph{coresidue} map $c_P\colon \StB(\Lambda/(\Lambda\cap P))\lra \gr^\Ic_k\StB(\Lambda)$  sending an element $[u_1,\dots,u_{d-1}]$ to $[v_P,\tilde{u}_1,\dots,\tilde{u}_{d-1}],$ where $\tilde{u} \in V$ is the unique lift of $u\in V/P$ such that $0<h(\tilde{u})<k.$ We need to check that this map is well-defined, i.e., satisfies the properties \ref{bykovskii:rel1}--\ref{bykovskii:rel3}. It is enough to check \ref{bykovskii:rel3} for $d=3$ and  \ref{bykovskii:rel2} for $d=2;$ \ref{bykovskii:rel1} is obvious.

For \ref{bykovskii:rel2}, we have
\begin{align*}
&c_P(u)-c_P(-u)=[v_P,\tilde{u}]-[v_P,\widetilde{\wrapper{-u}}]=[v_P,\tilde{u}]-[v_P,v_P-\tilde{u}]=[v_P-\tilde{u},\tilde{u}]\in \Ic_{k-1}\StB(\Lambda).
\end{align*}
Next, for \ref{bykovskii:rel3} we need to show that
\begin{align*}
&c_P([u_1,u_2])-c_P([u_1,u_1+u_2])-c_P([u_1+u_2,u_2])\\
={} &[v_P,\tilde{u}_1,\tilde{u}_2]-[v_P,\tilde{u}_1,\widetilde{\wrapper{u_1\!+\!u_2}}]-[v_P,\widetilde{\wrapper{u_1\!+\!u_2}},\tilde{u}_2] \, \in \, \Ic_{k-1}\StB(\Lambda).
\end{align*}
If $h(\tilde{u}_1)+h(\tilde{u}_2)<k,$ we have $\widetilde{\wrapper{u_1\!+\!u_2}}=\tilde{u}_1+\tilde{u}_2$ and the statement follows. Otherwise, $\widetilde{\wrapper{u_1\!+\!u_2}}=\tilde{u}_1+\tilde{u}_2-v_P$ and we have
{\allowdisplaybreaks
\begin{align*}
&[v_P,\tilde{u}_1,\tilde{u}_2]-[v_P,\tilde{u}_1,\widetilde{\wrapper{u_1\!+\!u_2}}]-[v_P,\widetilde{\wrapper{u_1\!+\!u_2}},\tilde{u}_2]\\[1ex]
={}
&([v_P,\tilde{u}_1,\tilde{u}_2]-[v_P,\tilde{u}_1,\tilde{u}_1+\tilde{u}_2]-[v_P,\tilde{u}_1+\tilde{u}_2,\tilde{u}_2,])\\*[-0.5ex]
&-([v_P,\tilde{u}_1,\widetilde{\wrapper{u_1\!+\!u_2}}]-[v_P,\tilde{u}_1,\tilde{u}_1+\tilde{u}_2])-([v_P,\widetilde{\wrapper{u_1\!+\!u_2}},\tilde{u}_2]-[v_P,\tilde{u}_1+\tilde{u}_2,\tilde{u}_2])\\[0.5ex]
={}&-([v_P,\tilde{u}_1,\tilde{u}_1+\tilde{u}_2-v_P]-[v_P,\tilde{u}_1,\tilde{u}_1+\tilde{u}_2])-([v_P,\tilde{u}_1+\tilde{u}_2-v_P,\tilde{u}_2]-[v_P,\tilde{u}_1+\tilde{u}_2,\tilde{u}_2])\\
={}&[\tilde{u}_1,\tilde{u}_1+\tilde{u}_2,\tilde{u}_1+\tilde{u}_2-v_P]-[\tilde{u}_2,\tilde{u}_1+\tilde{u}_2,\tilde{u}_1+\tilde{u}_2-v_P]\\
={}&[\tilde{u}_1,\tilde{u}_2,\tilde{u}_1+\tilde{u}_2-v_P] \, \in \, \Ic_{k-1}\StB(\Lambda).
\end{align*}}

Clearly, $\delta_P\circ c_P=\Id$ so it only remains to show that the map $\sum c_P$ is surjective. For that, consider a filtration $\Gc_\bullet$ on $\gr^\Ic_k\StB(\Lambda)$ where $\Gc_s \gr^\Ic_k\StB(\Lambda)$ is spanned by elements $[v_1,\dots,v_d]$ with at most $s$ vectors satisfying $|h(v_i)|=k.$ Since $\Gc_1$ coincides with image of $\sum c_P$ it is sufficient to show that 
$
\gr^\Gc_s\gr^\Ic_k\StB=0
$
for $s\geq 2.$ This follows from the fact that if $h(v_1)=h(v_2)=k$ then $[v_1,v_2]=[v_1,-v_2]=[v_1,v_1-v_2]-[v_1-v_2,-v_2]\in \Gc_1.$
\end{proof}

\end{theorem}

\section{Polylogarithms on a torus}
\subsection{Formal multiple polylogarithms}\label{SectionMultiplePolylogs}
 
For a field $F$, the Hopf algebra of formal multiple polylogarithms $\Hc^{\mathrm{f}}(F)$ was defined in \cite{CMRR24}.  Henceforth we will omit the superscript ${}^{\fup}$ for notational simplicity, as we only work with the Hopf algebra of formal multiple polylogarithms.  For a finite field we declare \( \Hc_0(F) = \Q \), \( \Hc_{n}(F) = 0 \), \( n > 0 \).  When \( F \) is an infinite field, the construction is as follows.

Firstly, define a positively graded \( \mathbb{Q} \)-vector space \( \mathcal{A}(F) = \bigoplus_{n\geq1}\mathcal{A}_n(F) \) generated by symbols \( \llp x_0, x_1, \ldots, x_n \rrp \in \mathcal{A}_n(F) \), \( x_i \in F \), subject to the following relations \cite[\S2.1]{CMRR24}:
\begin{align}
    \tag*{$\bf (A_1)$}\label{Acycle} & \llp x_0,x_1,\dots,  x_n \rrp = \llp x_1,x_2,\dots,x_n,x_0 \rrp \,,\\
    \tag*{$\bf (A_2)$}\label{Atranslate} & \llp x_0+b,x_1+b,\dots,  x_n+b \rrp = \llp x_0,x_1,\dots,x_n \rrp \text{ for $b\in F$,}\\
    \tag*{$\bf (A_3)$}\label{Alog} & \llp 0,x \rrp + \llp 0,y \rrp = \llp 0,xy \rrp\ \text{for $ x,y \in F^{\times}$,}\\
    \tag*{$\bf (A_4)$}\label{Ascale} & \llp mx_0,mx_1,\dots, mx_n\rrp = \llp x_0,x_1,\dots,x_n\rrp \ \text{for $m\in F^{\times}$ and $n\geq 2$,}\\
    \tag*{$\bf (A_5)$}\label{Azero} & \llp 0,0,\dots,0 \rrp =0\,,\\
    \tag*{$\bf (A_6)$}\label{Aone} & \llp 1,0,\dots,0\rrp =0\,.
\end{align}
The cobracket \( \delta \colon \mathcal{A}(F) \to \Lambda^2 \mathcal{A}(F) \) is defined by the following explicit formula, 
\begin{equation}\label{CobracketA}
\delta \llp x_0,\dots, x_n\rrp =\sum_{j=0}^n\sum_{i=1}^{n-1}\llp x_{j}, x_{j+1}, \dots, x_{j+i}\rrp \wedge \llp x_{j},  x_{j+i+1}, \dots, x_{j+n}\rrp \,,
\end{equation}
 where we use the convention that $x_{i+n+1} = x_{i}$ for all $i$.  This makes \( \mathcal{A}(F) \) into a Lie coalgebra.  There  is a specialization map \( \Sp_{t\to t_0} \mathcal{A}_n(F(t)) \to \mathcal{A}_n(F) \) \cite[\S2.2]{CMRR24}; this specialization is the counterpart of a limit for continuous functions.  

One then defines  \cite[\S2.3]{CMRR24} a space of relations \( \mathcal{R}_n(F) \subset \mathcal{A}_n(F) \) and the Lie coalgebra of formal multiple polylogarithms \( \Lc_n(F) = \mathcal{A}_n(F) / \mathcal{R}_n(F) \) inductively: consider  \( R \in \mathcal{A}_n(F(t)) \)  such that \( \delta R \) vanishes in \( \Lambda^2 \Lc(F(t)) \), where \( \Lc_k(F') \) is already defined inductively for \( k < n \) and all fields \( F' \).  Then \( R \) should be constant\footnote{The element \( R \) defines a class in the first Chevalley-Eilenberg cohomology of \( \Lc(F(t)) \), which is conjectured in \cite{CMRR24} to be isomorphic to \( \gr_\gamma^n K_{2n-1}(F(t))_\mathbb{Q} \), and is then conjecturally isomorphic to \( \gr_\gamma^n K_{2n-1}(F)_\mathbb{Q} \) by the Beilinson-Soul\'e vanishing conjecture, for \( n \geq 2 \), using the inclusion \( F \hookrightarrow F(t) \).}, and the space of relations \( \mathcal{R}_n(F) \) is given as the span of \( \Sp_{t\to0}(R) - \Sp_{t\to1}(R) \) for such \( R \).  The projection of \( \llp x_0, x_1,\ldots,x_n \rrp \) to \( \Lc_n(F) \) is denoted by \( \Cor(x_0, x_1,\ldots,x_n) \) and is called the \emph{correlator}.   As a $\Q$-vector space, $\Lc(F)$ is spanned by correlators, by construction.   After projecting \eqref{CobracketA} to \( \Lc(F) \), the cobracket of correlators is given by
\begin{equation}\label{FormulaCoproductCorrelators}
\delta \Cor(x_0,\dots, x_n)=\sum_{j=0}^n\sum_{i=1}^{n-1} \Cor(x_{j}, x_{j+1}, \dots, x_{j+i})\wedge  \Cor(x_{j},  x_{j+i+1}, \dots, x_{j+n}).
\end{equation}
(The relations \ref{Acycle}--\ref{Aone} and cobracket \eqref{CobracketA} on \( \mathcal{A}(F) \) are motivated by such formulas for the \emph{motivic correlators} \cite{Gon19}, \cite[Eq. (66)]{GR18}.)

The Hopf algebra of formal multiple polylogarithms \( \Hc(F) \) is then abstractly defined as the universal coenveloping coalgebra of \( \Lc(F) \).  This means that $\Lc(F) = Q(\Hc(F))$ is the Lie coalgebra of indecomposable elements of $\Hc(F)$.  The projection of an element $a\in \Hc(F)$ to  $\Lc(F)$ is denoted by $a^{\Lc}$. Then \( \Hc(F) \) is a commutative graded connected Hopf algebra over $\Q$,  and it has a distinguished set of generators
    \[\I(x_0;x_1,\dots, x_{n};x_{n+1})\in \Hc_n(F)\]
for $x_0,\dots,x_{n+1}\in F$, called \emph{iterated integrals}, such that 
\begin{equation}\label{FormulaIteratedIntegralsCorrelators}
\I^{\Lc}(x_0;x_1,\dots,x_n;x_{n+1}) = \Cor(x_1,\dots,x_{n+1})- \Cor(x_0,\dots,x_n).
\end{equation}

We have $\I(x_0;x_1)=1\in \Hc_0(F).$ In weight 1, there is an isomorphism $\Hc_1(F)\cong F^{\times}_\Q$ and we use the notation $\log(x)$ for an element in $\Hc_1(F)$ corresponding to $x\in F^{\times}$; it holds that
\begin{equation}\label{IasLog}
\I(x_0;x_1;x_2)=\log(x_1-x_2)-\log(x_1-x_0) \,.
\end{equation}
Notice that since we work with $\Q$-vector spaces, $\log(\zeta)=0$ for any root of unity $\zeta$; note in particular that this means \( \log(\zeta x) = \log(x) \) for any root of unity \( \zeta \). The coproduct of iterated integrals is given by the following formula:
    \begin{equation} \label{Icoproduct}
    \begin{split}
    &\Delta(\I(x_0;x_1,\dots,x_n;x_{n+1}))=\\
    &\sum_{\substack{0=i_0<i_1<\cdots \\ \cdots<i_r<i_{r+1}=n+1}}\I(x_{i_0};x_{i_1},\dots,x_{i_r};x_{i_{r+1}})\otimes \prod_{p=0}^{r}\I(x_{i_p};x_{i_{p}+1},\dots,x_{i_{p+1}-1};x_{i_{p+1}}).
    \end{split}
    \end{equation}
    
Multiple polylogarithms are related to iterated integrals by a simple change of variable. For $n_1,\dots,n_k \in \N$ and $a_1,\dots,a_k\in F^\times$ we define   multiple polylogarithm of depth $k:$ 
    \begin{equation}\label{LinDef}
    \begin{split}
    &\Li_{n_1,\dots,n_k}(a_1,a_2,\dots,a_k)\\
    &=(-1)^{k}\I(0;1,\underbrace{0,\dots,0,a_1}_{n_1},\dots,\underbrace{0,\dots,0,a_1a_2\cdots a_{k-1}}_{n_{k-1}},\underbrace{0,\dots,0;a_1a_2\cdots a_{k}}_{n_k}).
    \end{split}
    \end{equation}
In particular, we have $\Li_{1}(a)=-\log(1-a)\in \Hc_1(F).$ 

To summarize, we have two families of generators of $\Hc(F)$: iterated integrals and multiple polylogarithms. We also have a family of generators of $\Lc(F)$ called correlators. At first, it seems that there is not much difference between these families as they are all related to each other in a simple way. Yet, it appears to be fruitful to consider all three families simultaneously. These families of generators are related to the three families of generators of the stable Steinberg module $\StL(F)$, see \S \ref{SectionSteinbergCorrelators}, and, in particular, Remark \ref{RemarkGoncharovDihedral}.

Recall some properties of iterated integrals discussed in \cite[\S3.3]{CMRR24}. First, we have the \emph{shuffle relation} \cite[Lemma 24 (P2)]{CMRR24}:
\begin{equation}\label{ShuffleRelation}
\I(a;x_1,\dots,x_n;b) \cdot \I(a;x_{n+1},\dots,x_{n+m};b)=\sum_{\sigma \in \Sigma_{n,m}} \I(a;x_{\sigma(1)},\dots,x_{\sigma(n+m)};b),
\end{equation}
where $\Sigma_{n_1,n_2}\subseteq \Sfr_{n_1+n_2}$ is the set of $(n_1,n_2)$-shuffles, i.e., the set of permutations $\sigma \in \Sfr_{n_1+n_2}$ such that $\sigma^{-1}(1)<\dots<\sigma^{-1}(n_1)$ and $\sigma^{-1}(n_1+1)<\dots<\sigma^{-1}(n_1+n_2)$. 
Next, we have the \emph{path composition relation} \cite[Lemma 24 (P3)]{CMRR24}
\begin{equation}\label{PathComposition}
\I(x_0;x_1,\dots,x_n;x_{n+1})=\sum_{k=0}^n\I(x_0;x_1,\dots,x_k;a)\I(a;x_{k+1},\dots,x_n;x_{n+1}).
\end{equation}
We also have the \emph{reversal relation}

\begin{lemma}
For any \( n \geq 1 \), and any \( x_0, \ldots, x_{n+1} \in F \), we have
\begin{equation}\label{PathReversal}
    \I(x_0;x_1,\dots,x_n;x_{n+1}) =
    (-1)^n\I(x_{n+1};x_n,\dots,x_1;x_{0}).
\end{equation}
\begin{proof}
The base case follows from \eqref{IasLog}.  By indiction one readily sees
\[
    \Delta' ( \I(x_0;x_1,\dots,x_n;x_{n+1}) - 
    (-1)^n\I(x_{n+1};x_n,\dots,x_1;x_{0}) ) = 0 .
\] 
Thus, by Lemma \ref{LemmaMilnorMoore}, it suffices to check the identity in the Lie coalgebra $\Lc_n(F)$.  This was done in \cite[Proposition 22]{CMRR24}.
\end{proof}
\end{lemma}

Another important property of iterated integrals is invariance under rescaling:  
\begin{lemma} \label{LemmaAffineInvariance}
For any $x_0,\dots,x_{n+1}\in F$ with $x_0\ne x_1$, $x_{n}\ne x_{n+1}$, and any $\lambda\in F^{\times}$ we have
    \[\I(x_0;x_1,\dots,x_n;x_{n+1}) =
      \I(\lambda x_0;\lambda x_1,\dots,\lambda x_n;\lambda x_{n+1}).\]
The same identity holds if $\lambda$ is a root of unity, without any restrictions on $x_0,\dots,x_{n+1}$.
\end{lemma}
\begin{proof}
We prove this by induction on $n$. For $n=1$ the identity follows from 
    \[\I(x_0;x_1;x_2)-\I(\lambda x_0;\lambda x_1;\lambda x_2) = 
    \log(x_1-x_2)-\log(\lambda x_1-\lambda x_2)-\log(x_0-x_1)+\log(\lambda x_0-\lambda x_1) = 0\]
provided that $x_1\ne x_0$ and $x_1\ne x_2$ or provided that $\lambda$ is a root of unity. Assuming the identity is known in weights $<n$, it is easy to see that the difference $R$ of the left hand side and the right hand side satisfies $\Delta' R=0$. Thus, by Lemma \ref{LemmaMilnorMoore}, it suffices to check the identity in the Lie coalgebra $\Lc_n(F)$. The latter follows from \cite[Proposition 20]{CMRR24}.
\end{proof}

Multiple polylogarithms also satisfy distribution relations, which we now recall. 

\begin{lemma} \label{LemmaDistributionLi}
Let $F$ contain a primitive $m$-th root of unity.
Then for any $x_1,\dots,x_{k}\in F$ we have
    \[\sum_{\zeta_1^m=\dots=\zeta_k^m=1}\Li_{n_1,\dots,n_k}(\zeta_1 x_1,\dots,\zeta_k x_k) =
    m^{k-n}\Li_{n_1,\dots,n_k}(x_1^m,\dots,x_k^m),\]
where $n=n_1+\dots+n_k$.
\end{lemma}
\begin{proof}
In \cite[Lemma~26]{CMRR24} it is proven that for any $z_0,\dots,z_{n+1}\in F$
    \begin{equation} \label{EquationDistributionIteratedIntegrals}
    \I(z_0^m;z_1^m,\dots,z_n^m;z_{n+1}^m)=\sum_{\zeta_1^m=1,\dots,\zeta_{n}^m=1}\I(z_0;\zeta_{1}z_1,\dots,\zeta_{n}z_n;z_{n+1})\,.
    \end{equation}
Letting 
    \[(z_0,z_1,\dots,z_n,z_{n+1})=(0,1,\underbrace{0,\dots,0,x_1}_{n_1},\dots,\underbrace{0,\dots,0,x_1x_2\dots x_{k-1}}_{n_{k-1}},\underbrace{0,\dots,0,x_1x_2\dots x_{k}}_{n_k})\]
and using~\eqref{LinDef}, together with the second claim from Lemma~\ref{LemmaAffineInvariance}, we get from~\eqref{EquationDistributionIteratedIntegrals} that
    \[\Li_{n_1,\dots,n_k}(x_1^m,\dots,x_k^m) = m^{(n_1-1)+\dots+(n_k-1)}\sum_{\zeta_1^m=\dots=\zeta_k^m=1}\Li_{n_1,\dots,n_k}(\zeta_1 x_1,\dots,\zeta_k x_k).\]
This proves the claim, since $(n_1-1)+\dots+(n_k-1)=n-k$.
\end{proof}

The analog of Lemma \ref{LemmaDistributionLi}  for $m=-1$ is known as the inversion relation and is more involved, see \cite[Lemma 38]{CMRR24}. We will need only the statement for polylogarithms of depth one:
\begin{equation}\label{LemmaInversionDepthOne}
 \Li_n(x)+(-1)^n\Li_n(1/x) = {-}\frac{\log(x)^n}{n!} = {-}\I(0; 0,\dots,0; x).
\end{equation}
The proof is similar to that of Lemma \ref{LemmaAffineInvariance}.

\subsection{The Hopf algebra of polylogarithms on a torus}\label{SectionHopfAlgebraPolylogarithmsOnTorus}
The goal of the section is to define the Hopf algebra $\Hc(\Tup)$ of polylogarithms on a torus $\Tup$.  Informally, elements of  $\Hc(\Tup)$ are products of polylogarithms
\[
\Li_{n_1,\dots,n_k}\left(\zeta_1\prod_{i=1}^d (\!\sqrt[N]{x_i}\,)^{a_{i1}},\dots,\zeta_k\prod_{i=1}^d (\!\sqrt[N]{x_i}\,)^{a_{ik}}\right) 
\]
where $\zeta_i$ are roots of unity and the exponent vectors $(a_{1j},\dots,a_{d\indsp  j})$ are linearly independent.
The Hopf algebra $\Hc(\Tup)$ is defined as a colimit of more explicit Hopf algebras $\Hc_{\Tup'}$ over all tori $\Tup'$ covering $\Tup$.

Let  
\[
\Tup_d=\Spec\left(\overline{\Q}[x_1^{\vphantom{-1}},x_1^{-1},\dots,x_d^{\vphantom{-1}},x_d^{-1}]\right)
\]
be an algebraic torus of rank $d;$ we put  $\Tup_0=\Spec(\overline{\Q})$.  The character lattice of $\Tup$ is denoted $\Xup(\Tup)$.

\begin{definition} \label{definitionHT} The space $\Hc_{\Tup_d}$ is the $\Q$-vector subspace of $\Hc(\overline{\Q}(\Tup_d))$ (see \S\ref{SectionMultiplePolylogs}) spanned by products of multiple polylogarithms
\begin{equation} \label{FormulaHodgePolylogarithms}
\Li_{n_1,\dots,n_k}\left(\zeta_1\prod_{i=1}^d x_i^{a_{i1}},\dots,\zeta_k\prod_{i=1}^d x_i^{a_{ik}}\right)\in \Hc_n(\overline{\Q}(\Tup_d))
\end{equation}
such that $(a_{ij})$ is a $d\times k$ integral matrix of rank $k$ and $\zeta_i$ are arbitrary roots of unity. In particular, $\Hc_{\Tup_0}\cong \Q$.
\end{definition}

\begin{lemma} \label{LemmaGeneratorsOfHT}
The space $\Hc_{\Tup_d}$ is generated by products of iterated integrals $\I(0;z_1,\dots,z_n;z_{n+1})$, $n \geq 1$, where each $z_i$ is either $0$ or is of the form $\zeta\prod_{j=1}^{d}x_j^{a_j}$, for \( \zeta \) a root of unity, and the exponent vectors of the nonzero $z_i$, $1\leq i\leq n+1$, are affinely independent.
\end{lemma}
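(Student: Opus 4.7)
Write $\Hc_{\Tup_d}^{\I}$ for the $\Q$-span of products of iterated integrals of the form described in the statement; the plan is to prove both inclusions $\Hc_{\Tup_d}=\Hc_{\Tup_d}^{\I}$.  Since both sides are subalgebras of $\Hc(\overline{\Q}(\Tup_d))$, it suffices to handle single generators rather than products.

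The inclusion $\Hc_{\Tup_d}\subseteq\Hc_{\Tup_d}^{\I}$ is immediate from~\eqref{LinDef}: every generator $\Li_{n_1,\ldots,n_k}(y_1,\ldots,y_k)$ with $y_s=\zeta_s\prod_j x_j^{a_{js}}$ and $\mathrm{rank}(a_{js})=k$ equals, up to sign, an iterated integral $\I(0;1,0,\ldots,0,y_1,0,\ldots,y_1y_2,\ldots;y_1\cdots y_k)$.  All arguments lie in $\{0\}\cup\{\zeta\prod_j x_j^{a_j}\}$; the nonzero ones are $1,y_1,y_1y_2,\ldots,y_1\cdots y_k$, with exponent vectors $0,a_{\cdot 1},a_{\cdot 1}+a_{\cdot 2},\ldots,\sum_s a_{\cdot s}$, and their affine independence is equivalent (by the invertible triangular change of basis from partial sums to consecutive increments) to linear independence of $a_{\cdot 1},\ldots,a_{\cdot k}$, i.e., the rank hypothesis.

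For the reverse inclusion I would show by induction on the total number of zeros in $(z_1,\ldots,z_n,z_{n+1})$ that any $I=\I(0;z_1,\ldots,z_n;z_{n+1})$ of the stated form lies in $\Hc_{\Tup_d}$.  First, $\log(y)\in\Hc_{\Tup_d}$ for any Laurent monomial $y=\zeta\prod_j x_j^{a_j}$: the inversion formula~\eqref{LemmaInversionDepthOne} gives $\log(x_j)=\Li_1(x_j^{-1})-\Li_1(x_j)\in\Hc_{\Tup_d}$, and $\log$ is linear with $\log(\zeta)=0$.  If there are $m\ge 1$ leading zeros, apply the shuffle relation~\eqref{ShuffleRelation} to $\I(0;0,\ldots,0;z_{n+1})\cdot \I(0;z_{m+1},\ldots,z_n;z_{n+1})$: by~\eqref{LemmaInversionDepthOne} the first factor equals $\log(z_{n+1})^m/m!$, while the right-hand side of the shuffle identity is a sum of iterated integrals, exactly one of which recovers $I$ (the shuffle placing all $m$ zeros first) and all others have strictly fewer leading zeros.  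Solving for $I$ and invoking the induction hypothesis reduces to the case of no leading zeros; a symmetric manipulation, combined with path reversal~\eqref{PathReversal}, handles trailing zeros and the case $z_{n+1}=0$.  Once $z_1,z_{n+1}\neq 0$, Lemma~\ref{LemmaAffineInvariance} applied with $\lambda=z_1^{-1}$ makes the first argument equal to $1$; then reading off block sizes $n_s$ of zeros between consecutive nonzero positions and setting $a_s=u_s/u_{s-1}$, where $u_0=1,u_1,\ldots,u_k=z_{n+1}/z_1$ are the nonzero arguments in order, equation~\eqref{LinDef} identifies $I$ with $(-1)^k\Li_{n_1,\ldots,n_k}(a_1,\ldots,a_k)$.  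Affine independence of the exponent vectors of $u_0,\ldots,u_k$ (inherited via translation from the original hypothesis on the $z_i$) translates, by the same triangular change of basis, into linear independence of the exponent vectors of $a_1,\ldots,a_k$, so this $\Li$ meets the rank hypothesis of~\eqref{FormulaHodgePolylogarithms} and lies in $\Hc_{\Tup_d}$.

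The main technical obstacle is the regularization of boundary-degenerate iterated integrals appearing in the induction — in particular, cleanly justifying identities such as $\I(0;0,\ldots,0;z)=\log(z)^m/m!$ and the vanishing of various $\I(0;\ldots;0)$ which naively involve $\log(0)$.  These must be interpreted within the shuffle-regularized Hopf algebra $\Hc(\overline{\Q}(\Tup_d))$ from~\cite{CMRR24}; once that accounting is set up, the remainder of the argument is the formal dictionary of~\eqref{LinDef} together with the elementary linear-algebra equivalence between affine independence of partial-product exponent vectors and the rank of the increment matrix.
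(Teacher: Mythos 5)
Your proposal follows essentially the same route as the paper: after noting that the forward inclusion is a direct reading of~\eqref{LinDef}, you shuffle $\I(0;\{0\}^m;z_{n+1})$ against the residual integral to strip leading zeros by induction, use~\eqref{LemmaInversionDepthOne} to identify $\I(0;\{0\}^m;z)$ as an element of $\Hc_{\Tup_d}$, and finally rescale via Lemma~\ref{LemmaAffineInvariance} and read off a multiple polylogarithm from~\eqref{LinDef}, with the rank hypothesis following from affine independence by the triangular change of basis from consecutive ratios to partial products. The paper normalizes by $z_{n+1}^{-1}$ and you by $z_1^{-1}$; both are fine and lead to the same identification.

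Two small points worth flagging. First, you announce an induction ``on the total number of zeros in $(z_1,\dots,z_n,z_{n+1})$,'' but the shuffle only rearranges zeros and does not decrease their total count; the quantity that actually decreases, and which your argument in fact uses, is the number of \emph{leading} zeros $j-1$, exactly as in the paper and in~\cite[Corollary 29]{CMRR24}. Second, your separate ``trailing zeros'' manipulation and the handling of $z_{n+1}=0$ via path reversal are unnecessary: $\I(0;z_1,\dots,z_n;0)$ vanishes outright by the convention $\I(a;x_1,\ldots,x_n;a)=0$ for $n\ge1$ in the Hopf algebra of~\cite{CMRR24}, and once $z_1,z_{n+1}\ne 0$ the presence of trailing zeros $z_n=\dots=0$ is already permitted by the block structure of~\eqref{LinDef} (it corresponds to $n_k\geq 2$), so no further reduction is needed. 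Neither slip is fatal, but both would need to be corrected in a final write-up.
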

\begin{proof}
By the definition of $\Li_{n_1,\dots,n_k}$, the space spanned by the iterated integrals of the form given in the statement of lemma clearly contains $\Hc_{\Tup_d}$, so we only need to prove the reverse inclusion. Let $\I(0;z_1,\dots,z_n;z_{n+1})$ be any element satisfying the conditions of the lemma. We may assume that $z_{n+1}\ne0$ since otherwise the iterated integral vanishes. Let $1\leq j\leq n+1$ be the smallest index for which $z_j\ne 0$. If $j=1$, note that $z_n\ne z_{n+1}$, since otherwise $z_n,z_{n+1}\ne0$ and the exponent vectors of $z_n$ and $z_{n+1}$ would be affinely dependent, contradicting the assumption. Therefore, by Lemma~\ref{LemmaAffineInvariance}, $\I(0;z_1,\dots,z_n;z_{n+1})=I(0;z_1/z_{n+1},\dots,z_n/z_{n+1};1)$ and since $z_1\ne0$ the latter element lies in $\Hc_{\Tup_d}$ by the definition of $\Li_{n_1,\dots,n_k}$. Next, for $j\leq n$ we can prove the claim by induction in $j$ in the same way as for \cite[Corollary 30]{CMRR24}: using the shuffle product \eqref{ShuffleRelation} against \( I(0; 0, \ldots, 0; z_{n+1}) \) to reduce the number of leading zeros in the integral.  For the conclusion, note that any subset of an affinely independent set of vectors is affinely independent. Finally, for $j=n+1$ the claim follows directly from~\eqref{LemmaInversionDepthOne}.
\end{proof}

\begin{corollary}
The space $\Hc_{\Tup_d}$ is a Hopf subalgebra of $\Hc(\overline{\Q}(\Tup_d)).$ 
\end{corollary}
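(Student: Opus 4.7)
The plan is to verify two closure properties: closure under multiplication and closure under the coproduct; closure under the antipode is then automatic for a graded connected commutative Hopf algebra. Closure under multiplication is immediate from the definition, since $\Hc_{\Tup_d}$ is defined as the $\Q$-span of \emph{products} of multiple polylogarithms of the specified form.

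For coproduct closure, by Lemma \ref{LemmaGeneratorsOfHT} it suffices to check that $\Delta\,\I(0; z_1, \ldots, z_n; z_{n+1})$ lies in $\Hc_{\Tup_d} \otimes \Hc_{\Tup_d}$ whenever each $z_i$ is either $0$ or of the form $\zeta \prod_j x_j^{a_j}$, with the nonzero $z_i$ having affinely independent exponent vectors. By formula~\eqref{Icoproduct}, this coproduct is a $\Q$-linear combination of tensor products of iterated integrals whose argument tuples are subsequences of $(0, z_1, \ldots, z_{n+1})$. Since any subset of an affinely independent collection of vectors is affinely independent, every factor has nonzero arguments whose exponent vectors are affinely independent and each argument is either $0$ or of the form $\zeta \prod_j x_j^{a_j}$.

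The only remaining issue is that the iterated integrals arising from the coproduct may start at a nonzero argument $a$, whereas the generators in Lemma \ref{LemmaGeneratorsOfHT} start at $0$. For such a factor $\I(a; y_1, \ldots, y_k; b)$, apply path composition~\eqref{PathComposition} with intermediate point $0$ to get
\[
\I(a; y_1, \ldots, y_k; b) = \sum_{j=0}^{k} \I(a; y_1, \ldots, y_j; 0) \, \I(0; y_{j+1}, \ldots, y_k; b).
\]
The second factor is already of the Lemma \ref{LemmaGeneratorsOfHT} form. For the first factor, apply path reversal~\eqref{PathReversal} to rewrite it as $(-1)^{j}\I(0; y_j, \ldots, y_1; a)$, which is also of the required form, since its argument set is again a subset of $\{0, z_1, \ldots, z_{n+1}\}$. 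Thus each factor (and hence the product) lies in $\Hc_{\Tup_d}$, completing the verification.

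The proof is largely bookkeeping: the main step is the standard reduction (path composition at $0$ followed by reversal) to the normalized iterated integrals starting at $0$, and the key observation is that affine independence passes to subsets. No substantial difficulty is expected beyond what was already needed for Lemma \ref{LemmaGeneratorsOfHT}.
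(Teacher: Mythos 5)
Your proof is correct and follows essentially the same approach as the paper's: reduce to checking the subcoalgebra property on generators from Lemma~\ref{LemmaGeneratorsOfHT}, observe that affine independence passes to subsets, and handle iterated integrals not starting at $0$ by splitting the path at $0$ via \eqref{PathComposition} and then reversing via \eqref{PathReversal}. The observation that the antipode comes for free in the graded connected setting also mirrors the paper's remark.
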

\begin{proof}
Recall: a graded connected bialgebra is automatically a Hopf algebra, as the coproduct uniquely determines the antipode.  Therefore, a subspace of a graded connected commutative Hopf algebra is a Hopf subalgebra if and only if it is a subalgebra and a subcoalgebra.  By Definition \ref{definitionHT}, $\Hc_{\Tup_d}$ is a subalgebra of $\Hc(\overline{\Q}(\Tup_d)),$ so it is sufficient to check that  $\Hc_{\Tup_d}$ is a subcoalgebra. 

First, we check that for any element $\I(0;z_1,\dots,z_n;z_{n+1})$ satisfying the conditions of Lemma~\ref{LemmaGeneratorsOfHT}, we have 
\begin{equation}\label{FormulaToProve}
\Delta\Bigl( \I(0;z_1,\dots,z_n;z_{n+1}) \Bigr)\in \Hc_{\Tup_d}\otimes  \Hc_{\Tup_d}.
\end{equation}
From the formula for the coproduct of iterated integrals stated in \S \ref{SectionMultiplePolylogs} we see that
it suffices to show that for any $0<i_0<\dots<i_{k+1}\leq n+1$ both $\I(0;z_{i_0},\dots,z_{i_k};z_{i_{k+1}})$ and $\I(z_{i_0};z_{i_1},\dots,z_{i_k};z_{i_{k+1}})$ belong to $\Hc_{\Tup_d}$. For $\I(0;z_{i_1},\dots,z_{i_k};z_{i_{k+1}})$ this is obvious since passing to subsets preserves affine independence. For $\I(z_{i_0};z_{i_1},\dots,z_{i_k};z_{i_{k+1}})$ this follows from (\ref{PathComposition}) and (\ref{PathReversal}) by splitting the path of integration at 0:
\begin{align*}
\I(z_{i_0};z_{i_1},\dots,z_{i_k};z_{i_{k+1}})&
=\sum_{j=0}^k\I(z_{i_0};z_{i_1},\dots,z_{i_j};0)\I(0;z_{i_{j+1}},\dots,z_{i_{k}};z_{i_{k+1}})\\
&=\sum_{j=0}^k(-1)^{j}\I(0;z_{i_j},\dots,z_{i_1};z_{i_0})\I(0;z_{i_{j+1}},\dots,z_{i_{k}};z_{i_{k+1}}).
\end{align*}

By  Lemma~\ref{LemmaGeneratorsOfHT}, the elements  $\I(0;z_1,\dots,z_n;z_{n+1})$ generate $\Hc_{\Tup_d}$ as an algebra, so (\ref{FormulaToProve}) implies  that 
$\Delta \bigl(\Hc_{\Tup_d}\bigr)\subseteq  \Hc_{\Tup_d} \otimes  \Hc_{\Tup_d}$. So, $\Hc_{\Tup_d}$ is a  subcoalgebra of $\Hc(\overline{\Q}(\Tup_d))$ and thus it is a Hopf subalgebra as well.  
\end{proof}

Consider a category  whose objects are algebraic tori over $\overline{\Q}$ and morphisms are isogenies; let $\Cc_{\Tup}$ be the corresponding category of objects over $\Tup$. An object of $\Cc_{\Tup}$ is an isogeny $p\colon\Tup'\lra \Tup$. For every isogeny $p\colon\Tup'\lra \Tup$, we have a well-defined map
\[
p^*\colon \Hc(\overline{\Q}(\Tup))\lra \Hc(\overline{\Q}(\Tup'))
\]
associated to the field extension $\overline{\Q}(\Tup)\subseteq  \overline{\Q}(\Tup')$.

\begin{lemma} \label{LemmaIsogeniesFunctorial} We have $p^*(\Hc_\Tup)\subseteq \Hc_{\Tup'}$.
\end{lemma}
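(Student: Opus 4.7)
The plan is to reduce the statement to a check on a generating set. Since $p^*$ is induced by the field extension $\overline{\Q}(\Tup) \subseteq \overline{\Q}(\Tup')$, it is a morphism of Hopf algebras and in particular respects products. Because $\Hc_\Tup$ is defined as the $\Q$-span of products of polylogarithms of the form $\Li_{n_1,\dots,n_k}(\zeta_1 \prod_i x_i^{a_{i1}}, \dots, \zeta_k \prod_i x_i^{a_{ik}})$ with $(a_{ij})$ of rank $k$, it suffices to show that $p^*$ sends each such generator into $\Hc_{\Tup'}$.

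Next, the isogeny $p \colon \Tup' \to \Tup$ (necessarily between tori of the same dimension $d$) corresponds to an injection $p^* \colon \Xup(\Tup) \hookrightarrow \Xup(\Tup')$ of character lattices with finite cokernel. Choosing coordinates $x_1, \dots, x_d$ on $\Tup$ and $y_1, \dots, y_d$ on $\Tup'$, this injection is encoded by an integer matrix $C = (c_{ij}) \in \Mup_d(\Z)$ with $\det C \neq 0$, so that $p^*(x_j) = \prod_i y_i^{c_{ij}}$. Substituting into a generator yields
\[
p^*\!\left( \Li_{n_1,\dots,n_k}\!\left(\zeta_1 \prod_i x_i^{a_{i1}}, \dots, \zeta_k \prod_i x_i^{a_{ik}}\right) \right)
= \Li_{n_1,\dots,n_k}\!\left(\zeta_1 \prod_i y_i^{(CA)_{i1}}, \dots, \zeta_k \prod_i y_i^{(CA)_{ik}}\right),
\]
where $A = (a_{ij})$ is the exponent matrix of the original argument.

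The remaining point is that $CA$ has rank $k$: since $C$ is invertible over $\Q$ and $A$ has rank $k$ by assumption, left multiplication by $C$ preserves column rank. Hence the right-hand side is again a generator of $\Hc_{\Tup'}$, proving $p^*(\Hc_\Tup) \subseteq \Hc_{\Tup'}$. I do not foresee any genuine obstacle here—the argument is a direct bookkeeping in terms of exponent matrices. The only subtlety is to confirm that if one allows isogenies which are not strict group homomorphisms (e.g.\ composed with a torsion translation), then $p^*(x_j)$ picks up a root-of-unity prefactor; but such a prefactor merely modifies the $\zeta_j$ and preserves the form of the generator.
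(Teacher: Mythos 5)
Your proof is correct and follows essentially the same approach as the paper: applying $p^*$ to a generating polylogarithm, using that $p^*$ corresponds to an invertible-over-$\Q$ integer matrix $C$ on character lattices, and observing that $CA$ retains rank $k$. The aside about isogenies composed with torsion translations is unnecessary for the paper's setup (its isogenies are group homomorphisms), but it does no harm.
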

\begin{proof}
Since $p$ is an isogeny, the map $p^*\colon \Xup(\Tup_d)\lra \Xup(\Tup_d')$ embeds the lattice $\Xup(\Tup_d)=\langle x_1,\dots,x_d\rangle $ into $\Xup(\Tup_d')=\langle y_1,\dots,y_d\rangle$ as a sublattice of full rank. Thus 
\begin{align*}
& p^*\left(\Li_{n_1,\dots,n_k}{\left(\zeta_1\prod_{i=1}^d x_i^{a_{i1}},\dots,\zeta_k\prod_{i=1}^d x_i^{a_{ik}}\right)}\right)\\
&=\Li_{n_1,\dots,n_k}\left(\zeta_1\prod_{i=1}^d p^*(x_i)^{a_{i1}},\dots,\zeta_k\prod_{i=1}^d p^*(x_i)^{a_{ik}}\right)=\Li_{n_1,\dots,n_k}\left(\zeta_1\prod_{i=1}^d y^{b_{i1}},\dots,\zeta_k\prod_{i=1}^d y_i^{b_{ik}}\right)
\end{align*}
for some matrix $(b_{ij})$ of rank $k$.
\end{proof}

By Lemma \ref{LemmaIsogeniesFunctorial}, we obtain a contravariant functor from $\Cc_{\Tup}$ to the category of commutative graded connected Hopf algebras which sends a torus $\Tup'$ to the Hopf algebra $\Hc_{\Tup'}.$ The category  $\Cc_{\Tup}$ is cofiltered, so the category $\Cc_{\Tup}^{\op}$ is filtered.

\begin{definition} Let $\Tup$ be an algebraic torus. The Hopf algebra $\Hc(\Tup)$ of polylogarithms on $\Tup$ is the filtered colimit
\[
\Hc(\Tup)=\varinjlim\Hc_{\Tup'}
\]
of Hopf algebras $\Hc_{\Tup'}$ over the category $\Cc_{\Tup}^{\op}$.
\end{definition}

\begin{remark} For an isogeny $p\colon\Tup'\longrightarrow \Tup$ we have a canonical isomorphism $\Hc(\Tup)\stackrel{\sim}{\longrightarrow} \Hc(\Tup')$ obtained from the final functor, sending tori over $\Tup'$ to tori over $\Tup.$ 
\end{remark}

\subsection{Primitive elements of the Hopf algebra of polylogarithms on a torus} \label{SectionPrimitiveElements}
Recall that we denote by $P(H)$ the space of primitive elements of a Hopf algebra $H$. Recall also that for a graded connected commutative Hopf algebra, we always have $H_1\subseteq P(H)$. Our goal is to show that $P(\Hc(\Tup))=\Hc_1(\Tup)$ for any torus $\Tup$.

\begin{lemma} \label{LemmaCohomologyOfHT} 
We have $P(\Hc_{\Tup_d})=\Hc_{\Tup_d,1}.$
\end{lemma}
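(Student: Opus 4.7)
The inclusion $\Hc_{\Tup_d,1}\subseteq P(\Hc_{\Tup_d})$ is automatic, since in a graded connected Hopf algebra every element of weight one is primitive. The content of the lemma is the reverse inclusion, which I plan to prove by showing that any primitive $f\in P(\Hc_{\Tup_d})$ homogeneous of weight $n\geq 2$ must vanish.

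My first step would be to pass to the Lie coalgebra of indecomposables. By Lemma \ref{LemmaMilnorMoore} applied to the Hopf algebra $\Hc_{\Tup_d}$, the projection $f^\Lc$ to $Q(\Hc_{\Tup_d})$ is injective on primitives and lies in $\Ker(\delta)$, so it suffices to show $f^\Lc = 0$. Since $\Hc_{\Tup_d}$ is a Hopf subalgebra of $\Hc(\overline{\Q}(\Tup_d))$, I can push $f^\Lc$ forward to an element $\bar f \in \Lc(\overline{\Q}(\Tup_d))$ with $\delta \bar f = 0$, and work in the explicit correlator description of the ambient Lie coalgebra.

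Next, using \eqref{FormulaIteratedIntegralsCorrelators} together with a correlator analogue of Lemma \ref{LemmaGeneratorsOfHT}, I would write $\bar f$ as a $\Q$-linear combination of correlators $\Cor(z_0,\dots,z_n)$ whose nonzero arguments are Laurent monomials $\zeta_i\prod_j x_j^{a_{ij}}$ with affinely independent exponent vectors (zero arguments are allowed). The cobracket formula \eqref{FormulaCoproductCorrelators} then expresses $\delta\Cor(z_0,\dots,z_n)$ as a sum of wedges of strictly shorter correlators whose arguments are contiguous cyclic sub-sequences of $(z_0,\dots,z_n)$. On the space spanned by such correlators I would introduce a filtration---for example by the rank of the exponent-vector matrix $(a_{ij})$ occurring in $\bar f$, refined by a lexicographic order on the multi-exponents---and extract the leading associated-graded piece of $\delta\bar f$.

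The hard part will be proving that the leading piece of $\delta$ is injective in weight $n\geq 2$, which amounts to a combinatorial linear-independence statement for wedges of shorter correlators. The key input will be the multiplicative independence of $x_1,\dots,x_d$ inside $\overline{\Q}(\Tup_d)^\times/\mu_\infty$: distinct exponent vectors produce genuinely distinct correlator arguments, so no accidental cancellations between wedge summands can appear at the top of the filtration. Once this injectivity is in hand, induction on $n$, with the trivial base case $n=1$, forces $\bar f=0$ and hence $f^\Lc=0$, completing the argument. The combinatorics of matching exponent vectors across the different contiguous sub-sequences produced by \eqref{FormulaCoproductCorrelators} is where I expect the main technical difficulty to lie.
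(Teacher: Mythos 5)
Your reduction to showing $\Ker(\delta)=0$ on $\Lc_{\Tup_d}$ in weight $n\geq 2$ via Lemma~\ref{LemmaMilnorMoore} is the same starting move as the paper's, and the correlator description with affinely independent exponent vectors is also the right ambient picture. The problem is the step you flag as ``the hard part'': you propose to prove that the leading associated-graded piece of $\delta$ is injective by a combinatorial linear-independence argument, with ``multiplicative independence of $x_1,\dots,x_d$ inside $\overline{\Q}(\Tup_d)^\times/\mu_\infty$'' as the key input, and then induct on $n$ from the trivial base case $n=1$. This cannot work as stated, for a structural reason. The cobracket $\delta\colon\Lc_n(\overline{\Q}(\Tup_d))\to\Lambda^2\Lc(\overline{\Q}(\Tup_d))$ has a large kernel for $n\geq 2$ (for $n=2$ this is a rational Bloch group, nonzero already over $\overline{\Q}$), so any injectivity claim for $\delta$ restricted to $\Lc_{\Tup_d,n}$ is really a rigidity statement: it asserts that no nontrivial class in $\Ker(\delta)$ can be represented by a combination of correlators whose arguments are monomials with nonzero exponent vectors. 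Distinctness of irreducibles $1-\zeta\chi$ in the UFD $\overline{\Q}[x_1^{\pm1},\dots,x_d^{\pm1}]$ tells you nothing about this, since the cancellations you need to rule out are exactly the ones that produce Bloch-group elements. Your filtration by exponent rank does not separate them, and the weight-$1$ base case gives you no foothold, because the whole difficulty already sits at $n=2$.

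The paper's proof sidesteps this by importing a deep external input: $\bigl[H^1(\Lc(\overline{\Q}),\Q)\bigr]_n \xrightarrow{\ \sim\ } \bigl[H^1(\Lc(\overline{\Q}(\Tup_d)),\Q)\bigr]_n$, with inverse induced by the iterated specialization $S=\Sp_{x_1\to0}\circ\cdots\circ\Sp_{x_d\to0}$ (this is \cite[Corollary~10]{CMRR24}, a rigidity result in the spirit of Suslin). Once you have this, the remaining work \emph{is} an elementary observation, and it is one your setup is well suited to prove: using affine independence of the exponent vectors, one shows $\Sp_{x_k\to0}(\Lc_{\Tup_k})\subseteq\Lc_{\Tup_{k-1}}$ and hence $S(\Lc_{\Tup_d})=0$. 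Any $a\in\Lc_{\Tup_d,n}$ with $\delta(a)=0$ then defines a class in $H^1$ that maps to zero under $S_*$, so $a=0$. If you want to keep your strategy, replace the combinatorial injectivity claim by this rigidity isomorphism; the filtration-by-exponents idea can then be repurposed to verify the (genuinely easy) specialization containment, which is where the affine-independence hypothesis actually earns its keep.
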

\begin{proof}
It is sufficient to show that $\bigl[ P(\Hc_{\Tup_d})\bigr]_n=0$ for $n\geq 2.$ From now on we fix $n\geq 2.$
Denote by $\Lc_{\Tup_d}$ the Lie coalgebra of indecomposables of  $\Hc_{\Tup_d}.$ 
By Lemma \ref{LemmaMilnorMoore}, it is sufficient to show that $\bigl[H^1(\Lc_{\Tup_d},\Q)\bigr]_n=0$.  By \cite[Corollary 11]{CMRR24}, the map $i\colon \Lc(\overline{\Q})\lra \Lc(\overline{\Q}(\Tup_d))$ induces an isomorphism 
$i_{*}\colon \bigl[ H^1\bigl(\Lc(\overline{\Q}),\Q\bigr)\bigr]_n\lra \bigl[H^1\bigl(\Lc(\overline{\Q}(\Tup_d)),\Q\bigr)\bigr]_n$. Its inverse is induced by the composition of specialization maps (see \cite[\S2.2]{CMRR24} for the definition and relevant properties of specialization)
\[
S=\Sp_{x_{1}\to 0} \circ\Sp_{x_{2}\to 0} \circ  \dots\circ \Sp_{x_d\to 0}\colon \Lc\bigl(\overline{\Q}(\Tup_d)\bigr)\lra \Lc(\overline{\Q}).
\]
Consider a tower of fields 
\[
\overline{\Q}(\Tup_0)\subset  \overline{\Q}(\Tup_1) \subset \dots\subset \overline{\Q}(\Tup_d).
\]

We claim that
\begin{align}\label{FormulaSpecializationImage}
\Sp_{x_k\to 0}(\Lc_{\Tup_k})\subseteq \Lc_{\Tup_{k-1}}\quad \text{for} \quad k \in \{1,\dots,d\}.
\end{align}
For $k=1$, we need to show that 
$\Sp_{x_1\to 0}\Li_n^{\Lc}(\zeta x_1^{a_1})=0$
for $a_1\neq 0.$ We assume that $a_1>0$; the case $a_1<0$ would follow from (\ref{LemmaInversionDepthOne}).  We have
\[
\Sp_{x_1\to 0}\Li_n^{\Lc}(\zeta x_1^{a_1})=\Sp_{x_1\to 0}\bigl(-\Cor(0,\dots,0,\zeta x_1^{a_1},1)\bigr)=
-\Cor(0,\dots,0, 1)=0.
\]
Assume that $k\geq 2$. Lemma \ref{LemmaGeneratorsOfHT} and identity (\ref{FormulaIteratedIntegralsCorrelators}) imply that $\Lc_{\Tup_k}$ is generated by elements $\Cor(z_0,\dots,z_{n})$ where each $z_i$ is either $0$ or is of the form $\zeta\prod_{j=1}^{k}x_j^{a_j}$, and the exponent vectors of nonzero $z_i$ are affinely independent. Since $n\geq 2$, we can use the affine invariance of correlators and reduce to the case when $\nu_k(z_0),\dots, \nu_k(z_n)\geq 0$ and $\nu_k(z_i)=0$ for some $i$, where $\nu_k$ is the valuation with respect to $x_k$. The affine independence of exponents implies that at least one of the exponents is strictly positive. In this case, the definition of specialization implies that  $\Sp_{x_k\to 0}\Cor(z_0,\dots,z_{n})=\Cor(z_0',\dots,z_{n}')$ where 
\[
z_i'=
\begin{cases}
z_i &\text{if } \nu_k(z_i)=0,\\
0 & \text{if } \nu_k(z_i)>0.
\end{cases}
\]
Such an element lies in  $\Lc_{\Tup_{k-1}}$ because a subset of an affinely independent set of vectors remains affinely independent. This proves  (\ref{FormulaSpecializationImage}).

Applying (\ref{FormulaSpecializationImage}) repeatedly, we see that $S(\Lc_{\Tup_d})=0$.  Since 
\[
S_*\colon \bigl[H^1(\Lc\bigl(\overline{\Q}(\Tup_d)\bigr),\Q)\bigr]_n\lra \bigl[H^1(\Lc(\overline{\Q}),\Q)\bigr]_n
\]
is an isomorphism, we get that the induced map
\[
\bigl[H^1(\Lc_{\Tup_d},\Q)\bigr]_n \lra \bigl[H^1(\Lc\bigl(\overline{\Q}(\Tup_d)\bigr),\Q)\bigr]_n
\]
is zero. On the other hand, this map is clearly injective. Indeed, the first cohomology group of a Lie coalgebra is isomorphic to the kernel of the cobracket. Since the Lie coalgebra $\Lc_{\Tup_d}$ is a Lie subcoalgebra of $\Lc\bigl(\overline{\Q}(\Tup_d)\bigr)$, the space $\bigl[H^1(\Lc_{\Tup_d},\Q)\bigr]_n$ is embedded into the space $\bigl[H^1(\Lc\bigl(\overline{\Q}(\Tup_d)\bigr),\Q)\bigr]_n$.
It follows that $\bigl[H^1(\Lc_{\Tup_d},\Q)\bigr]_n=0$ and thus $[P(\Hc_{\Tup_d})]_n=0.$
\end{proof}

From the above lemma, and the fact that primitives commute with filtered colimits,  we immediately obtain the initial claim on primitive elements in \( \Hc(\Tup) \).

\begin{corollary} For any torus $\Tup$ we have $P(\Hc(\Tup))= \Hc_1(\Tup).$ 
\end{corollary}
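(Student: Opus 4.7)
The plan is to deduce the corollary from Lemma \ref{LemmaCohomologyOfHT} by arguing that the functor of primitive elements commutes with filtered colimits of Hopf algebras. Since $\Hc(\Tup) = \varinjlim_{\Cc_{\Tup}^{\op}} \Hc_{\Tup'}$ and $\Hc_1(\Tup) = \varinjlim_{\Cc_{\Tup}^{\op}} \Hc_{\Tup',1}$ by definition, once we know $P$ commutes with the colimit we can write
\[
P(\Hc(\Tup)) \,=\, P\Bigl(\varinjlim \Hc_{\Tup'}\Bigr) \,=\, \varinjlim P(\Hc_{\Tup'}) \,=\, \varinjlim \Hc_{\Tup',1} \,=\, \Hc_1(\Tup),
\]
where the third equality is exactly the statement of Lemma \ref{LemmaCohomologyOfHT}.

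The justification for exchanging $P$ with the colimit is essentially formal: $P(H)$ is defined as $\Ker\bigl(\Delta' \colon H_+ \to H_+ \otimes H_+\bigr)$, so it suffices to observe that both $H \mapsto H_+$ and $H \mapsto H_+ \otimes H_+$ commute with filtered colimits in $\Q$-vector spaces (the first is a direct summand, and tensor products commute with all colimits), and that filtered colimits commute with finite limits---in particular with kernels---in the category of $\Q$-vector spaces. The transition maps $p^*\colon \Hc_{\Tup'} \to \Hc_{\Tup''}$ for an isogeny $\Tup'' \to \Tup'$ are Hopf algebra maps by construction, so they carry $P(\Hc_{\Tup'})$ into $P(\Hc_{\Tup''})$ and the colimit system of primitives makes sense.

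There is no real obstacle; the only thing to check carefully is that the colimit of $\Hc_{\Tup',1}$ in the system $\Cc_{\Tup}^{\op}$ is genuinely $\Hc_1(\Tup)$ as defined (rather than some a priori larger or smaller subspace), which is immediate from the definition of $\Hc(\Tup)$ as a filtered colimit together with the fact that the weight-one piece is preserved under $p^*$.
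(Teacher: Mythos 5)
Your proof is correct and follows exactly the same route as the paper, which deduces the corollary from Lemma \ref{LemmaCohomologyOfHT} together with the fact that primitives commute with filtered colimits. You merely spell out the standard justification for that commutation (primitives as a kernel, filtered colimits commuting with finite limits of $\Q$-vector spaces), which the paper takes as known.
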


\subsection{The depth filtration on \texorpdfstring{$\Hc(\Tup)$}{H(T)}} \label{SectionDepthFiltrationOnH}
Our goal is to define a filtration on the Hopf algebra $\Hc_{\Tup}$ which we call the depth filtration. For the standard depth filtration on the Hopf algebra of multiple polylogarithms (see \cite[\S 4.1]{CMRR24}), the space $\Dc_k\Hc(F)$ is spanned by multiple polylogarithms $\Li_{n_1,\dots,n_k}$; in particular $\Dc_0\Hc(F)=\Q$. Importantly, this filtration is not compatible with the coalgebra structure: the coproduct
\begin{equation} \label{FormulaCoproductDilogarithm}
\Delta(\Li_2(x))=1\otimes \Li_2(x) - \log(1-x)\otimes \log(x)+ \Li_2(x)\otimes 1
\end{equation}
of an element $\Li_2(x)\in \Dc_1$ is not contained in $\Dc_0\otimes \Dc_1+ \Dc_1\otimes D_0$. The depth filtration on  $\Hc(\Tup)$ is defined in such a way that elements $\log(x_i)$ lie in $\Dc_0\Hc(\Tup)$, so (\ref{FormulaCoproductDilogarithm}) 
is no longer problematic.

Let $\Tup$ be an algebraic torus. Consider the $\Q$-subspace 
\[
\Log(\Tup) \subseteq \Hc_{\Tup,1}
\]
spanned by elements $\log(x_i)$ for $1\leq i\leq d.$  We have a canonical isomorphism $\Log(\Tup) \cong \Xup(\Tup)\otimes_{\Z} \Q$. 

\begin{definition} The depth filtration on $\Hc_{\Tup}$ is a filtration by subspaces  $\Dc_k\Hc_{\Tup}$ for $k\geq 0$ spanned by elements $a\cdot \prod_{i=1}^m a_{k_i}$ with $k_1+\dots+k_m\leq k,$ where 
$a$ is a product of elements in $\Log(\Tup)$ and $a_{k_i}$ is a  polylogarithm (\ref{FormulaHodgePolylogarithms}) of depth $k_i.$  For an isogeny $p\colon \Tup'\lra \Tup$, the corresponding map $p^* \colon \Hc_{\Tup}\lra \Hc_{\Tup'}$  preserves the depth filtration, thus we obtain the depth filtration on $\Hc(\Tup)$ as the filtered colimit
\[
\Dc_k\Hc(\Tup)=\varinjlim\Dc_k\Hc_{\Tup'}
\]
over the category $\Cc_{\Tup}^{\op}$.
\end{definition}

Notice that for any $\Tup$ we have $\Dc_0\Hc_{\Tup}\cong \Sbb^{\bullet}\Log(\Tup)$. For an isogeny $p\colon \Tup'\lra \Tup$ the map $p^*\colon \Log(\Tup)\lra \Log(\Tup')$ is an isomorphism, so $\Dc_0\Hc(\Tup)\cong\Sbb^{\bullet}\Log(\Tup).$ 

\begin{lemma}\label{LemmaDepthFiltration} The depth filtration makes $\Hc(\Tup_d)$ a filtered Hopf algebra. 
\end{lemma}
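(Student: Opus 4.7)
The plan is to verify two compatibilities of the depth filtration with the Hopf algebra structure on $\Hc(\Tup_d)$: that the multiplication satisfies $\Dc_i \cdot \Dc_j \subseteq \Dc_{i+j}$, and that
\[
\Delta(\Dc_k \Hc(\Tup_d)) \subseteq \sum_{i+j \leq k} \Dc_i \Hc(\Tup_d) \otimes \Dc_j \Hc(\Tup_d).
\]
The first is built into the definition: the depth is additive under products. The second is the content of the lemma.

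Since $\Delta$ is an algebra homomorphism and both filtrations (source and target) are multiplicative, it is enough to check the coproduct condition on the natural spanning set of $\Dc_k$. The generators come in two types: elements of $\Log(\Tup_d) \subseteq \Dc_0$, and individual multiple polylogarithms of the form \eqref{FormulaHodgePolylogarithms}. For $\log(x_i)$ the claim is trivial, since $\log(x_i)$ is primitive and $\log(x_i) \otimes 1 + 1 \otimes \log(x_i) \in \Dc_0 \otimes \Dc_0$. It remains to treat a single polylogarithm $P = \Li_{n_1,\dots,n_k}(y_1,\dots,y_k)$ with $y_j = \zeta_j \prod_i x_i^{a_{ij}}$ of depth $k$.

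The key observation is that $\log$ of any admissible argument $y = \zeta \prod_i x_i^{a_i}$ equals $\sum_i a_i \log(x_i)$ in $\Hc_1$, because $\log(\zeta)=0$ for any root of unity $\zeta$, so $\log(y) \in \Log(\Tup_d) \subseteq \Dc_0$. I would express $P$ as an iterated integral via \eqref{LinDef} and expand $\Delta' P$ using Goncharov's coproduct formula \eqref{Icoproduct}. Every summand is a tensor product whose factors are iterated integrals $\I(z_{i_p}; z_{i_p+1}, \dots, z_{i_{p+1}-1}; z_{i_{p+1}})$ with $z$'s either $0$ or monomials in the $y_j$'s. Using path composition \eqref{PathComposition} to split at $0$ and affine invariance (Lemma~\ref{LemmaAffineInvariance}) to normalize non-zero endpoints, each such factor becomes a product of genuine multiple polylogarithms of the form \eqref{FormulaHodgePolylogarithms}, multiplied possibly by $\log$-factors of the form $\log(y_j/y_{j'})$; these logarithmic factors lie in $\Log(\Tup_d) \subseteq \Dc_0$ and therefore do not contribute to the depth count.

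The main obstacle is the combinatorial bookkeeping showing that in every summand of $\Delta' P$ the total depth of the polylog factors on the left tensor slot plus the total depth on the right is at most $k$. This is exactly the computation Goncharov carried out in \cite[\S 6]{Gon01} and exploited for polylogarithms at roots of unity in \cite{Gon19B,Gon01B,GoncharovMSRI}; the only additional input in our setting is that $\log$ of a monomial $\zeta \prod x_i^{a_i}$ is a $\Q$-linear combination of $\log(x_i)$'s, which is precisely what places it in $\Dc_0$. The sample computation \eqref{FormulaCoproductLi21}, once one notes that $\log(x_1)$, $\log(x_1 x_2)$ belong to $\Dc_0$, illustrates the mechanism in the first non-trivial case and is the model for the general argument. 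Finally, since all constructions are compatible with pullback along isogenies $p\colon\Tup'\to\Tup_d$, the statement on $\Hc_{\Tup_d}$ passes to the filtered colimit $\Hc(\Tup_d)$.
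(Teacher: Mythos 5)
Your proposal follows essentially the same route as the paper: reduce to $\Hc_{\Tup_d}$, observe that the algebra part is built into the definition, then check the coproduct on iterated integrals via Goncharov's formula \eqref{Icoproduct}, using path composition and affine invariance to split each factor into polylogarithms and log-factors, with the latter landing in $\Dc_0$. The one place you underdeliver is the ``combinatorial bookkeeping'' step, which you defer to \cite{Gon01,Gon19B,Gon01B,GoncharovMSRI} rather than carrying out. The paper isolates this cleanly as an explicit claim --- an iterated integral $\I(z_0;z_1,\dots,z_n;z_{n+1})$ whose middle entries $z_1,\dots,z_n$ contain exactly $k$ nonzero terms (each $0$, $1$, or a monomial times a root of unity) lies in $\Dc_k\Hc_{\Tup_d}$ --- and proves it by induction: the $k=0$ case is a power of a logarithm, path composition reduces to $z_0=0$, and shuffling against $\I(0;0,\dots,0;z_{n+1})$ peels off leading zeros exactly as in Lemma~\ref{LemmaGeneratorsOfHT}. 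Once that claim is in place, the lemma follows from the simple observation that in \eqref{Icoproduct} the indices $i_1,\dots,i_r$ and their complement in $\{1,\dots,n\}$ partition the middle entries between the two tensor slots, so the total count of nonzero entries is preserved. Stating and proving that claim explicitly, rather than pointing at the literature, would make your argument self-contained and is what I'd recommend adding.
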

\begin{proof} It is sufficient to prove the statement for $\Hc_{\Tup_d}.$
From the definition of the depth filtration it is clear that  $\Hc_{\Tup_d}$ is a filtered algebra, i.e., 
\begin{align}\label{FormulaDepthFiltrationAlgebra}
\Dc_{i}\Hc_{\Tup_d}\cdot \Dc_{j}\Hc_{\Tup_d}\subseteq  \Dc_{i+j}\Hc_{\Tup_d}.
\end{align}
Since the antipode is uniquely determined by the product and coproduct, it is enough to check that $\Hc_{\Tup_d}$ is a filtered coalgebra:
\begin{align}\label{FormulaDepthFiltrationCoAlgebra}
\Delta(\Dc_k\Hc_{\Tup_d})\subseteq \sum_{i+j=k} \Dc_i\Hc_{\Tup_d} \otimes \Dc_j\Hc_{\Tup_d}.
\end{align}
By~\eqref{FormulaDepthFiltrationAlgebra}, it is sufficient to check the inclusion~\eqref{FormulaDepthFiltrationCoAlgebra} on polylogarithms~\eqref{FormulaHodgePolylogarithms}.

Consider an iterated integral $\I(z_0;z_1,\dots,z_n;z_{n+1})$ with each $z_i$ equal to either $0$, $1$, or to a monomial times a root of unity.  Let $k$ be the number of nonzero elements $z_i$ for $1\leq i\leq n$. We claim that $\I(z_0;z_1,\dots,z_n;z_{n+1})\in \Dc_k\Hc_{\Tup_d}.$ Indeed, the identity
    \[
    \I(z_0;0,\dots,0;z_{n+1})=\frac{(\log(z_{n+1})-\log(z_{0}))^n}{n!}\in \Dc_0\Hc_{\Tup_d}
    \]
shows that this is true for $k=0$. The identity
    \[\I(z_{0};z_{1},\dots,z_{n};z_{n+1})=\sum_{j=0}^n\I(z_{0};z_{1},\dots,z_{j};0)\I(0;z_{j+1},\dots,z_{n};z_{n+1})\]
together with (\ref{PathReversal}) shows that it is enough to deal with the case $z_0=0$. Finally, for $z_0=0$ and $1\leq k\leq d$ this can be seen the same way as in the proof of Lemma~\ref{LemmaGeneratorsOfHT}.

Now any polylogarithm of depth $k$ as in~\eqref{FormulaHodgePolylogarithms} is equal to $I(z_0;z_1,\dots,z_n;z_{n+1})$ with exactly $k$ nonzero entries among $z_i$, $i=1,\dots,n$. The statement of the lemma now simply follows by counting nonzero elements in each term of the coproduct.
\end{proof}

Elements $\log(x_i)$ are primitive in $\Hc(\Tup_d)$, so the quotient
    \[\overline{\Hc}(\Tup_d)\coloneqq\Hc(\Tup_d)/\bigl(\Log(\Tup_d)\cdot\Hc(\Tup_d)\bigr)\]
is again a Hopf algebra; we denote the coproduct by $\overline{\Delta}$. The depth filtration on $\Hc(\Tup_d)$ induces the one on $\overline{\Hc}(\Tup_d);$ we have $\Dc_0(\overline{\Hc}(\Tup_d))=\Q.$ Lemma \ref{LemmaDepthFiltration} implies that
\begin{align}\label{FormulaDepthFiltrationBar}
\overline{\Delta}'(\Dc_k\overline{\Hc}(\Tup_d))\subseteq \sum_{\substack{i+j=k\\ i,j\geq 1}} \Dc_i\overline{\Hc}(\Tup_d) \otimes \Dc_j\overline{\Hc}(\Tup_d).
\end{align}
In particular, on associated graded pieces we have
\begin{align}\label{FormulaDepthFiltrationBarGraded}
\overline{\Delta}'(\gr_{k}^{\Dc}\overline{\Hc}(\Tup_d))\subseteq \bigoplus_{\substack{i+j=k\\ i,j\geq 1}} \gr_{i}^{\Dc}\overline{\Hc}(\Tup_d) \otimes \gr_{j}^{\Dc}\overline{\Hc}(\Tup_d).
\end{align}
 
 Let $\Lc(\Tup_d)$ and $\overline{\Lc}(\Tup_d)$  be the Lie coalgebras of indecomposables of $\Hc(\Tup_d)$ and $\overline{\Hc}(\Tup_d)$. Notice that 
 we have an exact sequence
 \[
 0\longrightarrow \Log(\Tup_d) \longrightarrow \Lc_1(\Tup_d)\longrightarrow\overline{\Lc}_1(\Tup_d)\longrightarrow 0
 \]
 and $\Lc_n(\Tup_d)=\overline{\Lc}_n(\Tup_d)$ for $n\geq 2.$

We will need the following lemma.

 \begin{lemma}\label{LemmaDirectSumDecomposition} We have $\gr_0^{\Dc}\left(\Lambda^m\Lc(\Tup_d)\right )\cong \Lambda^m \Log(\Tup_d)$.  For $k\geq 1$ we have a canonical direct sum decomposition
\[
\gr_k^{\Dc}\left(\Lambda^m\Lc(\Tup_d)\right)\cong \bigoplus_{i=0}^{m-1} \left(\Lambda^{i} \Log(\Tup_d) \otimes \gr_k^{\Dc}\Lambda^{m-i}\overline{\Lc}(\Tup_d)\right).
\]
\end{lemma}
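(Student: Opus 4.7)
The plan is to reduce the statement to a purely linear-algebraic calculation about exterior powers of a filtered $\Q$-vector space by first pinning down the depth-graded structure of $\Lc(\Tup_d)$.

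First I would identify $\overline{\Lc}(\Tup_d)$ with $\Lc(\Tup_d)/\Log(\Tup_d)$. Because $\Log(\Tup_d) \subseteq \Hc_+(\Tup_d)$, the ideal $\Log(\Tup_d)\cdot\Hc(\Tup_d)$ is contained in $\Log(\Tup_d) + (\Hc_+(\Tup_d))^2$, and so
\[
\overline{\Lc}(\Tup_d)=\overline{\Hc}_+(\Tup_d)/(\overline{\Hc}_+(\Tup_d))^2=\Lc(\Tup_d)/\Log(\Tup_d).
\]
Next I would use $\Dc_0\Hc(\Tup_d)\cong\Sbb^\bullet\Log(\Tup_d)$ to observe that in weight one $\Dc_0\Hc_1=\Log(\Tup_d)$, while in weight $\geq 2$ it lies entirely in $(\Hc_+)^2$. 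Passing to $\Lc(\Tup_d)$ this gives $\Dc_0\Lc(\Tup_d)=\Log(\Tup_d)$ (concentrated in weight one), and consequently $\Dc_0\overline{\Lc}(\Tup_d)=0$ while $\gr_k^{\Dc}\Lc(\Tup_d)=\gr_k^{\Dc}\overline{\Lc}(\Tup_d)$ for $k\geq 1$.

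The second step is to choose a splitting of the short exact sequence $0\to\Log(\Tup_d)\to\Lc(\Tup_d)\to\overline{\Lc}(\Tup_d)\to 0$ compatible with the depth filtration: pick a complement $W\subseteq\Lc(\Tup_d)$ such that $\Dc_k W=W\cap\Dc_k\Lc(\Tup_d)$ projects isomorphically onto $\Dc_k\overline{\Lc}(\Tup_d)$ for every $k$. This exists because we are dealing with $\Q$-vector spaces and because the quotient filtration on $\overline{\Lc}(\Tup_d)$ vanishes in depth~$0$, where the kernel lives; one just splits each $\Dc_k$ step by step. With the splitting fixed, the standard identity for exterior powers of a direct sum yields
\[
\Lambda^m\Lc(\Tup_d)=\bigoplus_{i+j=m}\Lambda^i\Log(\Tup_d)\otimes\Lambda^jW,
\]
and since $\Log(\Tup_d)\subseteq\Dc_0$, the induced depth filtration on each summand is $\Lambda^i\Log(\Tup_d)\otimes\Dc_k\Lambda^jW$.

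The third step is the standard char-zero fact that associated graded commutes with exterior powers: passing to $\gr^{\Dc}_k$ on $\Lambda^jW$ gives $\gr^{\Dc}_k\Lambda^j\overline{\Lc}(\Tup_d)$. Assembling,
\[
\gr_k^{\Dc}\Lambda^m\Lc(\Tup_d)\cong\bigoplus_{i+j=m}\Lambda^i\Log(\Tup_d)\otimes\gr_k^{\Dc}\Lambda^j\overline{\Lc}(\Tup_d).
\]
For $k=0$ only $j=0$ survives (as $\gr_0^{\Dc}\Lambda^j\overline{\Lc}=0$ for $j\geq 1$), giving $\Lambda^m\Log(\Tup_d)$. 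For $k\geq 1$ only $j\geq 1$ survives (since $\gr_k^{\Dc}\Q=0$), giving the claimed sum over $i=0,\dots,m-1$. The only point that requires attention is \emph{canonicity}: although $W$ is not canonical, on associated graded the decomposition by ``number of $\Log$-factors'' is intrinsic, being read off from the filtration by powers of the ideal generated by $\Log(\Tup_d)$ inside $\Lambda^\bullet\Lc(\Tup_d)$. The expected main obstacle is this last bookkeeping, ensuring that the splitting-dependent decomposition descends to a canonical one on $\gr^{\Dc}$; but it is a routine check once the bifiltration by depth and by number of $\Log$-factors is set up, since in the $\gr^{\Dc}$-direction any two compatible splittings differ by a map landing in strictly lower depth.
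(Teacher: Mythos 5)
Your argument is correct and rests on exactly the same two facts the paper invokes, namely $\gr_0^{\Dc}\Lc(\Tup_d)\cong\Log(\Tup_d)$ and $\gr_k^{\Dc}\Lc(\Tup_d)\cong\gr_k^{\Dc}\overline{\Lc}(\Tup_d)$ for $k\geq 1$, together with the characteristic-zero commutation of associated graded with exterior powers. The only difference is cosmetic: you choose a filtered splitting of $0\to\Log(\Tup_d)\to\Lc(\Tup_d)\to\overline{\Lc}(\Tup_d)\to0$ and then pass to $\gr^{\Dc}$, which forces you to worry about canonicity at the end, whereas the paper passes to the graded object $\gr^{\Dc}\Lc(\Tup_d)=\Log(\Tup_d)\oplus\bigoplus_{k\geq1}\gr_k^{\Dc}\overline{\Lc}(\Tup_d)$ first and applies the $\Lambda^m(A\oplus B)=\bigoplus_i\Lambda^iA\otimes\Lambda^{m-i}B$ decomposition there, where no choice is needed and canonicity is automatic. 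Both routes are valid and essentially identical in content.
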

\begin{proof}
The statement follows from the formula for the associated graded of a wedge power of a vector space and the following facts: $\gr_0^{\Dc}(\Lc(\Tup_d))\cong\Log(\Tup_d)$ and $\gr_k^{\Dc}(\Lc(\Tup_d))\cong\gr_k^{\Dc}(\overline{\Lc}(\Tup_d))$ for $k\geq 1$.
\end{proof}

\subsection{Coproduct of polylogarithms in \texorpdfstring{$\gr_k^{\Dc} \label{SectionCoproductOfPolylogs}
(\overline{\Hc}(\Tup))$}{gr\_k\textasciicircum{}D(H bar(T))}}
Let $\overline{\Delta}_{k-1,1}$ denote the part of the coproduct $\overline{\Delta}$ landing in $\gr_{k-1}^{\Dc}\overline{\Hc}(\Tup)\otimes \gr_1^{\Dc}\overline{\Hc}(\Tup)$ in the decomposition~\eqref{FormulaDepthFiltrationBarGraded}.  The goal of this section is to compute $\overline{\Delta}_{k-1,1}\Li_{n_1,\dots,n_k}(x_1,\dots,x_k)$. This computation will play the key role in the proof of Proposition \ref{PropositionTruncatedSymbolMap}. This computation is also closely related to some computations of Goncharov, see \cite[\S 6]{Gon01}.

Consider the generating function 
    \[\Li(x_1,\dots,x_k;t_1,\dots,t_k) = \sum_{n_1,\dots,n_k>0}\Li_{n_1,\dots,n_k}(x_1,\dots,x_k)t_1^{n_1-1} \!\cdots t_k^{n_k-1} \in \gr_k^{\Dc}(\overline{\Hc}(\Tup))[\![t_1,\dots,t_k]\!].\]
In the formulation of the following lemma we extend $\overline{\Delta}$ coefficientwise to power series in $t_1,\dots,t_k$.
\begin{lemma} \label{LemmaCoproductGeneratingFunction}
The generating function $\Li(x_1,\dots,x_k;t_1,\dots,t_k)$ satisfies the identity
    \begin{align*}
    \overline{\Delta}_{k-1,1} & \Li(x_1,\dots,x_k;t_1,\dots,t_k) \,\, = \,\, 
    \Li(x_2,\dots,x_k;t_2,\dots,t_k)\otimes \Li(x_1;t_1)\\ 
    &+ \sum_{i=1}^{k-1}\Li(x_1,\dots,x_{i}x_{i+1},\dots,x_k;t_1,\dots,\widehat{t_{i+1}},\dots,t_k) \otimes \Li(x_{i+1};t_{i+1}-t_{i})\\
    &- \sum_{i=1}^{k-1}\Li(x_1,\dots,x_{i}x_{i+1},\dots,x_k;t_1,\dots,\widehat{t_i},\dots,t_k) \otimes \Li(x_{i};t_i-t_{i+1}).
    \end{align*}
\end{lemma}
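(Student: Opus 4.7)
The plan is to deduce the formula from the coproduct of iterated integrals~\eqref{Icoproduct} applied to the representation~\eqref{LinDef} of $\Li_{n_1,\dots,n_k}$, after discarding the terms that either contain a $\log(x_i)$ factor (hence vanish in $\overline{\Hc}(\Tup)$) or lie outside the $(k-1,1)$ part of the depth-graded piece.

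First, I write
\[
\Li_{n_1,\dots,n_k}(x_1,\dots,x_k) = (-1)^k\I(0;y_1,\dots,y_n;y_{n+1}),
\]
where the only nonzero entries among $y_1,\dots,y_{n+1}$ are $y_{m_0}=1$, $y_{m_i}=x_1\cdots x_i$ for $1\leq i<k$, and $y_{n+1}=x_1\cdots x_k$, placed at positions $m_0=1<m_1<\dots<m_{k-1}<m_k=n+1$. Applying \eqref{Icoproduct}, every term in $\Delta\I(0;y_1,\dots,y_n;y_{n+1})$ is a tensor product of a main iterated integral (on the left) with a product of sub-iterated integrals (on the right). In the quotient $\overline{\Hc}(\Tup)$ a sub-iterated integral $\I(y_{i_p};y_{i_p+1},\dots,y_{i_{p+1}-1};y_{i_{p+1}})$ either (i) vanishes, (ii) reduces to $1$, (iii) is a product of logarithms and therefore vanishes modulo $\Log(\Tup)\cdot \Hc(\Tup)$, or (iv) equals a single depth-$1$ polylogarithm up to logarithm factors. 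Only case (iv) can contribute to $\gr_1^{\Dc}\overline{\Hc}(\Tup)$, and at most one such factor may appear on the right; similarly, the main integral on the left must involve exactly $k-1$ of the nonzero $y_{m_i}$ in order to land in $\gr_{k-1}^{\Dc}\overline{\Hc}(\Tup)$.

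This forces the following classification of surviving cuts, indexed by which nonzero point is \emph{omitted} from the main subsequence:
\begin{itemize}\itemsep2pt
\item[(a)] Omit $y_{m_0}=1$. Using the affine invariance of iterated integrals (Lemma~\ref{LemmaAffineInvariance}), the main integral becomes $(-1)^{k-1}\I(0;y_{m_1},\dots,\widehat{y_{m_0}},\dots;y_{m_k})$ rescaled by $x_1^{-1}$, yielding $\Li(x_2,\dots,x_k;t_2,\dots,t_k)$, while the right factor contributes $\Li(x_1;t_1)$.
\item[(b)] Omit $y_{m_{i}}$ for some $1\le i\le k-1$. The two adjacent blocks merge into one of length $n_i+n_{i+1}$, producing $\Li(x_1,\dots,x_ix_{i+1},\dots,x_k;t_1,\dots,\widehat{t_{i+1}},\dots,t_k)$ on the left. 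The sub-integral pinched out around $y_{m_i}$ is, after path composition \eqref{PathComposition} and identifying the direction of integration with \eqref{PathReversal} and the inversion identity \eqref{LemmaInversionDepthOne}, either $\Li(x_{i+1};t_{i+1}-t_i)$ (from the segment to the right of $y_{m_i}$) or $-\Li(x_i;t_i-t_{i+1})$ (from the segment to the left, after applying~\eqref{LemmaInversionDepthOne} to invert its argument).
\end{itemize}

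Summing (a) and (b) over $i$ yields the three sums in the claimed identity. The work therefore reduces to two purely bookkeeping verifications: that every cut not of the above form produces either a log factor or a tensor in $\Dc_{<k-1}\otimes\Dc_{\le 1}$, and that the generating-function coefficients in case (b) match exactly. The main obstacle is the second of these: when a sub-integral in (b) contains both the omitted point $y_{m_i}$ and a number of adjacent zeros from either block, one must carefully package the resulting logarithmic and depth-$1$ contributions via the generating series so that the accumulated weights assemble into $t_{i+1}-t_i$ (resp.\ $t_i-t_{i+1}$), with the signs and inversion correction from~\eqref{LemmaInversionDepthOne} falling into place. Once this is done, the generating-function format absorbs the combinatorics and the formula follows.
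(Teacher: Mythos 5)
Your overall strategy is the right one and is the same as the paper's: apply \eqref{Icoproduct} to the representation \eqref{LinDef}, observe that only cuts producing a single depth-$1$ factor on the right survive in $\gr_{k-1}^{\Dc}\otimes\gr_1^{\Dc}$ of $\overline{\Hc}(\Tup)$, and organize the surviving cuts by which nonzero marker $y_{m_i}$ migrates to the right tensor. The classification into your cases (a) and (b) matches the paper's three families of terms.

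However, what you flag as ``the main obstacle'' and then describe as bookkeeping is in fact where the real work lies, and you have not done it. For a single choice of omitted marker $y_{m_i}$, there is not one cut but a whole family of them, parameterized by how many zeros from the two adjacent blocks are absorbed into $\bar{I}$; each cut on the right side is an iterated integral of the form $\I(z_\ell;\{0\}^{b},y_{m_i},\{0\}^{a};z_{\ell'})$ with variable $a,b\ge 0$, and these do not individually equal $\pm\Li(x_{i\pm1};t_i-t_{i+1})$ — each produces a single weight-$(a+b+1)$ polylogarithm with a nontrivial combinatorial prefactor. The paper isolates this in a separate lemma (Lemma~\ref{LemmaDivergentLis}), proved by induction using the shuffle relation, showing that in $\overline{\Hc}(\Tup)$
\[
\I(z_1;\underbrace{0,\ldots,0}_{k},z_2,\underbrace{0,\ldots,0}_{l};z_3)
= (-1)^{k+1}\binom{k+l}{k}\bigl(\Li_{k+l+1}(z_3/z_2)-\Li_{k+l+1}(z_1/z_2)\bigr).
\]
The binomial coefficient is the crucial input: only after summing the contributions over all $a$ (resp.\ $b$) and multiplying by $t_1^{n_1-1}\cdots t_k^{n_k-1}$ does the binomial theorem assemble these weighted terms into $\Li(x_{i+1};t_{i+1}-t_i)$ and $\Li(x_i;t_i-t_{i+1})$. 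You should also take note of the cancellation structure: the paper's Term~\eqref{Term1} (both endpoints of the right sub-integral nonzero) is precisely the sum of the $a=0$ specialization of~\eqref{Term2} and the $b=0$ specialization of~\eqref{Term3}, which is what makes the three sums combine cleanly. Without this lemma and the explicit binomial-theorem computation, the proof is not complete; ``the generating-function format absorbs the combinatorics'' is the conclusion, not an argument.

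One smaller point: your phrasing in case~(b) (``either $\Li(x_{i+1};t_{i+1}-t_i)$\ldots\ or $-\Li(x_i;t_i-t_{i+1})$'') suggests each surviving cut contributes one of these two generating series directly, which conflates an individual cut with the aggregate. You should rephrase this to make clear that the two sums in the statement arise from the two families of cuts distinguished by whether $z_\ell$ or $z_{\ell'}$ is nonzero, and that each family must be summed over all choices of absorbed zeros.
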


We will need the following lemma.
\begin{lemma} \label{LemmaDivergentLis}
For any $k,l\geq0$ and any $z_1,z_2,z_3\in\mu_{\infty}\Xup(\Tup)$, where $\mu_{\infty}$ is the set of all roots of unity, we have the following identity in $\overline{\Hc}(\Tup)$
    \[\I(z_1;\underbrace{0,\dots,0}_{k},z_2,\underbrace{0,\dots,0}_{l};z_3) = 
    (-1)^{k+1}\binom{k+l}{k}\Big(\Li_{k+l+1}(z_3/z_2)-\Li_{k+l+1}(z_1/z_2)
    \Big).\]
\end{lemma}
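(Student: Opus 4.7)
The plan is to reduce the iterated integral to depth-one polylogarithms by splitting the path of integration at~$0$ and then iteratively removing the leading zeros via the shuffle product, exploiting the fact that all relevant logarithms vanish in $\overline{\Hc}(\Tup)$.

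First I would apply path composition~(\ref{PathComposition}) with base point $a=0$ to rewrite
\[
\I(z_1;0^k,z_2,0^l;z_3)=\sum_{j=0}^{k+l+1}\I(z_1;w_1,\dots,w_j;0)\cdot\I(0;w_{j+1},\dots,w_{k+l+1};z_3),
\]
where $(w_1,\dots,w_{k+l+1})=(0^k,z_2,0^l)$. For $1\le j\le k$ the first factor equals $(-1)^j\log(z_1)^j/j!$ by~(\ref{PathReversal}) and~(\ref{LemmaInversionDepthOne}); for $k+1\le j\le k+l$ the second factor equals $\log(z_3)^{k+l+1-j}/(k+l+1-j)!$. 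Since $\log(\zeta\chi)=\sum_i a_i\log(x_i)\in\Log(\Tup)$ for any $\zeta\chi\in\mu_\infty\Xup(\Tup)$ with $\chi=\prod_i x_i^{a_i}$, all of these logarithms vanish in $\overline{\Hc}(\Tup)$, and only the boundary terms $j=0$ and $j=k+l+1$ survive:
\[
\I(z_1;0^k,z_2,0^l;z_3)=\I(0;0^k,z_2,0^l;z_3)+\I(z_1;0^k,z_2,0^l;0).
\]

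Next I would compute $f(k,l)\coloneqq\I(0;0^k,z_2,0^l;z_3)$ by induction on~$k$. The base case $f(0,l)=-\Li_{l+1}(z_3/z_2)$ follows from~(\ref{LinDef}) together with affine invariance (Lemma~\ref{LemmaAffineInvariance}) used to rescale by $1/z_2$. For the induction step, I would apply the shuffle relation~(\ref{ShuffleRelation}) to $\I(0;0;z_3)\cdot\I(0;0^{k-1},z_2,0^l;z_3)$: the $k+l+1$ insertions of a single $0$ into $(0^{k-1},z_2,0^l)$ produce $(0^k,z_2,0^l)$ in $k$ ways (insertions to the left of $z_2$) and $(0^{k-1},z_2,0^{l+1})$ in $l+1$ ways (insertions to the right of $z_2$). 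Since the left-hand side equals $\log(z_3)\cdot f(k-1,l)=0$ in $\overline{\Hc}(\Tup)$, this yields the recursion $kf(k,l)+(l+1)f(k-1,l+1)=0$, which iterates to
\[
f(k,l)=(-1)^k\binom{k+l}{k}f(0,k+l)=(-1)^{k+1}\binom{k+l}{k}\Li_{k+l+1}(z_3/z_2).
\]

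Finally, I would handle the second surviving piece by path reversal~(\ref{PathReversal}):
\[
\I(z_1;0^k,z_2,0^l;0)=(-1)^{k+l+1}\I(0;0^l,z_2,0^k;z_1)=(-1)^{k+l+1}(-1)^{l+1}\binom{k+l}{l}\Li_{k+l+1}(z_1/z_2),
\]
which simplifies to $(-1)^k\binom{k+l}{k}\Li_{k+l+1}(z_1/z_2)$. Summing the two contributions produces the stated formula. The only technical subtlety is that several of the iterated integrals above are ``divergent'' in the classical analytic sense because path endpoints coincide with interior points; this is handled by the intrinsic shuffle regularization of $\Hc(\overline{\Q}(\Tup))$ reviewed in~\S\ref{SectionMultiplePolylogs}, so identities~(\ref{ShuffleRelation}),~(\ref{PathComposition}),~(\ref{PathReversal}) and Lemma~\ref{LemmaAffineInvariance} apply to the symbols as written and the calculation goes through without modification.
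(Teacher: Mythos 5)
Your proof takes essentially the same route as the paper's: split the path at $0$, then induct on $k$ via the shuffle of $\I(0;0;z_3)$ against $\I(0;\{0\}^{k-1},z_2,\{0\}^{l};z_3)$, using the vanishing of powers of $\log$ in $\overline{\Hc}(\Tup)$. Incidentally, your shuffle coefficient $l+1$ (and the recursion $kf(k,l)+(l+1)f(k-1,l+1)=0$) is the correct one; the paper's coefficient $l$ and the accompanying identity $k\binom{k+l}{k}=l\binom{k+l}{k-1}$ appear to be a typo for $l+1$ and $k\binom{k+l}{k}=(l+1)\binom{k+l}{k-1}$.
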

\begin{proof}
Note that for $z\in\{z_1,z_3\}$ both elements $\I(0;0,\dots,0;z)$ and $\I(z;0,\dots,0;0)$ vanish in $\overline{\Hc}(\Tup)$, since each is equal to a rational multiple of some power of $\log(z)$. Combining this observation with the path decomposition formula gives the following identity (again, only in $\overline{\Hc}(\Tup)$)
\[
\I(z_1;\underbrace{0,\dots,0}_{k},z_2,\underbrace{0,\dots,0}_{l};z_3) = 
\I(0;\underbrace{0,\dots,0}_{k},z_2,\underbrace{0,\dots,0}_{l};z_3) + 
\I(z_1;\underbrace{0,\dots,0}_{k},z_2,\underbrace{0,\dots,0}_{l};0).
\]
Therefore, it suffices to prove the lemma in the case when $z_1=0$, which we henceforth assume. We argue by induction on $k$. For $k=0$ the identity follows from the definition of $\Li_{k+l+1}$. Assume that $k>0$ and we know the formula
\[
\I(0;\underbrace{0,\dots,0}_{k'},z_2,\underbrace{0,\dots,0}_{l};z_3) = 
    (-1)^{k'+1}\binom{k'+l}{k'}\Li_{k'+l+1}(z_3/z_2)
\]
for all $k'<k$. By shuffle product identity
\[
\I(0;0;z_3)\I(0;\underbrace{0,\dots,0}_{k-1},z_2,\underbrace{0,\dots,0}_{l};z_3) = 
k\I(0;\underbrace{0,\dots,0}_{k},z_2,\underbrace{0,\dots,0}_{l};z_3)
+l\I(0;\underbrace{0,\dots,0}_{k-1},z_2,\underbrace{0,\dots,0}_{l+1};z_3).
\]
Since the left hand side vanishes in $\overline{\Hc}(\Tup)$, and $k\binom{k+l}{k}=l\binom{k+l}{k-1}$, this completes the proof of the induction step.
\end{proof}

\begin{proof}[Proof of Lemma~\ref{LemmaCoproductGeneratingFunction}]
Recall~\eqref{LinDef} and~\eqref{Icoproduct}:
    \begin{equation} \label{LitoInt}
    \begin{split}
    \Li_{n_1,\dots,n_k}&(x_1,x_2,\dots,x_k)\\
    &=(-1)^{k}\I(0;1,\underbrace{0,\dots,0}_{n_1-1},x_1,\dots,\underbrace{0,\dots,0}_{n_{k-1}-1},x_1x_2\dots x_{k-1},\underbrace{0,\dots,0}_{n_k-1};x_1x_2\dots x_{k})
    \end{split}
    \end{equation}
and
    \begin{align*}
    &\Delta(\I(z_0;z_1,\dots,z_n;z_{n+1}))=\\
    &\sum_{\substack{0=i_0<i_1<\dots \\ \cdots <i_r<i_{r+1}=n+1}}\I(z_{i_0};z_{i_1},\dots,z_{i_r};z_{i_{r+1}})\otimes \prod_{p=0}^r\I(z_{i_p};z_{i_{p}+1},\dots,z_{i_{p+1}-1};z_{i_{p+1}}).
    \end{align*}
We set $(z_0,z_1,\dots,z_n,z_{n+1})$ to be the list of arguments appearing in the expression of $\Li_{n_1,\dots,n_k}$ as an iterated integral given in~\eqref{LitoInt}.
Let $I\coloneqq \{i_0,\dots,i_{r+1}\}$ correspond to a term in $\overline\Delta_{k-1,1}\Li_{n_1,\dots,n_k}(x_1,x_2,\dots,x_k)$, and set $\overline{I}\coloneqq \{0,\dots,n+1\} \sm I$. 
We will first show that the term in the coproduct is zero unless $\overline{I}$ is an interval of consecutive integers. Note that the depth of $\I(z_{i_0};z_{i_1},\dots,z_{i_r};z_{i_{r+1}})$ is bounded by the number of nonzero entries among $z_{i_1},\dots,z_{i_r}$ and that the total number of nonzero entries among $z_1,\dots,z_n$ is $k$.
Assume that for some integers $0\leq s<t\leq n$ we have $i_{s+1}-i_s\geq2$ and $i_{t+1}-i_t\geq2$.
Since $\I(y;\{0\}^{m};y')$, $m\geq1$, belongs to the ideal generated by $\log(\Tup)$, if $z_{i_s+1}=\dots=z_{i_{s+1}-1}=0$ or $z_{i_t+1}=\dots=z_{i_{t+1}-1}=0$, then the term vanishes. Otherwise, the depth of $I(z_{i_0};z_{i_1},\dots,z_{i_r};z_{i_{r+1}})$ is at most $k-2$ as two nonzero indices were then omitted, and thus the term in the coproduct $\overline\Delta_{k-1,1}\Li_{n_1,\dots,n_k}(x_1,x_2,\dots,x_k)$ also vanishes.

Now let $I\coloneqq \{i_0,\dots,i_{r+1}\}$ correspond to a nonzero term in $\overline\Delta_{k-1,1}\Li_{n_1,\dots,n_k}(x_1,x_2,\dots,x_k)$; by the above discussion, $\overline{I}={\{l+1,\dots,l'-1\}}$ for some $l<l'$. The only term for which $1\in \overline{I}$ must have $l=0$, and thus $l'=n_1+1$, so the corresponding term in the coproduct is
    \[
    (-1)^k\I(0;x_1,\underbrace{0,\dots,0}_{n_2-1},x_1x_2,\underbrace{0,\dots,0}_{n_3-1},x_1x_2x_3,\dots;x_1\dots x_k)\otimes \I(0;1,\underbrace{0,\dots,0}_{n_1-1};x_1),
    \]
which is equal to $\Li_{n_2,\dots,n_k}(x_2,\dots,x_k)\otimes \Li_{n_1}(x_1)$. For any term for which $z_{l} z_{l'}\ne 0$ we must have
    \[\I(z_{l};z_{l+1},\dots,z_{l'-1};z_{l'}) = \I(x_1\dots x_{i-1};\underbrace{0,\dots,0}_{n_i-1},x_1\dots x_{i},\underbrace{0,\dots,0}_{n_{i+1}-1};x_1\dots x_{i+1}),\]
and so the corresponding term in the coproduct is equal to
    \begin{equation}\label{Term1}
    \begin{split}
    \Li_{n_1,\dots,n_{i-1},1,n_{i+2},\dots,n_{k}}&(x_1,\dots,x_{i}x_{i+1},\dots,x_k)\\ 
    &\otimes (-1)^{n_i}\binom{n_i+n_{i+1}-2}{n_i-1}\Big(\Li_{n_i+n_{i+1}-1}(x_{i+1})-\Li_{n_i+n_{i+1}-1}(x_i^{-1})\Big)
    \end{split}
    \end{equation}
The remaining nonzero terms are the ones for which
    \[\I(z_{l};z_{l+1},\dots,z_{l'-1};z_{l'}) = \I(x_1\dots x_{i-1};\underbrace{0,\dots,0}_{n_{i}-1},x_1\dots x_i,\underbrace{0,\dots,0}_{n_{i+1}-a-1};0),\]
with contribution 
    \begin{equation}\label{Term2}
   	\begin{split}
    -\Li_{n_1,\dots,n_{i-1},a+1,n_{i+2},\dots,n_k}&(x_1,\dots,x_{i-1},x_ix_{i+1},x_{i+2},\dots,x_k) \\
    &\otimes (-1)^{n_{i+1}'-1}\binom{n_i+n_{i+1}'-2}{n_i-1}\Li_{n_i+n_{i+1}'-1}(x_i)
    \end{split}
    \end{equation}
for $0<a<n_{i+1}$, where we set $n_{i+1}'=n_{i+1}-a>0$, and the ones for which
    \[\I(z_{l};z_{l+1},\dots,z_{l'-1};z_{l'}) = \I(0;\underbrace{0,\dots,0}_{n_{i}-b-1},x_1\dots x_i,\underbrace{0,\dots,0}_{n_{i+1}-1};x_1\dots x_{i+1}),\]
with contribution
	\begin{equation}\label{Term3}  
	\begin{split}
	\Li_{n_1,\dots,n_{i-1},b+1,n_{i+2},\dots,n_k}&(x_1,\dots,x_{i-1},x_ix_{i+1},x_{i+2},\dots,x_k) \\
	&\otimes (-1)^{n_i'-1}\binom{n_ i'+n_{i+1}-2}{n_{i+1}-1}\Li_{n_i'+n_{i+1}-1}(x_{i+1}),
	\end{split} 
	\end{equation}
for $0<b<n_i$, where we set $n_i'=n_i-b$. Note that~\eqref{Term1} is equal to the sum of~\eqref{Term2} for $a=0$ and ~\eqref{Term3} for $b=0$. 

Let us multiply~\eqref{Term2} by $t_1^{n_1-1}\cdots t_{k}^{n_k-1}$, sum over $a=0,1,\dots,n_{i+1}-1$ and then over $n_1,\dots,n_k>0$. This is equivalent to summing over all $n_1,\dots,n_{i},n_{i+2},\dots,n_k, a, n_{i+1}'>0$. Making use of binomial theorem then gives
	\[-\Li(x_1,\dots,x_ix_{i+1},\dots,x_k;t_1,\dots,t_{i-1},t_{i+1},\dots,t_{k})\otimes
	\Li(x_{i};t_i-t_{i+1})\]
Similarly, summing up~\eqref{Term3} in the same way we get
	\[\Li(x_1,\dots,x_ix_{i+1},\dots,x_k;t_1,\dots,t_{i},t_{i+2},\dots,t_{k})\otimes
	\Li(x_{i+1};t_{i+1}-t_i)\]
Finally, from $\Li_{n_2,\dots,n_k}(x_2,\dots,x_k)\otimes \Li_{n_1}(x_1)$ we obtain
	\[\Li(x_2,\dots,x_k;t_2,\dots,t_k)\otimes \Li(x_1;t_1).\]
Collecting these contributions together gives the claim.
\end{proof}

\subsection{Classical polylogarithms on a torus} 
We have seen that $\Dc_0\overline{\Hc}(\Tup_d)=\Q.$ Notice that the projection $\Dc_1\overline{\Hc}_n(\Tup_d)\lra \gr_{1}^\Dc\overline{\Hc}_n(\Tup_d)$ is an isomorphism for any $n\geq 1$. Our goal in this section is to describe the space $\gr_{1}^\Dc\overline{\Hc}(\Tup_d)$. This space is generated by classical polylogarithms $\Li_{n}(\zeta x_1^{a_1}\cdots x_d^{a_d})$; we will show that all relations between these elements are generated by the usual distribution relations.

Let $\Xupcirc(\Tup_d)\coloneqq \Xup(\Tup_d)\sm\{1\}$. Denote by $\Xup_{+}^{\prim}(\Tup_d)$ the set of primitive lexicographically positive elements of $\Xupcirc(\Tup_d)$. Here by a lexicographically positive character we mean a character whose exponent vector (with respect to $x_1,\dots,x_d$) is lexicographically greater than $0$ in $\Z^d$. Note that for any character $\chi \in\Xupcirc(\Tup_d)$ there exists a unique character $\psi \in \Xup_{+}^{\prim}(\Tup_d)$ and a unique integer $n$ such that $\chi=\psi^n$.

We will need the following lemma.
\begin{lemma}\label{LemmaLinearIndependence}
The elements $\Li_{n}^{\Lc}(\zeta x_1^{a_1}\cdots x_d^{a_d})$ for $\zeta\in\mu_{\infty}$ and $x_1^{a_1}\cdots x_d^{a_d}\in \Xup_{+}^{\prim}(\Tup_d)$ are linearly independent in $\overline{\Lc}_{\Tup_d,n}.$
\end{lemma}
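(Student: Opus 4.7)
My plan is to prove the result by induction on $n$. The base case $n=1$ will be handled via divisors on the open torus $\Tup_d$, and the inductive step via the cobracket $\delta$ on $\Lc_{\Tup_d}$, using the identification $\overline{\Lc}_{\Tup_d,n}=\Lc_{\Tup_d,n}$ for $n\ge 2$ noted just before Lemma~\ref{LemmaDirectSumDecomposition}.

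For the base case, $\overline{\Lc}_{\Tup_d,1}$ sits inside $F^\times_\Q/\Log(\Tup_d)$ with $F=\overline{\Q}(\Tup_d)$, and $\Li_1^\Lc(\zeta\chi)=-\log^\Lc(1-\zeta\chi)$. Since characters are units on $\Tup_d$, the divisor map on the open torus, $\mathrm{div}\colon F^\times_\Q\to\mathrm{Div}(\Tup_d)_\Q$, factors through the quotient by $\Log(\Tup_d)$. For $\chi\in\Xup_+^{\prim}(\Tup_d)$, primitivity of $\chi$ forces $1-\zeta\chi$ to be an irreducible Laurent polynomial, so $D_{\zeta,\chi}=\{1-\zeta\chi=0\}$ is an irreducible divisor on $\Tup_d$. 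Distinct pairs $(\zeta,\chi)\in\mu_\infty\times\Xup_+^{\prim}(\Tup_d)$ give distinct divisors: two distinct primitive lexicographically-positive characters cannot be proportional, so one is nonconstant on the fiber cut out by the other. Linear independence of distinct irreducible divisors then gives the linear independence of $\{\Li_1^\Lc(\zeta\chi)\}$ in $\overline{\Lc}_{\Tup_d,1}$.

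For the inductive step, I will use the coproduct formula $\Delta'\Li_n(y)=\sum_{k=1}^{n-1}\Li_{n-k}(y)\otimes\log^k(y)/k!$ (generalizing \eqref{FormulaCoproductDilogarithm}). Only the $k=1$ term survives in $\Lc_{\Tup_d}$ (the rest are products), and antisymmetrization gives
\[
\delta\Li_n^\Lc(\zeta\chi)=\Li_{n-1}^\Lc(\zeta\chi)\wedge\log^\Lc(\chi),
\]
where $\log^\Lc(\zeta\chi)=\log^\Lc(\chi)$ because $\zeta$ is torsion. Applying $\delta$ to a putative relation $\sum_i c_i\Li_n^\Lc(\zeta_i\chi_i)=0$ then yields $\sum_i c_i\,\Li_{n-1}^\Lc(\zeta_i\chi_i)\wedge\log^\Lc(\chi_i)=0$ in $\Lambda^2\Lc_{\Tup_d}$. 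By the inductive hypothesis the $\Li_{n-1}^\Lc(\zeta_i\chi_i)$ are linearly independent in $\Lc_{\Tup_d,n-1}$ (for $n=2$ this follows from the base case, promoted from $\overline{\Lc}_{\Tup_d,1}$ to its preimage $\Lc_{\Tup_d,1}=\Hc_1(\Tup_d)$). Letting $U$ denote their span and $W\coloneqq\Log(\Tup_d)$, we have $U\cap W=0$ in $\Lc_{\Tup_d}$---trivially by weights for $n\ge3$, and by the base-case divisor argument for $n=2$---so the wedge product lives in $U\wedge W\cong U\otimes W$ inside $\Lambda^2\Lc_{\Tup_d}$. Expanding in the basis of $U$, each coefficient $c_i\log^\Lc(\chi_i)\in W$ must vanish, forcing $c_i=0$ since $\chi_i\ne 1$ gives $\log^\Lc(\chi_i)\ne 0$.

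The technical heart is the base case, where the conditions ``$\chi\in\Xup_+^{\prim}$'' precisely exclude the only two genuine relations available: distribution (ruled out by primitivity, via irreducibility of $1-\zeta\chi$) and inversion $\Li_1^\Lc(\zeta\chi)\equiv\Li_1^\Lc(\zeta^{-1}\chi^{-1})\pmod{\Log}$ (resolved by selecting one character from each pair $\{\chi,\chi^{-1}\}$ via lexicographic positivity). I expect no serious obstacle beyond carefully verifying the irreducibility of $1-\zeta\chi$ and the wedge-versus-tensor identification; the inductive step is then essentially mechanical.
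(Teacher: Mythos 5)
Your proof is correct and follows essentially the same route as the paper's: induction on weight, with the base case resting on irreducibility and pairwise non-association of $1-\zeta\chi$ in the UFD $\overline{\Q}[x_1^{\pm1},\dots,x_d^{\pm1}]$ (which your divisor phrasing is equivalent to), and the inductive step resting on the cobracket formula $\delta\Li_n^\Lc(\zeta\chi)=\Li_{n-1}^\Lc(\zeta\chi)\wedge\log^\Lc(\chi)$. The only cosmetic difference is that the paper passes to $\gr_1^{\Dc}(\Lambda^2\Lc_{\Tup_d})\cong\Log(\Tup_d)\otimes\gr_1^{\Dc}\overline{\Lc}_{\Tup_d,n-1}$ via Lemma~\ref{LemmaDirectSumDecomposition}, whereas you argue directly in $\Lambda^2\Lc_{\Tup_d}$ by establishing $U\cap\Log(\Tup_d)=0$ so as to identify $U\wedge\Log(\Tup_d)$ with $U\otimes\Log(\Tup_d)$; both correctly extract the coefficient of each $\Li_{n-1}^\Lc$.
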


\begin{proof}
We prove the statement by induction on weight. In weight one, we need to show that $\Li_1^{\Lc}(\zeta x_1^{a_1}\cdots x_d^{a_d})$ with primitive and lexicographically positive exponent $(a_1,\dots,a_n)$ are linearly independent in $\overline{\Lc}_{\Tup_d,1}$. The space $\overline{\Lc}_{\Tup_d,1}$ is a subspace of $\Lc_1(\overline{\Q}(\Tup_d))/\Log(\Tup_d)\cong \overline{\Q}(\Tup_d)^{\times}/\langle x_1,\dots,x_d \rangle$, so the statement follows from the fact that Laurent polynomials $1-\zeta x_1^{a_1}\cdots x_d^{a_d}$ for $\zeta \in \mu_{\infty}$ and $x_1^{a_1}\cdots x_d^{a_d}\in X_{+}^{\prim}(\Tup_d)$ are irreducible and pairwise nonassociate in the unique factorization domain $\overline{\Q}[x_1^{\vphantom{-1}},x_1^{-1},\dots,x_d^{\vphantom{-1}},x_d^{-1}]$. That different $1-\zeta x_1^{a_1}\cdots x_d^{a_d}$ are pairwise nonassociate follows from the fact that they vanish on different subsets of $\Tup_d$; to see the irreducibility it suffices to note that there exists an automorphism of $\Tup_d$ that maps the given polynomial to $1-\zeta x_1$, which is irreducible.

In weight $n\geq 2$, we have $\Lc_{\Tup_d,n} \cong \overline{\Lc}_{\Tup_d,n}$, so it is sufficient to prove the linear independence in $\Lc_{\Tup_d,n}$ By Lemma \ref{LemmaDirectSumDecomposition},  the space $\left [\gr_1^{\Dc}\left(\Lambda^2\Lc_{\Tup_d}\right)\right]_n$ is isomorphic to $\Log(\Tup_d)\otimes \gr_1^{\Dc}\overline{\Lc}_{\Tup_d,n-1}$ and the image of 
the cobracket $\delta \Li_n^{\Lc}(\zeta x_1^{a_1}\cdots x_d^{a_d})$ 
in $\gr_1^{\Dc}\left(\Lambda^2\Lc_{\Tup_d}\right)$  equals to 
$-\log(\zeta x_1^{a_1}\cdots x_d^{a_d})\otimes \Li_{n-1}^{\Lc}(\zeta x_1^{a_1}\cdots x_d^{a_d})$. 
The statement follows by the induction hypothesis.
\end{proof}

Fix $n\geq 1$. Consider a map 
\[
L_n \colon \Q\bigl[\mu_{\infty}\times \Xupcirc(\Tup_d)\bigr] \longrightarrow\Dc_1\overline{\Hc}_{\Tup_d,n}
\]
sending $\bigl[\zeta, \chi \bigr]$ to $\Li_{n}(\zeta \chi).$ This map is surjective. Lemma \ref{LemmaDistributionLi} and (\ref{LemmaInversionDepthOne}) imply that the identity
\[
\sum_{\nu^m=1}\Li_{n}(\nu \zeta x_1^{a_1}\cdots x_d^{a_d}) =
    m^{1-n}\Li_{n}(\zeta^m x_1^{ma_1}\cdots x_d^{ma_d})
\]
holds in $\overline{\Hc}(\Tup_d)$ for $m\in \Z\sm \{0\}$. Thus we have a complex
\begin{equation}  \label{FormulaExactSequenceClssicalPolylogs}
\bigoplus_{m\in \Z\sm\{0\}} \Q\bigl[\mu_{\infty}\times\Xupcirc(\Tup_d)\bigr] \xrightarrow{\sum d_{n,m}} \Q\bigl[\mu_{\infty}\times\Xupcirc(\Tup_d)\bigr] \stackrel{L_n}{\longrightarrow}\Dc_1\overline{\Hc}_{\Tup_d,n}\longrightarrow 0
\end{equation} 
where 
\[
d_{n,m}[\zeta, x_1^{a_1}\cdots x_d^{a_d}]=[\zeta^m,  x_1^{ma_1}\cdots x_d^{ma_d}]-m^{n-1}\sum_{\nu^m=1}[\nu\zeta,  x_1^{a_1}\cdots x_d^{a_d}].
\]

\begin{lemma} 
The complex (\ref{FormulaExactSequenceClssicalPolylogs}) is exact.
\end{lemma}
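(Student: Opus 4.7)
Plan: Surjectivity of $L_n$ is immediate from the definition, and that the sequence is a complex (i.e., $L_n\circ d_{n,m}=0$ for each $m\in \Z\sm\{0\}$) has already been verified from the distribution relation (Lemma~\ref{LemmaDistributionLi}) and the depth-one inversion relation~\eqref{LemmaInversionDepthOne}, as noted just before the statement. So the real content is exactness at the middle term, for which I will show $\ker L_n \subseteq N$, where $N\coloneqq \sum_{m\neq 0}\mathrm{Im}(d_{n,m})$.

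The argument proceeds in two steps. First, a \emph{reduction} step: every element of $\Q[\mu_\infty\times \Xupcirc(\Tup_d)]$ is congruent modulo $N$ to a linear combination of pairs $[\zeta,\psi]$ with $\psi\in \Xup_+^{\prim}(\Tup_d)$. Indeed, any $\chi\in \Xupcirc(\Tup_d)$ admits a unique factorization $\chi=\psi^m$ with $\psi$ primitive positive and $m\in \Z\sm\{0\}$; given $[\eta,\chi]$, choose $\xi\in\mu_\infty$ with $\xi^m=\eta$ (such $\xi$ exists for both signs of $m$). Then the defining formula for $d_{n,m}$ yields
\[
d_{n,m}[\xi,\psi]=[\eta,\chi]-m^{n-1}\sum_{\nu^m=1}[\nu\xi,\psi],
\]
so $[\eta,\chi]\equiv m^{n-1}\sum_{\nu^m=1}[\nu\xi,\psi]\pmod{N}$, and the right-hand side has the desired form.

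Second, an \emph{injectivity} step on reduced pairs: composing $L_n$ with the canonical projection $\pi\colon \Dc_1\overline{\Hc}_{\Tup_d,n}\to \overline{\Lc}_{\Tup_d,n}$ to the Lie coalgebra of indecomposables sends $[\zeta,\psi]$ to $\Li_n^\Lc(\zeta\psi)$, and by Lemma~\ref{LemmaLinearIndependence} the elements $\Li_n^\Lc(\zeta\psi)$ are linearly independent as $\zeta$ ranges over $\mu_\infty$ and $\psi$ over $\Xup_+^{\prim}(\Tup_d)$. Hence $L_n$ restricted to the span of reduced pairs is injective. Combining the two steps: given $x\in \ker L_n$, use Step~1 to write $x=y+z$ with $y\in N$ and $z$ a combination of reduced pairs; then $L_n(z)=L_n(x)=0$, so Step~2 forces $z=0$, giving $x=y\in N$. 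No genuine obstacle arises beyond bookkeeping: the real technical work (linear independence in $\overline{\Lc}_n$) is already packaged in Lemma~\ref{LemmaLinearIndependence}, and the only point to watch is that the normalization $\chi=\psi^m$ must cover both lexicographically positive and negative $\chi$, which is where both positive and negative exponents $m$ are needed.
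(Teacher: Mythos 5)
Your proof is correct and takes essentially the same route as the paper: reduce modulo $\sum_m\mathrm{Im}(d_{n,m})$ to pairs $[\zeta,\psi]$ with $\psi$ primitive lexicographically positive, then invoke Lemma~\ref{LemmaLinearIndependence} for linear independence. Your exposition is a bit more explicit (choosing an $m$-th root $\xi$, spelling out the projection $\pi$ to $\overline{\Lc}_{\Tup_d,n}$), but the argument is the same.
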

\begin{proof}
We need to show that the kernel of $L$ is generated by the image of the map $\sum d_{n,m}$. Modulo these elements, an element of $\Q\bigl[\mu_{\infty}\times{\Xupcirc}(\Tup_d)\bigr]$ can be expressed as a linear combination of $[\zeta, x_1^{a_1}\cdots x_d^{a_d}]$ with $x_1^{a_1}\cdots x_d^{a_d}\in {\Xupcirc}(\Tup_d)$ primitive and lexicographically positive. It suffices to prove that elements $\Li_n(\zeta x_1^{a_1}\cdots x_d^{a_d})$ for different primitive and lexicographically positive vectors $(a_1,\dots,a_d)$ are linearly independent in $\overline{\Hc}_{\Tup_d}$. This follows from Lemma \ref{LemmaLinearIndependence}.
\end{proof}

Let $p\colon \Tup'_d\longrightarrow \Tup_d$ be an isogeny. We have a commutative diagram
\[
\begin{tikzcd}[column sep=large]
 \smash{\displaystyle\bigoplus\limits_{m\in \Z\sm\{0\}}} \Q\bigl[\mu_{\infty}\times \Xupcirc(\Tup_d)\bigr] \arrow[d, "p^{*}"] \arrow[r,"\sum d_{n,m}"] &\Q\bigl[\mu_{\infty}\times\Xupcirc(\Tup_d)\bigr]  \arrow[d, "p^{*}"] \arrow[r,"L_n"] & \Dc_1\overline{\Hc}_{\Tup_d,n} \arrow[r]  \arrow[d, "p^{*}"]&0\\
\displaystyle\bigoplus\limits_{m\in \Z\sm\{0\}}  \Q\bigl[\mu_{\infty}\times\Xupcirc(\Tup'_d)\bigr]  \arrow[r,"\sum d_{n,m}"] &\Q\bigl[\mu_{\infty}\times\Xupcirc(\Tup'_d)\bigr]  \arrow[r,"L_n"] &  \Dc_1\overline{\Hc}_{\Tup'_d,n} \arrow[r] & 0.
\end{tikzcd}
\]
where $p^*\colon \Q\bigl[\mu_{\infty}\times \Xupcirc(\Tup_d)\bigr] \longrightarrow \Q\bigl[\mu_{\infty}\times\Xupcirc(\Tup'_d)\bigr]$ acts trivially on $\mu_{\infty}$ and acts as a pull-back on $\Xup(\Tup_d).$ Passing to the filtered colimit over the category $\Cc_{\Tup_d}^{\op}$, we obtain an exact sequence
\[
\bigoplus_{m\in \Z\sm\{0\}} \varinjlim\bigl(\Q\bigl[\mu_{\infty}\times\Xupcirc(\Tup'_d)\bigr] \bigr) \xrightarrow{\qquad} \varinjlim\Q\bigl[\mu_{\infty}\times\Xupcirc(\Tup'_d)\bigr]   \xrightarrow{\qquad}  \Dc_1\overline{\Hc}_n(\Tup_d) \xrightarrow{\qquad}   0.
\]
Observe that an injective map between lattices $\Xup(\Tup_1)\hookrightarrow\Xup(\Tup_2)$ induces an isomorphism $\Xup(\Tup_1)_{\Q}\rightarrow\Xup(\Tup_2)_{\Q}$ on their rationalizations. Denote $\Xup(\Tup_d)_{\Q}\sm\{0\}$ by $\Xupcirc(\Tup_d)_{\Q}$. We have a map
\[
\varinjlim\Q\bigl[\mu_{\infty}\times\Xupcirc(\Tup'_d)\bigr] \lra \Q\bigl[\mu_{\infty}\times \Xupcirc(\Tup_d)_{\Q}\bigr]
\]
which can be easily shown to be an isomorphism.

We have proven the following lemma.

\begin{lemma}\label{DepthOneOnColimit} For $n\geq 1$, we have an exact sequence 
\begin{equation}  \label{FormulaExactSequenceClssicalPolylogs2}
\begin{tikzcd}[column sep=large]
 {\displaystyle\bigoplus\limits_{m\in \Z\sm\{0\}}} \Q\bigl[\mu_{\infty}\times \Xupcirc(\Tup_d)_{\Q} \bigr] \arrow[r,"\sum d_{n,m}"] &\Q\bigl[\mu_{\infty}\times\Xupcirc(\Tup_d)_{\Q} \bigr]  \arrow[r,"L_n"] & \Dc_1\overline{\Hc}(\Tup_d) \arrow[r] & 0 \end{tikzcd}.
 \end{equation}
\end{lemma}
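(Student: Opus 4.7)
The proof is essentially sketched by the argument immediately preceding the lemma statement, so my plan is to formalize that sketch. The strategy is to pass to the filtered colimit in the exact sequence \eqref{FormulaExactSequenceClssicalPolylogs}, which holds for each individual torus $\Tup'_d$ covering $\Tup_d$, and then identify the resulting colimits with the terms appearing in \eqref{FormulaExactSequenceClssicalPolylogs2}.

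First, I would observe that for each isogeny $p\colon \Tup'_d \to \Tup_d$ the sequence \eqref{FormulaExactSequenceClssicalPolylogs} is exact and forms a diagram functorial in $p$, with vertical maps induced by $p^*$ acting trivially on roots of unity and via pullback on characters. Since the category $\Cc_{\Tup_d}^{\op}$ is filtered, and filtered colimits of abelian groups are exact, the sequence obtained by taking $\varinjlim$ termwise is again exact. The third term gives $\Dc_1\overline{\Hc}(\Tup_d)$ directly from the definition of $\Hc(\Tup_d)$ as the filtered colimit and from the fact that filtered colimits commute with the passage to the depth filtration and to the quotient by $\Log(\Tup_d)\cdot\Hc(\Tup_d)$.

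The key identification is
\[
\varinjlim_{p\colon \Tup'_d\to \Tup_d}\Q\bigl[\mu_{\infty}\times \Xupcirc(\Tup'_d)\bigr] \,\cong\, \Q\bigl[\mu_{\infty}\times \Xupcirc(\Tup_d)_{\Q}\bigr].
\]
This is the only step requiring a small verification: since filtered colimits commute with forming the free $\Q$-vector space and with Cartesian products by a fixed set (here $\mu_\infty$), it suffices to check that $\varinjlim \Xupcirc(\Tup'_d) = \Xupcirc(\Tup_d)_\Q$ as sets in the filtered colimit of injections $\Xup(\Tup_d)\hookrightarrow \Xup(\Tup'_d)$ along isogenies. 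Every rational character $\chi\in \Xup(\Tup_d)_\Q$ has some integer multiple $N\chi$ in $\Xup(\Tup_d)$; choosing the $N$-th power isogeny $\Tup'_d\to\Tup_d$ realizes $\chi$ in $\Xup(\Tup'_d)\otimes\Q$ as an actual element of $\Xup(\Tup'_d)$. The same argument works simultaneously for any finite set of characters, which is enough to identify the filtered colimit.

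Finally, the map $\sum d_{n,m}$ in the colimit sequence is given by the same formula as in \eqref{FormulaExactSequenceClssicalPolylogs} and $L_n$ is given by $[\zeta,\chi]\mapsto \Li_n(\zeta\chi)$ for $\chi \in \Xupcirc(\Tup_d)_\Q$; both are well-defined by functoriality of the finite-level maps under $p^*$. The main obstacle is purely bookkeeping: one must check that the identifications of colimits are compatible with the differentials $d_{n,m}$ and with $L_n$, which is immediate since both were defined by formulas in terms of $\zeta$, $\chi$, and integer scalars that are preserved verbatim under $p^*$.
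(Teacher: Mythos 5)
Your proof is correct and takes essentially the same approach as the paper: pass to the filtered colimit of the exact sequence \eqref{FormulaExactSequenceClssicalPolylogs} over $\Cc_{\Tup_d}^{\op}$ (using that filtered colimits are exact and commute with direct sums), identify the colimit of $\Q\bigl[\mu_\infty\times\Xupcirc(\Tup'_d)\bigr]$ with $\Q\bigl[\mu_\infty\times\Xupcirc(\Tup_d)_\Q\bigr]$, and check compatibility of $d_{n,m}$ and $L_n$ with the transition maps. The paper presents this more tersely as the paragraph preceding the lemma, but the argument is the same; your fleshed-out verification of the key colimit identification is exactly what the paper's ``which can be easily shown to be an isomorphism'' is gesturing at.
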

For convenience, we spell out explicit formulas for the maps $d_{n,m}$ and $L_n$. We identify $\Xupcirc(\Tup_d)_{\Q}$ with $\Q^d\sm \{0\}$. The map $d_{n,m}$ is defined by formula
\[
d_{n,m}[\zeta, (a_1,\dots,a_d)]=[\zeta^m,  (ma_1,\dots,ma_d)]-m^{n-1}\sum_{\nu^m=1}[\nu\zeta,  (a_1,\dots,a_d)].
\]
The element $L_n([\zeta, (a_1,\dots,a_d)])$ can be obtained as follows. Consider $N\in \N$ such that $Na_1,\dots, Na_d \in\Z$ and let $p_N\colon \Tup'_d\lra \Tup_d$ be the isogeny sending $(y_1,\dots,y_d)\in \Tup'_d$ to $(y_1^N,\dots,y_d^N)\in \Tup_d$. Then $L_n([\zeta, (a_1,\dots,a_d)])$ is the image of $\Li_n(\zeta y_1^{Na_1}\cdots y_d^{Na_d})$ under the natural map from  $ \overline{\Hc}_{\Tup'_d}$ to $\overline{\Hc}(\Tup_d).$

\subsection{Primitive elements in \texorpdfstring{$\overline{\Hc}(\Tup)$}{H bar(T)}}
Classical polylogarithms, i.e., polylogarithms $\Li_n$, of depth 1, are primitive elements of $\overline{\Hc}(\Tup)$. It turns out that the converse is true as well: every primitive element 
of $\overline{\Hc}(\Tup)$ lies in $\Dc_1\overline{\Hc}(\Tup)$. This statement should be compared to the Goncharov Freeness Conjecture \cite[Conjecture B]{Gon95B}, which states that the space of primitive elements of the Hopf algebra $\Hc(F)/\left(F^{\times}\Hc(F) \right)$ is generated by classical polylogarithms.

\begin{proposition}\label{PropositionWeakDepthConjecture}
An element $a\in \Hc_n(\Tup_d)$ with $\overline{\Delta}(a)=1\otimes a+a\otimes 1$ can be written as a sum of classical polylogarithms $\Li_{n}(\zeta x_1^{a_1}\cdots x_n^{a_n})$  and elements in the ideal generated by $\Log(\Tup)$. We have the following isomorphism:
    \[
   P\bigl(\overline{\Hc}(\Tup_d)\bigr)\cong\gr_{1}^\Dc\Bigl(\overline{\Hc}(\Tup_d)\Bigr).
    \]
    
\end{proposition}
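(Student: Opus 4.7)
The inclusion $\Dc_{1}\overline{\Hc}(\Tup_d)\subseteq P(\overline{\Hc}(\Tup_d))$ is immediate: every reduced-coproduct term of $\Li_{n}(\zeta x^{\alpha})$ contains a factor $\log(\zeta x^{\alpha})^{j}=\log(x^{\alpha})^{j}$ with $j\geq 1$, which lies in the ideal generated by $\Log(\Tup_d)$ and vanishes in $\overline{\Hc}(\Tup_d)$. The two statements of the proposition are therefore equivalent to the reverse inclusion $P(\overline{\Hc}(\Tup_d))\subseteq\Dc_{1}\overline{\Hc}(\Tup_d)$, which I would prove by induction on the minimal $k$ with $a\in\Dc_{k}\overline{\Hc}_{n}$, where $a$ is primitive. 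The case $k\leq 1$ is trivial, so assume $k\geq 2$ and let $\bar a\in\gr_{k}^{\Dc}\overline{\Hc}_{n}$ denote its nonzero image. By the depth-coalgebra compatibility~\eqref{FormulaDepthFiltrationBarGraded}, primitivity of $a$ forces
\[
\overline{\Delta}_{k-1,1}(\bar a)=0\quad\text{in}\quad\bigoplus_{n_{1}+n_{2}=n}\gr_{k-1}^{\Dc}\overline{\Hc}_{n_{1}}\otimes\gr_{1}^{\Dc}\overline{\Hc}_{n_{2}}.
\]

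Writing $\bar a$ as a finite $\Q$-linear combination of depth-$k$ generators $\Li_{n_{1},\dots,n_{k}}(\zeta_{1}x^{\alpha_{1}},\dots,\zeta_{k}x^{\alpha_{k}})$ with the $\alpha_{j}$ linearly independent, expanding $\overline{\Delta}_{k-1,1}(\bar a)$ via Lemma~\ref{LemmaCoproductGeneratingFunction}, grouping the resulting identity by the second tensor factor $\Li_{m}(\eta x^{\gamma})$, and invoking Lemma~\ref{LemmaLinearIndependence} (together with the description of relations in Lemma~\ref{DepthOneOnColimit}) for the linear independence of classical polylogarithms in $\gr_{1}^{\Dc}\overline{\Hc}$, the vanishing decouples into independent constraints in $\gr_{k-1}^{\Dc}\overline{\Hc}$, one per pair $(m,\eta x^{\gamma})$. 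In the base case $k=2$ the first tensor factor also lies in $\gr_{1}^{\Dc}\overline{\Hc}$, so a second application of Lemma~\ref{LemmaLinearIndependence} produces scalar constraints on the coefficients $c_{n_{1},n_{2},\alpha,\beta}$ of $\bar a$. For each character $\alpha$ appearing as a first argument in the support of $\bar a$, matching the coefficient of $\Li_{n-m}(x^{\gamma})$ in the equation produced from the second factor $\Li_{m}(x^{\alpha})$ gives the telescoping identity
\[
c_{m,n-m,\alpha,\gamma}=\sum_{n_{1}=1}^{m}(-1)^{m-n_{1}}\binom{m-1}{m-n_{1}}\,c_{n_{1},n-n_{1},\alpha,\gamma-\alpha}.
\]
Induction on $m$, starting from $c_{1,n-1,\alpha,\gamma}=c_{1,n-1,\alpha,\gamma-\alpha}$ and using that the support of $c$ is finite (so that the $\gamma$-translation orbit $\{\gamma-j\alpha\}_{j\geq 0}$ cannot be entirely supported), forces $c_{\ast,\ast,\alpha,\ast}=0$; ranging over $\alpha$ gives $\bar a=0$, contradicting the minimality of $k$.

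\textbf{Main obstacle.} Extending the telescoping analysis to $k\geq 3$, where the first tensor factor lies in $\gr_{k-1}^{\Dc}\overline{\Hc}$ and the direct linear independence of classical polylogarithms no longer applies. The cleanest way around this, I expect, is to iterate the coproduct further---ultimately reducing to the iterated symbol valued in $(\gr_{1}^{\Dc}\overline{\Hc})^{\otimes k}$ where Lemma~\ref{LemmaLinearIndependence} can be applied coordinate-wise. Since this iterated symbol has a nontrivial kernel on $\gr_{k}^{\Dc}\overline{\Hc}$ (as noted in the discussion preceding Theorem~\ref{TheoremMain2}), one must supplement the analysis by exploiting the vanishing of \emph{all} $(i,j)$-components of $\overline{\Delta}'(\bar a)$---not merely the $(k-1,1)$-component---and by first passing along an isogeny (Lemma~\ref{LemmaIsogeniesFunctorial}) to place the characters $\alpha_{j}$ and their pairwise sums $\alpha_{i}+\alpha_{i+1}$ in sufficiently generic position, so that the different contributions in Lemma~\ref{LemmaCoproductGeneratingFunction} can be separated along a fixed total ordering of characters.
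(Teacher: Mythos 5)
Your plan and the paper's proof take genuinely different routes, and I think yours has a real gap at exactly the point you flag.

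The paper does \emph{not} induct on depth $k$ or work directly with the vanishing of $\overline{\Delta}_{k-1,1}(\bar a)$. Instead it passes to the Lie coalgebra of indecomposables via Milnor--Moore, inducts on \emph{weight} $n$, and — this is the key structural move you are missing — exploits the \emph{full} cobracket $\delta$ rather than the truncated one. Primitivity in $\overline{\Hc}$ means $\overline{\delta}(a)=0$, which translates to $\delta(a)\in\Lc_{\Tup_d,n-1}\wedge\Log(\Tup_d)$: the interesting information lives in the ``$\log(x_i)$'' piece that you quotient out. Writing $\delta(a)=\sum_i a_i\wedge\log(x_i)$, the coJacobi identity forces each $a_i$ to satisfy $\overline{\delta}(a_i)=0$, so by \emph{weight} induction each $a_i$ is depth one; a second application of coJacobi together with Lemma~\ref{LemmaLinearIndependence} then pins down the multiplier $\log(\psi_{\zeta,\chi})$ to $\log(\zeta\chi)$, at which point subtracting a single classical polylogarithm kills $\delta$ and Lemma~\ref{LemmaCohomologyOfHT} finishes. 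This side-steps the whole ``depth $\geq 3$'' difficulty because the inductive hypothesis is applied to weight-$(n-1)$ primitives, not to lower-depth pieces of the same weight. The base case $n=2$ in the paper is handled by yet another tool absent from your plan: residue maps for discrete valuations along the divisors $\{\zeta\chi=1\}$ in $\Tup_d$, which directly read off the coefficients in $\delta(a)$.

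The gap in your approach is the $k\geq 3$ step, and your candidate fixes are unlikely to close it as stated. ``Iterating the coproduct further'' lands you in the iterated truncated symbol $\overline{\Delta}^{[k-1]}$, whose injectivity on $\gr_k^{\Dc}\LL$ is precisely Theorem~\ref{TheoremMain2} --- and Theorem~\ref{TheoremMain2} is proved \emph{using} Proposition~\ref{PropositionWeakDepthConjecture} (see both the $d=2$ base case in \S\ref{SectionProofoftheoremMain} and the conclusion of \S\ref{SectionLemmaMain}). Even setting circularity aside, the nontrivial kernel of the iterated truncated symbol on $\gr_k^{\Dc}\overline{\Hc}$ that you mention is exactly the obstruction; neither ``using all $(i,j)$-components of $\overline{\Delta}'$'' nor ``passing along an isogeny to generic position'' tells you how to separate contributions from products of lower-depth primitives, which is what the kernel consists of. Your telescoping calculation at $k=2$ is plausible (and a nice explicit alternative to the paper's residue argument at $n=2$), but it does not generalize because for $k\geq 3$ the left tensor factor is not itself in $\gr_1$, and you no longer have a clean linear-independence statement to peel off coefficients. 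The missing idea is precisely the one the paper uses: do not discard the $\Lc\wedge\Log$ part of $\delta(a)$ — it carries the data you need — and induct on weight so that coJacobi reduces you to depth-one pieces.
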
 
\begin{proof} Notice that a primitive element in $\overline{\Hc}(\Tup_d)$ is an image of a primitive element in  $\overline{\Hc}_{\Tup'}$ for some torus $\Tup'$ over $\Tup$.
It is sufficient to show that primitive elements in $\overline{\Hc}_{\Tup}$ have depth one because then the same would be true for every torus $\Tup'$ over $\Tup$ and the statement would follow. Next, since the natural projection $P(\overline{\Hc}_{\Tup}) \lra \Ker(\overline{\delta})\subseteq \overline{\Lc}_{\Tup_d}$ is an isomorphism by Milnor-Moore (see Lemma~\ref{LemmaMilnorMoore}), we need to show that any element $a\in \overline{\Lc}_{\Tup_d}$ with $\overline{\delta}(a)=0$ lies in $\Dc_1\overline{\Lc}_{\Tup}$. We may assume that $a$ is homogeneous of weight $n$. 

We argue by induction on $n$.  For $n=1$ the statement is obvious because every element in  $\overline{\Lc}_{\Tup_d,1}$ has depth $1$. Assume that $a$ has weight $2$. Recall that $\overline{\Lc}_{\Tup_d,2}\cong \Lc_{\Tup_d,2}$, and so we can look at the cobracket $\delta(a)\in \Lambda^2\Lc_{\Tup_d,1}$. Since $\overline{\delta}(a)=0$, the cobracket $\delta(a)$ lies in the subspace $\Lc_{\Tup_d,1}\wedge \Log(\Tup_d)$ of $\Lambda^2\Lc_{\Tup_d,1}$ and we can express it as follows:
\begin{equation}\label{FormulaCobracketWeight2}
\delta(a)=\sum_{\substack{\zeta\in \mu_{\infty} \\ \chi\in \Xup_+^{\prim}(\Tup_d)}}
n_{\zeta,\chi}\Li_{1}(\zeta \chi)\wedge \log(\psi_{\zeta,\chi})+\sum_{i<j} c_{ij}\log(x_i)\wedge\log(x_j)
\end{equation}
for some $n_{\zeta,\chi}, c_{ij}\in \Q, \: \psi_{\zeta,\chi}\in \Xup^{\prim}_{+}(\Tup_d)$. 

Recall that for any field $K$, discrete valuation $\nu$, uniformizer $\pi$, and residue field $k$ we have a residue map $\res\colon\Lambda^2 K^{\times}_{\Q}\lra k^{\times}_{\Q}$ such that $\res(u_1\wedge u_2)=0$ and $\res(u_1 \wedge \pi)=\overline{u_1}$ for any units $u_1,u_2\in \mathcal{O}_{K}^{\times}$. It is easy to see that the residue map vanishes on classical dilogarithms: for any $x\in K$ we have
\[
\res\bigl(\delta(\Li_2^{\Lc}(x))\bigr)=\res(x\wedge (1-x))=0.
\] 
Since $\Lc_2(K)$ is spanned by classical dilogarithms \cite[\S 4.4]{CMRR24}, we have the identity $\res(\delta(a))=0$ for any $a\in \Lc_2(K)$.

Consider a field $K=\overline{\Q}(\Tup_d)$ and recall that $\Lc_{\Tup_d,1} \cong K^{\times}_\Q$. Every character $\chi \in \Xup_+^{\prim}(\Tup_d)$ and a root of unity $\zeta$ defines a discrete valuation on $K$ corresponding to the divisor $\{\zeta\chi=1\}$ of $\Tup_d$. Let $\res_{\chi,\zeta}$ be the residue map corresponding to this discrete valuation and a uniformizer $\pi=1-\zeta\chi$.
Elements $1-\zeta \chi$ for $\chi\in \Xup_+^{\prim}(\Tup_d)$ and $\zeta\in \mu_{\infty}$ are irreducible and pairwise non-associated in the ring $\overline{\Q}[\Tup_d]$. Thus for any $\chi'\in \Xup_+^{\prim}(\Tup_d)$ and $\zeta'\in \mu_{\infty}$  we have
\[
\res_{\chi,\zeta}\bigl((1-\chi'\zeta')\wedge \psi\bigr)=
\begin{cases}
\overline{\psi} & \text{if} \   \chi=\chi' \   \text{and}\  \zeta =\zeta',\\
0 & \text{otherwise}.
\end{cases}\]
Moreover, $\res_{\chi,\zeta}(x_i\wedge x_j)=0$ for any $i,j\in \{1,\dots, d\}$.

Applying the map $\res_{\chi,\zeta}$ to (\ref{FormulaCobracketWeight2}) we obtain 
\[
0=\res_{\chi,\zeta}(\delta(a))=n_{\zeta,\chi} \res_{\chi,\zeta}\bigl(-(1-\zeta \chi)\wedge \psi_{\zeta,\chi}\bigr)=-n_{\zeta,\chi}\overline{\psi_{\zeta,\chi}}.
\]
It follows that either $n_{\zeta,\chi}=0$ or the image of
$\psi_{\zeta,\chi}$  vanishes in $\bigl(\overline{\Q}[\Tup_d]/(1-\zeta\chi)\bigr)_{\Q}^{\times}$. In the latter case, there exists $N\in \N$ such that $1-\psi_{\zeta,\chi}^N$ is contained in the ideal $(1-\zeta\chi)$. From here, since $\chi,\psi_{\zeta,\chi}\in \Xup^{\prim}_{+}(\Tup_d)$, it is easy to deduce that $\psi_{\zeta,\chi}$ must be equal to $\chi$.
In either case, 
\begin{equation}\label{FormulaLog}
n_{\zeta,\chi}\log(\psi_{\zeta,\chi})= n_{\zeta,\chi}\log(\chi) =n_{\zeta,\chi}\log(\zeta\chi).
\end{equation}
Using \eqref{FormulaLog}, we can rewrite \eqref{FormulaCobracketWeight2} as
\[
\delta\biggl(a-\sum_{\substack{\zeta\in \mu_{\infty} \\ \chi\in \Xup_+^{\prim}(\Tup)}}
n_{\zeta,\chi}\Li_{2}^{\Lc}(\zeta \chi)\biggr )= \sum_{i<j} c_{ij}\log(x_i)\wedge\log(x_j).
\]
Applying residue maps to the divisors $x_i=0$, we see that $c_{ij}=0$. It follows that 
\[
\delta\biggl(a-\sum_{\substack{\zeta\in \mu_{\infty} \\ \chi\in \Xup_+^{\prim}(\Tup)}}
n_{\zeta,\chi} \Li_{2}^{\Lc}(\zeta \chi)\biggr)=0,
\]
and so $a$ lies in $\Dc_1\Lc_{\Tup_d,2}$  by Lemma \ref{LemmaMilnorMoore} and Lemma \ref{LemmaCohomologyOfHT}.

Now, assume that $a$ has weight $n\geq 3$. Since $\overline{\delta}(a)=0,$ we have 
\[
\delta(a)\in \Lc_{\Tup_d,n-1} \wedge \Log(\Tup_d)\subseteq \Lambda^2 \Lc_{\Tup_d}.
\]
Assume that $\delta(a)=\sum_{i=1}^d a_i\wedge \log(x_i)$
    for $a_i\in \Lc_{\Tup_d,n-1}$. By the coJacobi identity $\overline{\delta}(a_i)=0$, and so, by the induction hypothesis, $a_i\in \Dc_1\Lc_{\Tup_d,n-1}$. So, we have
\[
\delta(a)=\sum_{\substack{\zeta\in \mu_{\infty} \\ \chi\in \Xup_+^{\prim}(\Tup)}}
n_{\zeta,\chi}\Li_{n-1}^{\Lc}(\zeta \chi)\wedge \log(\psi_{\zeta,\chi})
\] 
for some $n_{\zeta,\chi}\in \Q, \: \psi_{\zeta,\chi}\in \Xup^{\prim}_{+}(\Tup_d)$.  By the coJacobi identity,
\[
0=\delta(\delta(a))=\delta \biggl(\sum_{\substack{\zeta\in \mu_{\infty} \\ \chi\in \Xup_+^{\prim}(\Tup)}} \!\!\! 
n_{\zeta,\chi}\Li_{n-1}^{\Lc}(\zeta \chi)\wedge \log(\psi_{\zeta,\chi})\biggr) = \sum_{\substack{\zeta\in \mu_{\infty} \\ \chi\in \Xup_+^{\prim}(\Tup)}} \!\!\! 
n_{\zeta,\chi}\Li_{n-2}^{\Lc}(\zeta \chi)\wedge\log(\zeta \chi)\wedge \log(\psi_{\zeta,\chi}).
\] 
Lemma~\ref{LemmaLinearIndependence} implies that if $n_{\zeta,\chi}\neq 0$ then $\psi_{\zeta,\chi}$ and $\chi$ are  linearly dependent. Since  both $\psi_{\zeta,\chi}$ and $\chi$ lie in $\Xup_+^{\prim}(\Tup_d)$,
we must have $n_{\zeta,\chi} \log(\psi_{\zeta,\chi}) = n_{\zeta,\chi} \log(\zeta\chi)$. It follows that
\[
\delta\biggl(a-\sum_{\substack{\zeta\in \mu_{\infty} \\ \chi\in \Xup_+^{\prim}(\Tup)}}
n_{\zeta,\chi}\Li_{n}^{\Lc}(\zeta \chi)\biggr)=0
\]
and so $a$  lies in $\Dc_1\Lc_{\Tup_d,n}$ by Lemma \ref{LemmaMilnorMoore} and Lemma \ref{LemmaCohomologyOfHT}. This finishes the proof.
\end{proof}

\section{The truncated symbol map} 
In this section, we introduce the $\GL_d(\Q)$-module $\LL_{n}(\Tup_d)$, which was discussed informally in \S \ref{SectionMPandSteinbergModule}. We will study its main properties and construct the truncated symbol map $\STup$ which appears in the statement of Theorem \ref{TheoremMain2}.

\subsection{Module \texorpdfstring{$\LL_{n}(\Tup)$}{L\_n(T\_d)}}\label{SectionModuleLn}
 Let $\Tup$ be an algebraic torus  and consider a $\Q$-vector space $V=\Xup(\Tup)_\Q$. 
 In \S \ref{SectionMPandSteinbergModule} we introduced a $\Q$-vector space $\LL_{n}(\Tup)$ which we now define.

For an isogeny $p\colon \Tup'\lra \Tup$ the Galois group $\Gamma_{\Tup',\Tup}=\Gal\bigl(\overline{\Q}(\Tup')/\overline{\Q}(\Tup)\bigr)$ acts on the Hopf algebra $\Hc\bigl(\overline{\Q}(\Tup')\bigr)$. This action induces an action of $\Gamma_{\Tup',\Tup}$ on the Hopf algebras $\Hc_{\Tup'}$ and  $\overline{\Hc}_{\Tup'}$. Consider the map $p_*\colon \Hc_{\Tup'}\lra \Hc(\Tup)$ defined as a composition of the trace map $\bigl( \sum_{\sigma \in\Gamma_{\Tup',\Tup}} \sigma \bigr)$ from $\Hc_{\Tup'}$ to itself
with the natural map $\Hc_{\Tup'}\lra \Hc(\Tup)$. We call the map $p_*$ \emph{the pushforward along the isogeny $p\colon \Tup'\lra \Tup$}. We also have the induced map $p_*\colon \overline{\Hc}_{\Tup'}\lra \overline{\Hc}(\Tup)$.

\begin{example} \label{ExamplePushforward}
Consider an isogeny of rank two tori $p\colon \Tup'\lra \Tup$ given by $p^{*}(x_1)=y_1^2$, $p^{*}(x_2)=y_2$. Then 
    \[p_{*}(\Li_{1,1}(y_1,y_2))=\Li_{1,1}(\sqrt{x_1},x_2)+\Li_{1,1}(-\sqrt{x_1},x_2).\]
\end{example}

\begin{definition}\label{DefLnDepth} The space $\LL_{n}(\Tup_d)$ is the subspace of $\overline{\Hc}_n(\Tup_d)$ spanned by the images of the functions $\Li_{n_1,n_2,\dots, n_k}(y_1,y_2,\dots,y_k)$ for $k\leq d$ under pushforwards $p_*$ along isogenies $p\colon \Tup'\lra\Tup_d$, where $y_1,\dots,y_d\in \Xup(\Tup')$ is a basis of the character lattice of $\Tup'$.
We define the depth filtration on $\LL_n(\Tup_d)$ by setting $\Dc_k\LL_n(\Tup_d)$ to be the span of all pushforwards of $\Li_{n_1,n_2,\dots, n_{l}}(y_1,y_2,\dots,y_{l})$ for all $n_1+\dots+n_{l}=n$ and $l\leq k$.
\end{definition}

\begin{remark} \label{RemarkDepthFiltration} Recall that in \S \ref{SectionDepthFiltrationOnH} we defined the depth filtration on $\overline{\Hc}_n(\Tup_d)$. It is easy to see that
\begin{equation} \label{FormulaInclusionDepth}
\Dc_k\LL_n(\Tup_d)\subseteq  \LL_n(\Tup_d) \cap \Dc_k\overline{\Hc}_n(\Tup_d).
\end{equation}
Distribution relations (Lemma \ref{LemmaDistributionLi}) imply that for $k=1$ the inclusion \eqref{FormulaInclusionDepth} is an equality. From the proof of Theorem~\ref{TheoremMain2} given in \S\ref{SectionProofoftheoremMain} it follows that inclusion \eqref{FormulaInclusionDepth} is an equality for all $k$.
\end{remark}
Notice that for an isogeny $p\colon\Tup'\lra \Tup$ the space $\LL_{n}(\Tup')$ is a $\bigl(\Gamma_{\Tup',\Tup}\bigr)$-invariant subspace of the filtered colimit
$\displaystyle\varinjlim (\Hc_{\Tup''})^{\Gamma_{\Tup'',\Tup'}}$ over the category $\Cc_{\Tup'}^{\op}$. Thus the space $\LL_{n}(\Tup')$ carries a $\bigl(\Gamma_{\Tup',\Tup}\bigr)$-action. We define a pushforward $p_{*}\colon \LL_{n}(\Tup') \lra \LL_{n}(\Tup)$ by the formula $p_{*}=\sum_{\sigma \in\Gamma_{\Tup',\Tup}} \sigma$; it is clearly functorial. 

The following lemma is a consequence of distribution relations (see Lemma~\ref{LemmaDistributionLi}).
\begin{lemma} \label{LemmaDistribution}
For $N\in \N$ consider an isogeny $p\colon \Tup_d\lra  \Tup_d$ given by the formula
\[
p(x_1,\dots,x_d)=(x_1^N,\dots,x_d^N).
\]
Then we have
\[
p_*\Li_{n_1,n_2,\dots, n_k}(x_1,x_2,\dots,x_k)=
N^{d-n} \Li_{n_1,n_2,\dots, n_k}(x_1,x_2,\dots,x_k).
\]
\end{lemma}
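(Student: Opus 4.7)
The plan is to unravel the definition of the pushforward $p_*$ as a Galois trace and reduce the claim directly to the distribution relation of Lemma~\ref{LemmaDistributionLi}.

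First I identify the Galois group of the extension of function fields induced by $p$. Since $p^*(x_i) = x_i^N$, the extension is $\overline{\Q}(x_1^N,\dots,x_d^N) \subseteq \overline{\Q}(x_1,\dots,x_d)$, with Galois group $\Gamma = \mu_N^d$ acting via $\sigma_{\zeta_1,\dots,\zeta_d}(x_i) = \zeta_i x_i$. By the definition of the pushforward as trace $\sum_{\sigma \in \Gamma} \sigma$ followed by the map to the colimit, I obtain
\begin{equation*}
p_* \Li_{n_1,\dots,n_k}(x_1,\dots,x_k) = \sum_{\zeta_1,\dots,\zeta_d \in \mu_N} \Li_{n_1,\dots,n_k}(\zeta_1 x_1, \dots, \zeta_k x_k).
\end{equation*}

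Since the polylogarithm on the right does not involve $x_{k+1},\dots,x_d$, the sums over $\zeta_{k+1},\dots,\zeta_d$ contribute only a multiplicative factor $N^{d-k}$. Applying Lemma~\ref{LemmaDistributionLi} with $m=N$ to the remaining inner sum gives
\begin{equation*}
\sum_{\zeta_1^N=\dots=\zeta_k^N=1} \Li_{n_1,\dots,n_k}(\zeta_1 x_1,\dots,\zeta_k x_k) = N^{k-n}\,\Li_{n_1,\dots,n_k}(x_1^N,\dots,x_k^N),
\end{equation*}
so the two factors combine to $N^{d-n}\,\Li_{n_1,\dots,n_k}(x_1^N,\dots,x_k^N)$.

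Finally, I identify this element in the colimit $\overline{\Hc}(\Tup_d)$: since $\Li_{n_1,\dots,n_k}(x_1^N,\dots,x_k^N) = p^*\Li_{n_1,\dots,n_k}(x_1,\dots,x_k)$, the general property of the colimit diagram $\Cc_{\Tup_d}^{\op}$ ensures that an element of $\Hc_{\Tup_d}$ (target) and its pullback along any isogeny represent the same element of $\Hc(\Tup_d)$. Hence the right-hand side equals $N^{d-n}\,\Li_{n_1,\dots,n_k}(x_1,\dots,x_k)$ in $\overline{\Hc}(\Tup_d)$, as desired. There is no substantive obstacle here; the only care required is the coherent identification of source and target coordinates inside the colimit. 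The lemma is essentially a direct translation of the classical distribution relation into the pushforward language, with the $N^{d-k}$ factor accounting for the trivial Galois action on the unused coordinates.
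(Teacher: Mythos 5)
Your proof is correct and fills in exactly the argument the paper alludes to: the paper gives no proof for this lemma, merely noting it is ``a consequence of distribution relations (see Lemma~\ref{LemmaDistributionLi}).'' You unwind the pushforward as a $\mu_N^d$-trace, factor out the $N^{d-k}$ contribution from the inactive coordinates, apply the distribution relation to the remaining inner sum, and identify $\Li_{n_1,\dots,n_k}(x_1^N,\dots,x_k^N)$ with $\Li_{n_1,\dots,n_k}(x_1,\dots,x_k)$ in the colimit via the structure map -- all steps are sound and this is the intended reasoning.
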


For a matrix $A\in \Mup_{d}(\Z)$ with $\det(A)\neq0$ consider an isogeny $p_A \colon \Tup_d\lra \Tup_d$ defined by the formula
\[
p_A(x_1,\dots,x_d)=\left(\prod_{i=1}^d x_i^{a_{i1}},\dots,\prod_{i=1}^d x_i^{a_{id}} \right).
\] 
For $x\in \LL_{n}(\Tup_d)$ we put 
\[
A\cdot x=\bigl((p_{\det(A)I_d})_{*}\bigr)^{-1}(p_{\adj(A)})_*(x)
\]
where
 $\adj(A)$ is the adjugate matrix of $A$, defined by $\adj(A)=\det(A)A^{-1}$ (when $\det(A)\ne0$). This defines a left action of the monoid  $\{A\in \Mup_{d}(\Z) \setsep \det(A)\neq 0\}$ on  $\LL_{n}(\Tup_d)$.
 
\begin{lemma} \label{LemmaGroupActionExtension}
The action above can be uniquely extended to an action of the group 
$\GL_d(\Q)$ on $\LL_{n}(\Tup_d).$
\end{lemma}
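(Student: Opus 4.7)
The plan is to exhibit $\GL_d(\Q)$ as a localization of the monoid $M_d(\Z)^\ast \coloneqq \{A \in M_d(\Z) \setsep \det(A) \neq 0\}$ at the central submonoid $\Z_{>0} \cdot I_d$, and to check that these scalar matrices act invertibly on each weight piece $\LL_n(\Tup_d)$. Every $A \in \GL_d(\Q)$ admits a presentation $A = N^{-1} B$ with $N \in \Z_{>0}$ and $B \in M_d(\Z)^\ast$ (clear denominators), so any group action extending the given monoid action is forced to satisfy $A \cdot x = (N I_d)^{-1} \cdot (B \cdot x)$; since $M_d(\Z)^\ast$ generates $\GL_d(\Q)$ as a group, this also yields uniqueness of the extension.

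The key computation is that for any $M \in \Z_{>0}$ the pushforward $(p_{M I_d})_*$ acts on $\LL_n(\Tup_d)$ as multiplication by $M^{d-n}$. To see this, note that an arbitrary element of $\LL_n(\Tup_d)$ has the form $q_* \Li_{n_1,\dots,n_k}(y_1,\dots,y_k)$ for some isogeny $q \colon \Tup' \to \Tup_d$. Since the $M$-th power endomorphism $[M]$ is central in the monoid of isogenies of any rank-$d$ torus, $p_{M I_d} \circ q = q \circ [M]$, and Lemma \ref{LemmaDistribution} applied to $[M]$ on $\Tup'$ gives $(p_{M I_d})_* (q_* \Li) = q_* ([M]_* \Li) = M^{d-n}\, q_* \Li$. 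Substituting into the defining formula $N I_d \cdot x = (p_{N^d I_d})_*^{-1}(p_{N^{d-1} I_d})_*(x)$ then yields $N I_d \cdot x = N^{n-d} x$, which is in particular invertible.

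Now define $A \cdot x \coloneqq N^{d-n}(B \cdot x)$ for $x \in \LL_n(\Tup_d)$ and $A = N^{-1} B$. To check independence of presentation, suppose also $A = (N')^{-1} B'$, so $N' B = N B'$ in $M_d(\Z)^\ast$. Associativity of the monoid action combined with the scalar formula above gives $(N')^{n-d}(B \cdot x) = (N' I_d) \cdot (B \cdot x) = (N' B) \cdot x = (N B') \cdot x = (N I_d) \cdot (B' \cdot x) = N^{n-d}(B' \cdot x)$, and rescaling by $(NN')^{d-n}$ produces $N^{d-n}(B \cdot x) = (N')^{d-n}(B' \cdot x)$, as required. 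The extension restricts to the original monoid action on $M_d(\Z)^\ast$ by taking $N = 1$, and the group axiom $(A_1 A_2) \cdot x = A_1 \cdot (A_2 \cdot x)$ follows by writing $A_i = N_i^{-1} B_i$, using centrality of scalar matrices to get $A_1 A_2 = (N_1 N_2)^{-1}(B_1 B_2)$, and invoking associativity in $M_d(\Z)^\ast$.

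The only technical input beyond this formal localization argument is the scalar-action computation in the second paragraph, which itself reduces to functoriality of pushforward along isogenies and Lemma \ref{LemmaDistribution}; I expect no substantive obstacle.
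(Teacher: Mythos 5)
Your argument is correct and follows essentially the same route as the paper: localize the monoid $\Mup_d(\Z)^{\ast}$ at the central scalar matrices, observe that scalars act invertibly (the paper invokes Lemma~\ref{LemmaDistribution} abstractly, while you carry out the explicit computation $N I_d \cdot x = N^{n-d}x$ via functoriality of pushforward), and verify well-definedness and the group axiom using centrality. The extra explicitness is a harmless elaboration of the same proof.
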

\begin{proof}
By Lemma~\ref{LemmaDistribution}, positive scalar matrices in $\Mup_d(\Z)$ act on $\LL_{n}(\Tup)$ invertibly. Denote by $\rho_A$ the action of $A\in \Mup_{d}(\Z)$ with $\det(A)\ne0$. For $N\in\Z_{>0}$ we also denote by $\rho_N=\rho_{NI_d}$, where $I_d$ is the $d\times d$ identity matrix. 

Note that any element $g\in\GL_d(\Q)$ can be written as $g=N^{-1}A$ for $N\in\Z_{>0}$ and $A\in\Mup_{d}(\Z)$ with $\det(A)\ne0$. Then we define the action of $g=N^{-1}A\in\GL_d(\Q)$ as $\rho_{N}^{-1}\rho^{\vphantom{-1}}_A(x)$. The action is well-defined since if $g=N^{-1}A=M^{-1}B$ for different $N,M\in\Z_{>0}$, then 
\[\rho_{M}^{-1}\,\rho^{\vphantom{-1}}_B = \rho_{NM}^{-1}\,\rho^{\vphantom{-1}}_{NB} = \rho_{MN}^{-1}\,\rho^{\vphantom{-1}}_{MA}=\rho_{N}^{-1}\,\rho^{\vphantom{-1}}_A
\]
(here we used that positive scalar matrices are in the center of $\Mup_d(\Z)$). That this is a group action is also evident.
\end{proof}
It is not hard to calculate that the action of $A\in \Mup_d(\Z)$ with $\lvert\det(A)\rvert=N\ne 0$ on $\Li_{n_1,\dots,n_k}$ is 
\begin{equation}  \label{FotmulaExplicitPushforward}
(A\cdot \Li_{n_1,\dots,n_k})(x_1,\dots,x_d) = N^{n-d-1}\sum_{\substack{y_1^{N}=x_1 \\ \hspace{\widthof{$y_1^{N}$}} \vdotsB \hspace{\widthof{$x_d$}} \\ y_d^{N}=x_d}}
   \Li_{n_1,\dots,n_k}\left(\prod_{i=1}^d y_i^{a_{i1}},\dots,\prod_{i=1}^d y_i^{a_{ik}} \right).
\end{equation}

\begin{remark}\label{RemarkCharacterPsy}  By (\ref{FotmulaExplicitPushforward}), the group $\GL_d(\Q)$ acts on $\LL_0(\Tup_d)\cong \Q$ by the character $\psi(A)= \lvert\det(A)\rvert^{-1}$.
\end{remark}

\begin{example}\label{exampleDepth1dim1} Let us describe $\LL_n(\Tup_1)$ as a $\GL_1(\Q)$-module for $n\geq 1$. Lemma \ref{LemmaDistributionLi} and the inversion relation (\ref{LemmaInversionDepthOne}) imply that this space is one-dimensional, spanned by  $\Li_n(x_1).$ The action of the $1\times1$ matrix $ \begin{pmatrix} q \end{pmatrix} $ is defined by
\[
\begin{pmatrix} q \end{pmatrix} \cdot \Li_n(x_1)= q^{n-1}\Li_n(x_1).
\]
\end{example}

\subsection{The cobar complex for \texorpdfstring{$\LL(\Tup_d)$}{L\_n(T\_d)}} \label{SectionCobarComplexLTd}
The graded $\Q$-vector space $\LL(\Tup_d)$ is a subspace of $\overline{\Hc}(\Tup_d)$, but is neither a subalgebra nor a subcoalgebra. 
In this section we define a subcomplex $\Omega^{\bullet}\LL_{n}(\Tup_d)$ of the cobar complex $\Omega^{\bullet}\overline{\Hc}(\Tup_d)$, which is a substitute for the cobar complex of $\LL(\Tup_d)$.

We start with a computation. Consider the isogeny $p\colon \Tup'\lra \Tup$ as in Example~\ref{ExamplePushforward}. Then 
    \begin{align*}
    \overline{\Delta}' p_{*}(\Li_{1,1}(y_1,y_2))={}& 
      \Li_{1}(x_2)\otimes \Li_{1}(\sqrt{x_1})
     +\Li_{1}(x_2)\otimes \Li_{1}(-\sqrt{x_1})\\
    &+\Li_{1}(x_2\sqrt{x_1})\otimes \Li_{1}(x_2)
     {}+\Li_{1}(-x_2\sqrt{x_1})\otimes \Li_{1}(x_2)\\
    &-\Li_{1}(x_2\sqrt{x_1})\otimes \Li_{1}(\sqrt{x_1}^{\,-1})
     {}-\Li_{1}(-x_2\sqrt{x_1})\otimes \Li_{1}(-\sqrt{x_1}^{\,-1})\\
    ={}& \Li_{1}(x_2)\otimes \Li_{1}(x_1)
      +\Li_{1}(x_1x_2^2)\otimes \Li_{1}(x_2)
      -p_{*}\bigl(\Li_{1}(y_1y_2)\otimes\Li_1(y_1^{-1})\bigr)\\
    ={}& p_{*}\bigl(\Li_{1}(y_2)\otimes \Li_{1}(y_1)\bigr)
      +p_{*}\bigl(\Li_{1}(y_1y_2)\otimes \Li_{1}(y_2)\bigr)
      -p_{*}\bigl(\Li_{1}(y_1y_2)\otimes\Li_1(y_1^{-1})\bigr).
    \end{align*}
Notice that in the last expression each tensor product defines a direct product decomposition of the torus~$\Tup'$, corresponding to a choice of a basis for its character lattice: $(y_2,y_1)$, $(y_1y_2,y_2)$, or $(y_1y_2,y_1^{-1})$. To formulate a general statement about the coproduct of elements in $\LL_n( \Tup_d)$, we need to discuss a certain extension of the pushforward map $p_*$. 

For any isogeny $p\colon \Tup'\to \Tup$ the pushforward maps $p_{*}$ extend to maps $p_{*}\colon\Omega^{m}\Hc_{\Tup'}\to\Omega^{m}\Hc(\Tup)$ for all $m\geq1$. Since the space $\varinjlim(\Omega^m\Hc_{\Tup''})^{\Gamma_{\Tup'',\Tup'}}$  (the filtered colimit taken over $\Cc_{\Tup'}^{\op}$) also carries a $\bigl(\Gamma_{\Tup',\Tup}\bigr)$-action, pushforward extends to a map 
\[
p_{*}\colon \varinjlim\left((\Omega^m\Hc_{\Tup''})^{\Gamma_{\Tup'',\Tup'}} \right) \lra \Omega^{m}\Hc(\Tup).
\]
This map induces a map on the quotient 
\[
p_{*}\colon \varinjlim\left((\Omega^m\overline{\Hc}_{\Tup''})^{\Gamma_{\Tup'',\Tup'}} \right) \lra \Omega^{m}\overline{\Hc}(\Tup).
\]

\begin{lemma}  \label{LemmaCoproductDecomposition}
For $d\geq 1, k\geq 1, n\geq 1$
\begin{equation}
\overline{\Delta}' \Dc_k\LL_n( \Tup_d) \, \subseteq \sum_{\substack{p\colon \Tup_{d_1} \! {\times} \Tup_{d_2} \! \to \Tup_d  \\n_1+n_2 =n\\k_1+k_2 = k}}p_*\bigl(\Dc_{k_1}\LL_{n_1}(\Tup_{d_1})\otimes \Dc_{k_2}\LL_{n_2}(\Tup_{d_2})\bigr ) \, \subseteq \, \Omega^2\overline{\Hc}(\Tup_d)
\end{equation}
where the summation goes over all isogenies $p\colon \Tup_{d_1}\times \Tup_{d_2}\to \Tup_d$ and $d_1, d_2, n_1, n_2, k_1, k_2\geq 1.$
\end{lemma}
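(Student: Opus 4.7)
\medskip

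The plan is to reduce the claim to a direct computation of $\overline{\Delta}'$ on a standard generator of $\Dc_k\LL_n(\Tup_d)$ and then to recognize each resulting tensor summand as the pushforward of a simple tensor from a product of tori. By linearity, it suffices to treat a generator $a = p_*(b)$ with $b = \Li_{n_1,\dots,n_\ell}(y_1,\dots,y_\ell)$, where $p\colon \Tup' \to \Tup_d$ is an isogeny of $d$-dimensional tori, $y_1,\dots,y_d$ is a basis of $\Xup(\Tup')$, $\ell \leq k$, and $n_1+\cdots+n_\ell = n$. Since the Galois group $\Gamma_{\Tup',\Tup_d}$ acts on $\overline{\Hc}_{\Tup'}$ by Hopf algebra automorphisms, each $\sigma$ satisfies $\overline{\Delta}'(\sigma c) = (\sigma\otimes\sigma)\,\overline{\Delta}'(c)$, so summing over $\sigma\in\Gamma_{\Tup',\Tup_d}$ yields
\[
\overline{\Delta}' a \,=\, p_*\bigl(\overline{\Delta}' b\bigr),
\]
where $p_*$ on the right is applied diagonally on $\Omega^2\overline{\Hc}_{\Tup'} \to \Omega^2\overline{\Hc}(\Tup_d)$.

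Next, compute $\overline{\Delta}' b$ by combining the definition~\eqref{LinDef} of $\Li$ with the iterated integral coproduct formula~\eqref{Icoproduct}, simplifying divergent pieces modulo logarithms by means of Lemma~\ref{LemmaDivergentLis}. The outcome is a sum of simple tensors
\[
\Li_{\mathbf{n}'}(z_1',\dots,z_{k_1}') \otimes \Li_{\mathbf{n}''}(z_1'',\dots,z_{k_2}''),
\]
where each $z_i', z_j''$ is a nontrivial Laurent monomial in $y_1,\dots,y_\ell$, $|\mathbf{n}'|+|\mathbf{n}''|=n$, $k_1+k_2 \leq \ell \leq k$, and the combined arguments $z_1',\dots,z_{k_1}',z_1'',\dots,z_{k_2}''$ are linearly independent in $\Xup(\Tup')_\Q$. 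For each such term, choose any direct sum decomposition $\Xup(\Tup')_\Q = W_1 \oplus W_2$ with $\langle z_1',\dots,z_{k_1}' \rangle_\Q \subseteq W_1$ and $\langle z_1'',\dots,z_{k_2}'' \rangle_\Q \subseteq W_2$, and let $\Tup_{d_i}$ be the torus with character lattice $\Xup(\Tup') \cap W_i$. The decomposition produces an isogeny $q\colon \Tup_{d_1} \times \Tup_{d_2} \to \Tup'$, hence an isogeny $p\circ q\colon \Tup_{d_1} \times \Tup_{d_2} \to \Tup_d$ of the required form. Under this identification, $\Li_{\mathbf{n}'}(z_1',\dots,z_{k_1}')$ lies in $\Dc_{k_1}\LL_{n_1}(\Tup_{d_1})$ (using the distribution relation of Lemma~\ref{LemmaDistribution} to clear any denominators introduced by passing from characters to basis vectors of $W_1$), and similarly for the second factor. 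Applying the diagonal pushforward of $p\circ q$ to the simple tensor produces an element of the right-hand side of the claimed inclusion.

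The main obstacle is the linear-independence assertion for the arguments $z_1',\dots,z_{k_1}',z_1'',\dots,z_{k_2}''$ appearing in every term of $\overline{\Delta}' b$. This is evident from Lemma~\ref{LemmaCoproductGeneratingFunction} for the extremal piece $\overline{\Delta}_{\ell-1,1}$, where the only monomials that appear are either the $y_i$ themselves, the products $y_i y_{i+1}$ merged at a single position, or $y_1 y_2\cdots y_{i}$ which remain basis elements once paired with the depth-$1$ factor. For arbitrary $\overline{\Delta}_{k_1,k_2}$ the same property can be deduced either by iterating the depth-$1$-stripping identity via coassociativity, or by a direct combinatorial analysis of the nested subsequences arising in~\eqref{Icoproduct}: after applying Lemma~\ref{LemmaDivergentLis} to collapse each maximal run of $0$'s, the remaining monomial arguments are exactly the differences of consecutive non-zero entries $y_1 y_2\cdots y_j$, which are easily checked to be linearly independent for any admissible choice of subsequence.
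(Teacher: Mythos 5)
Your plan matches the paper's proof at the structural level: reduce to a standard generator (using that $p_*$ commutes with the reduced coproduct, since the Galois action is by Hopf algebra automorphisms), compute $\overline{\Delta}'$ via the iterated-integral coproduct \eqref{Icoproduct} and Lemma~\ref{LemmaDivergentLis}, and then argue that each surviving term lands in a product decomposition of the torus. The gap is precisely the step you flag as the ``main obstacle'': neither route you propose actually establishes the required decomposition. The suggestion to ``iterate the depth-$1$-stripping identity via coassociativity'' does not obviously recover $\overline{\Delta}_{k_1,k_2}$ for $k_2>1$ from knowledge of $\overline{\Delta}_{\ell-1,1}$: coassociativity determines the iterated coproduct $\overline{\Delta}^{[m]}$ from $\overline{\Delta}'$, not conversely, and passing from the iterated coproduct back to $\overline{\Delta}_{k_1,k_2}$ would require controlling how the stripped depth-$1$ factors reassemble under quasi-shuffle, which is precisely what is at issue.

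The direct combinatorial analysis is the right route, and it is exactly what the paper carries out, but it is substantive and not a matter of ``easily checked'' independence. Two points your sketch glosses over: (i) the right tensor factor of each term of $\Delta\bigl(\I(z_0;z_1,\dots,z_n;z_{n+1})\bigr)$ is a \emph{product} $\prod_p \I(z_{i_p};\dots;z_{i_{p+1}})$ of iterated integrals, not a single polylogarithm; to place it in $\LL(\Tup_{d_2})$ one needs path decomposition, reversal, and quasi-shuffle, together with a verification that the normalized arguments arising across \emph{all} factors of the product fit inside a common torus of dimension exactly $k_2$; (ii) the conclusion requires more than linear independence of the arguments: one must also show that the characters of $\Tup_{d_1}$ and $\Tup_{d_2}$ together span $\Xup(\Tup_k)_\Q$ and that the counts satisfy $k_1+k_2=k$, so that $\Tup_{d_1}\times\Tup_{d_2}\to\Tup_k$ is an isogeny. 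The paper accomplishes this by observing that every nonzero ratio $z_j/z_{n+1}$ with $1\le j\le n$ factors as a product of characters of the two putative subtori (distinguishing the cases $j=i_p$ and $i_p<j<i_{p+1}$), and by directly counting the nonzero entries to obtain $k_1+k_2=k$. You should carry out this analysis explicitly rather than assert it.
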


\begin{proof}
It suffices to verify the claim for the generators $\Li_{n_1,\dots,n_k}(x_1,\dots,x_k)$, and we may assume that $d=k$. 
Consider an iterated integral $\I(z_0;z_1,\dots,z_n;z_{n+1})$ with each $z_i$ either equal to $0$ or to a monomial of the form $x_1\cdots x_{j_i}$ with all $j_i$ distinct. If $z_0=z_{n+1}=0$, then the integral vanishes. If $z_0=0$, $z_{n+1}\ne0$, then $\I(z_0;z_1,\dots,z_n;z_{n+1})\in \Dc_k\LL_n(\Tup)$ where $\Tup$ is a torus whose character lattice contains $z_j/z_{n+1}$ for nonzero $z_j$, where $k$ is the number of such nonzero $z_j$, $1\leq j\leq n$. Similarly, if $z_{n+1}=0$, $z_0\ne0$, then same is true if the character lattice contains $z_{j}/z_0$ for nonzero $z_j$. In general, from the path decomposition formula
    \[\I(z_0;z_1,\dots,z_n;z_{n+1}) = 
    \sum_{i=0}^{n}\I(z_0;z_1,\dots,z_{i};0)
                  \I(0;z_{i+1},\dots,z_{n};z_{n+1}),\]
for $z_0z_{n+1}\ne 0$ we have $\I(z_0;z_1,\dots,z_n;z_{n+1})\in \sum_{i=0}^{n}\Dc_k\LL(\Tup_i)$ where $\Tup_i$ is any subtorus whose character lattice contains $z_0/z_j$, $j=1,\dots,i$ and $z_{n+1}/z_j$, $j=i+1,\dots,n$, for nonzero $z_j$. Now consider any nonzero term in
    \begin{align*}
    &\Delta(\I(z_0;z_1,\dots,z_n;z_{n+1}))=\\
&\sum_{\substack{0=i_0<i_1<\cdots \\ \cdots<i_r<i_{r+1}=n+1}}\I(z_{i_0};z_{i_1},\dots,z_{i_r};z_{i_{r+1}})\otimes \prod_{p=0}^r\I(z_{i_p};z_{i_{p}+1},\dots,z_{i_{p+1}-1};z_{i_{p+1}})
    ,\end{align*}
where $z_0=0$, $z_1=1$, and $(z_2,\dots,z_{n+1})=(\{0\}^{n_1-1},x_1,\{0\}^{n_2-1},x_1x_2,\dots,\{0\}^{n_k-1},x_1\dots x_k)$ (here $\{0\}^{m}$ is a shorthand for $0,0,\dots,0$, repeated $m$ times), as in~\eqref{LinDef}. Since the term is nonzero, for all $p$ we have either $z_{i_p}\ne 0$ or $z_{i_{p+1}}\ne 0$. Let $k_1$ be the number of nonzero $z_{i_j}$, $j=1,\dots,r$, and $k_2$ is the total number of nonzero $z_j$ with $j\ne i_l$, $l=0,\dots,r+1$ (so that $k=k_1+k_2$). After applying path decomposition formula to each expression on the right hand side we get a sum of terms from $\LL(\Tup')\otimes \LL(\Tup'')$ with $\dim(\Tup')=k_1$, $\dim(\Tup'')=k_2$.
Consider any nonzero $z_j$, $1\leq j\leq n$. If $j=i_p$, then $z_j/z_{n+1}$ is a character on $\Tup''$. If $i_p<j<i_{p+1}$ then either $z_j/z_{n+1}=z_j/z_{i_p}\cdot z_{i_p}/z_{n+1}$ or $z_j/z_{n+1}=z_j/z_{i_{p+1}}\cdot z_{i_{p+1}}/z_{n+1}$, so in either case $z_j/z_{n+1}$ is a product of characters on $\Tup'$ and $\Tup''$. Since $z_j/z_{n+1}$ span the character lattice of $\Tup_k$, $\Tup'$ and $\Tup''$ are its subtori, and $k_1+k_2=k$, we see that $\Tup_k$ decomposes as $\Tup'\times \Tup''$, proving the claim.
\end{proof}

\begin{definition} For $m\geq 1$ define
\[
\Omega^m\LL(\Tup_d)=\sum_{p\colon \Tup_{d_1}\!{\times} \dots {\times}  \Tup_{d_m} \! \to \Tup_d }p_*\bigl(\LL_+(\Tup_{d_1})\otimes\dots \otimes \LL_+(\Tup_{d_m})\bigr)  \subseteq \Omega^m\overline{\Hc}(\Tup_d)
\]
where the summation goes over all isogenies $p\colon \Tup_{d_1}\times \dots \times  \Tup_{d_m}\lra \Tup_d$ with $d_1,\dots,d_m\geq 1.$  The depth filtration on  $\LL(\Tup_d)$ induces the depth filtration on $\Omega^m\LL(\Tup_d).$ 
\end{definition}

By Lemma \ref{LemmaCoproductDecomposition}, we obtain a subcomplex $\Omega^\bullet\LL(\Tup_d)$ of the cobar complex of $\overline{\Hc}(\Tup_d):$
\[
\begin{tikzcd}
\Omega^1\LL(\Tup_d) \arrow[d, hook] \arrow[r] & \Omega^2\LL(\Tup_d) \arrow[d, hook] \arrow[r] & \cdots \arrow[r]& \Omega^d\LL(\Tup_d) \arrow[d, hook]\\
\Omega^1\overline{\Hc}(\Tup_d) \arrow[r] & \Omega^2\overline{\Hc}(\Tup_d) \arrow[r] & \cdots \arrow[r]& \Omega^d\overline{\Hc}(\Tup_d).
\end{tikzcd}
\]

\begin{remark} \label{RemarkCyclicVector} Notice that as a $\GL_d(\Q)$-module the space 
$
\gr_k^{\Dc}\Omega^m\LL(\Tup_d)
$
is generated by elements
\[
\Li_{n_1^{(1)},\dots,n_{k_1}^{(1)}}(x_1^{(1)},\dots,x_{k_1}^{(1)})\otimes  \Li_{n_1^{(2)},\dots,n_{k_2}^{(2)}}(x_{k_1}^{(2)},\dots,x_{k_2}^{(2)}) \otimes \dots \otimes \Li_{n_{1}^{(m)},\dots,n_{k_m}^{(m)}}(x_{1}^{(m)},\dots,x_{k_m}^{(m)}) 
\]
for all decompositions $k=k_1+\dots+k_m$ and $n_i^{(j)}\geq 1$.
\end{remark}

We will also need the following lemma.

\begin{lemma} \label{LemmaProductDecomposition}
For $d\geq 1, k\geq 1, n\geq 1$ we have
\begin{equation}
\Dc_k\LL_n( \Tup_d)\cap \bigl(\overline{\Hc}_+(\Tup_d)\bigr)^2 = \sum_{\substack{p\colon \Tup_{d_1}\!{\times} \Tup_{d_2}\!\to \Tup_d  \\n_1+n_2=n\\k_1+k_2= k}}p_*\bigl(\Dc_{k_1}\LL_{n_1}(\Tup_{d_1})\cdot \Dc_{k_2}\LL_{n_2}(\Tup_{d_2})\bigr )
\end{equation}
where the summation goes over all isogenies $p\colon \Tup_{d_1}\times \Tup_{d_2}\to \Tup_d$ and $d_1, d_2, n_1, n_2, k_1, k_2\geq 1.$
\end{lemma}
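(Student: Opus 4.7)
The plan is to mirror the argument of Lemma \ref{LemmaHopfAlgebras1} by invoking the Dynkin operator $\Dee=m\circ(S\otimes Y)\circ\overline{\Delta}$ on the Hopf algebra $\overline{\Hc}(\Tup_d)$, whose kernel is $(\overline{\Hc}_+(\Tup_d))^2$, and combine it with the just-proved Lemma \ref{LemmaCoproductDecomposition}. The inclusion $\supseteq$ should be straightforward: reducing to generators $a=\Li_{\vec{n}_1}(\vec{y}_1)\in\Dc_{k_1}\LL_{n_1}(\Tup_{d_1})$ and $b=\Li_{\vec{n}_2}(\vec{y}_2)\in\Dc_{k_2}\LL_{n_2}(\Tup_{d_2})$, the stuffle (quasi-shuffle) product expresses $a\cdot b$ as a $\Q$-linear combination of $\Li_{\vec{l}}$-functions on $\Tup_{d_1}\times\Tup_{d_2}$ of weight $n_1+n_2$ and depth at most $k_1+k_2$; applying $p_*$ then places the result in $\Dc_k\LL_n(\Tup_d)\cap(\overline{\Hc}_+(\Tup_d))^2$, since $p_*$ is a sum of algebra automorphisms and therefore carries $(\overline{\Hc}_+)^2$ into itself.

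For the inclusion $\subseteq$, I would take $a\in\Dc_k\LL_n(\Tup_d)\cap(\overline{\Hc}_+(\Tup_d))^2$ of weight $n$. Since $\Dee(a)=0$,
\[
na=-m\circ(S\otimes Y)\circ\overline{\Delta}'(a).
\]
Lemma \ref{LemmaCoproductDecomposition} writes $\overline{\Delta}'(a)$ as a sum of terms $p_*(x\otimes y)$ with $x\in\Dc_i\LL_{n_1}(\Tup_{d_1})$, $y\in\Dc_j\LL_{n_2}(\Tup_{d_2})$, $i+j=k$, $n_1+n_2=n$, $i,j\ge 1$, indexed by isogenies $p\colon\Tup_{d_1}\times\Tup_{d_2}\to\Tup_d$. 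Since the Galois action commutes with $S$, $Y$, and $m$, so does the pushforward $p_*$, and we find
\[
m\circ(S\otimes Y)\circ p_*(x\otimes y)=n_2\cdot p_*\bigl(S(x)\cdot y\bigr).
\]

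The hard part will be an auxiliary closure claim: $S(\Dc_i\LL_{n'}(\Tup'))\subseteq\Dc_i\LL_{n'}(\Tup')$ for all $i\geq 1$, weights $n'$, and tori $\Tup'$. I would establish it by induction on $i$. The base case $i=1$ follows because classical polylogarithms are primitive in $\overline{\Hc}$ by Proposition \ref{PropositionWeakDepthConjecture}, so $S$ acts as $-\mathrm{id}$ on $\Dc_1\LL_{n'}(\Tup')$. For $i\geq 2$, using the recursive identity $S(x)=-x-m\circ(S\otimes\mathrm{id})\circ\overline{\Delta}'(x)$ together with Lemma \ref{LemmaCoproductDecomposition} shows that $S(x)$ differs from $-x$ by a sum of pushforwards $p'_*(S(x')\cdot y')$ with $x'$, $y'$ of strictly smaller depth; the inductive hypothesis places $S(x')$ in $\Dc_{i'}\LL$, and the stuffle expansion of $S(x')\cdot y'$ reassembles each such product as a combination of $\Li_{\vec l}$'s of depth at most $i$, which pushes forward into $\Dc_i\LL_{n'}(\Tup')$. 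The main technical obstacle here is the careful bookkeeping that stuffle expansions preserve (and do not increase) total depth. With the closure claim in hand, $p_*(S(x)\cdot y)$ lies in $p_*\bigl(\Dc_i\LL_{n_1}(\Tup_{d_1})\cdot\Dc_j\LL_{n_2}(\Tup_{d_2})\bigr)$, placing $na$ — and hence $a$ — in the claimed right-hand side.
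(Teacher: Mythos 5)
Your argument follows the paper's overall strategy: establish $\supseteq$ by quasi-shuffle, then establish $\subseteq$ via the Dynkin operator $\Dee = m\circ(S\otimes Y)\circ\overline{\Delta}$, whose kernel is $(\overline{\Hc}_+)^2$, combined with Lemma \ref{LemmaCoproductDecomposition}. The computation $m\circ(S\otimes Y)\circ p_*(x\otimes y)=n_2\cdot p_*\bigl(S(x)\cdot y\bigr)$ (using that $p_*$ commutes with $S$, $Y$, $m$) is right. So at this structural level you and the paper coincide.

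Where you genuinely add something: you isolate and prove the auxiliary claim $S\bigl(\Dc_i\LL_{n'}(\Tup')\bigr)\subseteq\Dc_i\LL_{n'}(\Tup')$, which is what makes the step ``hence $m\circ(S\otimes Y)\circ\overline{\Delta}'(a)$ lies in the right-hand side'' legitimate. The paper states this step as an immediate consequence of Lemma \ref{LemmaCoproductDecomposition}, but unlike the analogous statement for the coradical filtration in Lemma \ref{LemmaHopfAlgebras1} (where $S(N_i)\subseteq N_i$ is a standard filtered-Hopf-algebra fact), antipode-stability of the $\Q$-subspaces $\Dc_i\LL_{n'}$ is not a general formality: $\LL_{n'}(\Tup')$ is not a sub-Hopf-algebra, and Remark \ref{RemarkDepthFiltration} (equating $\Dc_k\LL_n$ with $\LL_n\cap\Dc_k\overline{\Hc}$) is itself deferred to the proof of Theorem \ref{TheoremMain2}, so it cannot be invoked here without circularity. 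Your induction on $i$ is the right fix: the base case uses primitivity of $\Dc_1$ (so $S=-\mathrm{id}$ there), and the inductive step uses the recursion $S(x)=-x-m\circ(S\otimes\Id)\circ\overline{\Delta}'(x)$ together with Lemma \ref{LemmaCoproductDecomposition} to reduce to strictly smaller depth, then closes up via quasi-shuffle exactly as in the $\supseteq$ direction. This is a correct and worthwhile clarification of an unstated step in the paper's argument.
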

\begin{proof}
The (RHS) is is generated by elements 
\begin{equation} \label{FormulaProductPolylogs}
\Li_{n_1,\dots,n_{k_1}}(x_1,\dots,x_{k_1})\cdot \Li_{n_{k_1+1},\dots,n_{k_1+k_2}}(x_{k_1+1},\dots,x_{k_1+k_2})
\end{equation}
as a $\GL_d(\Q)$-module. The quasi-shuffle relation for multiple polylogarithms (see \cite[\S 4.2]{CMRR24}) implies that (\ref{FormulaProductPolylogs}) lies in $\Dc_k\LL_n( \Tup_d)$. Thus $\text{(LHS)}\supseteq \text{(RHS)}$. 

Consider the Hopf algebra $\overline{\Hc}(\Tup_d).$ Let $m\colon \overline{\Hc}(\Tup_d)\otimes \overline{\Hc}(\Tup_d)\lra \overline{\Hc}(\Tup_d)$ be the product, $S\colon \overline{\Hc}(\Tup_d)\lra \overline{\Hc}(\Tup_d)$ be the antipode. Also, let $Y\colon \overline{\Hc}(\Tup_d)\lra \overline{\Hc}(\Tup_d)$ be the grading operator sending $x\in\overline{\Hc}_n(\Tup_d)$ to $nx$. Consider the Dynkin operator $\Dee \colon  \overline{\Hc}(\Tup_d)\lra  \overline{\Hc}(\Tup_d)$ given by the formula $\Dee=m\circ (S\otimes Y) \circ \overline{\Delta}$, see \cite[\S 2.2]{CDG21}. We have $\Ker(\Dee)= \bigl(\overline{\Hc}_+(\Tup_d)\bigr)^2$.

Consider an element $x\in \text{(LHS)}.$ We have 
\[
0=\Dee(x)=m\circ (S\otimes Y) \circ\overline{\Delta}(x)=nx+m\circ (S\otimes Y) \circ\overline{\Delta}'(x).
\]
Lemma \ref{LemmaCoproductDecomposition} implies that for $x\in \Dc_k\LL_n( \Tup_d)$ we have
\[
m\circ (S\otimes Y) \circ\overline{\Delta}'(x)\in  \text{(RHS)}.
\]
So, $x=-\frac{1}{n}m\circ (S\otimes Y) \circ\overline{\Delta}'(x)\in  \text{(RHS)}$. Thus $\text{(LHS)}\subseteq \text{(RHS)}$ and the statement follows.
\end{proof}

\subsection{Truncated symbol map} \label{SectionTruncatedSymbolMap}
In this section, we define the \emph{truncated symbol map} $\STup$, which extends the one appearing in the statement of Theorem \ref{TheoremMain2}. 

We start with a useful convention. Consider a $\VB$-module $M$ and a $\Q$-vector space $V$ with a basis $e_1,\dots,e_d$. For $k\in \{0,\dots,d\}$ consider its subspace $V_k=\langle e_1,\dots, e_k \rangle\subseteq V$ and the parabolic subgroup $\Pup_{k,d-k}=\{g\in \GL_d(\Q) \setsep g(V_k)\subseteq V_k\}$. Denote by $\psi_{V/V_k}\colon \Pup_{k,d-k} \lra \Q^{\times}$ the character sending a matrix
$
\bigl(\begin{smallmatrix}
A_{11} & A_{12}\\
0 & A_{22}
\end{smallmatrix}\bigr) \in \Pup_{k,d-k}
$
to $\lvert\det(A_{22})\rvert^{-1}$; similar character appeared in Remark \ref{RemarkCharacterPsy}. The group $\Pup_{k,d-k}$ acts on the space $M(V_k)$ via the homomorphism $\Pup_{k,d-k}\lra \GL(V_k)$, so we can consider the induction of $M(V_k)\otimes \psi_{V/V_k}$ from the parabolic subgroup $\Pup_{k,d-k}$ to  $\GL_d(\Q)$. We will use the following notation: 
\[
\bigoplus_{\substack{W\subseteq V \\ \dim(W)=k}}M(W)\coloneqq  \Ind_{\Pup_{k,d-k}}^{\GL_d}\bigl( M(V_k)\otimes \psi_{V/V_k}\bigr).
\]

In \S \ref{SectionKoszulitySteinberg}, we introduced a $\VB$-module $\StH$; this module is Koszul by Theorem \ref{TheoremSteinbergKoszulity}. The $\VB$-module ${\StH}\otimes \Sbb$ sending $V$ to $\StH(V)\otimes \Sbb^{\bullet}(V)$  is Koszul by Lemma \ref{LemmaTensorS}. In particular, for a $\Q$-vector space $V$ of dimension $d$ we have an exact complex
\[
0\lra \StH(V)\otimes \Sbb^{\bullet}(V) \stackrel{s}{\lra} \Bup_{d}\bigl( {\St} \otimes \Sbb\bigr)(V) \lra \Bup_{d-1}\bigl( {\St} \otimes \Sbb\bigr)(V)\lra \cdots .
\]
This implies exactness of the complex 
\[
0\lra\bigoplus_{\substack{W\subseteq V \\ \dim(W)=k}} \StH(W)\otimes \Sbb^{\bullet}(W) \stackrel{s}{\lra} \bigoplus_{\substack{W\subseteq V \\ \dim(W)=k}}\Bup_{k}\bigl( {\St} \otimes \Sbb\bigr)(W) \lra \bigoplus_{\substack{W\subseteq V \\ \dim(W)=k}}\Bup_{k-1}\bigl( {\St} \otimes \Sbb\bigr)(W)\lra \cdots .
\]

\begin{proposition}\label{PropositionTruncatedSymbolMap} For $k\in \{0,\dots,d\}$ there exists a unique $\GL_d(\Q)$-equivariant map 
\begin{align}\label{FormulaTruncatedSymbolMap}
\STup\colon \gr_{k}^\Dc \bigl( \LL(\Tup_d) \bigr)\lra  \bigoplus_{\substack{W\subseteq V \\ \dim(W)=k}}\StH(W)\otimes \Sbb^{\bullet}(W).
\end{align}
such that 
\begin{equation}\label{FormulaTruncatedCoproductLi}
\STup\Bigl(\Li_{n_1,\dots, n_k}(x_1,\dots,x_k)\Bigr)=\Lup[e_1,\dots,e_k]\otimes \prod_{i=1}^k \frac{e_i^{n_i-1}}{(n_i-1)!}.
\end{equation}
Moreover, the map $\STup$ is surjective. We call this map the \emph{truncated symbol map}.
\end{proposition}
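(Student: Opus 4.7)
The strategy is to construct $\STup$ as the composition $\STup^k \circ \overline{\Delta}^{[k-1]}$, where $\overline{\Delta}^{[k-1]}$ is the iterated reduced truncated coproduct and $\STup^k$ is the map on tensor powers of classical polylogarithms described in \S\ref{IntroductionMultiplePolylogarithms}. By~\eqref{FormulaDepthFiltrationBarGraded}, the iterated reduced coproduct descends to
\[
\overline{\Delta}^{[k-1]}\colon \gr_k^{\Dc}\overline{\Hc}(\Tup_d) \lra \bigl(\gr_1^{\Dc}\overline{\Hc}(\Tup_d)\bigr)^{\otimes k},
\]
and by Lemma~\ref{DepthOneOnColimit} the space $\gr_1^{\Dc}\overline{\Hc}(\Tup_d)$ is generated by classical polylogarithms $\Li_n(\zeta x^v)$ modulo distribution relations. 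A direct check---using that the distribution weight $m^{1-n}$ is exactly balanced by the scaling $\lvert\det(mv,v_2,\ldots,v_k)\rvert = m\cdot \lvert\det(v,v_2,\ldots,v_k)\rvert$ together with $(mv)^{n-1}/(n-1)! = m^{n-1}\cdot v^{n-1}/(n-1)!$---shows that the assignment
\[
\bigotimes_{i=1}^k \Li_{n_i}(\zeta_i x^{v_i}) \longmapsto \lvert\det(v_1,\ldots,v_k)\rvert\cdot [v_1|\cdots|v_k]\otimes \prod_{i=1}^k \frac{v_i^{n_i-1}}{(n_i-1)!}
\]
respects these relations, defining a $\GL_d(\Q)$-equivariant map $\STup^k\colon (\gr_1^{\Dc}\overline{\Hc}(\Tup_d))^{\otimes k} \to \Bup_k\St(V)\otimes \Sbb^{\bullet}V$. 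Composing yields a candidate $\widetilde{\STup}\colon \gr_k^{\Dc}\overline{\Hc}(\Tup_d) \to \Bup_k\St(V) \otimes \Sbb^\bullet V$.

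To identify $\widetilde{\STup}$ on the generator $\Li_{n_1,\ldots,n_k}(x_1,\ldots,x_k)$, the plan is to use Lemma~\ref{LemmaCoproductGeneratingFunction} together with induction on $k$. Applying $\STup^k$ term by term to the formula for $\overline{\Delta}_{k-1,1}\Li(x_1,\ldots,x_k;t_1,\ldots,t_k)$ and invoking the inductive hypothesis on the depth-$(k-1)$ factors, one arrives at
\[
\widetilde\STup\bigl(\Li(x_1,\ldots,x_k;t_1,\ldots,t_k)\bigr) = s\bigl(\Lup[e_1,\ldots,e_k]\bigr)\otimes e^{t_1 e_1+\cdots+t_k e_k},
\]
which matches term by term with the recursive expansion of $s(\Lup[v_1,\ldots,v_k])$ in Lemma~\ref{LemmaLCoproductComponent} under the substitution $v_i = e_i$. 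Extracting the coefficient of $t_1^{n_1-1}\cdots t_k^{n_k-1}$ gives~\eqref{FormulaTruncatedCoproductLi}, and the image lies in $\St^{\Hc}(V)\otimes \Sbb^\bullet V$ via the Koszulity embedding $s$ of Theorem~\ref{TheoremSteinbergKoszulity}. Restrict $\widetilde\STup$ to $\gr_k^{\Dc}\LL(\Tup_d)$: this subspace is generated as a $\GL_d(\Q)$-module by $\Li_{n_1,\ldots,n_k}(x_1,\ldots,x_k)$, and pushforward along any isogeny $p_A$ sends this element to a combination of polylogarithms whose exponent vectors span a single $k$-dimensional subspace $W \subseteq V$. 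Hence $\STup \coloneqq \widetilde\STup|_{\gr_k^{\Dc}\LL}$ lands in the direct sum $\bigoplus_{\dim W = k}\St^{\Hc}(W)\otimes \Sbb^\bullet W$, as required.

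Uniqueness is immediate from the above generation property, since any two $\GL_d(\Q)$-equivariant maps agreeing on $\Li_{n_1,\ldots,n_k}(x_1,\ldots,x_k)$ must coincide. For surjectivity, the image contains $s(\Lup[e_1,\ldots,e_k])\otimes \prod e_i^{n_i-1}/(n_i-1)!$ for every tuple of weights; by the action of $\GL(V_k) \subset \Pup_{k,d-k}$ one obtains all Steinberg polylogarithms $\Lup[v_1,\ldots,v_k]$ for bases of $V_k = \langle e_1,\ldots,e_k\rangle$ tensored with all monomials in $\Sbb^\bullet V_k$, and by Proposition~\ref{TheoremGenericPairsGenerate} (generic Coxeter pairs generate $\St^\Hc(V_k)$) these span $\St^{\Hc}(V_k)\otimes \Sbb^\bullet V_k$; acting by $\GL_d(\Q)$ to translate $V_k$ onto any other $k$-dimensional subspace $W$ then yields surjection onto the full induced target. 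The principal obstacle is the inductive computation in the second paragraph: one must verify that, after applying $\STup^k$, the term $-\Li(\cdots,\widehat{t_i},\cdots)\otimes \Li(x_i;t_i-t_{i+1})$ from Lemma~\ref{LemmaCoproductGeneratingFunction} (with its negative shift in the $t$-variables) combines with its partner to produce exactly the $([e_{i+1}]-[e_i])$-factor appearing in Lemma~\ref{LemmaLCoproductComponent}, so that the recursive identification is exact.
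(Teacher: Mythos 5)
Your proposal follows essentially the same route as the paper: your map $\STup^k$ is precisely the section $\sigma$ that the paper constructs in Lemma~\ref{LemmaInjectivityAlpha} (the paper phrases the construction via the embedding $\alpha$ and its left inverse, while you build the forward map directly, but the two are the same object), and your ``recursive identification'' between Lemma~\ref{LemmaCoproductGeneratingFunction} and Lemma~\ref{LemmaLCoproductComponent} is exactly the content of Lemma~\ref{LemmaSymbol}, while uniqueness and surjectivity are handled identically. The one thing to tighten for $k<d$: the tensor $\Li_{n_1}(\zeta_1 x^{v_1})\otimes\cdots\otimes\Li_{n_k}(\zeta_k x^{v_k})$ should be sent to an element of $\Bup_k(\St\otimes\Sbb)(W)$ for $W=\langle v_1,\dots,v_k\rangle$ (the target is the induced module $\bigoplus_{\dim W=k}\Bup_k(\St\otimes\Sbb)(W)$, not $\Bup_k\St(V)\otimes\Sbb^\bullet V$, since for $k<d$ the lines $\langle v_i\rangle$ do not decompose $V$), and the normalizing factor $\lvert\det(v_1,\dots,v_k)\rvert$ is undefined when $k<d$; you need the covolume $\lvert\omega_k(v_1,\dots,v_k)\rvert$ with respect to the lattice $\Z^d\cap W$, as in the paper's $\Sigma_{n_1,\dots,n_k}$, which is what makes the distribution-relation check go through in general rank.
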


Here are the key steps of the construction of the map $\STup$. Consider the iterated coproduct map
\[
\overline{\Delta}^{[k-1]}\colon \overline{\Hc}_+(\Tup_d) \lra \bigl(\overline{\Hc}_+(\Tup_d)\bigr)^{\otimes k}.
\]
Lemma~\ref{LemmaCoproductDecomposition} implies that the map $\overline{\Delta}^{[k-1]}$ vanishes on $\Dc_{k-1}\LL(\Tup_d)$ and lands in $\bigl(\Dc_1\overline{\Hc}_+(\Tup_d)\bigr)^{\otimes k}.$ Notice that the latter space is isomorphic to  $\bigl(\gr_{1}^\Dc \overline{\Hc}(\Tup_d)\bigr)^{\otimes k}$. In \S \ref{SectionALPHA}, we construct a map
\begin{equation}\label{FormulaMapAlpha}
\alpha\colon \bigoplus_{\substack{W\subseteq V \\ \dim(W)=k}} \Bup_{k}\bigl( {\St} \otimes \Sbb\bigr)(W) \longrightarrow  \bigl(\gr_{1}^\Dc \overline{\Hc}(\Tup_d)\bigr)^{\otimes k}
\end{equation}
and show its injectivity. It appears that the image of the map $\overline{\Delta}^{[k-1]}$ is contained in the image of $\alpha$.
Furthermore, the image of the map $\overline{\Delta}^{[k-1]}$ can be computed; in \S \ref{SectionProofPropositionTSM} we show that it coincides with the subspace
\begin{equation}\label{FormulaSubspaceStH}
\bigoplus_{\substack{W\subseteq V \\ \dim(W)=k}}\StH(W)\otimes \Sbb^{\bullet}(W) \subseteq \bigoplus_{\substack{W\subseteq V \\ \dim(W)=k}} \Bup_{k}\bigl( {\St} \otimes \Sbb\bigr)(W).
\end{equation}
From here we deduce both the existence of the map $\STup$ and its surjectivity.

\subsection{The embedding \texorpdfstring{$\alpha$}{alpha}} \label{SectionALPHA}
Our goal is to define a $\GL_d(\Q)$-equivariant map  (\ref{FormulaMapAlpha})
and prove its injectivity. Since 
\begin{equation} \label{FormulaInducedRep}
\bigoplus_{\substack{W\subseteq V \\ \dim(W)=k}} \Bup_{k}\bigl( {\St} \otimes \Sbb\bigr)(W)\coloneqq  \Ind_{\Pup_{k,d-k}}^{\GL_d}\bigl(\Bup_{k}\bigl( {\St} \otimes \Sbb\bigr)(V_k)\otimes \psi_{V/V_k}\bigr)
\end{equation}
is an induced representation, we can use the adjunction between induction and restriction functors. So, it is sufficient to construct a map
\[
\alpha_{V_k} \colon \Bup_{k}\bigl( {\St} \otimes \Sbb\bigr)(V_k)\otimes \psi_{V/V_k} \lra \bigl(\gr_{1}^\Dc \overline{\Hc}(\Tup_d)\bigr)^{\otimes k}
\]
such that $\alpha_{V_k}$  commutes with the action of the parabolic group $\Pup_{k,d-k}$. We define the map  $\alpha_{V_k}$ by an explicit formula:
\[
\alpha_{V_k}\left([w_1|\cdots|w_k]\otimes \frac{w_1^{n_1-1}\cdots w_k^{n_k-1}}{(n_1-1)!\cdots (n_k-1)!} \otimes 1\right)
= A\cdot \bigl(\Li_{n_1}(x_1)\otimes\dots\otimes \Li_{n_k}(x_k)\bigr),
\]
where $A=[w_1,\dots,w_k,e_{k+1},\dots,e_d]\in \GL_d(\Q)$. It is easy to see that this map  commutes with the action of the parabolic group $\Pup_{k,d-k}$, and so defines the map $\alpha$ via the adjunction. 

For convenience, we give an explicit description of the map $\alpha$. Let $w_1,\dots,w_k$ be a basis of $W$ consisting of integral vectors. We have
\begin{equation} \label{AlphakExplicit}
\begin{split}
\alpha\biggl([w_1|\cdots|w_k]\otimes {} & \frac{w_1^{n_1-1}\cdots w_k^{n_k-1}}{(n_1-1)!\cdots (n_k-1)!}\biggr) = \\ & N^{n-d-1}\sum_{\substack{y_1^{N}=x_1 \\ \hspace{\widthof{$y_1^{N}$}} \vdotsB \hspace{\widthof{$x_d$}} \\ y_d^{N}=x_d}} \Li_{n_1}\Bigl(\prod_{i=1}^d y_i^{(w_{1})_i}\Bigr)\otimes \dots \otimes \Li_{n_k}\Bigl(\prod_{i=1}^d y_i^{(w_k)_{i}}\Bigr),
\end{split}
\end{equation}
where $N$ is the index of the lattice $\Lambda\coloneqq \langle w_1,\dots,w_k\rangle_{\Z}$ in $W\cap\Z^d$. Indeed, by construction of $\alpha_k$,~\eqref{AlphakExplicit} holds when $w_i=e_i$. Since $\Z^d/W\cap \Z^d$ is torsion-free, and $\Lambda$ has index $N$ in $W\cap\Z^d$, we can find a matrix $A\in \Mup_d(\Z)$ such that $\lvert\det(A)\rvert=N$, and $A(e_i)=w_i$. Since $\alpha_k$ is $\GL_d(\Q)$-equivariant, we get~\eqref{AlphakExplicit} by acting with $A$.

\begin{lemma}\label{LemmaInjectivityAlpha}The map $\alpha$ is injective. 
\end{lemma}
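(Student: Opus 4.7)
The plan is to prove injectivity of $\alpha$ by equipping the target with a canonical bigrading by rational lines and weights, and showing that a natural basis of the source maps to nonzero elements of pairwise distinct graded pieces.

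First, I would establish a direct-sum decomposition
\[
\gr_1^\Dc \overline{\Hc}(\Tup_d) \;=\; \bigoplus_{L \subseteq V} \bigoplus_{n \ge 1} \gr_1^\Dc \overline{\Hc}(\Tup_d)[L]_n,
\]
indexed by rational lines $L \subseteq V$ and weights $n$, where the $[L]_n$-piece is spanned by classical polylogarithms $\Li_n(\zeta\chi)$ with $\zeta \in \mu_\infty$ and $\chi$ a nonzero rational character on $L$. This is well-defined because each distribution relator $d_{n,m}$ in Lemma~\ref{DepthOneOnColimit} preserves both the rational line of the character and the weight. Applying Lemma~\ref{LemmaLinearIndependence} in any cover $\Tup' \to \Tup_d$ whose character lattice contains a primitive generator of $L$ shows that every such piece is nonzero.

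Next, I would fix the basis of the source consisting of elements
\[
[e_{L_1}|\cdots|e_{L_k}] \otimes \frac{e_{L_1}^{n_1-1}}{(n_1-1)!} \cdots \frac{e_{L_k}^{n_k-1}}{(n_k-1)!},
\]
indexed by ordered $k$-tuples $(L_1,\dots,L_k)$ of distinct lines in $V$ (with $W = L_1 \oplus \cdots \oplus L_k$), primitive integer generators $e_{L_j} \in L_j \cap \Z^d$, and weights $n_j \ge 1$. Formula~\eqref{AlphakExplicit} shows that the image of such a basis element lies entirely in the summand $\bigotimes_{j=1}^k \gr_1^\Dc \overline{\Hc}(\Tup_d)[L_j]_{n_j}$, since the $j$-th tensor factor is a sum of $\Li_{n_j}$ applied to characters which, viewed in $\Xup(\Tup_d)_\Q$, lie on the line $L_j$. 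Because distinct basis elements correspond to distinct labels $(L_1, n_1; \dots; L_k, n_k)$ and land in disjoint graded summands, injectivity reduces to showing that the image on each summand is nonzero.

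Finally, for fixed labels, let $N$ be the index of $\langle e_{L_1}, \dots, e_{L_k}\rangle$ in $W \cap \Z^d$ and let $p\colon \Tup' \to \Tup_d$ be the isogeny $y_i^N = x_i$. Inside $(\gr_1^\Dc \overline{\Hc}_{\Tup'})^{\otimes k}$ the image of our basis element is a positive rational multiple of
\[
\sum_{\zeta \in (\mu_N)^d} \Li_{n_1}\bigl(\zeta^{e_{L_1}} \chi_1\bigr) \otimes \cdots \otimes \Li_{n_k}\bigl(\zeta^{e_{L_k}} \chi_k\bigr),
\]
where $\chi_j$ is the primitive integer character of $\Tup'$ corresponding to $e_{L_j}$. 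Regrouping by the fibers of the homomorphism $\varphi\colon (\mu_N)^d \to (\mu_N)^k$, $\zeta \mapsto (\zeta^{e_{L_1}}, \dots, \zeta^{e_{L_k}})$, rewrites this sum as $\lvert\ker\varphi\rvert$ times $\sum_{(\eta_1,\dots,\eta_k) \in \im \varphi}\Li_{n_1}(\eta_1 \chi_1) \otimes \cdots \otimes \Li_{n_k}(\eta_k \chi_k)$, and Lemma~\ref{LemmaLinearIndependence} in $\Tup'$ guarantees that the tensor products appearing are linearly independent, so the sum is nonzero. The main obstacle is verifying that this nonvanishing persists after mapping to the colimit $\gr_1^\Dc \overline{\Hc}(\Tup_d)$; this is handled by the fact that the bigrading above is preserved under further covers, so the only relations encountered are those already built into the line-and-weight pieces, and within any fixed piece the classes of $\Li_n(\zeta \chi)$ produced by Lemma~\ref{LemmaLinearIndependence} remain linearly independent throughout the colimit.
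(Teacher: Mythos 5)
Your approach differs from the paper's: the paper builds an explicit left inverse $\sigma$ of $\alpha$ by defining a map $\Sigma_{n_1,\dots,n_k}$ on the presentation from Lemma~\ref{DepthOneOnColimit} (the map reads off $|\omega_k(v_1,\dots,v_k)|\cdot[v_1|\cdots|v_k]\otimes\prod v_j^{n_j-1}/(n_j-1)!$, kills the distribution relators, and satisfies $\sigma\circ\alpha=\Id$ by a direct power-of-$N$ computation), whereas you introduce a bigrading of the target by $k$-tuples of lines and weights and reduce the question to nonvanishing within each graded piece. Both arguments ultimately rest on the same two inputs (the presentation of Lemma~\ref{DepthOneOnColimit} and the linear-independence statement of Lemma~\ref{LemmaLinearIndependence}), so in that sense they are closely related; but the paper's version is cleaner because the identity $\sigma\circ\alpha=\Id$ is an identity on the \emph{source}, which is a genuine direct sum, so one never needs to reason about how elements behave across the colimit in the target.

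The one place where your argument is incomplete is the final paragraph. After reducing to the element $\sum_{\eta\in\operatorname{im}\varphi}\Li_{n_1}(\eta_1\chi_1)\otimes\cdots\otimes\Li_{n_k}(\eta_k\chi_k)$, you observe that Lemma~\ref{LemmaLinearIndependence}, applied in $\Tup'$, gives linear independence of the factors in $\overline{\Lc}_{\Tup',n_j}$ -- a statement at a fixed finite level. But $\alpha$ lands in the colimit $\gr_1^\Dc\overline{\Hc}(\Tup_d)$, and the characters $\chi_j=e_{L_j}/N$ are \emph{not} primitive integer vectors in $\Z^d$, so the conclusion is not a direct instance of the normal form established in the paper; it additionally requires that the transition maps $\gr_1^\Dc\overline{\Hc}_{\Tup'}\to\gr_1^\Dc\overline{\Hc}_{\Tup''}$ are injective (equivalently, that primitive lex-positive normal forms at each finite level remain linearly independent in the colimit). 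This is true, and can be verified because the distribution relator $d_{n,m}$ sends a primitive normal form at level $\Tup'$ to a \emph{positive} linear combination of $m$ distinct primitive normal forms at a deeper level $\Tup''$, so distinct basis elements map to disjoint supported sets; but as written, the phrase ``the only relations encountered are those already built into the line-and-weight pieces'' asserts this rather than proves it. Fleshing out this step essentially reconstructs the paper's section: the linear functional that detects your element is exactly $\sigma$. So I would classify this as a correct outline with a gap in the last step, which, once filled, converges on the argument the paper gives.

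One small point worth stating explicitly in your write-up: in order for the image of a basis element to land in a \emph{single} bigraded summand you need (as you in fact observe via \eqref{AlphakExplicit}) that the $j$-th tensor factor of $\alpha([w_1|\cdots|w_k]\otimes\cdots)$ always has argument a root of unity times a character whose image in $\Xup(\Tup_d)_\Q$ lies on $L_j$; this is what makes the labels $(L_1,n_1;\dots;L_k,n_k)$ well-defined and is the structural reason the bigrading is useful here.
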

\begin{proof}
To prove that $\alpha$ is injective, we construct its section using the presentation for $\gr_{1}^\Dc\overline{\Hc}(\Tup_d)$ proven in Lemma \ref{DepthOneOnColimit}. 
We need to define a map
\begin{equation}\label{FormulaMapSigma}
\sigma_{n_1,\dots, n_k}\colon \gr_{1}^\Dc\overline{\Hc}_{n_1}(\Tup_d) \otimes \dots \otimes \gr_{1}^\Dc\overline{\Hc}_{n_k}(\Tup_d) \lra \bigoplus_{\substack{W\subseteq V \\ \dim(W)=k}} \!\!  \Bup_{k}\bigl({\St} \otimes \Sbb\bigr)(W)
\end{equation}
for any decomposition $n=n_1+\dots+n_k,$ $n_i\geq 1$
such that $\sigma \circ \alpha = \Id$, where $\sigma = \sum\sigma_{n_1,\dots, n_k}$.

First, we define a map
\begin{equation*}
\Sigma_{n_1,\dots,n_k}\colon \underbrace{\Q\bigl[\mu_{\infty}\times \overset{\circB}{\Xup}(\Tup_d)_{\Q}\bigr]\otimes \dots \otimes\Q\big[\mu_{\infty}\times \overset{\circB}{\Xup}(\Tup_d)_{\Q}\bigr]}_k  \lra \bigoplus_{\substack{W\subseteq V \\ \dim(W)=k}} \!\!  \Bup_{k}\bigl( {\St} \otimes \Sbb\bigr)(W)
\end{equation*}
by the formula
\[
\Sigma_{n_1,\dots,n_k}\Bigl( [\zeta_1 ,v_1]\otimes \dots\otimes [\zeta_k, v_k]\Bigr) = |\omega_k(v_1,\dots,v_k)| \cdot [v_1|\cdots|v_k]\otimes \frac{v_1^{n_1-1}\cdots v_k^{n_k-1}}{(n_1-1)!\cdots (n_k-1)!},
\]
where $\omega_k$ is any of the two volume forms on the subspace $W=\langle v_1,\dots,v_k\rangle$ for which the lattice $\Z^d\cap W$ has covolume $1$. For any $i\in\{1,\dots,k\}$ and $m\in \Z\sm\{0\}$ we have 
\[
\Sigma_{n_1,\dots,n_k}\circ \bigl( \underbrace{\Id\otimes \dots \otimes\Id}_{i-1} \otimes\, d_{n,m} \otimes \underbrace{\Id\otimes \dots\otimes\Id}_{k-i}\bigr)=0,
\]
because
\begin{align*}
&\Sigma_{n_1,\dots,n_k}\Bigl( [\zeta_1 ,v_1]\otimes \dots \otimes [\zeta_i^m ,mv_i]\otimes \dots \otimes [\zeta_k, v_k]\Bigr)\\
={}&|\omega_k(v_1,\dots,m v_i,\dots,v_k)| \cdot [v_1|\cdots|mv_i|\cdots|v_k] \otimes \frac{v_1^{n_1-1} \cdots (mv_i)^{n_i-1} \cdots v_k^{n_k-1}}{(n_1-1)!\cdots(n_i-1)!\cdots (n_k-1)!}\\
={}&|m|\cdot m^{n_i-1} \cdot |\omega_k(v_1,\dots,v_k)| \cdot [v_1|\cdots|v_k]\otimes \frac{v_1^{n_1-1}\cdots v_k^{n_k-1}}{(n_1-1)!\cdots (n_k-1)!} \\
={}&m^{n_i-1}\sum_{\nu^m=1}\Sigma_{n_1,\dots,n_k}\Bigl( [\zeta_1 ,v_1]\otimes \dots \otimes [\nu\zeta_i,v_i]\otimes \dots  \otimes [\zeta_k, v_k]\Bigr).
\end{align*}
Lemma \ref{DepthOneOnColimit} implies that the map $\Sigma_{n_1,\dots,n_k}$ induces the map (\ref{FormulaMapSigma}). The identity $\sigma\circ\alpha=\Id$ follows from the explicit formula~\eqref{AlphakExplicit}, upon noting that by homogeneity it suffices to verify it for integral vectors $w_i$, that $N=|\omega_k(w_1,\dots,w_k)|$ is the index of $\langle w_1,\dots,w_k\rangle_{\Z}$ in $\langle w_1,\dots,w_k\rangle_{\Q}\cap\Z^d$, and that $N^{1-k}=|\omega_k(w_1/N,\dots,w_k/N)|$.
\end{proof}

\subsection{Proof of Proposition \ref{PropositionTruncatedSymbolMap}} \label{SectionProofPropositionTSM}
The case $k=0$ is trivial by Remark~\ref{RemarkCharacterPsy}, so we will assume that $k\geq1$ for the rest of the proof. 
The space $\gr_{k}^\Dc \bigl( \LL_n(\Tup_d) \bigr)$ is generated by $\Li_{n_1,\dots, n_k}(x_1,\dots,x_k)$ as a $\GL_d(\Q)$-module, so uniqueness is obvious. It remains to prove the existence and surjectivity. In the previous section we constructed an embedding $\alpha$:
\[
\begin{tikzcd}[row sep=large]
\gr_{k}^\Dc \LL(\Tup) \arrow[r, "\overline{\Delta}^{[k-1]}"] 
& \bigl(\gr_{1}^\Dc\overline{\Hc}(\Tup_d)\bigr)^{\otimes k}\\
&  \mathllap{ \displaystyle\bigoplus_{{\substack{W\subseteq V \\ \dim(W)=k}}}} \!\! \!\! \Bup_{k}\bigl( {\St} \otimes \Sbb\bigr)(W) . \arrow[hook, u, "\alpha"]
\end{tikzcd}
\]
The embedding of the space $\StH(W)\otimes \Sbb^{\bullet}(W)$ into $\Bup_{k}\bigl({\St}\otimes \Sbb\bigr)(W)$ induces an embedding (\ref{FormulaSubspaceStH}). The following lemma shows that the image of the map $\overline{\Delta}^{[k-1]}$ is contained in the image of the map $\alpha \circ s$.

\begin{lemma}\label{LemmaSymbol} 
For $n_1,\dots,n_k\geq 1$ the following equality holds:
\[
\overline{\Delta}^{[k-1]}\bigl(\Li_{n_1,\dots, n_k}(x_1,\dots,x_k)\bigr)=(\alpha \circ s)\left(\Lup[e_1,\dots,e_k]\otimes \prod_{i=1}^k \frac{e_i^{n_i-1}}{(n_i-1)!}\right).
\] 
\end{lemma}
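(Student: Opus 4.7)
The plan is to prove the identity by induction on $k$, after packaging both sides as generating functions in $t_1, \dots, t_k$. To make the induction close cleanly, I strengthen the claim to arbitrary primitive integral bases: for any linearly independent $w_1, \dots, w_k \in \Z^d$ whose integer span is primitive (i.e.\ equals its $\Q$-span intersected with $\Z^d$), one has
\[
\alpha\bigl(s(\Lup[w_1, \dots, w_k]) \otimes e^{t_1 w_1 + \dots + t_k w_k}\bigr) \,=\, \overline{\Delta}^{[k-1]} \Li(x^{w_1}, \dots, x^{w_k}; t_1, \dots, t_k),
\]
where $x^w = \prod_i x_i^{(w)_i}$. The special case $w_j = e_j$ is Lemma~\ref{LemmaSymbol} in generating-function form. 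The base case $k = 1$ follows from the explicit formula $\alpha([w] \otimes e^{tw}) = \sum_{n\geq 1} t^{n-1} \Li_n(x^w) = \Li(x^w; t)$ combined with $\overline{\Delta}^{[0]} = \Id$.

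For the induction step, two ingredients combine. Since the image of $\overline{\Delta}^{[k-1]}$ lies in $\bigl(\gr_1^\Dc \overline{\Hc}(\Tup_d)\bigr)^{\otimes k}$, only the $(k-1, 1)$-component of the first $\overline{\Delta}'$ contributes, so effectively $\overline{\Delta}^{[k-1]} = (\overline{\Delta}^{[k-2]} \otimes \Id) \circ \overline{\Delta}_{k-1,1}$, and Lemma~\ref{LemmaCoproductGeneratingFunction} evaluates $\overline{\Delta}_{k-1,1} \Li(x_1, \dots, x_k; t_1, \dots, t_k)$ as a sum of three types of terms, each (lower-weight $\Li$) tensored with (classical $\Li$ of depth one). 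On the Steinberg side, Lemma~\ref{LemmaLCoproductComponent} expands $s(\Lup[w_1, \dots, w_k])$ as a matching sum of terms of the form $s(\Lup[\text{$(k-1)$ vectors}]) \otimes [\text{single vector}]$, each associated to an explicit direct sum decomposition $V = W' \oplus \langle v \rangle$.

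The crux is to split the exponential $e^{\sum_j t_j w_j} \in \Sbb^\bullet V$ between $\Sbb^\bullet W'$ and $\Sbb^\bullet \langle v \rangle$ via the canonical isomorphism, for each such decomposition. For instance, for $V = \langle w_1, \dots, w_i + w_{i+1}, \dots, w_k \rangle \oplus \langle w_{i+1} \rangle$, the identity $w_i = (w_i + w_{i+1}) - w_{i+1}$ yields
\[
\textstyle\sum_j t_j w_j \,=\, \bigl(\sum_{j \neq i, i+1} t_j w_j + t_i (w_i + w_{i+1})\bigr) + (t_{i+1} - t_i) w_{i+1},
\]
and analogously for the other middle decomposition, where the complementary line is $\langle w_i \rangle$ and the shifted parameter is $t_i - t_{i+1}$. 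Applying the induction hypothesis to the first factor (whose basis $w_1, \dots, w_i + w_{i+1}, \dots, w_k$ remains a primitive integral basis, with $x^{w_i + w_{i+1}} = x^{w_i} x^{w_{i+1}}$) and the $k = 1$ case to the second factor reproduces precisely the three types of terms from Lemma~\ref{LemmaCoproductGeneratingFunction}.

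The main obstacle is this careful bookkeeping: aligning the signs from Lemma~\ref{LemmaLCoproductComponent} with those of Lemma~\ref{LemmaCoproductGeneratingFunction}, verifying that the parameter shifts in the exponential produce the arguments $\Li(x_{i+1}; t_{i+1} - t_i)$ and $\Li(x_i; t_i - t_{i+1})$ on the nose, and checking that the primitivity hypothesis is preserved under the recursion so that no covolume factors appear spuriously when invoking the strengthened induction hypothesis.
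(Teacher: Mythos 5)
Your proof is correct and takes essentially the same route as the paper: pass to generating functions in $t_1,\dots,t_k$, induct on $k$, and match the three term types of Lemma~\ref{LemmaCoproductGeneratingFunction} against Lemma~\ref{LemmaLCoproductComponent} via the exponential splitting identity $\exp(e_it_i+e_{i+1}t_{i+1}) = \exp\bigl((e_i+e_{i+1})t_i\bigr)\exp\bigl(e_{i+1}(t_{i+1}-t_i)\bigr)$. Your explicit strengthening of the induction hypothesis to arbitrary primitive integral bases merely packages the $\GL_d(\Z)$-equivariance of $\overline{\Delta}^{[k-1]}$ and $\alpha\circ s$ that the paper leaves implicit when invoking the induction hypothesis on $\Li(x_1,\dots,x_ix_{i+1},\dots,x_k;\dots)$.
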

\begin{proof}
Taking generating series on both sides, we see that it suffices to show
\[
\overline{\Delta}^{[k-1]}\bigl(\Li(x_1,\dots,x_k;t_1,\dots,t_k)\bigr)=(\alpha\circ s) \left(\Lup[e_1,\dots,e_k]\otimes \exp(e_1t_1+\dots+e_kt_k)\right).
\]
Arguing by induction in $k$ (the case $k=1$ being trivial), it is enough to check that the $(k-1,1)$-part of the coproduct, computed in Lemma~\ref{LemmaCoproductGeneratingFunction}, agrees with the formula of Lemma~\ref{LemmaLCoproductComponent}. 
It is easy to see that the three types of terms that appear in Lemma~\ref{LemmaCoproductGeneratingFunction}, namely
\begin{align*}
	&\Li(x_2,\dots,x_k;t_2,\dots,t_k)\otimes \Li(x_1;t_1),\\ 
	&\Li(x_1,\dots,x_{i}x_{i+1},\dots,x_k;t_1,\dots,\widehat{t_{i+1}},\dots,t_k) \otimes \Li(x_{i+1};t_{i+1}-t_{i}),\quad \mbox{and}\\
	&\Li(x_1,\dots,x_{i}x_{i+1},\dots,x_k;t_1,\dots,\widehat{t_i},\dots,t_k) \otimes \Li(x_{i};t_i-t_{i+1})
\end{align*}
correspond to $\Lup[e_2,\dots,e_k]\otimes \Lup[e_1]$, $\Lup[e_1,\dots,e_{i-1},e_i+e_{i+1},e_{i+2},\dots, e_k]\otimes \Lup[e_{i+1}]$, and $\Lup[e_1,\dots,e_{i-1},e_i+e_{i+1},e_{i+2},\dots, e_k]\otimes \Lup[e_i]$, tensored with $\exp(e_1t_1+\dots+e_kt_k)$.
Indeed, for the first case this is immediate, and for the last two it follows from the identity
    \begin{align*}
    \exp(e_it_i+e_{i+1}t_{i+1}) 
    &= \exp\bigl((e_i+e_{i+1})t_i\bigr)\,\exp\bigl(e_{i+1}(t_{i+1}-t_i)\bigr)\\
    &= \exp\bigl((e_i+e_{i+1})t_{i+1}\bigr)\,\exp\bigl(e_{i}(t_{i}-t_{i+1})\bigr).
    \end{align*}
Since these are exactly the terms appearing in Lemma~\ref{LemmaLCoproductComponent} (with the same signs), this concludes the proof.
\end{proof}

We define the truncated symbol map by the formula 
$\STup= s^{-1}\circ \alpha^{-1}\circ \overline{\Delta}^{[k-1]}$. Lemma \ref{LemmaInjectivityAlpha} and Proposition \ref{TheoremGenericPairsGenerate} imply that the map $\STup$ is surjective. This finishes the proof of Proposition \ref{PropositionTruncatedSymbolMap}.\hfill\qedsymbol

\subsection{Extension of the truncated symbol map to cobar complexes} \label{SectionTSMComplexes}
The goal of this section is to extend the truncated symbol map $\STup$ to a $\GL_d(\Q)$-equivariant map of complexes 
\[
\STup^\bullet\colon \gr_k^{\Dc}\Omega^\bullet\LL(\Tup_d) \lra  \displaystyle\bigoplus_{{\substack{W\subseteq V \\ \dim(W)=k}}} \Omega^{\bullet}\bigl( {\StH} \otimes \Sbb\bigr)(W).
\]

Fix $m\in\{1,\dots,k\}$. The space $\Dc_k\Omega^m\LL(\Tup_d)$ is a subspace of $\Dc_k\left (\overline{\Hc}_+(\Tup_d)^{\otimes m}\right)$ so we get a $\GL_d(\Q)$-equivariant map 
\begin{equation}\label{FormulaEmbeddingLintoH}
\gr_k^{\Dc}\Omega^m\LL(\Tup_d) \lra \gr_k^{\Dc}\left (\overline{\Hc}(\Tup_d)^{\otimes m}\right)=\bigoplus_{k_1+\dots+k_m=k}  \gr_{k_1}^{\Dc}\overline{\Hc}(\Tup_d) \otimes \dots \otimes  \gr_{k_m}^{\Dc}\overline{\Hc}(\Tup_d).
\end{equation}
Combining it with the maps $\overline{\Delta}^{[k_1-1]}\otimes \dots \otimes  \overline{\Delta}^{[k_m-1]},$ we obtain a  $\GL_d(\Q)$-equivariant map 
\begin{equation} \label{FormulaTruncatedCobracketMap}
\gr_k^{\Dc}\Omega^m\LL(\Tup_d) \lra \bigoplus_{k=k_1+\dots +k_m} \bigl(\gr_{1}^\Dc\overline{\Hc}(\Tup_d)\bigr)^{\otimes k_1} \otimes \dots \otimes \bigl(\gr_{1}^\Dc\overline{\Hc}(\Tup_d)\bigr)^{\otimes k_m}.
\end{equation}
Consider a subspace $W\subseteq V$ of dimension $k$ and a decomposition $W=W_1\oplus\dots \oplus W_m$ into nonzero subspaces.
By Lemma \ref{LemmaInjectivityAlpha},  we have embeddings
\[
\bigoplus_{\dim(W_i)=k_i}\bigl({\StH}\otimes \Sbb\bigr)(W_i) \hookrightarrow \bigl(\gr_{1}^\Dc\overline{\Hc}(\Tup_d)\bigr)^{\otimes k_i} \quad \text{for}\quad 1\leq i\leq m.
\]
Adding those up, we obtain an embedding $\alpha_m$ of the space
\[
\bigoplus_{{\substack{W\subseteq V \\ \dim(W)=k}}} \Omega^{m}\bigl( {\St} \otimes \Sbb\bigr)(W)=
\bigoplus_{\substack{W=W_1\oplus \dots \oplus W_m\subseteq V\\\dim(W)=k,\,W_i\neq 0}}\left (\left ({\StH}\otimes \Sbb\right ) (W_1) \otimes \dots \otimes   \left ({\StH}\otimes \Sbb\right )(W_m) \right )
\]
into
\[
\bigoplus_{k_1+\dots +k_m=k} \bigl(\gr_{1}^\Dc\overline{\Hc}(\Tup_d)\bigr)^{\otimes k_1} \otimes \dots \otimes \bigl(\gr_{1}^\Dc\overline{\Hc}(\Tup_d)\bigr)^{\otimes k_m}.
\]

We have constructed a diagram
\[
\begin{tikzcd}[row sep=large]
\gr_k^{\Dc}\Omega^m\LL(\Tup_d) \arrow[r] 
& \displaystyle\bigoplus_{k_1+\dots +k_m=k} \bigl(\gr_{1}^\Dc\overline{\Hc}(\Tup_d)\bigr)^{\otimes k_1} \otimes \dots \otimes \bigl(\gr_{1}^\Dc\overline{\Hc}(\Tup_d)\bigr)^{\otimes k_m}.\\
& \displaystyle\bigoplus_{\dim(W)=k} \Omega^{m}\bigl( {\St} \otimes \Sbb\bigr)(W) \arrow[hook, u, "\alpha_m", shorten >=-1ex]
\end{tikzcd}
\]
Remark \ref{RemarkCyclicVector} and Lemma \ref{LemmaSymbol} imply that the image of the map (\ref{FormulaTruncatedCobracketMap})
is contained in the space
$
\alpha_m\left(\bigoplus_{\dim(W)=k} \Omega^m\bigl( {\St} \otimes \Sbb\bigr)(W)\right ).
$ 
Thus we may put
\[
\STup^m=(\alpha_m)^{-1}\left(\sum_{k=k_1+\dots+k_m} \overline{\Delta}^{[k_1-1]}\otimes \dots \otimes  \overline{\Delta}^{[k_m-1]}\right).
\]

\begin{lemma}\label{LemmaTruncatedSymbolOnComplexes} The maps $\STup^m$  combine into a map of complexes 
\[
\STup^\bullet\colon \gr_{k}^\Dc \bigl( \Omega^\bullet\LL(\Tup_d) \bigr)\lra  \bigoplus_{\substack{W\subseteq V \\ \dim(W)=k}} \Omega^{\bullet}\left({\StH}\otimes \Sbb \right)(W).
\]
Moreover, all maps $\STup^m$ are surjective.
\end{lemma}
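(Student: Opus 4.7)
The plan is to prove the two claims (chain map and surjectivity) separately.

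For the chain-map property $\dd \circ \STup^m = \STup^{m+1} \circ \dd$, the key input is that the iterated coproduct maps $\overline{\Delta}^{[k-1]}$ assemble into a morphism of Hopf algebras from $\overline{\Hc}(\Tup_d)$ to a shuffle-type Hopf algebra on $\gr_1^{\Dc}\overline{\Hc}(\Tup_d)$, so applying the cobar functor formally yields a chain map on the corresponding cobar complexes. To make this concrete on a pure tensor $a = a_1 \otimes \cdots \otimes a_m$ with $a_j$ of depth $k_j$, I would compare the two compositions directly. Via $\alpha_m^{-1}$, $\STup^m(a)$ corresponds to $\bigotimes_j \overline{\Delta}^{[k_j-1]}(a_j)$; applying $\dd$ inserts the reduced coproduct of $\St^{\Hc}\otimes \Sbb$ in one factor, which via $\alpha \circ s$ and Propositions \ref{LemmaFormulaProduct}--\ref{LemmaFormulaCoProduct} becomes a deconcatenation split of the block $(\gr_1^{\Dc}\overline{\Hc}(\Tup_d))^{\otimes k_j}$ into two contiguous sub-blocks. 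On the other side, $\STup^{m+1}(\dd a)$ first applies $\overline{\Delta}'$ to some $a_i$ (producing depths $k_i'+k_i''=k_i$) and then $\overline{\Delta}^{[k_i'-1]}\otimes \overline{\Delta}^{[k_i''-1]}$; summing over all such splits, coassociativity of $\overline{\Delta}$ in $\overline{\Hc}(\Tup_d)$ makes this equal to the corresponding deconcatenation of $\overline{\Delta}^{[k_i-1]}(a_i)$. Signs agree by the $(-1)^{i-1}$ cobar convention on both sides.

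For surjectivity of each $\STup^m$, Remark \ref{RemarkCyclicVector} states that $\gr_k^{\Dc}\Omega^m\LL(\Tup_d)$ is generated as a $\GL_d(\Q)$-module by pure tensors of polylogarithms of the form $\bigotimes_j \Li_{n_1^{(j)},\dots,n_{k_j}^{(j)}}(x_1^{(j)},\dots,x_{k_j}^{(j)})$, with $k_1+\cdots+k_m = k$. Applying Proposition \ref{PropositionTruncatedSymbolMap} (via Lemma \ref{LemmaSymbol}) on each factor shows that the image of such a tensor is the corresponding pure tensor of Steinberg polylogarithms in $\bigotimes_j \bigl(\St^{\Hc}(W_j)\otimes \Sbb^{\bullet}(W_j)\bigr)$, where the $W_j$ are the standard coordinate subspaces of $V$; these generate the summand corresponding to the standard decomposition. $\GL_d(\Q)$-equivariance then recovers all summands of $\bigoplus_{\dim(W)=k}\Omega^m(\St^{\Hc}\otimes \Sbb)(W)$ by varying the decomposition $V = V_1 \oplus \cdots \oplus V_m$.

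The main obstacle will be the bookkeeping in the chain-map step: one has to verify that the embedding $\alpha$ from \S\ref{SectionALPHA} intertwines the coproduct of the Koszul dual $\St^{\Hc}\otimes \Sbb$ (computed via Proposition \ref{LemmaFormulaCoProduct}) with the natural deconcatenation splitting of $(\gr_1^{\Dc}\overline{\Hc}(\Tup_d))^{\otimes k}$, so that coassociativity on the polylogarithm side really does correspond to the cobar differential on the Koszul-dual side. Once this compatibility is in hand, the rest of the argument is formal.
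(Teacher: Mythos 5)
Your proof takes essentially the same route as the paper: the chain-map property is reduced to (i) the symbol map $\bigotimes_j \overline{\Delta}^{[k_j-1]}$ being a morphism of Hopf algebras (hence inducing a map of cobar complexes) and (ii) the embedding $\alpha$ intertwining deconcatenation, and surjectivity follows from the generating set of Remark~\ref{RemarkCyclicVector}, Lemma~\ref{LemmaSymbol}, Proposition~\ref{TheoremGenericPairsGenerate}, and $\GL_d(\Q)$-equivariance. The ``main obstacle'' you flag — compatibility of $\alpha$ with deconcatenation — is exactly what the paper settles in one line by reading it off the explicit formula~\eqref{AlphakExplicit}; note that for this point the relevant fact is that the coproduct on $\St^{\Hc}$ is by definition the restriction of the deconcatenation coproduct~\eqref{BarDeconcatenationCoproduct} on $\Bup_\bullet\St$, rather than the apartment formulas of Propositions~\ref{LemmaFormulaProduct}--\ref{LemmaFormulaCoProduct}.
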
 
\begin{proof}
First we show that the map $\STup^\bullet$ is a map of complexes. The map (\ref{FormulaEmbeddingLintoH}) commutes with the differential by  construction, see \S \ref{SectionCobarComplexLTd}. To see that the map (\ref{FormulaTruncatedCobracketMap}) does, notice that this map is a composition of  the map (\ref{FormulaEmbeddingLintoH}) with the symbol map for the Hopf algebra $\gr_{\bullet}^{\Dc}\overline{\Hc}(\Tup_d)$; the statement follows from the fact that the symbol is a morphism of Hopf algebras, see \S \ref{SectionRecapOfHopfAlgebras}. The map $\alpha$ commutes with the deconcatenation coproduct by (\ref{AlphakExplicit}), so the map
$\alpha_m$ is a map of complexes. From here the statement follows. 

The surjectivity of the map $\STup^m$ is derived from Lemma \ref{LemmaSymbol} and Proposition \ref{TheoremGenericPairsGenerate}.
\end{proof}

Denote by $\LL^{\Lc}(\Tup_d)$ the image of the space $\LL_+(\Tup_d)\subseteq \overline{\Hc}_+(\Tup_d)$  under the projection to the Lie coalgebra of indecomposables  $\overline{\Lc}(\Tup_d)$. By Lemma \ref{LemmaProductDecomposition}, the composition 
\[
 \gr_k^{\Dc}\LL(\Tup_d) \stackrel{\STup}{\lra} \bigoplus_{\substack{W\subseteq V \\ \dim(W)=k}}\bigl( {\StH} \otimes \Sbb\bigr)(W)\lra \bigoplus_{\substack{W\subseteq V \\ \dim(W)=k}}\bigl( {\StL} \otimes \Sbb\bigr)(W)
\]
vanishes on the elements in $\LL(\Tup_d)\cap (\overline{\Hc}_+(\Tup_d))^2$. Thus, we have a Lie coalgebra version of the truncated symbol map 
\[
\STup^{\bullet}\colon  \gr_k^{\Dc} \left(\Omega^{\bullet}\LL^{\Lc}(\Tup_d)\right)\lra \bigoplus_{\substack{W\subseteq V \\ \dim(W)=k}} \Omega^{\bullet}\bigl({\StL} \otimes \Sbb\bigr)(W).
\]

\section{Proofs of the main results}
\subsection{Proof of Theorem \ref{TheoremMain2}}
\label{SectionProofoftheoremMain}
We will denote the map
\[
\STup^m\colon \gr_{k}^\Dc \bigl( \Omega^m\LL(\Tup_d) \bigr)\lra  \bigoplus_{\substack{W\subseteq V \\ \dim(W)=k}} \Omega^m\left({\StH}\otimes \Sbb \right)(W).
\]
from \S \ref{SectionTSMComplexes} by $\STup^m_{d,k}$. Its $n$-th graded component will be denoted by $\STup^m_{d,k,n}$. 

We need to show that the map $\STup^1_{d,d,n}$ is an isomorphism for every $d$ and $n$. We prove the statement by double induction: first on $d$ and then on $n$. The case $d=0$ is trivial. The case $d=1$ follows from Example \ref{exampleDepth1dim1}. Next, assume that $d=2$. We need to show that the map
\[
\STup^1\colon \gr_2^{\Dc} \LL_n(\Tup_2)\lra \StH(V)\otimes \Sbb^{n-2}(V)
\]
is an isomorphism. This map is surjective by Proposition \ref{PropositionTruncatedSymbolMap}, so it remains to prove that it is injective. Let $a\in \Dc_2\LL_n(\Tup_2)$ lie in $\Ker(\STup)$. By the construction of the truncated symbol map, $\overline{\Delta}^{[1]}(a)=\alpha(s(\STup(a)))=0,$ thus $a$ is primitive  in $\overline{\Hc}_n(\Tup_2)$. By Proposition \ref{PropositionWeakDepthConjecture}, $a$ lies in $\Dc_1\overline{\Hc}_n(\Tup_2)$. By Remark \ref{RemarkDepthFiltration}, $a$ lies in $\LL_n(\Tup_2)\cap \Dc_1\overline{\Hc}_n(\Tup_2)=\Dc_1\LL_n(\Tup_2)$, so vanishes in  $\gr_2^{\Dc} \LL_n(\Tup_2)$.

Assume that $d\geq 3$. We need to show that the map $\STup^1_{d,d}$ is injective.

\begin{lemma}\label{LemmaConsequenceOfInductionHyothesis} Assume that the map $\STup^1_{d',d'}$ is an isomorphism for $d'<d$. Then the maps
\[
\STup_{d,k}^m \colon \gr_k^{\Dc} \Omega^m \LL(\Tup_d)\lra \bigoplus_{\substack{W\subseteq V \\ \dim(W)=k}} \Omega^m\bigl( {\StH} \otimes \Sbb\bigr)(W)
\]
are isomorphisms in the following two cases: 
\begin{enumerate*} \item\label{LemmaConsequenceOfInductionHyothesis:part1} $m=1, k<d$,
\item\label{LemmaConsequenceOfInductionHyothesis:part2} $m\geq 2, k\leq d$.
\end{enumerate*}
\end{lemma}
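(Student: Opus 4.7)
We prove part~\ref{LemmaConsequenceOfInductionHyothesis:part1} first and deduce part~\ref{LemmaConsequenceOfInductionHyothesis:part2} from it together with the main induction hypothesis. Both statements have the same shape: the target is by construction an induced representation from a suitable parabolic subgroup of $\GL_d(\Q)$, and the strategy is to exhibit the source as an analogous induction so that the truncated symbol map becomes the induction of a lower-dimensional instance of itself.

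For part~\ref{LemmaConsequenceOfInductionHyothesis:part1}, the space $\gr_k^\Dc\LL_n(\Tup_d)$ is generated as a $\GL_d(\Q)$-module by the elements $\Li_{n_1,\dots,n_k}(x_1,\dots,x_k)$, which depend only on the first $k$ variables. Using the explicit pushforward formula~\eqref{FotmulaExplicitPushforward} together with Lemma~\ref{LemmaDistribution}, one checks that on these generators the unipotent radical of $\Pup_{k,d-k}$ acts trivially, the $\GL_k$-factor of the Levi acts through its standard action on $\gr_k^\Dc\LL_n(\Tup_k)$, and the $\GL_{d-k}$-factor acts via the character $\psi_{V/V_k}$ (this last point comes from summing over the $d-k$ "dummy" root directions, which produces a factor of $|\det A_{22}|^{d-k-(d-k)}$ via the distribution relation). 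Frobenius reciprocity then supplies a surjective $\GL_d(\Q)$-equivariant map
\[
\pi\colon \Ind_{\Pup_{k,d-k}}^{\GL_d(\Q)}\bigl(\gr_k^\Dc\LL_n(\Tup_k)\otimes\psi_{V/V_k}\bigr)\twoheadrightarrow \gr_k^\Dc\LL_n(\Tup_d),
\]
and naturality of the truncated symbol map gives a commutative square whose other vertical arrow is $\Ind(\STup^1_{k,k})$, an isomorphism by the induction hypothesis since $k<d$. Combined with the surjectivity of $\STup^1_{d,k}$ from Proposition~\ref{PropositionTruncatedSymbolMap}, the identity $\STup^1_{d,k}\circ \pi = \Ind(\STup^1_{k,k})$ forces both $\pi$ and $\STup^1_{d,k}$ to be isomorphisms.

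For part~\ref{LemmaConsequenceOfInductionHyothesis:part2}, the definition of $\Omega^m\LL(\Tup_d)$ combined with Remark~\ref{RemarkCyclicVector} expresses $\gr_k^\Dc\Omega^m\LL(\Tup_d)$ as a sum of induced representations from the parabolic subgroups stabilizing ordered direct sum decompositions $V=W_1\oplus\dots\oplus W_m$ with $d_i=\dim W_i\geq 1$ and $d_1+\dots+d_m=d$; the induction data on each piece is the tensor product $\gr_{k_1}^\Dc\LL(\Tup_{d_1})\otimes\dots\otimes\gr_{k_m}^\Dc\LL(\Tup_{d_m})$ (with $k_1+\dots+k_m=k$ and $k_i\leq d_i$), twisted by an appropriate character coming from the pushforward. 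The target admits a completely parallel decomposition, and $\STup^m_{d,k}$ corresponds on each summand to the induction of the tensor product $\STup^1_{d_1,k_1}\otimes\dots\otimes\STup^1_{d_m,k_m}$. Since $m\geq 2$ forces each $d_i\leq d-1<d$, every factor is an isomorphism: by the main induction hypothesis when $k_i=d_i$, and by part~\ref{LemmaConsequenceOfInductionHyothesis:part1} applied at dimension $d_i$ when $k_i<d_i$. Hence the tensor product, and therefore its induction, is an isomorphism. The main technical obstacle in both parts is the identification of the induced-representation structure on the source, which requires a careful unwinding of~\eqref{FotmulaExplicitPushforward} and the distribution relations to verify that the action of the unipotent radical is trivial modulo lower depth and that the character on the complementary Levi factor is exactly $\psi_{V/V_k}$; once this is established, the rest of the argument is formal.
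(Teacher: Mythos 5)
Your argument is correct and is essentially the paper's, phrased slightly differently. For part (i) the paper constructs a left inverse $\Psi_{d,k}^1$ by adjunction from the $\Pup_{k,d-k}$-equivariant map $a\otimes 1\mapsto (\pi_k)^*(a)$, composed with the isomorphism $\ST^1_{k,k}$ from the induction hypothesis, and then observes that $\Psi_{d,k}^1\circ\ST_{d,k}^1$ is $\GL_d(\Q)$-equivariant and fixes the generators $\Li_{n_1,\dots,n_k}(x_1,\dots,x_k)$, hence is the identity; your version works with the adjunct surjection $\pi$ and the commutative square $\ST^1_{d,k}\circ\pi = \Ind(\ST^1_{k,k})$, which is logically the same verification (both boil down to checking equation~\eqref{FormulaTruncatedCoproductLi} on a generating set). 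Your verification that the unipotent radical acts trivially and that the $\GL_{d-k}$-factor contributes exactly $\psi_{V/V_k}$ via the distribution relation is correct and makes explicit what the paper leaves implicit. For part (ii) the paper only says ``similar''; your expansion via tensor products of lower-rank truncated symbols is a reasonable realization of that. One small caution in the exposition of (ii): you describe the source as a \emph{sum} of induced representations, which a priori need not be direct; the clean way to close the gap is the same as in (i), namely to observe that the composite of the canonical surjection from the direct sum of induced pieces with $\ST^m_{d,k}$ equals the direct sum of the $\Ind(\ST^1_{d_1,k_1}\otimes\dots\otimes\ST^1_{d_m,k_m})$, which is injective, forcing the surjection to be an isomorphism as well. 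Also note that invoking part (i) at dimension $d_i<d$ is legitimate since the hypothesis of the lemma at dimension $d$ implies the hypothesis at any $d_i<d$, but it is worth stating explicitly to dispel any appearance of circularity.
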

\begin{proof}
We prove \ref{LemmaConsequenceOfInductionHyothesis:part1}; the proof of \ref{LemmaConsequenceOfInductionHyothesis:part2} is similar. The map $\STup_{d,k}^1$  is surjective by Proposition \ref{PropositionTruncatedSymbolMap}. To prove injectivity, we construct a map $\Psi_{d,k}^1$ in the opposite direction and show that $\Psi_{d,k}^1\circ \STup_{d,k}^1=\Id$. For that, consider a map $\pi_k\colon \Tup_d\lra \Tup_k$, such that $\pi_k(x_1,\dots,x_d)=(x_1,\dots,x_k)$. The map
 \[
\gr_k^{\Dc} \LL(\Tup_k)\otimes \psi_{V/V_k}\lra \gr_k^{\Dc}\LL(\Tup_d)
 \]
sending $a\otimes 1$ to $(\pi_k)^*(a)$ is $(\Pup_{k,d-k})$-equivariant. By the induction hypothesis, we obtain a $(\Pup_{k,d-k})$-equivariant map
\begin{equation}\label{FormulaMapRestriction}
\bigl( {\StH} \otimes \Sbb\bigr)(V_k)\otimes \psi_{V/V_k} \lra \gr_k^{\Dc}\LL(\Tup_d)
\end{equation}
sending $\Lup[e_1,\dots,e_k]\otimes \prod_{i=1}^k \frac{e_i^{n_i-1}}{(n_i-1)!}\otimes 1$ to $\Li_{n_1,\dots, n_k}(x_1,\dots,x_k)$. Let $\Psi_{d,k}^1$ be the map corresponding to (\ref{FormulaMapRestriction}) via the adjunction between induction and restriction. The composition $\Psi_{d,k}^1\circ \STup_{d,k}^1$ is a $\GL_d(\Q)$-equivariant map from  $\gr_k^{\Dc}  \LL(\Tup_d)$ to itself which fixes $\Li_{n_1,\dots, n_k}(x_1,\dots,x_k)$. Such a map has to be trivial, so $\Psi_{d,k}^1\circ \STup_{d,k}^1=\Id$. This finishes the proof of the statement.
\end{proof}

The following Lemma plays the main role; we prove it in \S \ref{SectionLemmaMain}.

\begin{lemma} \label{LemmaMain} Assume that the map 
\[
\STup_{d',d',n'}^1\colon \gr_{d'}^\Dc \LL_{n'}(\Tup_{d'}) \lra  \StH(W)\otimes \Sbb^{n'-d'}W
\]
for $W=\Xup(\Tup_{d'})_{\Q}$
is an isomorphism for any $d',n'$ such that either $d'<d$ or   $d'=d, n'<n$.
Then for $a\in \LL_n(\Tup_d)$ such that $\overline{\Delta}^{[2]}(a)=0$ we have $a\in \Dc_2\LL_n(\Tup_d).$
\end{lemma}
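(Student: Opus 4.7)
The plan is to construct, for every $a \in \LL_n(\Tup_d)$ satisfying $\overline{\Delta}^{[2]}(a) = 0$, a depth-$2$ lift $b \in \Dc_2\LL_n(\Tup_d)$ with $\overline{\Delta}'(b) = \overline{\Delta}'(a)$. Once this is achieved, the difference $a - b$ is primitive in $\overline{\Hc}(\Tup_d)$, so by Proposition \ref{PropositionWeakDepthConjecture} it lies in $\Dc_1\overline{\Hc}(\Tup_d)$; the $k = 1$ case of Remark \ref{RemarkDepthFiltration}, which is a direct consequence of the distribution relation (Lemma \ref{LemmaDistributionLi}) and the inversion relation \eqref{LemmaInversionDepthOne}, then refines this to $a - b \in \Dc_1\LL_n(\Tup_d)$, giving $a = b + (a - b) \in \Dc_2\LL_n(\Tup_d)$ as required.

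The hypothesis together with coassociativity forces $\overline{\Delta}'(a) \in P(\overline{\Hc}(\Tup_d)) \otimes P(\overline{\Hc}(\Tup_d))$, and Proposition \ref{PropositionWeakDepthConjecture} identifies this with $\Dc_1\overline{\Hc}(\Tup_d) \otimes \Dc_1\overline{\Hc}(\Tup_d)$. Combining with Lemma \ref{LemmaCoproductDecomposition}, which places $\overline{\Delta}'(a) \in \Omega^2\LL_n(\Tup_d)$, one obtains $\overline{\Delta}'(a) \in \Dc_2\Omega^2\LL_n(\Tup_d)$. Moreover $\Dc_1\Omega^2\LL_n(\Tup_d) = 0$ (any depth-$0$ tensor factor would lie in $\Dc_0\overline{\Hc} \cap \LL_+ = \Q \cap \LL_+ = 0$), so the class $[\overline{\Delta}'(a)] \in \gr_2^{\Dc}\Omega^2\LL_n(\Tup_d)$ recovers $\overline{\Delta}'(a)$ exactly. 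The construction of $b$ thus reduces to finding $\bar b \in \gr_2^{\Dc}\LL_n(\Tup_d)$ with $\overline{\Delta}'(\bar b) = [\overline{\Delta}'(a)]$.

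Using Lemma \ref{LemmaConsequenceOfInductionHyothesis} together with the Koszulity of $\St^{\Hc} \otimes \Sbb$ (Proposition \ref{PropositionBarCobar}, Lemma \ref{LemmaTensorS}), the truncated symbol identifies the depth-$2$ cobar complex $\gr_2^{\Dc}\Omega^\bullet\LL_n(\Tup_d)$ with that of $\St^{\Hc}\otimes \Sbb$ restricted to subspaces $W$ of dimension $2$; its cohomology is concentrated in degree $2$ and equal to $\bigoplus_{\dim W = 2}\St(W) \otimes \Sbb^{n-2}(W)$. Consequently the induced map $\overline{\Delta}'\colon \gr_2^{\Dc}\LL_n(\Tup_d) \to \gr_2^{\Dc}\Omega^2\LL_n(\Tup_d)$ is injective, and its cokernel is canonically identified with $\bigoplus_{\dim W = 2}\St(W) \otimes \Sbb^{n-2}(W)$. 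The existence of $\bar b$ is therefore equivalent to the vanishing of the image of $[\overline{\Delta}'(a)]$ under the projection to this cokernel.

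The hardest part will be this cokernel-vanishing step. My plan is to compute the projection explicitly by applying $\STup^1$ to each tensor factor of $\overline{\Delta}'(a)$ via the formulas of Lemmas \ref{LemmaCoproductGeneratingFunction} and \ref{LemmaSymbol}, and to combine the constraints coming from $\overline{\Delta}^{[2]}(a) = 0$ (which via coassociativity yields three separate partial-coproduct identities on $\overline{\Delta}'(a)$) with the double-shuffle relations of Proposition \ref{Proposition_double_shuffle} and the dihedral relations of Corollary \ref{CorollaryDihedral} to force the projection into the boundary subspace. Heuristically, the cokernel parametrizes the ``irreducible depth-$2$ Steinberg polylogarithms'' of Corollary \ref{thm:Stn_I_basis}, and coherence of the coproduct under all three iterated-coproduct identities, combined with the shuffle constraints, should leave no room for nontrivial contributions. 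As noted in the introduction, this is the technical heart of the paper and I expect the bulk of the proof to be occupied by this step.
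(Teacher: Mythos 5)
Your overall strategy---build $b\in\Dc_2\LL_n(\Tup_d)$ with $\overline{\Delta}'(b)=\overline{\Delta}'(a)$, then conclude $a-b$ is primitive, hence in $\Dc_1$ by Proposition~\ref{PropositionWeakDepthConjecture} and Remark~\ref{RemarkDepthFiltration}---is sound, and your first two steps (showing $\overline{\Delta}'(a)\in\Dc_2\Omega^2\LL_n(\Tup_d)$, and identifying the obstruction to constructing $\bar b$ as a class in $\bigoplus_{\dim W=2}\St(W)\otimes \Sbb^{n-2}(W)\cong H^2$ of the depth-graded cobar complex via Koszulity and Lemma~\ref{LemmaConsequenceOfInductionHyothesis}) are essentially correct. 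Your route to the first step via primitivity in both tensor slots is slightly different from the paper's commutative-square argument, but it works.

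The gap is in the step you correctly flag as the hardest, and your proposed tools cannot close it. You want to show the obstruction class vanishes using coassociativity of $\overline{\Delta}'$ together with the double-shuffle and dihedral relations. But coassociativity is already spent: once you know $\overline{\Delta}'(a)\in \gr_2^{\Dc}\Omega^2\LL_n(\Tup_d)$, the constraint $\overline{\Delta}^{[2]}(a)=0$ says precisely that $[\overline{\Delta}'(a)]$ is a cocycle, and it gives nothing further. The double-shuffle and dihedral relations of Proposition~\ref{Proposition_double_shuffle} and Corollary~\ref{CorollaryDihedral} are structural identities that hold for \emph{all} elements of $\St^\Hc$; they do not distinguish cocycles in $\Omega^2(\St^\Hc\otimes\Sbb)(W)$ that are coboundaries from those that are not, and $H^2$ is genuinely nonzero (that is exactly Koszulity). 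So these tools place no constraint at all on the obstruction class. Working entirely inside $\overline{\Hc}(\Tup_d)$, which you do throughout, there is simply not enough information to make the obstruction vanish.

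The missing idea is to use the information that the truncated coproduct discards. The paper lifts $a$ (via the Lie coalgebra of indecomposables, using that $\Lc_n(\Tup_d)=\overline{\Lc}_n(\Tup_d)$ for $n\ge 2$) into the \emph{untruncated} Lie coalgebra $\Lc(\Tup_d)$, whose cobracket $\delta(a)$ has, in addition to the component $\overline{\delta}(a)$, a $\Log(\Tup_d)$-component. This is formalized by the double complex $L^{p,q}_k(\Tup_d)=\gr_k^{\Dc}\bigl(\Omega^p\LL^\Lc(\Tup_d)\bigr)\otimes\Lambda^q\Log(\Tup_d)$ with differentials $\delta_1=\overline{\delta}$ and $\delta_2$ (computed in Lemma~\ref{LemmaSecondDifferential} to be, on the Steinberg side, $\mathrm{Id}\otimes\partial$ into $\Sbb^{\bullet-1}\otimes V$). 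The coJacobi identity for the \emph{full} cobracket $\delta$, applied to $\delta(a)=(a',a'')\in L^{2,0}_2\oplus L^{1,1}_2$, yields $\delta_1^{1,1}(a'')=-\delta_2^{2,0}(a')$; transported via $\STup$, this forces $\STup(a'')$ into the subspace $\bigoplus_{\dim W=2}\St^\Lc(W)\otimes \Sbb(W)\otimes W$ (rather than $\otimes V$), and the de Rham-type exactness of the $d_2$-complex on that subspace then produces $s\in S^{1,0}_2(V)$ with $d_2^{1,0}(s)=\STup(a'')$ and $d_1^{1,0}(s)=d_1^{1,0}(\STup(a'))$, i.e.\ a Steinberg-side preimage of $\overline{\delta}(a)$. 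This $\Log$-coupling is exactly what kills the $H^2$ obstruction, and it is invisible in $\overline{\Hc}(\Tup_d)$. Without importing this extra constraint---by lifting to $\Lc(\Tup_d)$ (or equivalently $\Hc(\Tup_d)$) and tracking the $\Log$-terms---the obstruction class cannot be shown to vanish, so your argument as written does not go through.
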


Now we are ready to finish the proof of Theorem \ref{TheoremMain2}. Assume that the map
\[
\STup^1_{d,d}\colon \gr_{d}^\Dc  \LL_n(\Tup_d) \lra  \StH(V)\otimes \Sbb^{n-d}(V).
\]
is not injective. Consider the set $S$ of elements $a\in \LL_n(\Tup_d)$ such that $a\not \in \Dc_{d-1}$ and $\STup_{d,d}^1(a)=0$. By our assumption, $S \neq \varnothing$. Choose an element $a\in S$ such that $\overline{\Delta}^{[1]}(a)$ is contained in $\Dc_k\Omega^{2}\LL(\Tup_d)$ for the smallest possible $k\in \N$. 

If $k=1$ then $\overline{\Delta}^{[1]}(a) \in \Dc_1\Omega^{2}\LL(\Tup_d)=0$ and so $a\in  \Dc_1 \overline{\Hc}_n(\Tup_d)$ by Proposition \ref{PropositionWeakDepthConjecture}. By Remark \ref{RemarkDepthFiltration}, $a$ lies in $\LL_n(\Tup_d)\cap \Dc_1\overline{\Hc}_n(\Tup_d)=\Dc_1\LL_n(\Tup_d)$. Since 
$\Dc_1 \LL_n(\Tup_d) \subseteq \Dc_{d-1} \LL_n(\Tup_d)$, we get that $a$ lies in  $\Dc_{d-1} \LL_n(\Tup_d)$, which  contradicts  the choice of $a$.

If $k=2$ then $\overline{\Delta}^{[2]}(a)\in \Dc_2 \Omega^{3}\LL(\Tup_d)=0$, so $a\in \Dc_2 \LL_n(\Tup_d)$ by Lemma \ref{LemmaMain}. Since $d\geq 3$, $a$ lies in $\Dc_{d-1} \LL_n(\Tup_d)$, 
which contradicts our choice of $a$. 

Next, we show that $k\leq d-1$.  By Lemma \ref{LemmaTruncatedSymbolOnComplexes}, we have $\STup_{d,d}^2(\overline{\Delta}^{[1]}(a))=d(\STup_{d,d}^1(a))=d(0)=0$. By Lemma \ref{LemmaConsequenceOfInductionHyothesis},  the map
\[
\STup_{d,d}^2\colon \gr_d^{\Dc}\Omega^2\LL(\Tup_d) \lra \Omega^{2}\bigl( {\StH} \otimes \Sbb\bigr)(V)
\]
is an isomorphism, so $\overline{\Delta}^{[1]}(a)$  vanishes in $\gr_d^{\Dc}\Omega^2\LL(\Tup_d)$. Thus $\overline{\Delta}^{[1]}(a)$ lies in $\Dc_{d-1}\Omega^2\LL(\Tup_d)$ and so $k\leq d-1$.

We have shown that $3\leq k\leq  d-1$. The complex
\begin{equation}\label{FormulaComplexDL} 
0 \lra \gr_k^{\Dc}\LL(\Tup_d) \lra  \gr_k^{\Dc}\Omega^2\LL(\Tup_d) 
\lra \gr_k^{\Dc}\Omega^3 \LL(\Tup_d)
\end{equation}
is exact because it is isomorphic to an exact complex 
\[
0\lra \bigoplus_{\substack{W\subseteq V \\ \dim(W)=k}}  \!\!\! \left( {\StH} \otimes \Sbb^n\right)(W) \lra 
\bigoplus_{\substack{W\subseteq V \\ \dim(W)=k}} \!\!\! \Omega^2\left( {\StH} \otimes \Sbb^n\right)(W) \lra 
\bigoplus_{\substack{W\subseteq V \\ \dim(W)=k}}  \!\!\! \Omega^3\left( {\StH} \otimes \Sbb^n\right)(W) 
\]
by Lemma \ref{LemmaConsequenceOfInductionHyothesis}. By coassociativity, the image of $\overline{\Delta}^{[1]}(a)\in \gr_k^{\Dc}\Omega^2 \LL(\Tup_d)$ vanishes in $\gr_k^{\Dc}\Omega^3 \LL(\Tup_d)$. By exactness of (\ref{FormulaComplexDL}), we can find $\tilde{a}\in \Dc_k\LL_n(\Tup_d)$ such that $\overline{\Delta}^{[1]}(a)$ and $\overline{\Delta}^{[1]}(\tilde{a})$ coincide in $\gr_k^{\Dc}\Omega^2 \LL(\Tup_d)$. It follows that
$\overline{\Delta}^{[1]}(a-\tilde{a})=\overline{\Delta}^{[1]}(a)-\overline{\Delta}^{[1]}(\tilde{a}) \in  \Dc_{k-1}\Omega^2\LL(\Tup_d)$. Since $k\leq d-1$ we have $\tilde{a}\in \Dc_k\LL(\Tup_d)\subseteq \Dc_{d-1}\LL(\Tup_d)$, so $\STup_{d,d}^1(\tilde{a})=0$. It follows that $a-\tilde{a}$ is an element in $S$ with $\overline{\Delta}^{[1]}(a-\tilde{a})\in  \Dc_{k-1}\Omega^2\LL(\Tup_d)$. The existence of such an element contradicts our choice of $a$.

This finishes the proof of Theorem \ref{TheoremMain2}. \hfill\qedsymbol\medskip

It remains to prove Lemma \ref{LemmaMain}.

\subsection{Preparation for the proof of Lemma \ref{LemmaMain}}  
\label{sectionPreparationLemmaMain}

In \S \ref{SectionDepthFiltrationOnH} we defined the Lie coalgebra $\Lc(\Tup_d)$ with the cobracket $\delta$ and the Lie coalgebra $\overline{\Lc}(\Tup_d)$ with the cobracket $\overline{\delta}$.  Consider the projection map $p\colon \Lc(\Tup_d) \lra \overline{ \Lc}(\Tup_d)$ and denote by $\widetilde{\LL}^{\Lc}(\Tup_d)$ the preimage $p^{-1}(\LL^{\Lc}(\Tup_d))$. In weight one, we have an exact sequence
\[
0\lra \Log(\Tup_d)\lra \widetilde{\LL}^{\Lc}_1(\Tup_d)\lra \LL^{\Lc}_1(\Tup_d) \lra 0.
\] 
In weight $n\geq 2$, we have $\widetilde{\LL}^{\Lc}_n(\Tup_d)\cong \LL^{\Lc}_n(\Tup_d)$. Denote by $\Omega^m\widetilde{\LL}^{\Lc}(\Tup_d)$  the preimage of $\Omega^m\LL^{\Lc}(\Tup_d)$ under the map 
\[
\Lambda^m p\colon \Lambda^m \Lc(\Tup_d)\lra \Lambda^m \overline{\Lc}(\Tup_d).
\]
It is easy to see that 
\[
\delta(\Omega^m\LL^{\Lc}(\Tup_d))\subseteq \Omega^{m+1}\LL^{\Lc}(\Tup_d),
\] 
so we may consider the following subcomplex of the Chevalley-Eilenberg complex of  $\Lc(\Tup_d)$:
\[
0\lra \widetilde{\LL}^{\Lc}(\Tup_d) \stackrel{\delta^1}{\lra} \Omega^2\widetilde{\LL}^{\Lc}(\Tup_d) \stackrel{\delta^2}{\lra} \Omega^3\widetilde{\LL}^{\Lc}(\Tup_d) \stackrel{\delta^3}{\lra} \cdots .
\]

Lemma \ref{LemmaDirectSumDecomposition} implies that for $k\geq 1$ we have a direct sum decomposition
\[
\gr_k^{\Dc}\left(\Omega^m\widetilde{\LL}^{\Lc}(\Tup_d)\right)\cong \bigoplus_{i=0}^{m-1} \left(\gr_k^{\Dc}\bigl(\Omega^{m-i}\LL^{\Lc}(\Tup_d) \bigr)\otimes \Lambda^{i} \Log(\Tup_d) \right).
\]
Consider the bigraded vector space
\[
L^{p,q}_k(\Tup_d) \coloneqq \gr_k^{\Dc}\bigl(\Omega^{p}\LL^{\Lc}(\Tup_d)\bigr) \otimes \Lambda^{q} \Log(\Tup_d) \quad \text{for} \quad p,q\geq 0.
\]
The map $\delta^{p+q}$ induces a map
\[
\delta^{p,q}\colon L^{p,q}_k(\Tup_d)\lra \bigoplus_{i=0}^{p+q-1} L^{p+q-i,i}_k(\Tup_d).
\]
The map $\delta$ is a graded derivation and elements in $\Log(\Tup_d)$ lie in the kernel of the cobracket. From here it easily follows that the only nonzero components of the map $\delta^{p,q}$ are the maps
\[
\delta_1^{p,q}\colon L^{p,q}_k(\Tup_d)\lra L^{p+1,q}_k(\Tup_d)
\quad \text{and} \quad \delta_2^{p,q}\colon L^{p,q}_k(\Tup_d)\lra L^{p,q+1}_k(\Tup_d).
\]
So, we see that the bigraded space $L^{\bullet,\bullet}_k(\Tup_d)$ with maps $\delta_1$ and $\delta_2$ is a double complex with the total complex $\gr_k^{\Dc}\left(\Omega^\bullet\widetilde{\LL}^{\Lc}(\Tup_d)\right)$. It is easy to see that the map 
\[
\delta_1^{p,q}\colon 
\gr_k^{\Dc}\Omega^{p}\LL^{\Lc}(\Tup_d) \otimes \Lambda^{q} \Log(\Tup_d)\lra  \gr_k^{\Dc}\Omega^{p+1}\LL^{\Lc}(\Tup_d)\otimes\Lambda^{q} \Log(\Tup_d)
\]
is equal to $\overline{\delta}^{p}\otimes\Id$.

\begin{lemma}\label{LemmaSecondDifferential} The map
\[
\delta_{2}^{0,1}\colon \gr_k^{\Dc}\LL^{\Lc}(\Tup_d)\lra  \gr_k^{\Dc}\LL^{\Lc}(\Tup_d) \otimes\Log(\Tup_d)
\]
satisfies
\[
\delta_{2}^{0,1}\left(\Li_{n_1,\dots,n_k}^{\Lc}(x_1,\dots,x_k)\right)= \sum_{\substack{1\leq i\leq k\\ n_i>1}}\Li_{n_1,\dots,n_i-1,\dots,n_k}^{\Lc}(x_1,\dots,x_k)
\otimes\log(x_i).
\]
\end{lemma}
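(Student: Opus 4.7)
The plan is to compute $\delta_2^{0,1}$ by lifting $\Li^{\Lc}_{n_1,\dots,n_k}(x_1,\dots,x_k)$ to $\Hc(\Tup_d)$ and extracting the $(n{-}1,1)$-component of the reduced coproduct $\Delta'$ whose right tensor factor lies in $\Log(\Tup_d)$. Indeed, the direct-sum decomposition of Lemma~\ref{LemmaDirectSumDecomposition} identifies $\delta_2^{0,1}$ with the projection of the cobracket $\delta$ onto $\gr_k^\Dc\LL^\Lc(\Tup_d)\otimes\Log(\Tup_d)$, where $\Log(\Tup_d)\subseteq\Lc_1(\Tup_d)$ is the span of $\log(x_1),\dots,\log(x_d)$.

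Using the iterated integral representation~\eqref{LinDef}, I would write $\Li_{n_1,\dots,n_k}(x_1,\dots,x_k) = (-1)^k\I(z_0;z_1,\dots,z_n;z_{n+1})$ with the usual sequence $(0,1,\{0\}^{n_1-1},x_1,\{0\}^{n_2-1},x_1x_2,\dots,x_1\cdots x_{k-1},\{0\}^{n_k-1},x_1\cdots x_k)$, and apply the iterated integral coproduct~\eqref{Icoproduct}. The $(n{-}1,1)$-part of $\Delta'$ is a sum over $1\le j\le n$ of terms $\I(z_0;z_1,\dots,\widehat{z_j},\dots,z_n;z_{n+1})\otimes\I(z_{j-1};z_j;z_{j+1})$, whose right factor equals $\log(z_{j+1}-z_j)-\log(z_{j-1}-z_j)$. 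Using that $\log(\zeta)=0$ for roots of unity together with the convention $\log(0)=0$ (via tangential-basepoint regularization in the formalism of~\cite{CMRR24}), this right factor lies in $\Log(\Tup_d)$ exactly when $z_j=0$; for $z_j\ne 0$ it either vanishes identically or equals a non-character logarithm like $\log(1-x_j)$, which projects to zero in $\Log(\Tup_d)$.

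The positions $j$ with $z_j=0$ partition into $k$ blocks, the $i$-th containing $n_i-1$ consecutive zeros between the entries $x_1\cdots x_{i-1}$ (with $x_0=1$) and $x_1\cdots x_i$ in $\vec z$. Removing any zero from the $i$-th block leaves the \emph{same} left factor $(-1)^k\Li_{n_1,\dots,n_i-1,\dots,n_k}(x_1,\dots,x_k)$, while the right factors telescope: the first zero contributes $-\log(x_1\cdots x_{i-1})$, the last contributes $\log(x_1\cdots x_i)$, any middle zeros (for $n_i\ge 4$) contribute $0$, and for $n_i=2$ the sole zero directly contributes $\log(x_i)$. In every case the net contribution of the $i$-th block is $\Li_{n_1,\dots,n_i-1,\dots,n_k}(x_1,\dots,x_k)\otimes\log(x_i)$ (the two factors of $(-1)^k$ cancelling), nonempty only when $n_i\ge 2$. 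Summing over all blocks and passing to the Lie coalgebra of indecomposables yields the claimed formula.

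The main technical subtlety is consistent bookkeeping of the regularized $\log(0)$ terms, so that the telescoping cancellations can be justified within the Hopf algebra of~\cite{CMRR24} (for instance via shuffle relations against $\I(0;0;\cdot)$). A cleaner alternative that avoids this issue entirely is to enrich the generating-function proof of Lemma~\ref{LemmaCoproductGeneratingFunction} by retaining the $\Log$-valued contributions that were dropped in the passage to $\overline{\Hc}(\Tup_d)$; an essentially identical computation then produces the compact identity $\delta_2^{0,1}\bigl(\Li(x_1,\dots,x_k;t_1,\dots,t_k)\bigr) = \Li(x_1,\dots,x_k;t_1,\dots,t_k)\otimes\bigl(t_1\log(x_1)+\cdots+t_k\log(x_k)\bigr)$, from which extracting the coefficient of $t_1^{n_1-1}\cdots t_k^{n_k-1}$ recovers the lemma.
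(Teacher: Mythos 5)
Your strategy coincides with the paper's (lift to the iterated-integral presentation, extract single-cut terms, project to the $\Log$ component), and your telescoping computation for the terms with $z_j=0$ is correct. However, the justification for discarding the terms with $z_j\ne0$ has a real gap: you claim that for $z_j\ne0$ the right tensor factor ``either vanishes identically or equals a non-character logarithm like $\log(1-x_j)$, which projects to zero in $\Log(\Tup_d)$.'' This fails whenever $n_i=1$. For example, if $n_i=1$ and $n_{i+1}>1$, then $z_{j-1}=x_1\cdots x_{i-1}$, $z_j=x_1\cdots x_i$, $z_{j+1}=0$, and the right factor is
\[
\I(z_{j-1};z_j;z_{j+1}) \;=\; \log(-x_1\cdots x_i)-\log\bigl(x_1\cdots x_{i-1}(1-x_i)\bigr) \;=\; \log(x_i)-\log(1-x_i),
\]
whose $\Log(\Tup_d)$-component is $\log(x_i)\ne0$. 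The same happens if $n_i=n_{i+1}=1$. So the $\Log$-condition on the right factor alone is not enough to rule these terms out.

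The reason they do not contribute is a different one, and it is precisely the condition the paper imposes: removing a nonzero entry $z_j$ drops the number of nonzero entries in the left factor to $k-1$, so the left factor has depth $\le k-1$ and the wedge $L\wedge\log(x_i)$ already vanishes in $\gr_k^{\Dc}$ (while $L\wedge\Li_1(x_i)$ lands in the $\Lambda^2\overline{\Lc}$-summand, not in $\overline{\Lc}\otimes\Log$). In other words, you must constrain the depth of the \emph{left} factor, not just the $\Log$-membership of the right one; the paper's argument derives ``the removed block consists of zeros'' from exactly this depth constraint. Your proof should be amended accordingly. The generating-function alternative you sketch is sound and would side-step this case analysis: the identity $\delta_2\bigl(\Li(x_1,\dots,x_k;t_1,\dots,t_k)\bigr)=\Li(x_1,\dots,x_k;t_1,\dots,t_k)\otimes\bigl(t_1\log(x_1)+\dots+t_k\log(x_k)\bigr)$ is indeed equivalent to the lemma upon extracting the coefficient of $t_1^{n_1-1}\cdots t_k^{n_k-1}$, and is a nice reformulation. (As an aside: the superscript in the lemma's statement should presumably read $\delta_2^{1,0}$ rather than $\delta_2^{0,1}$, in keeping with the notation $L^{p,q}_k$ of \S\ref{sectionPreparationLemmaMain}.)
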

\begin{proof}
By~\eqref{LinDef}, $\Li_{n_1,\dots,n_k}^{\Lc}(x_1,\dots,x_k)=(-1)^k\I^\Lc(z_0;z_1,\dots,z_n;z_{n+1})$, where we set $z_0=0$, $z_1=1$, and $(z_2,\dots,z_{n+1})=(\{0\}^{n_1-1},x_1,\{0\}^{n_2-1},x_1x_2,\dots,\{0\}^{n_k-1},x_1\cdots x_k)$ (as before, $\{0\}^{m}$ is a shorthand for $0,\dots,0$, repeated $m$ times). Looking at the coproduct formula~\eqref{Icoproduct}, since we are working in the Lie coalgebra of indecomposables, the only nonzero terms are
    \[T_{i,j} \coloneqq  \I^\Lc(z_{0};z_{1},\dots,z_{i-1},z_{i},z_{j},z_{j+1},\dots;z_{n+1})\otimes \I^\Lc(z_{i};z_{i+1},\dots,z_{j-1};z_{j}),\quad 0\leq i<j\leq n+1.\]
Next, since the left hand tensor should be nonzero in $\gr_k^{\Dc}$, we must have $z_{i+1}=\dots=z_{j-1}=0$, and since $\I^{\Lc}(a;\{0\}^m;b)=0$ for $m>1$, we must have $j=i+2$.
Now if $T_{i,i+2}$ is as above, and $z_i=x_1\cdots x_{l-1}$, then necessarily $n_{l}>1$, and
    \[\Li_{n_1,\dots,n_{l}-1,\dots,n_k}^{\Lc}(x_1,\dots,x_k)\otimes\log(x_{l}) = 
    \begin{cases}
    T_{i,i+2}+T_{i+n_{i}-2,i+n_{i}},& n_i>2,\\
    T_{i,i+2},& n_i=2.
    \end{cases}\]
Summing up these identities gives the claim.
\end{proof}

Our next goal is to extend the truncated symbol map $\STup^{\bullet}$ to this setting.
For that,  consider a double complex
\[
S_k^{p,q}(V)= \Biggl( \bigoplus_{\substack{W\subseteq V \\ \dim(W)=k}}\Omega^{p}\bigl( {\StL}\otimes \Sbb\bigr) (W)\Biggr)\otimes \Lambda^{q} V
\cong
 \bigoplus_{\substack{W\subseteq V \\ \dim(W)=k}}\Omega^{p}( {\StL})(W)\otimes \Sbb^{\bullet-k}(W)\otimes \Lambda^{q} V;
\]
with differentials $d_1^{p,q}\colon S_k^{p,q}(V)\lra S_k^{p+1,q}(V)$ and $d_2^{p,q}\colon S_k^{p,q}(V)\lra S_k^{p,q+1}(V)$ defined as follows. 
We put  $d_1^{p,q}=d^p\otimes \Id$. Recall the map $\partial\colon \Sbb^{\bullet}W \lra\Sbb^{\bullet-1}W \otimes W$ given by the formula
\[
\partial(v_1\cdots v_n)=\sum_{i=1}^n (v_1  \cdots  \widehat{v_i}\cdots v_n)\otimes v_i.
\] 
The map ${\Id}\otimes {\partial}\colon \StL(W)\otimes \Sbb^{\bullet}W \lra \StL(W)\otimes \Sbb^{\bullet-1}W \otimes W$ induces a map 
\[
\Omega^p ({\Id}\otimes {\partial}) \colon\bigoplus_{\substack{W\subseteq V \\ \dim(W)=k}}\Omega^{p}\bigl( {\StL}\otimes \Sbb) (W) \lra \bigoplus_{\substack{W\subseteq V \\ \dim(W)=k}}\Omega^{p}\bigl({\StL}\otimes \Sbb)(W) \otimes W 
\]
Finally, the differential $d_2^{p,q}$ is the composition of the map
\[
\Omega^p ({\Id}\otimes {\partial}) \otimes {\Id}\colon \bigoplus_{\substack{W\subseteq V \\ \dim(W)=k}}\Omega^{p}\bigl( {\StL}\otimes \Sbb) (W)  \otimes \Lambda^q V
\lra \bigoplus_{\substack{W\subseteq V \\ \dim(W)=k}}\Omega^{p}\bigl({\StL}\otimes \Sbb)(W) \otimes W \otimes \Lambda^q V
\]
and the map 
\[
{\Id} \otimes {\wedge} \colon
 \bigoplus_{\substack{W\subseteq V \\ \dim(W)=k}}\Omega^{p}\bigl({\StL}\otimes \Sbb)(W) \otimes W \otimes \Lambda^q V
\lra 
\bigoplus_{\substack{W\subseteq V \\ \dim(W)=k}}\Omega^{p}\bigl({\StL}\otimes \Sbb)(W) 
\otimes \Lambda^{q+1} V
\]
One can easily check the identities $(d_1)^2=(d_2)^2=d_1d_2+d_2d_1=0,$ so
$\bigl(S^{p, q}_k(V), d_1, d_2\bigr)$ is a double complex.
  
\begin{lemma} 
The map
\[
\STup^{p,q}={\STup^{p}}\otimes {\Id} \colon L_k^{p,q}(\Tup_d)\lra S_k^{p,q}(V)
\]
is a map of double complexes. 
\end{lemma}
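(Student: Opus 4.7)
The plan is to verify the two compatibilities $d_1 \circ \STup^{p,q} = \STup^{p+1,q} \circ \delta_1^{p,q}$ and $d_2 \circ \STup^{p,q} = \STup^{p,q+1} \circ \delta_2^{p,q}$ separately. The first is essentially free: recall that $\delta_1^{p,q} = \overline{\delta}^{p} \otimes \Id$ and $d_1^{p,q} = d^{p} \otimes \Id$, and Lemma~\ref{LemmaTruncatedSymbolOnComplexes} (in its Lie coalgebra version) already asserts that the maps $\STup^{p}$ assemble into a morphism of complexes from $\gr_k^{\Dc}\bigl(\Omega^{\bullet}\LL^{\Lc}(\Tup_d)\bigr)$ to $\bigoplus_{\dim(W)=k}\Omega^{\bullet}(\St^{\Lc}\otimes \Sbb)(W)$. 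Tensoring this compatibility with the identity on $\Lambda^q\Log(\Tup_d)\cong \Lambda^q V$ gives the $d_1$-statement.

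The content lies in the $d_2$-compatibility. Both $\delta^{p+q}$ and the bar-style differential on the codomain are derivations on their respective wedge powers (more precisely, $\delta$ is a graded derivation on $\Lambda^{\bullet}\widetilde{\LL}^{\Lc}(\Tup_d)$ and $\Omega^{p}(\Id \otimes \partial)$ is a sum of derivations on the $p$-fold tensor product), and logs are $\delta$-closed. This allows us to reduce the verification to elements of $L^{1,0}_k = \gr_k^{\Dc}\LL^{\Lc}(\Tup_d)$, namely the generators $\Li^{\Lc}_{n_1,\dots,n_k}(x_1,\dots,x_k)$.

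On such a generator, Lemma~\ref{LemmaSecondDifferential} gives
\[
\delta_2^{1,0}\bigl(\Li^{\Lc}_{n_1,\dots,n_k}(x_1,\dots,x_k)\bigr)= \sum_{\substack{1\le i\le k\\ n_i>1}}\Li^{\Lc}_{n_1,\dots,n_i-1,\dots,n_k}(x_1,\dots,x_k)\otimes \log(x_i),
\]
so by the formula of Proposition~\ref{PropositionTruncatedSymbolMap},
\[
\STup^{1,1}\bigl(\delta_2^{1,0}\Li^{\Lc}_{n_1,\dots,n_k}\bigr) = \sum_{n_i>1} \Lup^{\Lc}[e_1,\dots,e_k]\otimes \frac{e_i^{n_i-2}}{(n_i-2)!}\prod_{j\ne i}\frac{e_j^{n_j-1}}{(n_j-1)!}\otimes \log(x_i).
\]
On the other hand, $\STup^{1,0}(\Li^{\Lc}_{n_1,\dots,n_k}) = \Lup^{\Lc}[e_1,\dots,e_k]\otimes \prod_i \frac{e_i^{n_i-1}}{(n_i-1)!}$, and a direct computation of $\partial$ on the symmetric monomial (each factor $e_i^{n_i-1}/(n_i-1)!$ contributes the term with $e_i$ removed and $(n_i-1)!$ replaced by $(n_i-2)!$) followed by wedging into $\Lambda^1 V$ produces the same expression, provided we identify $\log(x_i)\in \Log(\Tup_d)$ with $e_i \in V$ via the canonical isomorphism $\Log(\Tup_d)\cong V$. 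This pointwise match on generators, combined with the derivation argument, yields the $d_2$-compatibility for all $(p,q)$.

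The only mildly technical point is the derivation reduction: one must observe that elements of $L^{p,q}_k$ are built by wedging $p$ factors from $\LL^{\Lc}$ with $q$ factors from $\Log$, that $\delta$ of a log vanishes, and that $\STup^{p,q}$ is compatible with this wedging (since it is defined factor-wise as $\STup^{p}\otimes \Id_{\Lambda^q}$ and $\Omega^p(\Id \otimes \partial)$ is itself a sum over positions). Once the reduction to $p=1,q=0$ is made, the matching above is purely computational and presents no serious obstacle.
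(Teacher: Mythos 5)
Your proof is correct and is essentially the argument the paper intends when it writes "follows easily from Lemma~\ref{LemmaSecondDifferential}": reduce the $d_1$-compatibility to Lemma~\ref{LemmaTruncatedSymbolOnComplexes}, reduce the $d_2$-compatibility by the derivation/factor-wise structure to the case $p=1$, $q=0$, and match the generator computation in Lemma~\ref{LemmaSecondDifferential} against $\Id\otimes\partial$ on $\Sbb^{\bullet}W$ under the identification $\Log(\Tup_d)\cong V$. You have simply made explicit what the paper leaves implicit.
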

\begin{proof}
The statement follows easily from Lemma \ref{LemmaSecondDifferential}.
\end{proof}

\subsection{Proof of Lemma \ref{LemmaMain}} \label{SectionLemmaMain}
For $d=1, 2$ the statement is obvious, so we assume that $d\geq 3$. Consider an element $a\in \LL_n(\Tup_d)$ such that $\overline{\Delta}^{[2]}(a)=0$. 

Our first goal is to show that 
\begin{equation}\label{FormulaCoproductLiesInDepthTwo}
\overline{\Delta}^{[1]}(a)\in \Dc_2\Omega^2 \LL_n( \Tup_d).
\end{equation}
Consider the smallest $k$ such that $\overline{\Delta}^{[1]}(a)\in \Dc_k\Omega^2 \LL_n( \Tup_d)$; the image of $\overline{\Delta}^{[1]}(a)$ in $\gr_k(\Omega^2 \LL_n( \Tup_d))$ is not zero. If $k\geq 3$, we have a commutative square
\begin{equation} \label{FormulaCommutativeSquare}
\begin{tikzcd}[row sep = large, column sep = huge]
\gr_k(\Omega^2 \LL( \Tup_d))\arrow[d,"\STup"] \arrow[r,"\overline{\Delta}^{[1]}\otimes 1"] &
\gr_k(\Omega^3 \LL( \Tup_d)) \arrow[d,"\STup"]  \\
\hspace{-1em} \displaystyle\bigoplus_{\substack{W\subseteq V \\ \dim(W)=k}} \!\! \Omega^2\left({\StH}\otimes \Sbb \right)(W)\arrow[r,"\overline{\Delta}^{[1]}\otimes 1"]  &  \hspace{-1em} \displaystyle \bigoplus_{\substack{W\subseteq V \\ \dim(W)=k}} \!\! \Omega^3\left({\StH}\otimes \Sbb \right)(W).
\end{tikzcd}
\end{equation}
The vertical maps in (\ref{FormulaCommutativeSquare}) are isomorphisms by  Lemma \ref{LemmaConsequenceOfInductionHyothesis}, and the bottom horizontal map is injective by the Koszulity property of the Steinberg module. It follows that the top horizontal map is also injective.  Since $(\overline{\Delta}^{[1]} \otimes 1)\overline{\Delta}^{[1]}(a)=\overline{\Delta}^{[2]}(a)=0$, we obtain a contradiction. This proves \eqref{FormulaCoproductLiesInDepthTwo}.

Assume that the projection of $a$ to $\LL_n^{\Lc}(\Tup_d)$ lies in $\Dc_2\LL_n^{\Lc}(\Tup_d).$ Our next goal is to show that this implies the lemma. Indeed, in this case, there exists $\tilde{a}\in \Dc_2\LL_n(\Tup_d)$ such that $a-\tilde{a}$ lies in $\LL_n(\Tup_d)\cap \overline{\Hc}_+(\Tup_d)^2$ and satisfies an identity $\overline{\Delta}^{[2]}(a-\tilde{a})=0$. By \eqref{FormulaCoproductLiesInDepthTwo}, $\overline{\Delta}^{[1]}(a-\tilde{a})$ lies in  $\Dc_2\Omega^2 \LL_n( \Tup_d)$ and so $(a-\tilde{a})$ lies in 
\[
m( \Dc_2\Omega^2 \LL_n( \Tup_d) )=\sum_{\substack{p\colon \Tup_{d_1} \! {\times} \Tup_{d_2} \! \to \Tup_d  \\n_1+n_2 =n}}m \Bigl( p_*\bigl(\Dc_{1}\LL_{n_1}(\Tup_{d_1})\otimes \Dc_{1}\LL_{n_2}(\Tup_{d_2})\bigr )\Bigr) \subseteq  \Dc_2 \LL_n( \Tup_d)
\]
by the properties of the Dynkin operator $\Dee$. It follows that $a\in \Dc_2 \LL_n( \Tup_d)$ which implies the statement of the lemma.

From now on, we work in the Lie coalgebra setting. We denote the image of $a$ in $\LL_n^{\Lc}(\Tup_d)$ by the same letter; we have $\overline{\delta}^{[2]}(a)=0$. In \S \ref{sectionPreparationLemmaMain}, we introduced spaces
\[
L^{p,q}_k(\Tup_d)=\gr_k^{\Dc}\bigl(\Omega^{p}\LL^{\Lc}(\Tup_d)\bigr) \otimes \Lambda^{q} \Log(\Tup_d) 
\]
and
\[
S_k^{p,q}(V)= \bigoplus_{\substack{W\subseteq V \\ \dim(W)=k}}\Omega^{p}\bigl( {\StL}\otimes \Sbb\bigr) (W)\otimes \Lambda^{q} V.
\]
Lemma \ref{LemmaConsequenceOfInductionHyothesis} and the assumptions of Lemma \ref{LemmaMain} imply that the map 
\[
\STup\colon L^{p,q}_k(\Tup_d) \lra S_k^{p,q}(V)
\]
is an isomorphism for $p+q\geq 2$ (for $k=d$ and $p=1$ we use the inductive assumption from the statement of Lemma~\ref{LemmaMain} in weights $<n$).

Our next goal is to show that 
\begin{equation}\label{FormulaProofOfMainLemma1}
\delta(a)\in \Dc_2\Omega^2\widetilde{\LL}^{\Lc}(\Tup_d).
\end{equation}
Assume the opposite and consider the smallest $k$ such that $\delta(a)\in \Dc_k\Omega^2\widetilde{\LL}^{\Lc}(\Tup_d);$ we have $k\geq 3$. By the choice of $k$, the image of $\delta(a)$ in
\[
\gr_k^{\Dc}\Omega^2\widetilde{\LL}^{\Lc}(\Tup_d)\cong  
L^{2,0}_k(\Tup_d)\oplus L^{1,1}_k(\Tup_d)
\]
is not equal to zero.  The projection of $\delta(a)$ to the summand $L^{2,0}_k(\Tup_d)$ equals to $\overline{\delta}(a)$ and vanishes by  (\ref{FormulaCoproductLiesInDepthTwo}).  Thus $\delta(a)$ lies in $L^{1,1}_{k}(\Tup_d)$. By the coJacobi identity, we have $\delta^{1,1}_1(\delta(a))=\delta^{1,1}_2(\delta(a))=0$. We have a commutative square
\[
\begin{tikzcd}[row sep = large, column sep = large]
L^{1,1}_{k}(\Tup_d)\arrow[d,"\STup"] \arrow[r,"\delta_1^{1,1}"] &
L^{2,1}_{k}(\Tup_d) \arrow[d,"\STup"]  \\
S^{1,1}_{k}(V)\arrow[r,"d_1^{1,1}"] &   S^{2,1}_{k}(V).
\end{tikzcd}
\]
The vertical maps are isomorphisms and the map $d_1^{1,1}=d\otimes \Id$ is injective by Proposition \ref{PropositionBarCobarCoLie}.  Thus the map $\delta^{1,1}_1$ is injective and $\delta(a)$ vanishes  in $\gr_k^{\Dc}\Omega^2\widetilde{\LL}^{\Lc}(\Tup_d)$. This contradicts the choice of $k$, so we have established (\ref{FormulaProofOfMainLemma1}).

Next, assume that the projection of $\delta(a)$ to $\gr_2^{\Dc}\Omega^2\widetilde{\LL}^{\Lc}(\Tup_d)$ equals to
\[
(a',a'')\in L^{2,0}_2(\Tup_d)\oplus L^{1,1}_2(\Tup_d).
\]
Consider the double complex
\[
\begin{tikzcd}[row sep = large, column sep = large]
L^{1,0}_{2}(\Tup_d)\arrow[d,"\delta_2^{1,0}"] \arrow[r,"\delta_1^{1,0}"] &
L^{2,0}_{2}(\Tup_d) \arrow[d,"\delta_2^{2,0}"] \\
L^{1,1}_{2}(\Tup_d) \arrow[d,"\delta_2^{1,1}"]\arrow[r,"\delta_1^{1,1}"] &  L^{2,1}_{2}(\Tup_d) \\
L^{1,2}_{2}(\Tup_d)& 
\end{tikzcd}
\]
By coJacobi, $\delta_1^{1,1}(a'')=-\delta_2^{2,0}(a'),$ and so $d_1^{1,1}(\STup(a''))=-d_2^{2,0}(\STup(a')).$ Lemma \ref{LemmaSecondDifferential} implies that
\[
\operatorname{Im}\bigl(d_2^{2,0}\bigr)\subseteq \bigoplus_{\substack{W\subseteq V \\ \dim(W)=2}} \!\! \left( \Omega^{2}\bigl( {\StL} \otimes \Sbb\bigr)(W) \otimes W\right).
\]
Thus,
$d_1^{1,1}(\STup(a''))$ lies in the subspace 
\[
\bigoplus_{\substack{W\subseteq V \\ \dim(W)=2}} \!\! \left( \Omega^{2}\bigl( {\StL} \otimes \Sbb\bigr)(W) \otimes W\right) \, \subseteq \, \,\, \bigoplus_{\substack{W\subseteq V \\ \dim(W)=2}} \!\! \Omega^{2}\bigl( {\StL} \otimes \Sbb\bigr)(W) \otimes V\,=\,S^{2,1}(V).
\]
We have a commutative diagram
\[
\begin{tikzcd}[row sep = huge]
0\arrow[r] & \displaystyle \hspace{-1em} \smash{\bigoplus_{\substack{W\subseteq V \\ \dim(W)=2}}} \!\!   \bigl( {\StL} \otimes \Sbb\bigr)(W)\otimes W  \arrow[hook, d,"d_1^{1,0}\otimes \Id", shorten <=1ex] \arrow[r] &
S^{1,1}_{2}(V) \arrow[hook, d,"d_1^{1,1}"]\arrow[r] & \displaystyle \hspace{-1em} \smash{\bigoplus_{\substack{W\subseteq V \\ \dim(W)=2}}} \!\!   \bigl( {\StL} \otimes \Sbb\bigr)(W)\otimes V/W   \arrow[hook, d,"d_1^{1,0}\otimes \Id", shorten <=1ex] \arrow[r] &0 \\
0\arrow[r] & \displaystyle \hspace{-1em}  \bigoplus_{\substack{W\subseteq V \\ \dim(W)=2}} \!\! \Omega^{2}\bigl( {\StL} \otimes \Sbb\bigr)(W) \otimes W \arrow[r] &  S^{2,1}_{2}(V) \arrow[r]  & \displaystyle \hspace{-1em} \bigoplus_{\substack{W\subseteq V \\ \dim(W)=2}} \!\! \Omega^{2}\bigl( {\StL} \otimes \Sbb\bigr)(W) \otimes V/W \arrow[r] & 0
\end{tikzcd}
\]
with exact rows and injective vertical maps, so the left square is a pullback square.
It follows that $\STup(a'')$ lies in the subspace 
\[
\bigoplus_{\substack{W\subseteq V \\ \dim(W)=2}} {\StL}(W) \otimes \Sbb^{\bullet}(W)\otimes W \,\subseteq\, S^{1,1}_2(V).
\]

We claim that there exists an element $\tilde{a}\in \Dc_2 \LL^{\Lc}(\Tup_d)$ such that $\overline{\delta}(\tilde{a})$ and $\overline{\delta}(a)$ coincide in $ L^{2,0}_{2}(\Tup_d)$. Since the truncated symbol map $\STup\colon  L^{1,0}_{2}(\Tup_d) \lra  S^{1,0}_{2}(V)$ is surjective, it is sufficient to find an element $s\in S^{1,0}_{2}(V)$ with $d_1^{1,0}(s)=\STup(a')$ and  $d_2^{1,0}(s)=\STup(a'')$. For that, notice that the complex 
\[
0\lra \!\!\!\! \bigoplus_{\substack{W\subseteq V \\ \dim(W)=2}} \!\!\! \StL(W) \otimes \Sbb^{\bullet}(W)  \xrightarrow{d_{2}^{1,0}}
\!\!\!\! \bigoplus_{\substack{W\subseteq V \\ \dim(W)=2}} \!\!\! \StL(W) \otimes \Sbb^{\bullet}(W)\otimes W
\xrightarrow{d_{2}^{1,1}} \!\!\!\! \bigoplus_{\substack{W\subseteq V \\ \dim(W)=2}} \!\!\! \StL(W) \otimes \Sbb^{\bullet}(W)\otimes \Lambda^2 V
\]
is exact. Since $d_{2}^{1,1}(\STup(a''))=0$, there exists a unique $s$ in
\[
\bigoplus_{\substack{W\subseteq V \\ \dim(W)=2}} \StL(W)\otimes \Sbb^{\bullet}(W)  = S^{1,0}_{2}(V)
\]
such that $d_{2}^{1,0}(s)=\STup(a'').$ Then we have
\[
d_2^{2,0}(d_1^{1,0}(s))=
-d_1^{1,1}(d_2^{1,0}(s))=-d_1^{1,1}(\STup(a''))=d_2^{2,0}(\STup(a')).
\]
Since $d_2^{2,0}$ is injective,  we have
\[
d_1^{1,0}(s)=d_1^{1,0}(\STup(a')).
\]
This proves the existence of an element $\tilde{a}\in \Dc_2 \LL^{\Lc}(\Tup_d)$ that we searched for.

Since $\overline{\delta}(a-\tilde{a})=0$, element $(a-\tilde{a})$  lies in $\Dc_1\Lc(\Tup_d)$ by Proposition \ref{PropositionWeakDepthConjecture}, and so lies in $\Dc_1\LL(\Tup_d)$  by Remark \ref{RemarkDepthFiltration}. Thus, $a$ lies in $\Dc_2\LL^{\Lc}(\Tup_d)$. This finishes the proof.\hfill\qedsymbol

\subsection{Proof of Theorem \ref{TheoremMain}}
\label{SectionLin11}

As explained in the introduction, we will prove Theorem \ref{TheoremMain} in the setting of the Hopf algebra of formal polylogarithms. The precise statement is as follows.

\begin{theorem} \label{TheoremMainFormal}
For any $n_1,\dots,n_d>0$ there exists $N>0$ such that $\Li_{n_1,n_2,\dots, n_d}(x_1^N,x_2^N,\dots,x_d^N)\in\Hc_n(\Q(\Tup_d))$ can be expressed as a polynomial with rational coefficients whose arguments are multiple polylogarithms of the following form:
    \[\Li_{m-k+1,\underbrace{\scriptstyle1,\dots,1}_{k-1}}(\zeta_1x_1^{a_{11}}\cdots x_d^{a_{d1}},\dots,\zeta_kx_1^{a_{1k}}\cdots x_d^{a_{dk}})\]
of depth $k\leq d$ and weight $m\leq n$, where $(a_{ij})\in \Z^{d\times k}$ has rank $k$ and $\zeta_i$ are roots of unity.
\end{theorem}

\begin{proof}
Given any nonzero vector $\lambda\in\Q^d$ consider the following element $\xi_{\lambda}\in \LL_{n}(\Tup_d)$:
    \[\xi_{\lambda} = \sum_{m_1+\dots+m_d=n}\Li_{m_1,m_2,\dots,m_d}(x_1,x_2,\dots,x_d)\lambda_1^{m_1-1}\cdots\lambda_d^{m_d-1},\]
where the sum is taken over all $d$-tuples with $m_i\in\Z_{>0}$. The mapping $\STup$ of Theorem~\ref{TheoremMain2} sends $\xi_{\lambda}$ to
    \[
    \STup(\xi_{\lambda}) = \sum_{m_1+\dots+m_d=n}\Lup[e_1,e_2,\dots,e_d] \otimes \frac{(\lambda_1e_1)^{m_1-1}\cdots (\lambda_de_d)^{m_d-1}}{(m_1-1)!\cdots (m_d-1)!} = \Lup[e_1,e_2,\dots,e_d] \otimes \frac{w^{n-d}}{(n-d)!},
    \]
where $w:=\lambda_1e_1+\dots+\lambda_de_d$ and in the second equality we used the multinomial theorem. By Corollary~\ref{thm:Stn_I_basis}, $\StH(V)$ is spanned by Coxeter pairs $[\Pc]\otimes[\Qc]$ with $P_1=Q_1=\langle w\rangle$. Moreover, by Proposition~\ref{PropositionNonGenericPairs}, any non-generic Coxeter pair is decomposable, so by Lemma~\ref{LemmaCoxeterParameterization} $\StL(V)$ is spanned by elements of the form $\Lup^{\Lc}[v_1,\dots,v_{d-1},w]$. Applying dihedral symmetry for $\Lup^{\Lc}$ from Corollary~\ref{CorollaryDihedral}, we get that elements of the form $\Lup^{\Lc}[w,v_2,\dots,v_{d}]$ also span $\StL(V)$. Therefore,
    \[
    \STup(\xi_{\lambda})^{\Lc} = \sum_{i}c_i\Lup^{\Lc}[w,v_2^{i},\dots,v_d^{i}] \otimes \frac{w^{n-d}}{(n-d)!}
    \]
for some rational numbers $c_i$. Then Theorem~\ref{TheoremMain2} implies that for some collection of $A_i\in\GL_d(\Q)$ we have
    \begin{equation} \label{EquationLinearCombinationLowerDepth}
    \xi_{\lambda}^{\Lc} - \sum_{i}c_i\,A_i\Li^{\Lc}_{n-d+1,1,\dots,1}(x_1,\dots,x_d) \in \mathcal{D}_{d-1}\Lup_n^{\Lc}(\Tup_d).
    \end{equation}
Since $\lambda$ was arbitrary, and since the image of $\lambda\mapsto(\lambda_1^{m_1-1}\cdots\lambda_d^{m_d-1})_{m_1+\dots+m_d=n}$ spans $\Q^{\binom{n-1}{d-1}}$, taking a suitable linear combination of~\eqref{EquationLinearCombinationLowerDepth} for sufficiently many different choices of $\lambda$, we get that
    \begin{equation} \label{EquationMPLLowerDepth}
    \Li_{n_1,\dots,n_d}^{\Lc}(x_1,\dots,x_d) - \sum_{j}c_j'\,A_j'\Li^{\Lc}_{n-d+1,1,\dots,1}(x_1,\dots,x_d) \in \mathcal{D}_{d-1}\Lup_n^{\Lc}(\Tup_d).
    \end{equation}
    
Choose a lift of~\eqref{EquationMPLLowerDepth} to $\Hc_n(\Tup_d)$. Recall that $\Hc_n(\Tup_d)$ is given as a filtered colimit of $\Hc_{\Tup'}$ over all isogenies $p\colon\Tup'\to\Tup_d$. Among all such isogenies, the isogenies $p_N\colon\Tup_{d}\to\Tup_d$, $N\in\Z_{>0}$, given by $p_N(x_1,\dots,x_d)=(x_1^N,\dots,x_d^N)$ form a cofinal subcategory. Therefore, there exists an $N>0$ such that the element
    \[p_N^{*}\Li_{n_1,\dots,n_d}(x_1,\dots,x_d) = \Li_{n_1,\dots,n_d}(x_1^N,\dots,x_d^N)\in \mathcal{H}_{\Tup_{d}}\] 
can be written as a linear combination of $\Li_{n-d+1,1,\dots,1}$, multiple polylogarithms of depth $\leq d-1$, and products of polylogarithms of weights $<n$ and depth $\leq d$. Applying the same argument recursively to each polylogarithm of depth $\leq d-1$ or weight $<n$ and depth $\leq d$, we get a polynomial expression using only elements of type $\Li_{m-k+1,1\dots,1}$, as claimed.
\end{proof}

\appendix
\section{Polylogarithms as functions on real tori}
In this appendix we outline an alternative form of Theorem~\ref{TheoremMain2}, by considering instead of $\LL_n(\Tup^d)$ a certain space of functions on the real torus $\R^d/\Z^d$, obtained from the power series expansion of multiple polylogarithms.

\subsection{The Steinberg module, cones, and partial fraction identities}
\label{sec:steinbergcones}
Let us describe a relation between the Steinberg module $\St_d(\Q)$, the algebra of rational polyhedral cones, and reciprocals of products of linear forms. The main references for this section are the papers of Khovanskii-Pukhlikov~\cite{KP92a,KP92b} and Brion-Vergne~\cite{BV99}.

Let $V$ be a finite-dimensional vector space over $\Q$, and let $d=\dim(V)$. We denote $V_{\R}=V\otimes_{\Q}\R$. A rational polyhedral cone is a subset of $V_{\R}$ of the form $C(v_1,\dots,v_N)=\R_{\geq0}v_1+\dots+\R_{\geq0}v_N$, $v_i\in V$. 
\begin{definition}
The cone algebra $\Cc(V)$ is the $\Q$-subspace of the space of $\Q$-valued functions on $V_{\R}$ generated by the indicator functions $[C]$ of all rational polyhedral convex cones $C\subseteq V_{\R}$. 
\end{definition}
Note that $\Cc(V)$ is generated by simplicial cones, i.e., cones of the form $C(v_1,\dots,v_k)$ where $v_1,\dots,v_k$ are linearly independent. Let $\Cc_0(V)$ be the subspace of $\Cc(V)$ spanned by cones of positive codimension, and let $\Cc_{\Lc}(V)$ be the subspace spanned by cones containing a line in some direction; we also set $\Cc_{0,\Lc}(V)=\Cc_{0}(V)+\Cc_\Lc(V)$.

\begin{proposition} \label{PropositionSteinbergConesLinearForms}
\begin{enumerate}
\item\label{PropositionSteinbergConesLinearFormsParti} We have the following exact sequence
	 \[
	0\longrightarrow \Cc_{0,\Lc}(\Q^d) \longrightarrow \Cc(\Q^d)\longrightarrow\St_{d}(\Q)\longrightarrow 0,
	\]
where the mapping from $\Cc(\Q^d)$ to $\St_{d}(\Q)$ sends $[C(v_1,\dots,v_d)]$ to $\sgn\det(v_1,\dots,v_d)[v_1,\dots,v_d]$ and it sends any lower-dimensional cone to $0$.

\item\label{PropositionSteinbergConesLinearFormsPartii} The $\Q$-linear map $\rho\colon\St_{d}(\Q) \to \Q(z_1,\dots,z_d)$ sending
	\[[v_1,\dots,v_d] \mapsto \frac{\det(v_1,\dots,v_d)}{\prod_{j=1}^{d}\langle v_j, z\rangle}\]
	is well-defined and injective.
\end{enumerate}
\end{proposition}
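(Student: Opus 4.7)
The plan is to treat parts (i) and (ii) in tandem by identifying the composition $\rho\circ\phi$ (where $\phi$ denotes the map in (i)) with the classical Laplace valuation of polyhedral cones, whose kernel has been computed by Khovanskii--Pukhlikov and Brion--Vergne.

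For (i), I would define $\phi\colon\Cc(\Q^d)\to\St_d(\Q)$ on generators by the prescription in the statement: $[C(v_1,\dots,v_d)]\mapsto\sgn\det(v_1,\dots,v_d)\,[v_1,\dots,v_d]$ for full-dimensional simplicial cones, and $\phi=0$ on lower-dimensional ones. To extend to all of $\Cc(\Q^d)$ I check compatibility with inclusion--exclusion; since all such relations are generated modulo $\Cc_0(\Q^d)$ by star subdivisions of simplicial cones, it suffices to verify that, when $w=\sum_i\lambda_iv_i$ with $\lambda_i>0$ lies in the interior of $C(v_1,\dots,v_d)$, the subdivision identity $[C(v_1,\dots,v_d)]=\sum_{i=1}^d[C(v_1,\dots,\widehat{v_i},\dots,v_d,w)]$ in $\Cc(\Q^d)/\Cc_0$ is sent by $\phi$ to a true relation in $\St_d(\Q)$. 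Using $\det(v_1,\dots,\widehat{v_i},\dots,v_d,w)=(-1)^{d-i}\lambda_i\det(v_1,\dots,v_d)$ and the scaling relation for apartments, the image of this subdivision is exactly the Steinberg boundary relation from \S\ref{SectionSteinbergDefinition} applied to the tuple $(w,v_1,\dots,v_d)$. Subdivisions where $w$ lies on a face of, or outside, the cone are handled identically, with the signs $\sgn(\lambda_i)$ matching the signs in the Steinberg relation. Surjectivity of $\phi$ is immediate since apartments generate $\St_d(\Q)$, and the easy inclusion $\Cc_{0,\Lc}(\Q^d)\subseteq\ker\phi$ follows from the construction on $\Cc_0$ and from the fact that any line $\R v_1$ in a full-dimensional cone produces (modulo $\Cc_0$) a decomposition into two simplicial cones related by the reflection $v_1\mapsto -v_1$, whose $\phi$-images cancel via $\sgn\det(-v_1,v_2,\dots,v_d)=-\sgn\det(v_1,v_2,\dots,v_d)$ combined with the scaling relation $[-v_1,v_2,\dots,v_d]=[v_1,v_2,\dots,v_d]$.

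For the well-definedness part of (ii), the scaling and skew-symmetry relations for $\rho$ are immediate from multilinearity and alternation of $\det$. The Steinberg boundary relation becomes, after clearing denominators,
\[
\sum_{i=0}^{d}(-1)^i\det(v_0,\dots,\widehat{v_i},\dots,v_d)\,\langle v_i,z\rangle=0,
\]
which is the Laplace expansion along the bottom row of the $(d+1)\times(d+1)$ matrix with columns $(v_j,\langle v_j,z\rangle)$, $j=0,\dots,d$. Since $v_0,\dots,v_d\in\Q^d$ are $d+1$ vectors, any linear dependence $\sum_ja_jv_j=0$ yields $\sum_ja_j\langle v_j,z\rangle=0$, so the augmented columns are linearly dependent and the determinant vanishes.

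Finally, the composition $\rho\circ\phi\colon\Cc(\Q^d)\to\Q(z_1,\dots,z_d)$ sends
\[
[C(v_1,\dots,v_d)]\longmapsto\frac{|\det(v_1,\dots,v_d)|}{\prod_{j=1}^{d}\langle v_j,z\rangle},
\]
which is exactly the rational function representing the Laplace transform $\int_{C(v_1,\dots,v_d)}e^{-\langle x,z\rangle}\,dx$ of the cone. By the Khovanskii--Pukhlikov/Brion--Vergne theorem \cite{KP92a,KP92b,BV99}, this valuation on $\Cc(\Q^d)$ has kernel exactly $\Cc_{0,\Lc}(\Q^d)$. Combined with the inclusions already established, this simultaneously yields $\ker\phi=\ker(\rho\circ\phi)=\Cc_{0,\Lc}(\Q^d)$, completing the exact sequence in (i), and injectivity of the induced map $\St_d(\Q)\cong\Cc(\Q^d)/\Cc_{0,\Lc}(\Q^d)\to\Q(z_1,\dots,z_d)$, completing (ii). The main obstacle is the appeal to the Khovanskii--Pukhlikov/Brion--Vergne computation of the kernel; a self-contained alternative is an inductive argument on $d$ using iterated residues of $\rho(x)$ along hyperplanes $\{\langle v,z\rangle=0\}$, where each residue either annihilates a term (when $v$ is not parallel to any generator of any apartment appearing in $x$) or strictly reduces the number of denominator factors, eventually reaching the trivial case $d=1$.
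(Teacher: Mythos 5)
Your approach inverts the paper's logical order, and the inversion is where the gap lies. The paper proves part~\ref{PropositionSteinbergConesLinearFormsPartii} \emph{first}: well-definedness of $\rho$ by the Laplace-expansion determinant identity (same as yours), and injectivity by a self-contained induction. Concretely, using Lemma~\ref{LemmaSteinbergBasis} the apartments $[A]$ for $A\in U_d(\Q)$ (unit upper triangular) form a basis, and the recursion
\[
\rho([A]) = \frac{\rho([A_{d-1}])}{z_1a_{1,d}+\dots+z_{d-1}a_{d-1,d}+z_d}
\]
reduces linear independence of $\{\rho([A])\}_{A\in U_d(\Q)}$ over $\Q$ to linear independence of $\{(z_1\alpha_1+\dots+z_{d-1}\alpha_{d-1}+z_d)^{-1}\}_{\alpha\in\Q^{d-1}}$ over $\Q(z_1,\dots,z_{d-1})$, which is elementary. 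With $\rho$ already known to be injective, the map in part~\ref{PropositionSteinbergConesLinearFormsParti} is simply \emph{defined} as $\rho^{-1}\circ\Ic$, where $\Ic$ is the Laplace-transform valuation whose existence is cited from Khovanskii--Pukhlikov; well-definedness is then automatic, since both $\Ic$ and $\rho^{-1}$ are honest maps.

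You instead attempt to construct $\phi\colon\Cc(\Q^d)\to\St_d(\Q)$ \emph{directly} on simplicial cone generators and to extract injectivity of $\rho$ afterwards from $\ker(\rho\circ\phi)=\Cc_{0,\Lc}(\Q^d)$. The sign bookkeeping you do for the star-subdivision relation is correct (I checked: with $w=\sum_i\lambda_iv_i$, $\lambda_i>0$, the induced identity is precisely the boundary relation~\ref{strel3} applied to $(w,v_1,\dots,v_d)$). But the load-bearing claim --- that ``all such relations are generated modulo $\Cc_0(\Q^d)$ by star subdivisions of simplicial cones'' --- is asserted with no proof and no citation. This is a genuinely nontrivial structural statement about the presentation of the cone algebra: $\Cc(\Q^d)$ is defined as a space of actual $\Q$-valued functions, so ``the relations'' means \emph{all} $\Q$-linear relations among indicator functions, and reducing these to the star-subdivision moves (plus your unverified remark that points on faces or outside the cone ``are handled identically'') is exactly the kind of thing one would hope to cite from the valuations literature rather than establish in passing. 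Until that claim is justified, $\phi$ is not known to be well-defined, and with it your entire derivation of injectivity of $\rho$ collapses. Your closing sentence does gesture at a direct injectivity argument via iterated residues, which is the right instinct and close in spirit to the paper's proof, but it is only a sketch; if you carried that out first, you could then define $\phi=\rho^{-1}\circ\Ic$ as the paper does and the well-definedness problem would disappear.
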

\begin{proof}
Let us prove part \ref{PropositionSteinbergConesLinearFormsPartii} first. That the map is well-defined follows from the determinant identity
	\[\sum_{i=0}^{d}(-1)^i\det(v_0,\dots,\widehat{v_i},\dots,v_d)\,v_i = 0\]
that holds for any $v_0,\dots,v_d\in \Q^d$. Next, we prove injectivity. For $A\in\GL_d(\Q)$ we denote $[A]:=[Ae_1,\dots,Ae_d]\in\St_d(\Q)$, where $e_1,\dots,e_d$ is the standard basis of $\Q^d$. By Lemma~\ref{LemmaSteinbergBasis} it suffices to show that the elements $\rho([A])$, $A\in U_d(\Q)$ are linearly independent, where $U_d(\Q)$ is the set of unit upper triangular matrices. Note that for $A\in U_d(\Q)$ we have
    \begin{equation} \label{eq:partialfractioninduction}
    \rho([A]) = \frac{\rho([A_{d-1}])}{z_1a_{1,d}+\dots+z_{d-1}a_{d-1,d}+z_d},
    \end{equation}
where $A_{d-1}$ is the upper left $(d-1)\times (d-1)$-submatrix of $A=(a_{ij})$. Since rational functions $(z_1\alpha_{1}+\dots+z_{d-1}\alpha_{d-1}+z_d)^{-1}$, $\alpha_i\in\Q$ are $\Q(z_1,\dots,z_{d-1})$-linearly independent in $\Q(z_1,\dots,z_{d})$, linear independence of $\rho([A])$, $A\in U_d(\Q)$ trivially follows from~\eqref{eq:partialfractioninduction} by induction on $d$.

For part \ref{PropositionSteinbergConesLinearFormsParti} we first recall from~\cite[Prop. 4.1]{KP92b} that there exists a unique $\Q$-linear mapping $\Ic\colon \Cc(\Q^d)\to\Q(z_1,\dots,z_d)$, such that for any proper cone $C$ and $z$ in the interior of the dual cone $C^{*}$ we have $\Ic([C])(z)=\int_{C}e^{-\langle z,x\rangle} dx$. Taking $C=C(v_1,\dots,v_d)=A\R_{\geq0}^d$, where $A\in\GL_d(\Q)$ satisfies $Ae_i=v_i$, we compute 
    \[\Ic([C(v_1,\dots,v_d)]) = \int_{C}e^{-\langle z,x\rangle}dx = \lvert\det(A)\rvert\int_{\R_{\geq0}^d}e^{-\langle A^Tz,y\rangle}dy=\frac{\lvert\det(v_1,\dots,v_d)\rvert}{\langle v_1,z\rangle\dots \langle v_d,z\rangle}.\]
From this computation we see that the images of $\Ic$ and $\rho$ coincide. This implies that the mapping in the statement of part \ref{PropositionSteinbergConesLinearFormsParti} agrees with $\rho^{-1}\circ \Ic$ on simplicial cones, hence it is well-defined and surjective. Finally, from~\cite[Prop. 4.1 (A) (iii)]{KP92b},~\cite[Thm. 2.1]{KP92b}, and injectivity of Laplace transform (on piecewise constant functions in the region of convergence) it follows that $\Ker(\Ic)=\Cc_{0,\Lc}(\Q^d)$, proving the claim.
\end{proof}

The image of the map $\rho$ from Proposition~\ref{PropositionSteinbergConesLinearForms} consists of rational functions of degree $-d$ that vanish at infinity and can be written as linear combinations of products of reciprocals of linear forms (see~\cite[\S2]{BV99}). By~\cite[Theorem~1]{BV99} the image of $\rho$ generates a free $\Sbb(\Q^d)$-module, where the symmetric algebra acts on $\Q(z_1,\dots,z_d)$ as differential operators with constant coefficients. From this we get the following generalization, where we also apply duality to the element in the Steinberg module (see~\S\ref{SectionCoxeterPairs}).
\begin{corollary} \label{SteinbergPartialFractionsWeightN}
The mapping of $\Q$-vector spaces $\rho\colon\St_{d}(\Q)\otimes\Sbb(\Q^d) \to \Q(z_1,\dots,z_d)$ sending
	\[[v_1,\dots,v_d]\otimes \prod_{i=1}^{d}\frac{v_i^{n_i-1}}{(n_i-1)!} \mapsto \frac{\det(v^1,\dots,v^d)}{\prod_{i=1}^{d}\langle v^i, z\rangle^{n_i}},\]
where $v^1,\dots,v^d$ is a basis dual to $v_1,\dots,v_d$, is well-defined and injective. 
\end{corollary}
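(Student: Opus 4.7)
The plan is to factor $\rho$ as the composition of the weight-$d$ map with the action of $\Sbb(\Q^d)$ on $\Q(z_1,\dots,z_d)$ by constant-coefficient differential operators. Equip $\Q(z_1,\dots,z_d)$ with the $\Sbb(\Q^d)$-module structure where $v\in\Q^d$ acts as $-\partial_v := -\sum_i v_i\,\partial_{z_i}$; this action is well-defined and commutative since partial derivatives commute. Let $\rho_0\colon\St_d(\Q)\to\Q(z_1,\dots,z_d)$ denote the weight-$d$ version of the sought map, sending $[v_1,\dots,v_d]$ to $\det(v^1,\dots,v^d)/\prod_i\langle v^i,z\rangle$. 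Well-definedness and injectivity of $\rho_0$ follow from Proposition~\ref{PropositionSteinbergConesLinearForms}\ref{PropositionSteinbergConesLinearFormsPartii} composed with the natural isomorphism $\St_d(\Q)\cong\St_d(\Q^{\vee})$ induced by $[v_1,\dots,v_d]\mapsto[v^1,\dots,v^d]$ (this is consistent with the defining relations of $\St_d(\Q)$: rescaling $v_1\mapsto\lambda v_1$ rescales $v^1\mapsto\lambda^{-1}v^1$, which leaves the dual apartment unchanged, while antisymmetry and the three-term relation transfer routinely to the dual basis).

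Define $\rho(\alpha\otimes p):=p\cdot\rho_0(\alpha)$. As the tensor of two $\Q$-linear maps, $\rho$ is automatically well-defined on $\St_d(\Q)\otimes\Sbb(\Q^d)$. To verify that $\rho$ matches the closed-form expression in the statement, I would compute directly: since $\partial_{v_i}\langle v^j,z\rangle=\langle v^j,v_i\rangle=\delta_{ij}$, we have $\partial_{v_i}^{n_i-1}\bigl(1/\langle v^i,z\rangle\bigr)=(-1)^{n_i-1}(n_i-1)!\,\langle v^i,z\rangle^{-n_i}$, while cross-derivatives vanish. Applying $\prod_i(-\partial_{v_i})^{n_i-1}/(n_i-1)!$ to $\det(v^1,\dots,v^d)/\prod_j\langle v^j,z\rangle$ therefore yields $\det(v^1,\dots,v^d)/\prod_i\langle v^i,z\rangle^{n_i}$: the sign $(-1)^{n_i-1}$ from differentiating cancels the sign from $(-\partial_{v_i})^{n_i-1}$, and the factorials cancel with those in the denominator of $v_i^{n_i-1}/(n_i-1)!$.

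For injectivity, I would invoke \cite[Theorem~1]{BV99}, which asserts that the image of $\rho_0$ generates a \emph{free} $\Sbb(\Q^d)$-submodule of $\Q(z_1,\dots,z_d)$ under the differential-operator action. Concretely, if $\{\alpha_j\}$ is a $\Q$-basis of a finite-dimensional subspace of $\St_d(\Q)$ and $\sum_j p_j\cdot\rho_0(\alpha_j)=0$, then $\{\rho_0(\alpha_j)\}$ is $\Q$-linearly independent by injectivity of $\rho_0$, and the Brion--Vergne freeness then forces $p_j=0$ for all $j$; hence $\Ker(\rho)=0$. The main subtlety is matching the Brion--Vergne statement to our parametrization: one must verify that any $\Q$-basis of $\mathrm{image}(\rho_0)$ is a free $\Sbb(\Q^d)$-basis of the submodule it generates, which amounts to checking that $\mathrm{image}(\rho_0)$ is a graded submodule meeting each $\Sbb(\Q^d)$-orbit in a one-dimensional space. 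If a self-contained proof is preferred, one can instead fix the flag basis of Lemma~\ref{LemmaSteinbergBasis} and iterate the recursion~\eqref{eq:partialfractioninduction} applied to the dual basis, reducing linear independence of the functions $p\cdot\rho_0([A])$ for $A\in U_d(\Q)$ and $p\in\Sbb(\Q^d)$ to the obvious statement that $(z_1\alpha_1+\dots+z_{d-1}\alpha_{d-1}+z_d)^{-k}$, for varying $k\ge 1$ and $(\alpha_1,\dots,\alpha_{d-1})\in\Q^{d-1}$, are $\Q(z_1,\dots,z_{d-1})$-linearly independent in $\Q(z_1,\dots,z_d)$.
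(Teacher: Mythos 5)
Your proposal is correct and takes the same route the paper sketches just before the corollary: reduce to the unweighted case via the duality isomorphism $\St_d(\Q)\cong\St_d(\Q^\vee)$, let $\Sbb(\Q^d)$ act on $\Q(z_1,\dots,z_d)$ by constant-coefficient differential operators, and invoke the Brion--Vergne freeness theorem for injectivity. You supply the explicit sign bookkeeping and the differential-operator computation that the paper leaves to the reader, and the flag-basis recursion you sketch at the end is a reasonable self-contained substitute for the appeal to Brion--Vergne.
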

From~\cite[Theorem~1]{BV99} it also follows that the image of the above mapping is spanned by all functions of the form 
	\[
	\varphi(z) = \prod_{i=1}^{N}\langle u_i,z\rangle^{-n_i},
	\] 
where $n_i\geq1$ and $u_1,\dots,u_N$ span $\Q^d$. Finally, we remark that in coordinate-free notation, the mapping of Corollary~\ref{SteinbergPartialFractionsWeightN} goes from $\St(V)\otimes \Sbb(V)$ to $\Lambda^d V^{\vee}\otimes \operatorname{Frac}(\Sbb(V^{\vee}))$, and it is easy to see that it is a morphism of $\GL(V)$-modules.

\subsection{Polylogarithms as distributions on real tori}
\label{sec:polylogdistributions}
We assume familiarity with the theory of distributions, but we very briefly recall the basic setup in the special case of distributions on tori (for details see, e.g.,~\cite{khavin1991commutative}).

For a free abelian group $\Lambda$ of rank $d$, let $\TT_{\Lambda}$ denote the real torus $\Lambda_{\R}/\Lambda$, where $\Lambda_{\R}\coloneqq \Lambda\otimes \R$. We will denote $\TT_{\Z^d}=\R^d/\Z^d$ by $\TT^d$. Any distribution $F$ on $\TT_{\Lambda}$ can be formally given as a Fourier series $F(x)=\sum_{\nu\in\Lambda^{\vee}}\widehat{F}(\nu)e^{2\pi i \nu(x)}$ whose coefficients $\widehat{F}(\nu)$ grow at most polynomially as $|\nu|\to\infty$; the value of $F$ (viewed as a linear functional on the space of test functions) on $\varphi\in C^{\infty}(\TT_{\Lambda})$ is then defined as $\sum_{\nu\in\Lambda^{\vee}}\overline{\widehat{F}(\nu)}\widehat{\varphi}(\nu)$. We may therefore think of distributions on $\TT_{\Lambda}$ as slowly growing functions on $\Lambda^{\vee}:=\Hom_{\Z}(\Lambda,\Z)$.

We will consider only homogeneous distributions, which we now define. For $n\in\Z$ we let $\Sc_{n}'(\TT_{\Lambda})$ be the $\Cbb$-vector space of distributions $F$ on $\TT_{\Lambda}$ with $\widehat{F}\colon \Lambda^{\vee}\to\Cbb$ homogeneous of degree $-n$. This space has the following alternative description:
    \[F\in \Sc_{n}'(\TT_{\Lambda})\qquad \Leftrightarrow\qquad 
    \sum_{v\in \Lambda/N\Lambda}F\Big(\frac{x+v}{N}\Big) = N^{d-n}F(x),\quad \mbox{for all }N\in\Z_{>0}.\]
Indeed, letting $D_NF(x) \coloneqq \sum_{v\in \Lambda/N\Lambda}F(\frac{x+v}{N})$, we compute
$\widehat{D_NF}(\nu) = N^{d}\widehat{F}(N\nu)$, which implies the above equivalence. The same computation also easily implies the following result.
\begin{lemma} \label{LemmaDistributionConmeasurable}
	Let $\Lambda\subseteq\Lambda'$ be a pair of free abelian groups of rank $d$. Then for any positive integer $N$ divisible by $|\Lambda'/\Lambda|$ the mapping $F\mapsto N^{n-d}D_NF$ gives an isomorphism between $\Sc_{n}'(\TT_{\Lambda'})$ and $\Sc_{n}'(\TT_{\Lambda})$, and moreover this isomorphism does not depend on the choice of $N$.
\end{lemma}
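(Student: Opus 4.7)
The plan is to reduce everything to a Fourier-side computation, where the operator $D_N$ becomes a transparent combination of dilation and restriction. Fix $F\in\Sc_n'(\TT_{\Lambda'})$ with Fourier expansion $F(x)=\sum_{\mu\in(\Lambda')^{\vee}}\widehat F(\mu)e^{2\pi i\mu(x)}$ and let $m\coloneqq|\Lambda'/\Lambda|$. The first step is to check that $D_NF$, initially defined on $\Lambda_{\R}$, descends to a distribution on $\TT_{\Lambda}=\Lambda_{\R}/\Lambda$. Each summand $x\mapsto F((x+v)/N)$ is periodic modulo $N\Lambda'$, and the hypothesis $m\mid N$ yields $N\Lambda'\subseteq\Lambda$, so every summand is $\Lambda$-periodic. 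Summing over a full set of representatives $v\in\Lambda/N\Lambda$ is invariant under $\Lambda$-translations of $x$, so $D_NF$ is well-defined on $\TT_{\Lambda}$.

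The second step is a direct Fourier calculation. Note that $\Lambda\subseteq\Lambda'$ forces $(\Lambda')^{\vee}\subseteq\Lambda^{\vee}$, and $m\mid N$ forces $N\Lambda^{\vee}\subseteq(\Lambda')^{\vee}$. Expanding the sum termwise and using the orthogonality identity $\sum_{v\in\Lambda/N\Lambda}e^{2\pi i\mu(v)/N}=N^d\cdot\mathbf{1}_{\mu\in N\Lambda^{\vee}}$, one finds that for every $\nu\in\Lambda^{\vee}$,
\[
\widehat{D_NF}(\nu) \,=\, N^d\,\widehat F(N\nu),
\]
which makes sense because $N\nu\in(\Lambda')^{\vee}$. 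Setting $\widetilde F\coloneqq N^{n-d}D_NF$, the coefficients $\widehat{\widetilde F}(\nu)=N^n\widehat F(N\nu)$ are immediately seen to be homogeneous of degree $-n$ on $\Lambda^{\vee}$, using homogeneity of $\widehat F$ on $(\Lambda')^{\vee}$ (and extended, if necessary, by homogeneity). Hence $\widetilde F\in\Sc_n'(\TT_{\Lambda})$.

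Independence of the choice of $N$ is then an immediate consequence of the same homogeneity: for any two valid $N,N'$, both $N^n\widehat F(N\nu)$ and $(N')^n\widehat F(N'\nu)$ agree with $(NN')^n\widehat F(NN'\nu)$. Finally, injectivity and surjectivity fall out of the Fourier formula. For injectivity, $\widetilde F=0$ forces $\widehat F(N\nu)=0$ for all $\nu\in\Lambda^{\vee}$; restricting to $\nu\in(\Lambda')^{\vee}$ and using homogeneity gives $\widehat F=0$, hence $F=0$. For surjectivity, given $G\in\Sc_n'(\TT_{\Lambda})$, the restriction $\widehat F\coloneqq\widehat G|_{(\Lambda')^{\vee}}$ inherits homogeneity and thus defines $F\in\Sc_n'(\TT_{\Lambda'})$; then for $\nu\in\Lambda^{\vee}$ one has $N^n\widehat F(N\nu)=N^n\widehat G(N\nu)=\widehat G(\nu)$ by homogeneity of $\widehat G$, so $\widetilde F=G$. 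There is no serious obstacle in this argument: the whole proof is governed by the dual pair of inclusions $N\Lambda'\subseteq\Lambda\subseteq\Lambda'$ and $N\Lambda^{\vee}\subseteq(\Lambda')^{\vee}\subseteq\Lambda^{\vee}$, and the only place requiring care is verifying that both are implied by $m\mid N$.
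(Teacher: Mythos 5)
Your proposal is correct and takes essentially the same approach as the paper: the proof in the text is a one-line reference to the Fourier identity $\widehat{D_NF}(\nu)=N^d\widehat{F}(N\nu)$, which you carry out in full detail, including the verification of the dual lattice inclusions $N\Lambda'\subseteq\Lambda$ and $N\Lambda^\vee\subseteq(\Lambda')^\vee$ implied by $m\mid N$ and the bijectivity/independence-of-$N$ consequences of homogeneity.
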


Let us also note that integer matrices with nonzero determinant act on $\Sc_{n}'(\TT^d)$ via
\begin{equation*} \label{DistributionsAction}
(AF)(x)\coloneqq N^{n-d-1}\sum_{v\in \Z^d/N\Z^d}F\Big(\frac{x+v}{N}A\Big),\qquad N=\lvert\det(A)\rvert,
\end{equation*}
and this action extends to a $\GL_d(\Q)$-action by homogeneity in the same way as in Lemma~\ref{LemmaGroupActionExtension}. 

Multiple polylogarithms $\Li_{n_1,\dots,n_k}(e^{2\pi i x_1},\dots,e^{2\pi i x_k})$ for $k\leq d$ are elements of $\Sc_{n}'(\TT^d)$, $n=n_1+\dots+n_k$. Here we interpret $F(x)=\Li_{n_1,\dots,n_k}(e^{2\pi i x_1},\dots,e^{2\pi i x_k})$ for $x\in\TT^d$ as a limit in the space of distributions of $F_{\varepsilon}(x)=F(x_1+i\varepsilon,\dots,x_d+i\varepsilon)$ as $\varepsilon\to 0+$, where $F_{\varepsilon}\in C^{\infty}(\TT^d)$ for all $\varepsilon>0$. The Fourier coefficients of $F$ are given by
    \[\widehat{F}(m_1,\dots,m_d) = \frac{1}{m_1^{n_1}\cdots m_k^{n_k}},\qquad 0<m_1<\dots<m_k,\; m_{k+1}=\dots=m_d=0,
    \]
and $0$ otherwise. Note that for polylogarithms of depth $k<d$ the support of $\widehat{F}$ is contained in a proper subspace of $\Z^d$. More generally, we consider distributions $F=F_{C,u,n_1,\dots,n_d}\in\Sc'_n(\TT_{\Lambda})$ given by
    \begin{equation} \label{eq:Lidistributions}
    \widehat{F}(\nu) = 
    \begin{cases}
    \displaystyle
    \frac{1}{\prod_{j=1}^{d}\langle u_j,\nu\rangle^{n_j}},
    & \nu\in C\cap \Lambda^{\vee},\; \langle u_1,\nu\rangle,\dots,\langle u_d,\nu\rangle\ne 0, \\[0.5cm]
    0, & \mbox{ otherwise},
    \end{cases}
    \end{equation}
where $C$ is a rational convex cone in $\Lambda^{\vee}_{\R}$, $u_1,\dots,u_d$ is any basis of $\Lambda_{\Q}$, and $n_1,\dots,n_d\geq1$.

\begin{definition} \label{DefinitionPolylogDistributions}
The space $\Sc_{\Lup,n}'(\TT_{\Lambda})$ is the quotient of the $\Q$-subspace of $\Sc_n'(\TT_{\Lambda})$ spanned by all distributions as in~\eqref{eq:Lidistributions} with $n=n_1+\dots+n_d$ by the $\Q$-subspace spanned by distributions $F_{C,u,n_1,\dots,n_d}$ over all cones $C$ with $\dim(C)<d$.
\end{definition}
This space of distributions has the following algebraic description.

\begin{proposition} \label{PropositionDistributionsIsomorphism}
For $n\geq d$ the mapping
    \[[C]\otimes [w_1,\dots,w_d]\otimes\prod_{j=1}^{d}\frac{w_{j}^{n_j-1}}{(n_j-1)!} 
    \longmapsto\sum_{\substack{\nu\in C\cap\Z^d\\ \langle \nu,w_j\rangle \ne 0}}\frac{\det(w^1,\dots,w^d)e^{2\pi i \langle\nu,x\rangle}}{\prod_{j=1}^{d}\langle w^j,\nu\rangle^{n_j}}\] 
gives a well-defined isomorphism 
    \[\big(\Cc(\Q^d)/\Cc_0(\Q^d)\big)\otimes \St_d(\Q)\otimes \Sbb^{n-d}(\Q^d)
    \lra
    \Sc'_{\Lup,n}(\TT^{d}).\]
\end{proposition}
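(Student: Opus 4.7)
The plan is to factor the map through the partial-fraction realization $\rho$ of Corollary~\ref{SteinbergPartialFractionsWeightN} and to recognize the image as a Fourier series. Concretely, for any rational polyhedral cone $C\subseteq\R^d$ and any homogeneous rational function $\phi\in\Q(z_1,\ldots,z_d)$ of degree $-n$, the formal series
\[
\Psi([C]\otimes\phi)(x) \coloneqq \sum_{\substack{\nu\in C\cap\Z^d\\ \phi\text{ regular at }\nu}}\phi(\nu)\,e^{2\pi i\langle\nu,x\rangle}
\]
has coefficients of polynomial growth and thus defines an element of $\Sc'_n(\TT^d)$. The map of the proposition is then $\Psi\circ(\mathrm{id}\otimes\rho)$, where $\rho\colon\St_d(\Q)\otimes\Sbb^{n-d}(\Q^d)\to\Q(z_1,\ldots,z_d)$ is the injective map of Corollary~\ref{SteinbergPartialFractionsWeightN}.

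Well-definedness will split naturally in two. On the factor $\St_d(\Q)\otimes\Sbb^{n-d}(\Q^d)$ it follows from the fact that $\rho$ is itself well-defined and $\GL_d$-equivariant. On the cone factor, the relations modulo $\Cc_0(\Q^d)$ arise from subdivision identities $[C_1\cup C_2]=[C_1]+[C_2]-[C_1\cap C_2]$ with $C_1\cup C_2$ convex; the discrepancy in the image is supported on $(C_1\cap C_2)\cap\Z^d$, a sublattice of lower rank, and therefore lies in the span of generators $F_{C',u,n_1,\ldots,n_d}$ with $\dim(C')<d$, which vanishes in $\Sc'_{\Lup,n}(\TT^d)$ by Definition~\ref{DefinitionPolylogDistributions}. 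The same argument shows that cones $C$ of dimension less than $d$ map to zero. Surjectivity is immediate: given a generator $F_{C,u,n_1,\ldots,n_d}$ of $\Sc'_{\Lup,n}(\TT^d)$, set $w^j=u_j$, let $w_1,\ldots,w_d$ be the dual basis, and observe that $\det(u_1,\ldots,u_d)^{-1}[C]\otimes[w_1,\ldots,w_d]\otimes\prod_j w_j^{n_j-1}/(n_j-1)!$ maps to it.

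The substantive step is injectivity. Suppose $\xi\in\bigl(\Cc(\Q^d)/\Cc_0(\Q^d)\bigr)\otimes\St_d(\Q)\otimes\Sbb^{n-d}(\Q^d)$ maps to zero. Refine all cones appearing in a representative of $\xi$ by a common complete rational fan $\Sigma$, which exists by standard toric arguments. The top-dimensional cones $D_1,\ldots,D_k$ of $\Sigma$ are linearly independent in $\Cc(\Q^d)/\Cc_0(\Q^d)$: evaluating a putative relation of indicator functions at an interior point of some $D_j$ detects the coefficient of $[D_j]$ and annihilates all lower-dimensional generators. Hence $\xi$ admits a unique representation $\xi=\sum_j[D_j]\otimes\beta_j$ with $\beta_j\in\St_d(\Q)\otimes\Sbb^{n-d}(\Q^d)$. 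The Fourier coefficient of the image distribution at $\nu\in D_j^\circ\cap\Z^d$ (away from the finitely many pole hyperplanes of $\rho(\beta_j)$) equals $\rho(\beta_j)(\nu)$. Vanishing of the distribution thus forces $\rho(\beta_j)$ to vanish on a Zariski-dense subset of $\Q^d$, so $\rho(\beta_j)=0$; by the injectivity part of Corollary~\ref{SteinbergPartialFractionsWeightN} we conclude $\beta_j=0$, and hence $\xi=0$.

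The main obstacle is the injectivity step, and within it the verification that the refining-fan decomposition is unique modulo $\Cc_0(\Q^d)$ and that Fourier coefficients of the image distribution can be read off chamber by chamber. This reduces to two geometric inputs: Zariski density of $D^\circ\cap\Z^d$ for a full-dimensional rational cone $D$, and a careful matching of the ``lower-dimensional'' quotients on both sides of the asserted isomorphism. Both inputs are clean once established, but setting up the Fourier-analytic framework together with the dual-basis conventions $\langle w^j,w_i\rangle=\delta_{ij}$ of Corollary~\ref{SteinbergPartialFractionsWeightN} requires some bookkeeping to carry through.
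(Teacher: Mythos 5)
Your proof is correct and follows essentially the same route as the paper: both factor the map through the partial-fraction realization $\rho$ of Corollary~\ref{SteinbergPartialFractionsWeightN}, refine cones by a common complete fan, and read off Fourier coefficients chamber by chamber, using the Zariski density of lattice points in the interior of a full-dimensional rational cone to conclude via injectivity of $\rho$. The only cosmetic difference is that the paper interposes an auxiliary space $R_{d,n}$ of piecewise rational functions (homogeneous of degree $-n$ with pieces in the image of $\rho$) and splits the isomorphism into two steps, whereas you argue directly on distributions; the underlying content is identical.
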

\begin{proof}
Let $R_{d,n}$ be the $\Q$-vector space of functions on $\Q^d$ that are piecewise rational with respect to some complete rational polyhedral fan, and whose rational pieces are homogeneous of degree $-n$ and all belong to the image of the map $\rho$ from Corollary~\ref{SteinbergPartialFractionsWeightN}. At the points where rational functions are not defined, we set the value as $0$ and we consider two elements of $R_{d,n}$ to be equal if they agree outside a union of finitely many hyperplanes. Corollary~\ref{SteinbergPartialFractionsWeightN} implies that the mapping
    \[[C]\otimes [u_1,\dots,u_d]\otimes\prod_{j=1}^{d}\frac{w_{j}^{n_j-1}}{(n_j-1)!} 
	\longmapsto [C](\nu)\cdot \frac{\det(w^1,\dots,w^d)}{\prod_{j=1}^{d}\langle w^j,\nu\rangle^{n_j}}\] 
is an isomorphism of $\big(\Cc(\Q^d)/\Cc_0(\Q^d)\big)\otimes \St_d(\Q)\otimes \Sbb^{n-d}(\Q^d)$ onto $R_{d,n}$. It remains to note that the mapping that sends $\varphi\in R_{d,n}$ to $\sum_{\nu\in\Z^d} \varphi(\nu) e^{2\pi i \langle \nu, x\rangle}$ is an isomorphism onto $\Sc'_{\Lup,n}(\TT^{d})$ since a homogeneous function on $\Q^d$ is uniquely determined by its restriction to any lattice.
\end{proof}

We define a filtration $\Fc$ on $\Sc_{\Lup,n}'(\TT_{\Lambda})$ by setting $\Fc_{k}\Sc_{\Lup,n}'(\TT_{\Lambda})$ to be the $\Q$-linear span of all distributions~\eqref{eq:Lidistributions} for which $C$ contains a $(d-k)$-dimensional linear subspace. From Proposition~\ref{PropositionDistributionsIsomorphism} and part \ref{PropositionSteinbergConesLinearFormsParti} of Proposition~\ref{PropositionSteinbergConesLinearForms} we immediately get the following result.
\begin{corollary}
For $n\geq d$ the mapping
	\[[v_1,\dots,v_d]\otimes [w_1,\dots,w_d]\otimes\prod_{j=1}^{d}\frac{w_j^{n_j-1}}{(n_j-1)!} 
	\longmapsto\frac{\sgn\det(v_1,\dots,v_d)}{\det(w_1,\dots,w_d)}\sum_{\substack{\nu\in C\cap\Z^d\\ \langle w_j,\nu\rangle \ne 0}}\frac{e^{2\pi i \langle\nu,x\rangle}}{\prod_{j=1}^{d}\langle w^j,\nu\rangle^{n_j}},\]
where $C=C(v_1,\dots,v_d)$, and $w^1,\dots,w^d$ is the basis dual to $w_1,\dots,w_d$, gives a well-defined isomorphism 
\[\St_d(\Q)\otimes \St_d(\Q)\otimes \Sbb^{n-d}(\Q^d)
\lra
\gr_d^{\Fc}\Sc'_{\Lup,n}(\TT^{d}).\]
\end{corollary}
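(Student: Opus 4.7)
The plan is a direct composition of the two isomorphisms just stated, so the corollary really does follow ``immediately'', but it is worth spelling out the bookkeeping. First, I would invoke Proposition~\ref{PropositionDistributionsIsomorphism} to identify $\Sc'_{\Lup,n}(\TT^d)$ with $\big(\Cc(\Q^d)/\Cc_0(\Q^d)\big)\otimes \St_d(\Q)\otimes \Sbb^{n-d}(\Q^d)$, and then check that under this identification the filtration $\Fc$ corresponds, on the first tensor factor, to the filtration whose $k$-th piece is spanned by classes of cones containing a $(d-k)$-dimensional linear subspace. This is immediate from the explicit formula, since the Fourier support of the distribution assigned to $[C]\otimes(\cdots)$ is precisely $C\cap\Z^d$ (outside the hyperplanes $\langle w^j,\nu\rangle=0$, which contribute nothing).

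Second, I would take the top associated graded. The $k=d$ piece of the filtration on $\Cc(\Q^d)/\Cc_0(\Q^d)$ is the full space, and the $k=d-1$ piece is $\Cc_{\Lc}(\Q^d)/\big(\Cc_\Lc(\Q^d)\cap\Cc_0(\Q^d)\big)$, so the quotient is $\Cc(\Q^d)/\Cc_{0,\Lc}(\Q^d)$. By part~\ref{PropositionSteinbergConesLinearFormsParti} of Proposition~\ref{PropositionSteinbergConesLinearForms} the latter is isomorphic to $\St_d(\Q)$ via the map sending $[C(v_1,\dots,v_d)]$ to $\sgn\det(v_1,\dots,v_d)\,[v_1,\dots,v_d]$. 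Inverting this last isomorphism and tensoring with $\St_d(\Q)\otimes\Sbb^{n-d}(\Q^d)$ yields the asserted $\Q$-linear isomorphism
\[
\St_d(\Q)\otimes\St_d(\Q)\otimes\Sbb^{n-d}(\Q^d)\xrightarrow{\ \sim\ }\gr_d^{\Fc}\Sc'_{\Lup,n}(\TT^d).
\]

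Finally, to confirm the explicit formula in the statement, I would substitute $C=C(v_1,\dots,v_d)$ into the formula of Proposition~\ref{PropositionDistributionsIsomorphism}, use the identity $\det(w^1,\dots,w^d)=1/\det(w_1,\dots,w_d)$ for dual bases, and pick up the factor $\sgn\det(v_1,\dots,v_d)$ coming from the inverse of the Steinberg-cone isomorphism; this reproduces the displayed formula verbatim. There is no substantial obstacle here, as both input isomorphisms have already been established; the only item requiring attention is the compatibility of the two filtrations, and this is transparent from the fact that ``$C$ contains a $(d-k)$-dimensional linear subspace'' is a condition formulated purely in terms of the cone $C$, so that the filtration pulls back cleanly to the cone algebra side.
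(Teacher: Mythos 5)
Your proposal is correct and matches the paper's own (unwritten) proof, which is precisely the composition of Proposition~\ref{PropositionDistributionsIsomorphism} with part~\ref{PropositionSteinbergConesLinearFormsParti} of Proposition~\ref{PropositionSteinbergConesLinearForms} applied to the cone factor. One small remark: the compatibility of the filtration $\Fc$ with the cone filtration is not really a Fourier-support statement — since $\Fc_{d-1}$ is a span and cancellation could in principle shrink supports, the cleaner justification is simply that $\Fc_{d-1}$ is \emph{by definition} the span of the distributions $F_{C,u,\underline{n}}$ with $C$ containing a line, and Proposition~\ref{PropositionDistributionsIsomorphism} identifies these exactly with the image of $\Cc_\Lc(\Q^d)\otimes\St_d(\Q)\otimes\Sbb^{n-d}(\Q^d)$, so the two filtrations match by construction.
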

This is the analogue of Theorem~\ref{TheoremMain2} that we were after. As a corollary, we see that, as a $\GL_d(\Q)$-module, $\gr_d^{\Fc}\Sc'_{\Lup,n}(\TT^{d})$ is generated by multiple polylogarithms $\Li_{n_1,\dots,n_d}(e^{2\pi i x_1},\dots,e^{2\pi i x_d})$, $n_1+\dots+n_d=n$, and we also recover the formula for lattice Aomoto polylogarithms $\Li_{n_1,\dots,n_d}(v_1,\dots,v_d;w_1,\dots,w_d;x)$ from the introduction. To make the analogy between $\LL_n(\Tup^d)$ and $\Sc'_{\Lup,n}(\TT^{d})$ more direct, it remains to identify $\Fc_{d-1}\Sc'_{\Lup,n}(\TT^{d})$ with a subspace of distributions given by ``products with $\log(z_i)$''. First let us look at the 1-dimensional torus.

\begin{example} \label{ExampleLogsDistributions}
	The space $\Sc'_{\LL,n}(\TT^1)$ has a basis consisting of $\Li_{n}(e^{2\pi i x})$, and $\Li_{n}(e^{-2\pi i x})$, $n\geq1$. The only nontrivial part of the filtration is the space $\Fc_{0}\Sc'_{\LL,n}(\TT^1)$, spanned by
	\[\sum_{\nu\ne0}\frac{e^{2\pi i \nu x}}{\nu^n} = -\frac{(2\pi i)^n}{n!}B_n(\{x\}),\qquad n\geq 1,\]
	where $B_n(t)$ is the $n$-th Bernoulli polynomial and $\{x\}$ denotes the fractional part of $x$.
	Since for $0<x<1$ we have $2\pi i B_1(\{x\})=\log(-e^{2\pi i x})$, we see that any element of $\Fc_{0}\Sc'_{\LL,n}(\TT^1)$ is a polynomial in $\log(-e^{2\pi i x})$ and $2\pi i$ (for $0<x<1$).
\end{example}

 Distributions cannot be multiplied in general, however there is a well-defined external product map $\Sc'_{\Lup,n_1}(\TT_{\Lambda_1})\otimes \Sc'_{\Lup,n_2}(\TT_{\Lambda_2}) \to \Sc'_{\Lup,n_1+n_2}(\TT_{\Lambda_1\oplus \Lambda_2})$ given by $(F\cdot G)(x_1,x_2) = F(x_1)G(x_2)$ for $(x_1,x_2)\in \Lambda_{1,\R}\oplus \Lambda_{2,\R}$.
 On the level of Fourier expansions we have $\widehat{F\cdot G}(\nu_1,\nu_2)=\widehat{F}(\nu_1)\widehat{G}(\nu_2)$, $\nu_1\in\Lambda_1^{\vee}$, $\nu_2\in\Lambda_2^{\vee}$. Similarly, if $\Lambda_1^{\vee}\oplus \Lambda_2^{\vee}\subseteq \Lambda^{\vee}$, then we have a natural product map $\Sc'_{\Lup,n_1}(\TT_{\Lambda_1})\otimes \Sc'_{\Lup,n_2}(\TT_{\Lambda_2}) \to \Sc'_{\Lup,n_1+n_2}(\TT_{\Lambda})$ obtained by composition of the external product with the isomorphism from Lemma~\ref{LemmaDistributionConmeasurable}. Note that by Proposition~\ref{PropositionDistributionsIsomorphism} we have $ \Sc'_{\Lup,n}(\TT_{\Lambda})\cong \Cc(V)/\Cc_0(V)\otimes \St(V)\otimes \Sbb^{n-d}(V)$, where $V=\Lambda_{\Q}^{\vee}$, and the external product is induced by the usual VB-monoid structure for $\St\otimes\Sbb$ (see Definition~\ref{DefinitionTensorS}), and the external product $\Cc(V)\otimes\Cc(W)\to \Cc(V\oplus W)$ sending $[C_1]\otimes [C_2]$ to $[C_1+C_2]$ for the cone algebra. 

\begin{proposition} \label{LogarithmicFiltration1}
	The natural map
	\[\bigoplus_{m=1}^{n-d+1}\bigoplus_{L^{\vee}\oplus \Lambda_1^{\vee} \subseteq\Lambda^{\vee}}\Fc_{0}\Sc'_{\Lup,m}(\TT_{L})\otimes \Sc'_{\Lup,n-m}(\TT_{\Lambda_1})\lra \Fc_{d-1}\Sc'_{\Lup,n}(\TT_{\Lambda}),\] 
	where $L^{\vee}$ runs over all rank $1$ sublattices in $\Lambda^{\vee}$, is surjective. 
\end{proposition}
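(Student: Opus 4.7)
The plan is to pass through the algebraic description of Proposition~\ref{PropositionDistributionsIsomorphism}, which identifies $\Sc'_{\Lup,n}(\TT_{\Lambda})$ with $(\Cc(V^{\vee})/\Cc_0(V^{\vee}))\otimes \St(V^{\vee})\otimes \Sbb^{n-d}(V^{\vee})$, where $V=\Lambda_{\Q}$ and cones live in $V_{\R}^{\vee}=\Lambda^{\vee}_{\R}$. Under this isomorphism, the subspace $\Fc_{d-1}\Sc'_{\Lup,n}(\TT_{\Lambda})$ corresponds to the span of elements $[C]\otimes A\otimes P$ for which the cone $C$ contains a one-dimensional linear subspace. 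An analogous description applies to each summand of the source, with the one-dimensional factor $\Fc_0\Sc'_{\Lup,m}(\TT_L)$ accounted for by Example~\ref{ExampleLogsDistributions}. First, one verifies that the external product on distributions defined in the paragraph preceding the proposition corresponds, under these identifications, to the componentwise external product on cones, Steinberg modules, and symmetric powers; this should be a direct computation on the generating rational functions of Corollary~\ref{SteinbergPartialFractionsWeightN}.

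The geometric heart of the argument is the following observation. Given a rational polyhedral cone $C\subseteq V^{\vee}_{\R}$ containing a line $K^{\vee}_{\R}$, where $K^{\vee}=L^{\vee}_{\Q}$ for some primitive rank-one sublattice $L^{\vee}\subseteq\Lambda^{\vee}$, and any rational complement $W'\subseteq V^{\vee}$ of $K^{\vee}$, the Minkowski decomposition gives $C=K^{\vee}_{\R}+(C\cap W'_{\R})$, so that $[C]=[K^{\vee}_{\R}]\cdot[C\cap W'_{\R}]$ in $\Cc(V^{\vee})$ as an external product for the splitting $V^{\vee}=K^{\vee}\oplus W'$. Combining this with Proposition~\ref{PropositionSteinbergSubspace} applied to the Steinberg factor and with the canonical isomorphism $\Sbb(V^{\vee})\cong\Sbb(K^{\vee})\otimes\Sbb(W')$, any generator $[C]\otimes A\otimes P$ with $C\supseteq K^{\vee}_{\R}$ becomes a finite sum of external products $([K^{\vee}_{\R}]\otimes A_{i}^{K}\otimes P_{i}^{K})\cdot([C\cap W'_{i,\R}]\otimes A_{i}^{W}\otimes P_{i}^{W})$ indexed by various rational complements $W'_i$ to $K^{\vee}$.

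Since $K^{\vee}$ is one-dimensional, its only one-dimensional linear subspace is $K^{\vee}$ itself, and so each first factor lies in $\Fc_{0}\Sc'_{\Lup,m}(\TT_{L})$ for the weight $m$ determined by homogeneity; the second factor lies in $\Sc'_{\Lup,n-m}(\TT_{\Lambda_{1,i}})$, where $\Lambda_{1,i}^{\vee}=\Lambda^{\vee}\cap W'_i$ is a sublattice for which $L^{\vee}\oplus\Lambda_{1,i}^{\vee}$ has finite index in $\Lambda^{\vee}$, so the product map cited in the statement applies. Summing over all lines $K^{\vee}_{\R}\subseteq C$ and all such lattice complements exhibits $[C]\otimes A\otimes P$ in the image, yielding surjectivity. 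The main obstacle I anticipate is the initial compatibility check between the distribution-theoretic external product (defined via the external product of Fourier coefficients composed with the isomorphism of Lemma~\ref{LemmaDistributionConmeasurable}) and the componentwise external product on the three factors under Proposition~\ref{PropositionDistributionsIsomorphism}; once this is in hand, the rest of the argument is essentially a formal consequence of three compatible splittings.
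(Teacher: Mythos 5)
Your proposal is correct and follows essentially the same route as the paper: fix one line $L$ contained in $C$, use the Minkowski identity $C = L + (C\cap W)$ for any complement $W$, and combine this with Proposition~\ref{PropositionSteinbergSubspace} to decompose the Steinberg factor over complements of $L$. The only minor imprecision is the closing phrase about ``summing over all lines $K^{\vee}_{\R}\subseteq C$'' --- fixing a single line suffices, as the paper does --- but this does not affect the argument's validity.
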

\begin{proof}
In view of the above discussion, the result would follow from the following claim: the image of $\Cc_\Lc(V)\otimes \St(V)$ in $\Cc(V)\otimes \St(V)$ is a sum of decomposable elements coming from decompositions of $V$ into a sum of $1$-dimensional and $(d-1)$-dimensional subspaces. To prove the claim, let $C\subseteq V$ be a cone containing a line $L$.
By Proposition~\ref{PropositionSteinbergSubspace}
\begin{equation*}
	\bigoplus_{V=L\oplus W} \St(L)\otimes \St(W) \stackrel{\cong}{\lra} \St(V),
\end{equation*}
where $W$ runs over $(d-1)$-dimensional subspaces of $V$ complementary to $L$. But for any such $W$ we have $C=L+C\cap W$, so that $[C]$ lies in the image of $\Cc(L)\otimes \Cc(W)$, and thus the claim is proven.
\end{proof}
Together with the computation from Example~\ref{ExampleLogsDistributions} this proposition shows that $\Fc_{d-1}\Sc'_{\Lup,n}(\TT^{d})$ is indeed spanned by products with $\log(z_i)$ and $2\pi i$.

\begin{remark}
Although we treat elements of $\Sc'_{\Lup,n}(\TT_{\Lambda})$ as abstract distributions, they are in fact well-behaved functions that are integrable on~$\TT^d$ and smooth on a complement of a union of finitely many subtori. For example, for $\Li_{1,\dots,1}(e^{2\pi i x_1},\dots,e^{2\pi i x_d})$ this can be seen from the integral representation
	\[\Li_{1,\dots,1}(z_1,\dots,z_d) = \int_{0<t_1<\dots<t_d<1}\frac{dt_1}{(z_1\cdots z_d)^{-1}-t_1}\wedge\dots \wedge \frac{dt_d}{z_d^{-1}-t_d},\]
using the fact that $\int_{0}^{1}\frac{dt}{|t-z|}$ is bounded by $C\log\frac{1}{|1-z|}$ for $z$ close to $1$.
\end{remark}

\bibliographystyle{halpha2}
\bibliography{bibliography} 

\end{document}